\def\tr{\mathop{\text{tr}}\kern.2ex}
\def\supp{\mathop{\text{supp}}}
\long\def\comment#1{}
\def\tr{\mathop{\text{Tr}}}
\def\cS{{\mathcal{S}}}
\newcommand{\bel}{\begin{eqnarray}\label}
\newcommand{\eel}{\end{eqnarray}}
\newcommand{\bes}{\begin{eqnarray*}}
	\newcommand{\ees}{\end{eqnarray*}}
\newcommand{\LRT}{\mathop{\mathrm{LRT}}}
\let\hat\widehat
\let\tilde\widetilde
\def\EE{{\mathbb E}}
\def\supp{\mathop{\text{supp}\kern.2ex}}
\def\tr{{\rm{Tr}}}
\def\supp{\mathop{\text{supp}}}
\def\tr{\mathrm{Tr}}
\newcolumntype{M}{>{\centering\arraybackslash}m{0.17\textwidth}}
\def\##1\#{\begin{align}#1\end{align}}
\def\$#1\${\begin{align*}#1\end{align*}}
\theoremstyle{plain}
\theoremstyle{mytheoremstyle}
\begin{document}

\title{Combinatorial-Probabilistic Trade-Off:  Community Properties Test in the Stochastic Block Models} 
\author{Shuting Shen\thanks{Department of Biostatistics, Harvard School of Public Health, Boston, MA 02115; Email: \texttt{shs145@g.harvard.edu}}, Junwei Lu\thanks{Department of Biostatistics, Harvard School of Public Health, Boston, MA 02115; Email:
\texttt{junweilu@hsph.harvard.edu}. The paper is  supported by NSF1916211, NIH1R35CA220523-01A1, and NIH5U01CA209414-02. }}
\date{}

\DeclarePairedDelimiter{\nint}\lfloor\rceil

	\maketitle

	\begin{abstract}
	    In this paper, we propose an inferential framework testing the general community combinatorial properties of the stochastic block model. Instead of estimating the community assignments, we aim to test the hypothesis on whether a certain community property is satisfied. For instance, we propose to test whether a given set of nodes belong to the same community or whether different network communities have the same size.
	   We propose a general inference framework that can be applied to all symmetric community properties.  To ease the  challenges caused by the combinatorial nature of communities properties, we develop a novel shadowing bootstrap testing method. By utilizing the symmetry, our method can find a shadowing representative of the true assignment and the number of assignments to be tested in the alternative can be largely reduced. 
	    In theory, we introduce  a combinatorial distance between two community classes and show a  combinatorial-probabilistic trade-off phenomenon in the community properties test. Our test is honest as long as the product of  combinatorial distance between two communities and the probabilistic distance between two assignment probabilities is sufficiently large.
	     On the other hand, we shows that such trade-off also exists in the information-theoretic lower bound of the community property test. We also implement numerical experiments on both the synthetic data and the protein interaction application to show the validity of our method.
	  \end{abstract}
	\noindent {\bf Keyword:}
	Combinatorial inference; stochastic block models; community properties; minimax lower bound.

	\section{Introduction}\label{intro}
	Clutering is an important feature for network studies, which refers to the presence of node communities in the underlying graph. Community partitions the nodes into subgroups, within which a higher level of connectivity is perceived. The broad spectrum of applications for inferring the network community include the fields of sociology \citep{wasserman1994social}, biology \citep{barabasi2004network}, physics \citep{newman2003structure} and internet \citep{albert1999diameter}. Stochastic block model (SBM) \citep{holland1983stochastic} is one of the most widely studied statistical model to depict the community structures in networks. It is a random graph model which divides the nodes into disjoint communities and assigns the probability of connection between two nodes according to their community memberships. 
	
	 One of the central problem in the study of the stochastic block model is the community detection. Many existing research focused on estimating the community labeling and showing the weak and strong consistency of the community estimation \citep{choi2012stochastic, airoldi2013stochastic,mossel2012stochastic, mossel2018proof, massoulie2014community, hajek2016achieving,abbe2015exact}. Some fundamental limits of the community recovery have also been established in the previous studies. For example, \cite{abbe2015exact} showed the optimal phase transition for the exact recovery of the community assignments using the  maximum likelihood. The semidefinite relaxation methods    \citep{abbe2015exact, hajek2016achieving, agarwal2017multisection, bandeira2018random} and the spectral methods \citep{abbe2015community, yun2014accurate, gao2017achieving,abbe2017entrywise}  are  also shown to be optimal in exact recovery. Besides the exact recovery, \cite{zhang2016minimax} quantified the statistical rate of the community estimation via the mis-match ratio and 
	showed the minimax rate of the mis-match ratio for community detection. 
	 
	 The consistency of the community estimation has two major limits: 1) it does not provide the uncertainty assessment of the quality of the estimation, and 2) it requires the recovery of community assignments for all nodes, while in many scientific applications we are interested in the community properties of a  specific subset of nodes. For instance, \cite{tabouy2019variational} studied the ESR1 protein-protein interaction network in breast cancer and aimed to test if a given set of cancer-related proteins belongs to the same community. Another example is the application in human 
	brain connectome: \cite{joshuabrainconn} studied whether two specific  areas of brains belongs to the same cluster.  This reduces to the statistical hypothesis that if two sets of cerebral nodes belong to the same community.  
	We can formulate the above applications as the following examples of statistical hypotheses.
	
\begin{example}[Same community test for $m$ nodes]\label{ex:same}
We want to test whether $m$ given nodes are in the same cluster or not. Without loss of generality, we have the hypothesis: 
\begin{align*}
 &\mathrm{H}_0: \text{Nodes $1, \ldots, m$ belong to the same community,}\\
 &\mathrm{H}_1: \text{There exists two nodes $1\le j\neq k \le m$ belonging to two different communities}.
\end{align*}
\end{example}
\begin{example}[Group community test]\label{ex:group}
  Like the applications in human 
	brain connectome \citep{joshuabrainconn}, we have two group of nodes 
	and within each group, we know in prior that they belong to the same community. We aim to further test whether these two groups of nodes belong to the same cluster. We denote one node set as $S_m = \{1,\ldots, m\}$ and the other node set as $S_{m'} = \{m+1,\ldots, m+m'\}$. The group community hypothesis is 
\begin{align*}
 &\mathrm{H}_0: \text{Nodes in $S_m \cup S_{m'}$ belong to the same community},\\
 &\mathrm{H}_1: \text{Nodes in $S_m$ belong to community $a$, but nodes in $S_{m'}$ belong to community $b \neq a$}.
\end{align*}
\end{example}
\begin{example}[Equal-sized communities test]\label{ex:num}
Given an SBM of $n$ nodes and $K$ communities, we
aim to test whehter each community has the same size. Namely, we aim to test the hypothesis:
\begin{align*}
 &\mathrm{H}_0: \text{Each community has the size $n/K$},\\
 &\mathrm{H}_1: \text{At least one of the communities size is not equal to $n/K$}.
\end{align*}
\end{example}

In order to conduct hypothesis tests including the above examples, we develop a general inference community property test. We consider the SBM with $n$ nodes and $K$ communities.  Denote the community assignment of the nodes as   $z=(z(1),...,z(n)) \in \{1, \ldots, K\}^n$ such that $z(j) = k$ implies that the $j$-th node belongs to the $k$-th community. In order to specify  the true assignment, we assume $z$ is deterministic.   The homogeneous SBM assumes that the edges of the random graph are independent Bernoulli random variables, i.e.,  the probability of the nodes $i$ and $j$ being connected is  $p$ if $z(i) = z(j)$ and the probability of the nodes $i$ and $j$ being connected is  $q$ if $z(i) \neq z(j)$. Let $\cC_0, \cC_1 \subset \{1, \ldots, K\}^n$ be two disjoint communities assigment families. We are interested in the {\it general community property test}:
\begin{equation}\label{eq:commtest}
    \mathrm{H}_0: z \in \cC_0 \text{~versus~} \mathrm{H}_1: z \in \cC_1.
\end{equation}
The concrete examples of $\cC_0$ and $\cC_1$ are listed in Examples \ref{ex:same} - \ref{ex:num}.
We characterize the  hardness of differentiating the null hypothesis from the alternative by two kinds of ``distances": the probabilistic  distance: how close between $p$ and $q$ and the combinatorial distance: how close between $\cC_0$ and $\cC_1$. The existing literature in the study of the community detection only focused on the probability distance, e.g., $\sqrt{p} - \sqrt{q}$ \citep{abbe2015community} or the Renyi divergence \citep{zhang2016minimax}
\begin{equation}
I(p,q)=-2 \log \big (\sqrt{pq}+\sqrt{(1-p)(1-q)} \big ).
\end{equation}
In comparison to these results, our paper introduce a novel combinatorial distance between $\cC_0$ and $\cC_1$ denoted as $d(\cC_0, \cC_1)$ measuring the number of misalignments between two families (we refer the exact definition to Definition \ref{def:clusterdist}).
The main result of our paper is that, for a wide range of SBM models, we can propose a general testing method which is honest and powerful when 
\begin{equation}\label{eq:pqdistance}
\text{Combinatorial-Probabilistic Trade-Off:~}  I(p,q) d(\cC_0, \cC_1) = \Omega(n^{\epsilon}) 
\end{equation}
for some arbitrarily small $\epsilon >0$. On the other hand, we show the minimax lower bound of the test in the sense that $\mathrm {\rm H}_0$ and $\mathrm {\rm H}_1$ in \eqref{eq:commtest} cannot be differentiated  when $ I(p,q) d(\cC_0, \cC_1) \le c\log n$ for some constant $c>0$.\footnote{We refer to Theorem \ref{t1-mtd} and Theorem \ref{lb-main} for the rigorous arguments about the upper and lower bounds.}
The multiplication between $I(p,q)$ and $d(\cC_0, \cC_1)$ reveals the trade-off between the probabilistic  distance and the combinatorial distance in the general community property test. Our paper makes the following specific contributions to achieve such trade-off.

{\bf $\bullet$ Methodology.}
We propose a likelihood ratio test for the community property test in \eqref{eq:commtest}. We show that our test is generally honest and powerful as long as the tested community properties $\cC_0$ and $\cC_1$ are symmetric under community assignment permutation transforms, which covers all examples above. Comparing to the likelihood ratio test on the community numbers \citep{wang2017likelihood}, our likelihood ratio test could be  evaluated over a much larger family of community assignments such that the limiting distribution of our test statistic is no longer always normal. Therefore, the method of \cite{wang2017likelihood} is no longer applicable and  we need to develop a new multiplier bootstrap method to estimate the quantile of our statistic. To achieve this, there are two major challenges. Firstly, 
the possible assignments in the alternative space $\cC_1$ are so large such that the multiplier bootstrap statistic in \cite{chernozhukov2013gaussian} cannot be applied directly. To overcome this, instead of considering the entire alternative class $\cC_1$, our testing method shows that it suffices to consider  the boundary of $\cC_1$, which will significantly reduce the computation complexity. Secondly, the naive multiplier bootstrap requires to know the true assignment. We propose a ``shadowing bootstrap" method by utilizing the symmetry of $\cC_0$ and $\cC_1$. Instead of using the true community assignment, we use a ``shadowing assignment" in the bootstrap which remains to be valid due to the symmetry of the community properties.

{\bf $\bullet$ Theory.}
We show the validity and power of the proposed test when the probability and combinatorial distances satisfies the general relationship in \eqref{eq:pqdistance}. We also prove the minimax lower bound of the general community property test and show that the proposed test is nearly optimal. Due to the generality of the property test, the existing theoretical results on the community detection, e.g., \cite{abbe2015exact} and \cite{zhang2016minimax},  can not be directly applied. To derive the general lower bound, we take a set of the hardest assignments in the alternative $\cC_1$ which are closest to $\cC_0$. These hardest assignments are dependent among each other and we control their dependency via comparing to the distribution of  high dimensional Gaussian vectors. To the best of our knowledge, it is the first time for our paper to derive a minimax lower bound
for general community properties.

\subsection{Related Papers}	 
 There are several existing papers discussing the inference of the community properties of the stochastic block model.   \cite{bickel2015hypothesis} designed a recursive bipartitioning algorithm based on the test statistic derived from the principal eigenvalue of the normalized adjacency matrix to automatically determine the number of clusters $k$. Similarly, \cite{lei2016goodness} developed a goodness-of-fit test based on the largest singular value of the residual matrix for estimating the number of communities. Also interested in inferring the number of the communities, \cite{wang2017likelihood} employed a likelihood ratio statistic and showed the asymptotic normality of the proposed statistic. Compared to the above works that only focus on an exact community property, the hypotheses testing problem in our paper is more general.  Under the framework of the mixed membership model, \cite{fan2019simple} studied the one-sample test of the weight vector of the mixed membership via a singular value decomposition based method. It covers Example \ref{ex:same} for $m = 2$ in the stochastic block model, however, it cannot be directly applied to other examples mentioned above. In comparison, our method
 covers wider range of scenarios. Moreover, we also study the general lower bound of the community test which reveals a novel bridge between the probability distance and the combinatorial distance between the null and alternative community families.
 
Besides inferring the number of communities, \cite{gao2018community} proposed a community detection algorithm in degree-corrected block models that can be reduced to the hypothesis tests on the membership of a given node when the cluster labels of other nodes are the truth. \cite{gao2018community} integrated the hypothesis tests as a technical procedure in community detection without providing any uncertainty assessment. Rich literature can also be found in the testing of underlying random graph models \citep{bubeck2016testing, karwa2016exact, gao2017testing, ghoshdastidar2017two, ghoshdastidar2017twouse, tang2017semiparametric, tang2017nonparametric, shumovskaia2018towards}.

\subsection{Organization of the Paper}
The rest of the paper is organized as follows. Section~\ref{sec: def and ntn} provides the definitions and background knowledge that will be useful for inference on SBM models. At the end of Section~\ref{sec: def and ntn}, we introduce the inference method for community properties test with \textit{symmetric} structures, where we mainly focus on SBM with even cluster sizes, and we provide concrete case studies to illustrate the method procedure. Theoretical results for the methods are developed in Section~\ref{sec: theory}. In Section~\ref{lwrbd}, we focus on the lower bound of the community property test, and in Section~\ref{sec: general framework} we generalize our method to SBM with uneven cluster sizes. Finally in Section~\ref{sec: num} we conduct numerical analysis both on synthetic data and real-world protein interaction data to evaluate the performance of our method. 

\subsection*{Notations}
We denote $|\cdot|$ to be the cardinality of a set. For two positive sequences $x_n$ and $y_n$, we say $x_n = \Omega(y_n)$ if there exists a positive constant $C$ not depending on $n$ such that $x_n \ge C y_n$ for all $n$ sufficiently large. We say $x_n \lesssim y_n$ or $x_n = O(y_n)$ if $x_n \le C y_n$ for $C>0$ not depending on $n$. We say $x_n \asymp y_n$ if $x_n \lesssim y_n$ and $y_n \lesssim x_n$. If $\lim_{n \rightarrow \infty} x_n/y_n = 0$ then we say $x_n=o(y_n)$.

	\section{Community Properties of the Stochastic Block Model}\label{sec: def and ntn}
	
 In our paper, we consider the fixed assignment stochastic block model. Recall that $p$ and $q$ are respectively the within-community and between-community probabilities. Denote $[n] = \{1, \ldots ,n\}$ for any integer $n$. For the stochastic block model with $n$ nodes and $K$ communities, the fixed assignment SBM assumes the community assignment $z = (z(1), \ldots, z(n)) \in [K]^n$ is the prefixed parameter of the model. For simplicity, we start with considering the situation that the community sizes of the assignment are even. We denote the even assignment class $\cK^n := \{ z\in [K]^n: |\{i: z(i) = k \}| = n/K, \forall k \in [K]\}$. We will generalize our analysis to the uneven case in Section \ref{sec: general framework}. We denote the fixed assignment stochastic block model as $\cM(n,K,p,q,z)$. In our paper, we assume the number of communities $K$ is bounded. Let $\Ab \in \{0,1\}^{n \times n}$ be the symmetric adjacency  matrix of the random graph generated from the SBM. We say $\Ab \sim \cM(n,K,p,q,z)$ if the upper triangular entries of $\Ab$ are independent Bernoulli random variable and $\PP(\Ab_{ij} = \Ab_{ji} = 1) = p$ if $z(i) = z(j)$ and $\PP(\Ab_{ij} = \Ab_{ji} = 1) = q$ if $z(i) \neq z(j)$ for any $i \neq j \in [n]$. In the following part of the paper, we will study the community property test with an observation of the adjacency matrix $\Ab \sim \cM(n,K,p,q,z)$.

	\subsection{Symmetric Community Properties}\label{hypo test pre}
	
	In this section, we aim to define the community property and the distance between two community families. In general, we say a community property is a subset of  $[K]^n$. However, such definition is too general and may include some ill-posed examples. For example, if we can transfer one assignment to another under certain permutation of the community labels, they are essentially the same assignment and should belong to the same community property. This motivates us to give the following definition of equivalent assignments. 
	
		\begin{definition}[Equivalent community assignments] Let $S_K$ be the symmetric group containing all bijections from  $[K]$ to itself.  We say two assignments $z$ and $z' \in [K]^n$ are equivalent, denoted as $z \simeq z'$, if there exists a permuation $\sigma \in S_K$ such that $\sigma(z) = z'$. Here $\sigma(z)$ means implementing the permutation $\sigma$ to each entry of the vector $z$. More generally, given a node set $\cN \subseteq [n]$, we denote $z_{\cN}$ as the sub-vector of $z$ with entries in $\cN$. We say $z_{\cN} \simeq z'_{\cN}$ if there exists a permutation $\sigma \in S_K$ such that $\sigma(z_{\cN}) = z'_{\cN}$. 
			\end{definition}
			
	With the concept of equivalent assignments, we can give the definition of symmetric community properties as follows.
	
		\begin{definition}[Symmetric community properties]\label{def:cluster class} We say a community property $\cC_0$ is symmetric, if there exist a node set $\cN \subseteq [n]$ and a specified assignment $\tilde z\in \cK^n$, such that $\cC_0 = \{ z \in \cK^n:  z_{\cN} \simeq \tilde z_{\cN}\}$. We say some $\cC_1 \subseteq \cK^n \backslash \cC_0$ is an alternative property of $\cC_0$ if $\cC_1$ is closed under permutations on the support $\cN$, i.e., for any $z \in \cC_1  \subseteq \cK^n \backslash \cC_0$, if some $z'$ satisfies $z'_\cN \simeq z_\cN$, then $z' \in \cC_1$ as well. 
		\end{definition}

	Intuitively, the node set $\cN$ and the assignment $\tilde z$ in Definition \ref{def:cluster class} are the representative node set and assignment generating all possible assignments in the community property via permutation. The community property $\cC_0$ is ``symmetric" in the sense that all its assignments are equivalent on the support of node set $\cN$.  Therefore, we impose the following assumption on testing symmetric properties.

	\begin{assumption} [Symmetric community property test] \label{asmp:symtest}
	In the hypothesis test ${\rm H}_0: z \in \cC_0$ v.s. ${\rm H}_1: z \in \cC_1$, we assume  $\cC_0, \cC_1 \subseteq \cK^n$ and $\cC_0$ is symmetric and $\cC_1$ is an  alternative property of $\cC_0$.
	\end{assumption}

By Definition \ref{def:cluster class}, $\cC_1 = \cC_0^c$ is an alternative property of $\cC_0$. Meanwhile, $\cC_1$ satisfying the assumption above could be a strict subset of $\cC_0^c$, which allows more examples in practice. In fact, we can show that Examples \ref{ex:same}
 and \ref{ex:group} given in the introduction satisfies Assumption \ref{asmp:symtest}.  We will give concrete forms of the representative node set $\cN$ and assignment $\tilde z$ for these two examples below.  Before going to the detailed discussion, we also want to remark that the assumption that $\cC_0, \cC_1 \subseteq \cK^n$, i.e., the community sizes are even, is only for the simplicity of our statement. We will discuss the uneven cases of  Examples \ref{ex:same}
 and \ref{ex:group} as well as  Example \ref{ex:num} in Section \ref{sec: general framework}.

$\bullet$ Example \ref{ex:same}: Same community test for $m$ nodes. For the null hypothesis that nodes $1, \ldots, m$ belong to the same community, we can define 
\begin{equation}\label{eq:c0ex1}
\cC_0 = \{z \in \cK^n: z(1) = \cdots = z(m)\} \text{ and } \cC_1 = \cK^n \backslash \cC_0.
\end{equation}
Consider $\cN =[m]$ and $\tilde z$ is any assignment satisfying $\tilde z_\cN = (1, \ldots, 1) \in [K]^m$. As $\tilde z$ represents an assignment whose first $m$ nodes belong to one community, we can check that $\cC_0 = \{z: z_{\cN} \simeq \tilde z_{\cN}
\}$ and thus is symmetric. 

$\bullet$ Example \ref{ex:group}: Same community test for groups. Recall that the null hypothesis is that nodes $1,\ldots, m, m+1, \ldots, m+m'$ belong to the same community. Therefore, the null property is similar to Example \ref{ex:same}. Following the same argument of the previous example, $\cC_0$ is symmetric by choosing  $\cN =[m+m']$. 
On the other hand, the alternative hypothesis is different from the previous example. In fact, we have
\begin{equation}\label{eq:c0ex2}
\begin{aligned}
   \cC_0 &= \{z \in \cK^n : z(1) = \cdots = z(m) = z(m+1) = \cdots = z(m+m')\} \\
   \cC_1 &= \{z \in \cK^n: z(1) = \cdots = z(m) \neq z(m+1) = \cdots = z(m+m')\}
\end{aligned}
\end{equation}
 Notice $\cC_1$ is a strict subset of $\cK^n \backslash \cC_0$. We can check $\cC_1$ is an alternative property of $\cC_0$ by Definition \ref{def:cluster class}.

\subsection{Combinatorial Distance Between Community Properties}\label{sec: comb dist}

The major difference between the two examples  above is their alternative properties. With similar null properties, $\cC_1$ in  \eqref{eq:c0ex1} is the complement of $\cC_0$, while  in \eqref{eq:c0ex2}, $\cC_1$  is a strict subset.
From this perspective, the distance between the null and alternative hypotheses in Example \ref{ex:same}  is smaller than Example \ref{ex:group}. In other words, Example \ref{ex:group} is easier to test in comparison to Example \ref{ex:same}. 
 Therefore, in order to  depict the relationship between $\mathcal{C}_0$ and $\mathcal{C}_1$, we will propose a metric of distance in terms of the number of misaligned edges. We first define the set of misaligned edges between two assignments.
\begin{definition}\label{def:e12}
	For any two assignments $z_0 \in \mathcal{C}_0$ and  $z_1 \in \mathcal{C}_1$, we define the two sets of misaligned edges as 
	\begin{align*}
	\mathcal{E}_1({z}_0,{z}_1) &= \{ (i,j):i<j, i,j \in [n],{z}_0(i)={z}_0(j), {z}_1(i) \neq {z}_1(j)\} \text{ and }\\
	\mathcal{E}_2({z}_0,{z}_1) &= \{ (i,j): i<j, i,j \in [n],{z}_0(i)\neq {z}_0(j), {z}_1(i) = {z}_1(j)\},
	\end{align*}
	where $\mathcal{E}_1(z_0,z_1)$ contains the edges whose corresponding nodes are assigned to the same community in $z_0$ but to two different communities by $z_1$, and $\mathcal{E}_2(z_0,z_1)$ is the opposite. See Figure \ref{fig:e12} for an illustration. We denote $n_i(z_0,z_1)=|\cE_i(z_0,z_1)|$, for $i=1,2$ as the cardinality of the two edge sets. 
\end{definition}
 With the definition of misaligned edges, we are ready to propose the metric of assignment distance defined as follows.
\begin{definition}[Community property distance]\label{def:clusterdist}
We define the distance between two assigments $z_0$ and $z_1$ as 
	$d(z_0,z_1) = n_1(z_0,z_1) \vee n_2(z_0,z_1)$.
	Correspondingly, we also define
	$d(z_0, \mathcal{C}_1) = \inf_{z_1 \in \mathcal{C}_1} d(z_0,z_1)$  and the distance between two community properties $$d(\mathcal{C}_0, \mathcal{C}_1) = \inf_{z_0 \in \mathcal{C}_0,z_1 \in \mathcal{C}_1} d(z_0,z_1).$$	
\end{definition}

\begin{figure}[t]
		\centering	\includegraphics[width=0.5\textwidth]{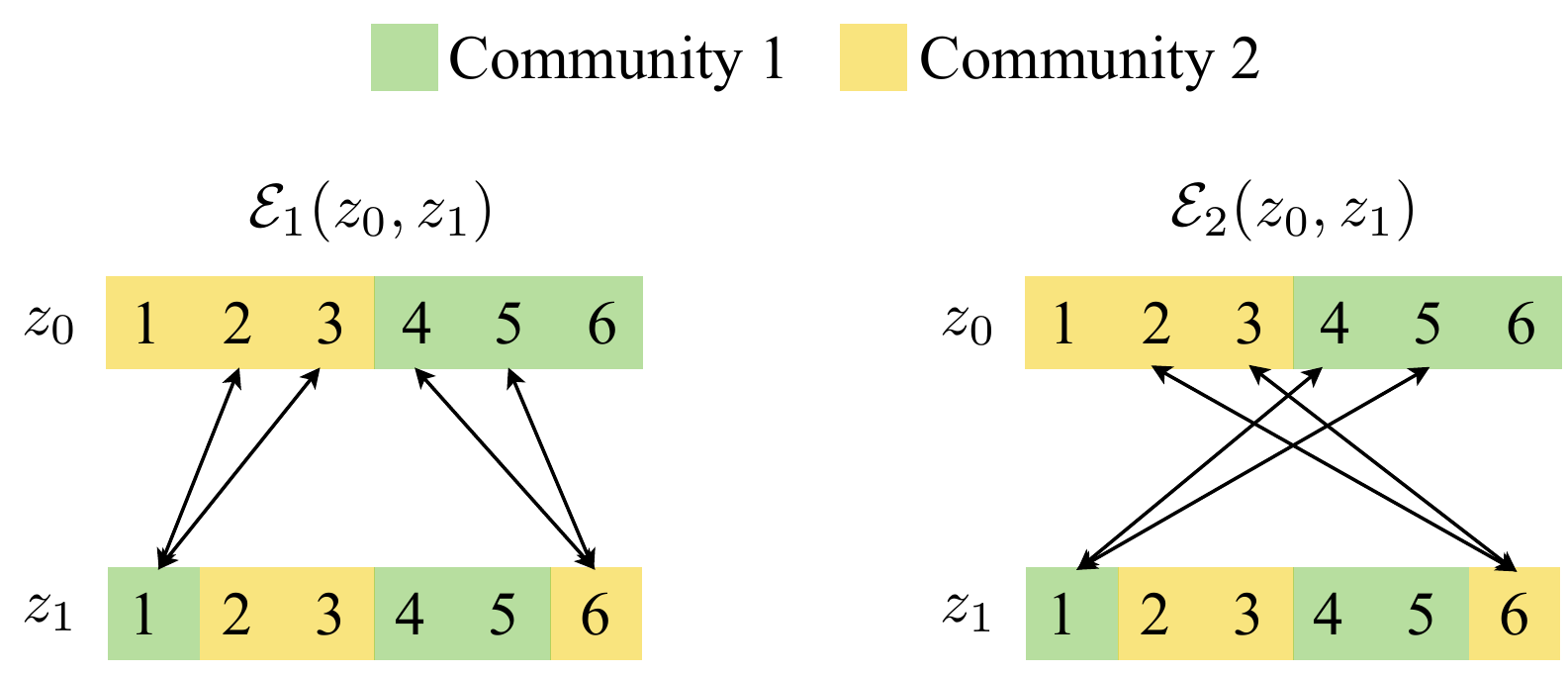}
		\caption{Example of misaligned edges in Definition \ref{def:e12}.}\label{fig:e12}
	\end{figure}

By its defiintion, the distance $d(\mathcal{C}_0,\mathcal{C}_1)$ is the minimal number of misaligned edges between  $\mathcal{C}_0$ and $\mathcal{C}_1$. For instance, consider $\cC_0 = \{z \in \cK^n : z(1) = z(2)  \}$ and  $\cC_1 = \{z \in \cK^n : z(1) \neq z(2)  \}$ with $K=2$. On the one side, since the alignment of nodes 1 and 2 are different, we have $d(\cC_0, \cC_1) \ge |\cE_1(z_0, z_1)| \ge n-2$ given any $z_0 \in \cC_0$ and $z_1 \in \cC_1$. On the other hand, we can easily find two concrete examples of $z_0, z_1$, illustrated in Figure \ref{fig:e12}, such that $d(z_0, z_1) = n-2$. We refer the computation of $d(\cC_0, \cC_1)$ for more general examples to  Section \ref{sec:case}.

	\subsection{Likelihood-Ratio Test for Community Properties}\label{sec: LRT test intro}
 Our method starts with defining a likelihood-ratio test statistic. We denote the observed adjacency matrix from the true model as $\Ab \sim \cM(n, K, p,q,z^*)$, where $z^*$ is the true assignment. The likelihood function of the stochastic block model is
	$$f(\Ab;z,p,q)=\Pi_{i<j}p^{\ind\left(z(i) = z(j) \right) \Ab_{ij}}(1-p)^{\ind\left(z(i) = z(j) \right) (1-\Ab_{ij})}q^{\mathbbm{1}\left(z(i) \neq z(j) \right) \Ab_{ij}}(1-q)^{\mathbbm{1}\left(z(i) \neq z(j) \right) (1-\Ab_{ij})}.$$
We then denote the log-likelihood ratio statistic as 
\begin{equation}\label{eq:lrt}
    \LRT=\log \frac{\sup_{z \in \mathcal{C}_1} f(\Ab;z,p,q)}{\sup_{z \in \mathcal{C}_0 \cup \mathcal{C}_1} f(\Ab;z,p,q)}.
\end{equation}

In order to conduct the property test, we aim to study the limiting distribution of the likelihood ratio statistic. In specific, we are able to decompose the LRT as follows
\begin{align}\label{eq:lrt-lead}
  \nonumber  \LRT &= \sup_{z \in \mathcal{C}_1} \log f(\Ab;z,p,q) - \log f(\Ab;z^*,p,q) + o(1)\\
    &=  \sup_{z \in \mathcal{C}_1} g(p,q)\bigg( \sum_{(i,j) \in \mathcal{E}_2(z^*,z)} \Ab_{ij}-\sum_{(i,j) \in \mathcal{E}_1(z^*,z)} \Ab_{ij} \bigg)+o(1),
\end{align}
where $g(p,q)= \log p(1-q)/\left( q(1-p)\right)$. The first equality above is due to the consistency of the maximum likelihood estimator and the second equality is derived via controlling the remainder term. We defer the proof details to Appendix \ref{sec:lrt-proof}. We observe that the leading term in \eqref{eq:lrt-lead} is the difference of edges in two edge sets: $\mathcal{E}_2(z^*,z)$ and $\mathcal{E}_1(z^*,z)$. By Definition \ref{def:clusterdist}, the property distance $d(\cC_0, \cC_1)$ is larger when the two edge sets are larger, which makes the leading term larger as well. This implies why $d(\cC_0, \cC_1)$ characterizes the difficulty of the test. 

\begin{remark}\label{rm:wang}
  The likelihood ratio statistic in \eqref{eq:lrt} is similar to the one proposed in \cite{wang2017likelihood}. 
  They considered the hypothesis on a specific community property: the number of communities. 
  \begin{equation} \label{eq:commnum}
      \cC_0 = \{z| z \in [K-1]^n \} \text{ and } \cC_1 = \{z| z \in [K]^n \} \backslash \cC_0
  \end{equation}
  They show that the suprema $\sup_{z \in \cC_1} f(\Ab; z, p,q)$ used in \eqref{eq:lrt} is unique, as illustrated in Figure \ref{fig:projection}(a). This makes the LRT  in \cite{wang2017likelihood}  asymptotically normal for $\cC_0, \cC_1$ in \eqref{eq:commnum}. However, this is not always true for the general community properties. For some properties, there will be an exponential number of candidate assignments maximizing the likelihood in \eqref{eq:lrt}, as illustrated in Figure \ref{fig:projection}(b). Thus the LRT is no longer asymptotically normal for the general case.  Therefore, despite the similar formality of the likelihood ratio statistic comparing to the one in  \cite{wang2017likelihood}, our testing procedure will be different from their method.
\end{remark}

	\begin{figure}[t]
		\centering
		\begin{tabular}{cc}
			(a) Unique projection & (b) Non-unique projection\\
			\includegraphics[height=0.3\textwidth]{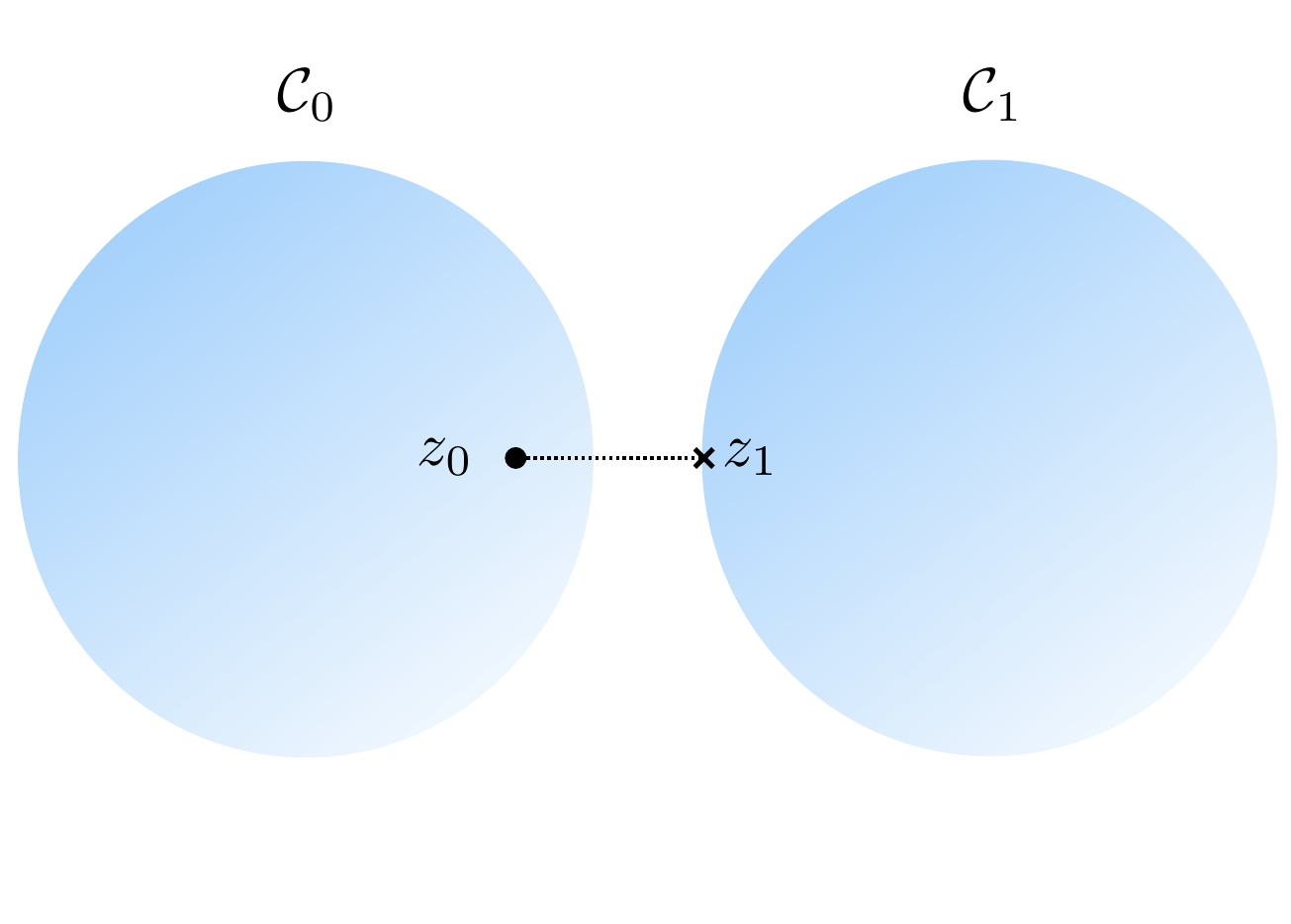}
			&\includegraphics[height=0.3\textwidth]{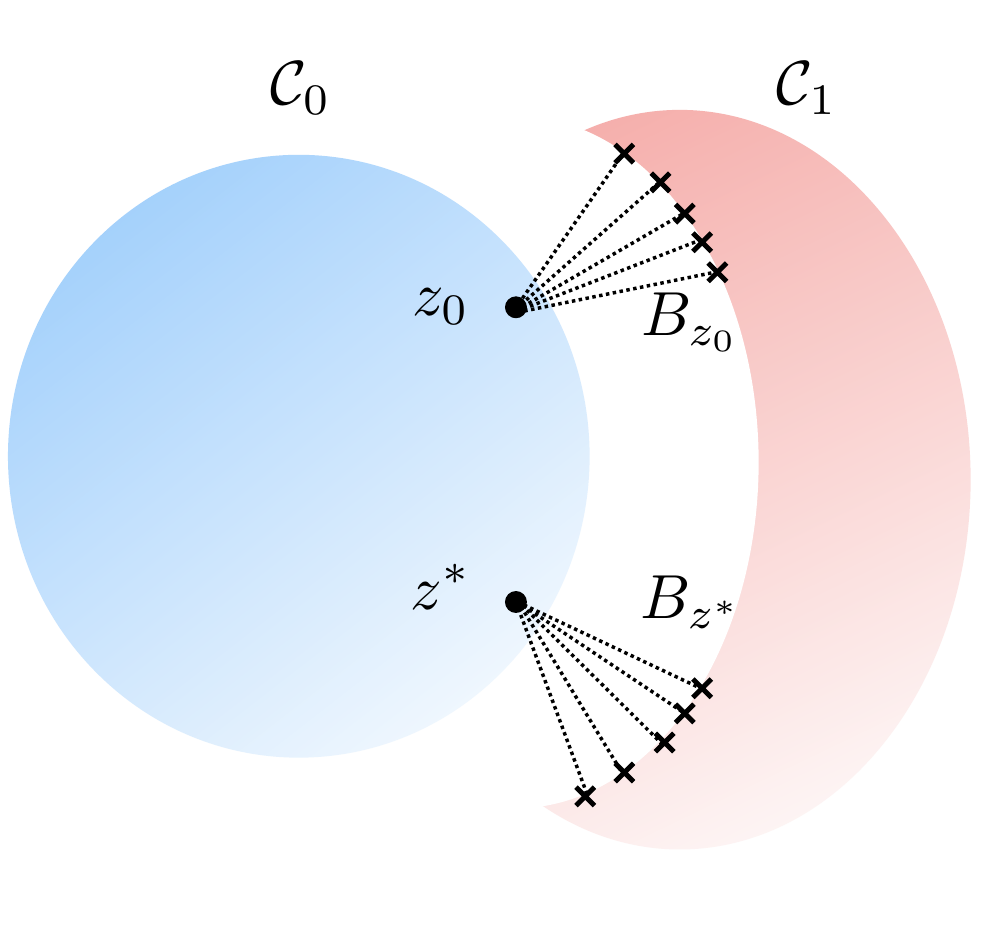}
		\end{tabular}
		\caption{The boundary of $\cC_1$ given $z_0$. On the left panel, the projection is unique and $B_{z_0} = z_1$. On the right panel, the projection is not unique. $z_0$ is the shadowing assignment of $z^*$. $B_{z^*}$ and $B_{z_0}$ have similar structures.}\label{fig:projection}
	\end{figure}

Since we want to characterize the suprema in the LRT, this motivates us to define the boundary of the alternative properties as follows.
	\begin{definition}[Boundary of communities class]\label{def:boundary}
		For a given null assignment $z_0 \in \cC_0$, we define the boundary of $\cC_1$ as:
		$$B_{z_0}= \big \{ z \in \cC_1: d(z_0,z ) = d(z_0,\cC_1) \big \}$$ 
		\end{definition}
By the definition above, $B_{z_0}$ is the projection of $z_0$ onto $\cC_1$ using the distance  in Definition \ref{def:clusterdist}. As we explained in Remark \ref{rm:wang}, the maximizer of the likelihood in $\cC_1$ might not be unique. Our analysis shows that the likelihood maximizer is asymptotically equivalent to the boundary $B_{z*}$.\footnote{See Lemma \ref{lm1-t1} in the Appendix for the rigorous argument.}   The later scenerio will be more challenging. In the next section, we will study the asymptotic property of the LRT under such case by studying the structure of $B_{z_0}$.

	\subsection{Shadowing Bootstrap for the Property Test}\label{mtd}
	In this section, we propose a bootstrap method to estimate the asymptotic quantile for the likelihood ratio statistic in \eqref{eq:lrt}. 

There are two major challenges to estimate the quantile of LRT. First, the suprema of the likelihood is not unique and therefore the limiting distribution of LRT is not necessarily normal. By \eqref{eq:lrt-lead}, we can in turn study the limiting distribution of the leading term
\begin{equation}\label{eq:T}
     L:= \sup_{z \in \mathcal{C}_1} g(p,q)\bigg( \sum_{(i,j) \in \mathcal{E}_2(z^*,z)} \Ab_{ij}-\sum_{(i,j) \in \mathcal{E}_1(z^*,z)} \Ab_{ij} \bigg),
\end{equation}
which is a maximum of a sequence of empirical processes indexed by $z$. \cite{chernozhukov2013gaussian} studied the limiting distribution of the maximal of high dimensional empirical process and proposed to estimate its quantile by multiplier bootstrap. However, their method restricts the scaling condition that the dimension $d$ of the empirical process and the sample size $n$ satisfies $\log d/n^{1/5} = o(1)$. However, in \eqref{eq:T}, the dimension $d = |\cC_1|$ could be of the order $
K^{n}$ and violates the scaling condition. To handle such problem, our key observation is that the supreme over the alternative $\cC_1$ can be represented by the supreme over its boundary $B_{z*}$.  In particular, we show that the leading term $L$ is asymptotically the same as the following statistic:
\begin{equation}\label{eq:T0}
    L_0 := \sup_{z \in B_{z^*}} g(p,q)\bigg( \sum_{(i,j) \in \mathcal{E}_2(z^*,z)} \Ab_{ij}-\sum_{(i,j) \in \mathcal{E}_1(z^*,z)} \Ab_{ij} \bigg).
\end{equation}
We refer to Section \ref{sec:lrt-proof} in the Appendix for the detailed proof. The cardinality  of $B_{z^*}$ is much smaller than the one of $\cC_1$. In Table \ref{tab:rate-even}, we can see that for example, $|B_{z^*}|$ is of the order  polynomial to $n$ and therefore satisfies the scaling condition of high dimensional multiplier bootstrap. 

Although $B_{z^*}$ is much smaller than $\cC_1$, we cannot construct $B_{z^*}$ in practice as $z^*$ is unknown. This leads to the second challenge: how to find $B_{z^*}$ in practice? Our key insight to solve the second challenge is to utilize the symmetry property in Definition \ref{def:cluster class}. This insight relies on the following lemma characterizing the covariance of two processes.

	\begin{lemma}[Shadowing symmetry]\label{lm2-t1}
	  For a given $z \in \mathcal{C}_0$, we list the assignments in the boundary $B_{z}$ as $z_1, z_2, \ldots, z_{|B_{z}|}$. Define a $|B_{z}|$-dimensional vector $\bL_{z}$ as
	 \[
(\bL_{z})_k = g(p,q)\bigg( \sum_{(i,j) \in \mathcal{E}_2(z,z_k)} \Ab_{ij}-\sum_{(i,j) \in \mathcal{E}_1(z,z_k)} \Ab_{ij} \bigg), \text{ for } k=1,2,\ldots, |B_{z}|.
	 \]
	 	Suppose Assumption \ref{asmp:symtest}	holds. For any $z_0,z_0' \in \cC_0$, we have $|B_{z_0}| = |B_{z_0'}|$ and  $\Cov(\bL_{z_0})$ equals to $\Cov(\bL_{z_0'})$ up to permutation, i.e., there existing a permutation $\uptau \in S_{|B_{z_0}|}$ such that $\Cov(\bL_{z_0})_{kl} = \Cov(\bL_{z_0'})_{\uptau(k)\uptau(l)}$ for all $k,l = 1, \ldots, |B_{z_0}|$.
	 \end{lemma}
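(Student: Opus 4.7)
My plan is to exhibit a single symmetry that simultaneously sends $z_0 \mapsto z_0'$, transports $B_{z_0}$ bijectively onto $B_{z_0'}$, and permutes the entries of $\Ab$ so as to preserve the covariance structure of $\bL$. First, because $\cC_0 = \{z \in \cK^n : z_{\cN} \simeq \tilde z_{\cN}\}$ is symmetric, the restrictions $z_{0,\cN}$ and $z_{0,\cN}'$ are both equivalent to $\tilde z_{\cN}$, so there exists $\sigma \in S_K$ with $\sigma(z_{0,\cN}) = z_{0,\cN}'$. Since $z_0, z_0' \in \cK^n$ have every community of size $n/K$, the vectors $\sigma(z_0)$ and $z_0'$ agree on $\cN$ and therefore also have matching community counts on $[n] \setminus \cN$. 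A balanced matching on $[n] \setminus \cN$ then yields a permutation $\pi \in S_n$ with $\pi|_{\cN} = \mathrm{id}_{\cN}$ such that $z_0'(i) = \sigma(z_0(\pi(i)))$ for every $i \in [n]$.

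Next I would transport the boundary via the map $\Phi : \cK^n \to \cK^n$, $\Phi(z)(i) = \sigma(z(\pi(i)))$; this is a bijection with $\Phi(z_0) = z_0'$. Because $\pi$ fixes $\cN$, $\Phi(z)_{\cN} = \sigma(z_{\cN}) \simeq z_{\cN}$, and Assumption~\ref{asmp:symtest} forces $\Phi(\cC_1) = \cC_1$. Unpacking Definition~\ref{def:e12} shows that for $r \in \{1,2\}$,
\[
(i,j) \in \cE_r(\Phi(z_0), \Phi(z)) \iff (\pi(i), \pi(j)) \in \cE_r(z_0, z),
\]
whence $|\cE_r(\Phi(z_0), \Phi(z))| = |\cE_r(z_0, z)|$ and $d(z_0', \Phi(z)) = d(z_0, z)$ for all $z \in \cC_1$. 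Because $\Phi$ is a distance-preserving bijection of $\cC_1$ sending $z_0$ to $z_0'$, this gives $d(z_0', \cC_1) = d(z_0, \cC_1)$, and $\Phi$ further restricts to a bijection $B_{z_0} \to B_{z_0'}$, proving $|B_{z_0}| = |B_{z_0'}|$.

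To match the covariance, I would enumerate $B_{z_0} = \{z_1, \ldots, z_N\}$ and define $\uptau \in S_N$ so that the $\uptau(k)$-th element of $B_{z_0'}$ is $\Phi(z_k)$. Changing variables $(u,v) = (\pi(i), \pi(j))$ produces
\[
(\bL_{z_0'})_{\uptau(k)} = g(p,q) \biggl(\sum_{(u,v) \in \cE_2(z_0, z_k)} \Ab_{\pi^{-1}(u), \pi^{-1}(v)} - \sum_{(u,v) \in \cE_1(z_0, z_k)} \Ab_{\pi^{-1}(u), \pi^{-1}(v)}\biggr),
\]
so $(\bL_{z_0'})_{\uptau(k)}$ has exactly the same combinatorial form as $(\bL_{z_0})_k$ after relabelling the entries of $\Ab$ by $\pi^{-1}$. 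By independence of the entries of $\Ab$, both $\Cov((\bL_{z_0})_k,(\bL_{z_0})_l)$ and $\Cov((\bL_{z_0'})_{\uptau(k)},(\bL_{z_0'})_{\uptau(l)})$ collapse to weighted sums of single-edge variances with identical combinatorial coefficients, and the desired equality reduces to $\mathrm{Var}(\Ab_{uv}) = \mathrm{Var}(\Ab_{\pi^{-1}(u), \pi^{-1}(v)})$ for every relevant $(u,v)$. This follows from the chain
\[
z_0(u) = z_0(v) \iff \sigma(z_0(u)) = \sigma(z_0(v)) \iff z_0'(\pi^{-1}(u)) = z_0'(\pi^{-1}(v)),
\]
which says that $u,v$ share a community under $z_0$ iff their $\pi^{-1}$-images share a community under $z_0'$ — so the variance designations governed by each reference assignment indeed agree.

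The chief obstacle is Step~1, the construction of a permutation $\pi$ with $\pi|_{\cN} = \mathrm{id}_{\cN}$. It requires that for every $k \in [K]$ the cardinalities $|\{i \in [n]\setminus\cN : \sigma(z_0(i)) = k\}|$ and $|\{i \in [n]\setminus\cN : z_0'(i) = k\}|$ coincide; this follows by subtracting the (equal) $\cN$-counts from the common community totals $n/K$, but this balanced-matching step is the combinatorial heart of the argument. Once $\pi$ is in place, the edge-set identifications, the distance computation, and the variance matching are essentially bookkeeping, so Step~1 is the conceptual crux of the lemma.
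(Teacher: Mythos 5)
Your proposal is correct and follows essentially the same route as the paper, which likewise builds the combined transform $z \mapsto \sigma(z(\tau(\cdot)))$ (via its Proposition~\ref{prop:sym}, using the symmetry of $\cC_0$ on $\cN$ plus even community sizes to get a node permutation fixing $\cN$), shows that this map preserves $n_1$, $n_2$, and the intersection cardinalities $|\cE_r(z_0,z_k)\cap\cE_r(z_0,z_l)|$, bijects $B_{z_0}$ onto $B_{z_0'}$, and then matches the explicitly computed covariance entries. Your variance-matching step via $z_0(u)=z_0(v)\iff z_0'(\pi^{-1}(u))=z_0'(\pi^{-1}(v))$ is exactly the content of the paper's closed-form expression $\Cov(\bL_{z_0})_{kl}=g(p,q)^2\bigl(|\cE_2\cap\cE_2|q(1-q)+|\cE_1\cap\cE_1|p(1-p)\bigr)$.
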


We refer to Section \ref{sec: proof lm2-t1-g} in the Appendix for proof of Lemma \ref{lm2-t1}. We call the above lemma as the shadowing symmetry lemma, because it implies that the covariance of $\bL_{z_0}$ is same up to permutation to any other ``shadowing assignment" $z'_0 \in \cC_0$. Therefore, we can avoid directly constructing $B_{z^*}$. Instead, we can choose any $z \in \cC_0$ as a ``shadowing assignment" and consider
the shadowing statistic
\begin{equation}\label{eq:Lz0}
    L_0(z_0) := \sup_{z \in B_{z_0}} g(p,q)\bigg( \sum_{(i,j) \in \mathcal{E}_2(z_0,z)} \Ab_{ij}-\sum_{(i,j) \in \mathcal{E}_1(z_0,z)} \Ab_{ij} \bigg), 
\end{equation}
illustrated in Figure \ref{fig:projection}(b). Applying Lemma \ref{lm2-t1}, the following proposition shows that  the quantile of $L_0(z_0)$ is asymptotically same as the quantile of $L_0$.

\begin{proposition}\label{prop: eq-quant}
  Suppose Assumption \ref{asmp:symtest}
holds, $\log |B_{z^*}| = O(\log n)$ and $1/\rho_n =o(n^{1-c_2})$ for some constant $c_2 >0$. For any $z_0 \in \cC_0$, we have
\[ 
     \lim_{n \rightarrow \infty} \sup_{t \in \RR}
     |\PP(L_0 <t) - \PP(L_0(z_0) < t)| = 0. 
\]
\end{proposition}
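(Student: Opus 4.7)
The plan is to approximate both $L_0$ and $L_0(z_0)$ by maxima of Gaussian vectors and then invoke Lemma~\ref{lm2-t1} to identify their limiting distributions. Write $L_0 = \max_{k\le|B_{z^*}|}(\bL_{z^*})_k$ and $L_0(z_0)=\max_{k\le |B_{z_0}|}(\bL_{z_0})_k$, so each statistic is the coordinate-wise maximum of a vector whose entries are linear combinations of the independent Bernoulli edge variables $\Ab_{ij}$. The hypothesis $\log|B_{z^*}|=O(\log n)$ gives a polynomially-sized index set, and $1/\rho_n = o(n^{1-c_2})$ controls the per-coordinate variance from below and the moment ratios of the Bernoulli summands.

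The first step is to invoke the Chernozhukov--Chetverikov--Kato high-dimensional Gaussian approximation for maxima of sums of independent vectors. Under the two conditions above this produces Gaussian vectors $\bG^*\sim\mathcal{N}(\EE\bL_{z^*},\Cov(\bL_{z^*}))$ and $\bG\sim\mathcal{N}(\EE\bL_{z_0},\Cov(\bL_{z_0}))$ with $\sup_{t\in\RR}|\PP(L_0<t)-\PP(\max_k G^*_k<t)|\to 0$ and $\sup_{t\in\RR}|\PP(L_0(z_0)<t)-\PP(\max_k G_k<t)|\to 0$; anti-concentration of Gaussian maxima via Nazarov's inequality upgrades the convex-set metric produced by CCK to the uniform Kolmogorov distance appearing in the statement.

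The second step shows $\max_k G^*_k \stackrel{d}{=}\max_k G_k$. Lemma~\ref{lm2-t1} gives $\Cov(\bL_{z^*})=\Cov(\bL_{z_0})$ up to a coordinate permutation $\uptau$. By Definition~\ref{def:cluster class} and $z^*,z_0\in\cC_0$, there exist a node permutation $\pi\in S_n$ and a label permutation $\sigma\in S_K$ with $z^*=\sigma\circ z_0\circ\pi$; I will verify that this coupling carries each edge set $\cE_\ell(z_0,z_k)$ to $\cE_\ell(z^*,z_k')$ for the corresponding $z_k'\in B_{z^*}$, so it also sends the mean profile $\EE\bL_{z_0}$ to $\EE\bL_{z^*}$ under the same $\uptau$. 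Thus $\bG^*$ and $\bG$ have permutationally identical laws, so $\max\bG^*\stackrel{d}{=}\max\bG$. A triangle inequality combining this with the two Kolmogorov bounds from the first step yields the proposition.

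The main obstacle is verifying the CCK regime in the first step: the Bernoulli summands each have variance of order $\rho_n$, so the coordinate variances of $\bL_z$ scale like $\rho_n d(\cC_0,\cC_1)$ and one must bound the CCK scaling quantity---of the form $(\log|B_{z^*}|)^{7}/(n\rho_n)=o(1)$ in the standard formulation---which is precisely where the density hypothesis $1/\rho_n=o(n^{1-c_2})$ interacts with $\log|B_{z^*}|=O(\log n)$. A secondary subtlety is that Lemma~\ref{lm2-t1} delivers only the covariance-level symmetry; I must check independently that the coupling $(\pi,\sigma)$ also permutes the means of $\bL_z$ under $\Ab\sim\cM(n,K,p,q,z^*)$, which follows because $(\pi,\sigma)$ preserves both the boundary structure $\cE_1,\cE_2$ and the true within-/between-community edge probabilities, so each summand's expectation is merely relabeled rather than altered in magnitude.
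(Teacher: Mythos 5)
Your proposal follows essentially the same route as the paper: apply the Chernozhukov--Chetverikov--Kato Gaussian approximation (with anti-concentration to get the Kolmogorov metric) to both $L_0$ and $L_0(z_0)$, then identify the two limiting Gaussian maxima via the covariance permutation in Lemma \ref{lm2-t1}. The only cosmetic difference is your treatment of the means: the paper centers the statistics and observes that every coordinate of $\bL_{z}$ has the identical mean $g(p,q)\,d(\cC_0,\cC_1)(q-p)$ because all boundary assignments sit at the same distance in the even case, so no separate permutation argument for the mean profile is needed.
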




We defer the proof to Appendix \ref{sec: proof of invary quant on z0}. Now we are ready to present the shadowing bootstrap procedure. Based on the previous discussion, we aim to estimate the quantile of $L_0(z_0)$. To achieve this, we take an arbitrary $z_0 \in \mathcal{C}_0$, and generate one realization of the adjacency matrix $\hat{\Ab} \sim \cM(n, K, \hat p, \hat q, z_0)$. Here $\hat p$ and $\hat q$ are the maximum likelihood estimator
\begin{equation}\label{eq: p q hat}
    (\hat{p},\hat{q})=\operatorname{argmax}_{(p,q)}\sup_{{z} \in \cC_0 \cup \cC_1}f(\Ab;z,p,q)
\end{equation}
 The likelihood ratio statistic is
\begin{equation}\label{eq:lrthat}
       \hat{\LRT} = \log \sup_{{z} \in \mathcal{C}_1} f(\Ab;z,\hat{p},\hat{q}) - \log\sup_{{z} \in \mathcal{C}_0 \cup \mathcal{C}_1} f(\Ab;z,\hat{p},\hat{q}).
\end{equation}

The next step is to find the assignments in the boundary $B_{z_0}$. We can construct $B_{z_0}$ by Definition \ref{def:boundary} in general. We refer to Section \ref{sec:case} on how to construct $B_{z_0}$ for the concrete examples. To estimate the quantile of $L_0(z_0)$, we apply the Gaussian multiplier bootstrap. Let $\{e_{ij}\}_{1\le i<j \le n}$ be independent standard Gaussian random variables and define
\begin{equation} \label{eq:Wn}
   W_n = \sup_{z \in B_{z_0}}\sum_{1\le i<j \le n}\big( {\hat{\Ab}}_{ij} - \mathbb{E}_{\hat p, \hat q} ({\hat{\Ab}}_{ij})\big) \big( \mathbbm{1}[(i,j) \in \mathcal{E}_2(z_0,z)] -\mathbbm{1}[(i,j) \in \mathcal{E}_1(z_0,z)]\big)e_{ij},
\end{equation}
where $\mathbb{E}_{\hat p, \hat q} ({\hat{\Ab}}_{ij}) =\hat p$ if $z_0(i) = z_0(j)$ and $\mathbb{E}_{\hat p, \hat q} ({\hat{\Ab}}_{ij}) =\hat q$ otherwise. Let $C_W(\alpha)$ be the $1-\alpha$ quantile of $W_n$ conditioning on $\hat \Ab$ and $\Ab$, i.e., $\PP(W_n \le C_W(\alpha) | \hat \Ab, \Ab)  = 1- \alpha$. We then estimate the quantile of  $\LRT$ by 
\begin{equation}\label{eq:qa}
    q_{\alpha} = g(\hat{p},\hat{q})C_W(\alpha) + g(\hat{p},\hat{q})\hat{\mu}_0,   
\end{equation}
where $\hat{\mu}_0 =d(\mathcal{C}_0,\mathcal{C}_1)(\hat{q}-\hat{p})$ is the estimator of mean of the process in \eqref{eq:Lz0}. Finally, we reject the null $\mathrm{H}_0: z^* \in \cC_0$ if $\hat \LRT \ge q_{\alpha}$ and do not reject $\mathrm{H}_0$ otherwise.

	\subsubsection{Case Study of the Boundary}\label{sec:case}
	In this part, we provide concrete algorithm to construct the boundary $B_{z_0}$ for Examples \ref{ex:same} and \ref{ex:group}. We can find  $B_{z_0}$  via computationally  efficient algorithms for both examples. Meanwhile, we will also calculate $d(\cC_0, \cC_1)$ needed in \eqref{eq:qa}.
		\begin{figure}[htpb]
		\centering
		\begin{tabular}{cc}
			\includegraphics[width=0.35\textwidth]{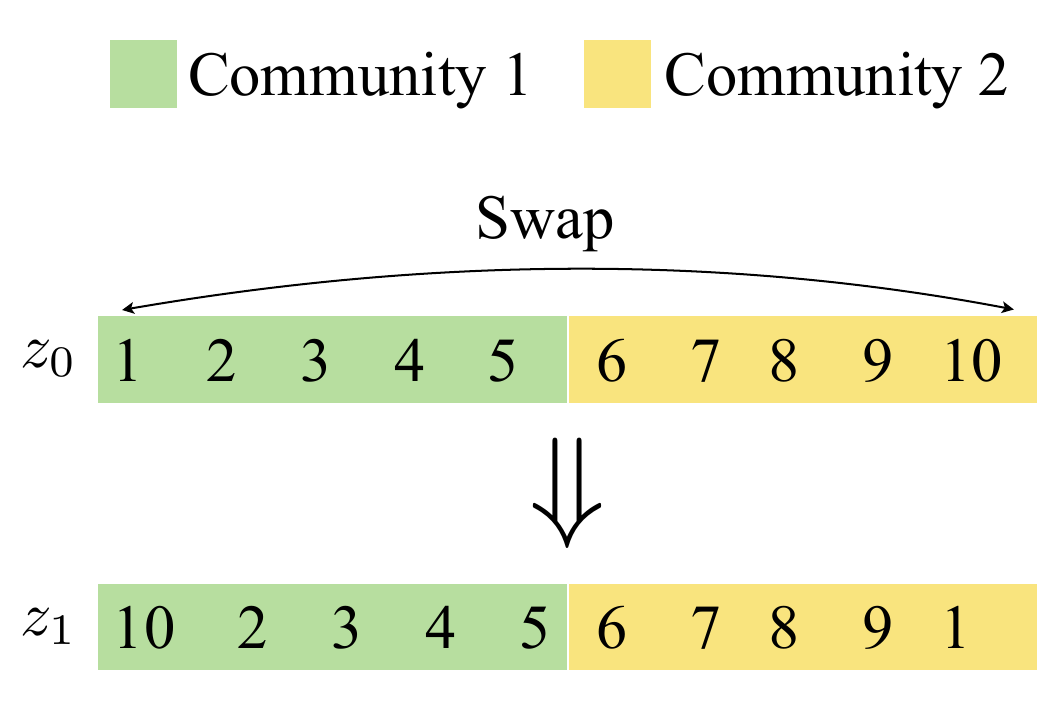}
			&\includegraphics[width=0.35\textwidth]{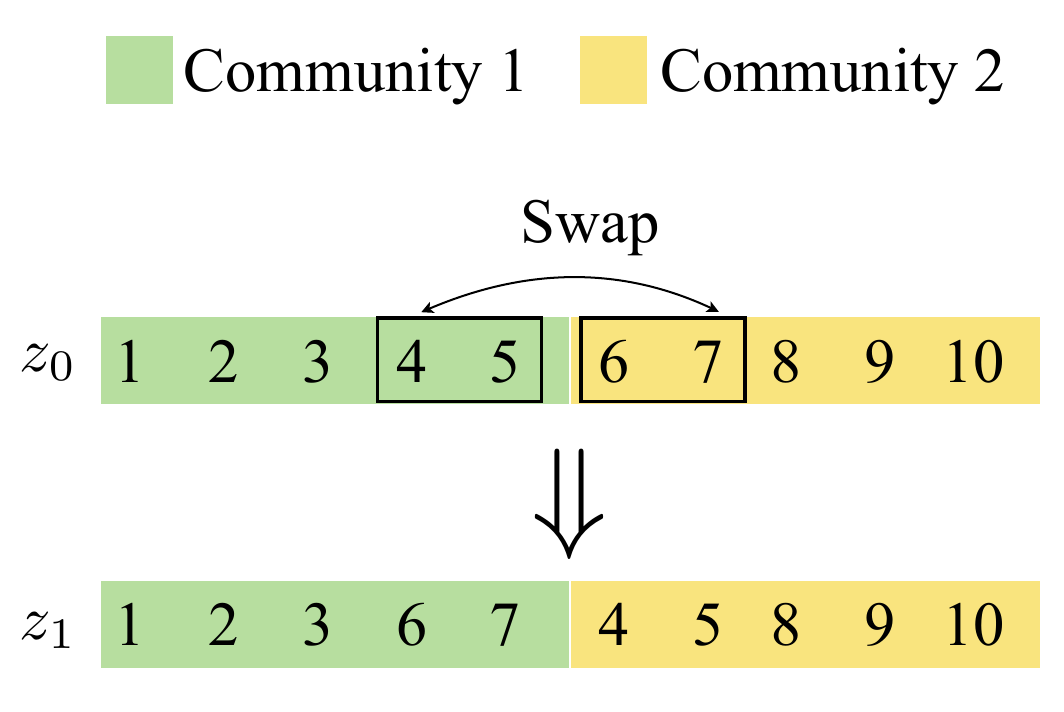}\\
			(a) Example \ref{ex:same} & (b) Example \ref{ex:group}
		\end{tabular}
		\caption{Procedure to construct the assignment in the boundary $B_{z_0}$.  Panel (a) is to test whether the first 3 nodes belong to the same cluster, and Panel (b) is to test whether node sets $\{1,2,3\}$ and $\{4,5\}$ belong to the same cluster.}\label{fig:swap}
	\end{figure}
	
$\bullet$ Example \ref{ex:same}: Same community test for $m$ nodes. Recall that $\cC_0$ and $\cC_1$ are defined in \eqref{eq:c0ex1}. For any $z_0 \in \cC_0$, to find assignments in $B_{z_0}$, we aim to find assignments whose distance to $z_0$ is $d(z_0, \cC_1)$. The simplest way is to exchange the community assignment of one node $s \in [m]$ with another node $s'$ from a different community (see $z_1$ in Figure~\ref{fig:swap}(a) for an example when $n=10$, $m=3$, and $K=2$). It is easy to check all such assignments belong to $B_{z_0}$. On the other hand, any other operation will incur more node-wise misclassification and the edge-wise misalignment will be much larger. To find $d(\cC_0, \cC_1)$, we start with evaluating $n_1(z_0,z_1)$ for some $z_1 \in B_{z_0}$. The edges whose connection probability is changed from $p$ to $q$ will be the edges between node $s$ and the rest of the nodes in its original community and between node $s'$ and the rest of the nodes in its original community. Therefore, we have $n_1(z_0, z_1) = 2(n/K-1)$. Similarly, $n_2(z_0, z_1) = 2(n/K-1)$ and thus $d(\mathcal{C}_0,\mathcal{C}_1)= 2(n/K-1)$.

In summary,
$B_{z_0}$ is composed of all the assignments which can be obtained from reassigning the label of one of $z_0$'s node in $[m]$ to a different community.
The distance between two classes is $d(\mathcal{C}_0,\mathcal{C}_1)= 2(n/K-1)$.

$\bullet$ Example \ref{ex:group}: Same community test for groups. Recall that $\cC_0$ and $\cC_1$ are defined in \eqref{eq:c0ex2}. In this example, without loss of generality we can assume that $m' \le m$. Then to project an arbitrary $z_0 \in \cC_0$ onto $\mathcal{C}_1$, we will exchange the cluster assignment of the set $\mathcal{S}_{m'}=\{m+1,\ldots, m+m'\}$ with another set $\mathcal{S}_{m'}'$ from a different cluster of cardinality $m'$ to obtain the smallest number of edges that are misaligned. See Figure~\ref{fig:swap}(b).  Correspondingly, for $z_1 \in B_{z_0}$, we have $d(z_0,z_1)=n_1(z_0, z_1)=n_2(z_0, z_1)=2m\wedge m'(n/K-m \wedge m')$, and thus $d(\mathcal{C}_0,\mathcal{C}_1)= 2m\wedge m'(n/K-m\wedge m')$.

In summary, $B_{z_0}$ is composed of all the assignments which can be obtained from reassigning the label of  nodes $m+1,\ldots, m+m'$ in $z_0$. The distance between two classes is $d(\mathcal{C}_0,\mathcal{C}_1)= 2m\wedge m'(n/K-m\wedge m')$.

	\section{Validity of Community Property Test}\label{sec: theory}
In this section, we show the theoretical results that our testing method is honest and powerful. Before presenting our theorems, we first give the following assumption for the alternative class $\mathcal{C}_1$.

	\begin{assumption}[{Scattering of  $\mathcal{C}_1$}]\label{asmp: sct} For any $z_0 \in \mathcal{C}_0$, we have $|B_{z_0}| = O(n^{c_0})$ for some constant $c_0>0$. \end{assumption}
	\begin{remark}
	We call this assumption as the {\it scattering assumption} as it ensures that the assignments in $\mathcal{C}_1$ are uniformly scattered in $\cC_1$  and there are not too many assignments  concentrating  on the boundary. In specific, we assume the cardinality of the boundary $B_{z_0}$ is at most polynomial to $n$. In Section \ref{sec:case}, we construct   $B_{z_0}$ for Examples \ref{ex:same} and \ref{ex:group} and they both satisfy this assumption. We refer to Proposition \ref{prop:sct} or Table \ref{tab:rate-even} for the specific rates of $|B_{z_0}|$ under each example.
	\end{remark}
	Recall that  $q_{\alpha}$ in \eqref{eq:qa} is our estimator of the $1-\alpha$ quantile of the likelihood ratio statistic $\hat \LRT$. We reparamterize $p,q$ as $p =\rho_n \lambda_1$ and $q =\rho_n \lambda_2$, where we assume $\lambda_1$ and $\lambda_2$ are constants independent to $n$.  The following main theorem shows that our test is honest and powerful for general symmetric community properties.
	
	\begin{theorem}\label{t1-mtd}
Suppose Assumptions \ref{asmp:symtest}  and \ref{asmp: sct} hold, $d(\mathcal{C}_0, \mathcal{C}_1) = o(n^{c_1})$ for some constant $c_1 < 2$, and $1/ \rho_n =o(n^{1-c_2})$ for some constant $c_2>0$. We have $$\lim_{n \rightarrow \infty} \sup_{z^* \in \cC_0} \mathbb{P}(\hat\LRT \geq q_{\alpha}) =\alpha \text{ and }
\lim_{n \rightarrow \infty} \sup_{z^* \in \cC_0} \mathbb{P}(\text{reject } {\rm H}_0) = \alpha.
$$
Moreover, if $d(\cC_0,\cC_1)I(p,q)= \Omega(n^{\varepsilon})$ for some arbitrarily small constant $\varepsilon>0$, we have
\[
\lim_{n \rightarrow \infty} \inf_{z^* \in \cC_1} \mathbb{P}(\text{reject } {\rm H}_0) = 1.
\]
	\end{theorem}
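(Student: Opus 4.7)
The plan is to split Theorem \ref{t1-mtd} into a size argument and a power argument; both reduce to comparing the test statistic $\hat\LRT$ to the bootstrap quantile $q_\alpha$ through a sequence of Gaussian approximations on the polynomially-sized boundary $B_{z^*}$.

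For the size argument, fix $z^* \in \cC_0$ and proceed by a four-step chain. First, the decomposition \eqref{eq:lrt-lead} shows $\hat\LRT = L + o(1)$ with $L$ as in \eqref{eq:T}; the boundary reduction promised by Lemma \ref{lm1-t1} then yields $L = L_0 + o(1)$, replacing the exponential alternative $\cC_1$ by $B_{z^*}$. Consistency of the MLE $(\hat p, \hat q)$ in \eqref{eq: p q hat} under $1/\rho_n = o(n^{1-c_2})$ together with the balanced support $\cK^n$ gives $g(\hat p, \hat q)/g(p,q) \to 1$ and validates $\hat\mu_0 = d(\cC_0,\cC_1)(\hat q - \hat p)$ as a consistent plug-in for the mean of $L_0$. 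Proposition \ref{prop: eq-quant} replaces the unknown $z^*$ by the arbitrary $z_0 \in \cC_0$, so it suffices to approximate the distribution of $L_0(z_0)$. This is exactly a Chernozhukov--Chetverikov--Kato (CCK) high-dimensional Gaussian multiplier bootstrap: the summands $(\hat\Ab_{ij} - \EE_{\hat p, \hat q}\hat\Ab_{ij})(\mathbbm{1}[(i,j)\in\cE_2] - \mathbbm{1}[(i,j)\in\cE_1])$ are bounded with variance of order $\rho_n$, the dimension is $|B_{z_0}| = O(n^{c_0})$ by Assumption \ref{asmp: sct} so $\log|B_{z_0}|/n^{1/5} = o(1)$, and the envelope/variance conditions of CCK's bootstrap theorem are satisfied. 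Chaining the four approximations yields $\PP(\hat\LRT \ge q_\alpha) \to \alpha$ uniformly over $z^* \in \cC_0$.

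For the power argument, fix $z^* \in \cC_1$. First show that with probability tending to one the global MLE over $\cC_0 \cup \cC_1$ lies in $\cC_1$, because the signal $I(p,q) d(\cC_0,\cC_1) = \Omega(n^\varepsilon)$ makes the log-likelihood gap between any configuration near $z^*$ and the best configuration in $\cC_0$ much larger than the stochastic fluctuations; consequently $\sup_{\cC_0 \cup \cC_1} \log f = \sup_{\cC_1} \log f$ with high probability, so $\hat\LRT = o(1)$. On the other hand, the quantile $q_\alpha = g(\hat p,\hat q) C_W(\alpha) + g(\hat p,\hat q)\hat\mu_0$ inherits from the null model a strongly negative deterministic drift $g(\hat p,\hat q)\hat\mu_0 = -\Theta(d(\cC_0,\cC_1) I(p,q))$ (via the algebraic identification of $g(p,q)(p-q)$ with the Renyi divergence $I(p,q)$ up to constants) plus a stochastic piece $g(\hat p,\hat q) C_W(\alpha)$ of order $\sqrt{d(\cC_0,\cC_1)\, \rho_n \log n}$. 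Under $d(\cC_0,\cC_1) I(p,q) = \Omega(n^\varepsilon)$, the drift dominates this fluctuation (since $I(p,q) \asymp \rho_n$ reduces the requirement to $n^\varepsilon \gtrsim \log n$), so $\hat\LRT \approx 0 \gg q_\alpha$ with probability tending to one, uniformly over $z^* \in \cC_1$.

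The main obstacle is keeping the cascade of Gaussian approximations honest simultaneously. The Kolmogorov-distance errors incurred in \eqref{eq:lrt-lead}, in the boundary reduction $L \leftrightarrow L_0$, in the shadowing replacement $L_0 \leftrightarrow L_0(z_0)$ via Proposition \ref{prop: eq-quant}, and in the CCK bootstrap for $L_0(z_0)$ must all be $o(1)$ at once. Threading these through the simultaneous constraints $d(\cC_0,\cC_1) = o(n^{c_1})$ with $c_1 < 2$, $1/\rho_n = o(n^{1-c_2})$, and $\log|B_{z^*}| = O(\log n)$ --- in particular the interaction between the plug-in bias of $(\hat p,\hat q)$ inside the bootstrap statistic $W_n$ and the variance of the centered boundary process --- is where the delicate bookkeeping lives.
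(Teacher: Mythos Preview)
Your proposal is correct and follows essentially the same route as the paper's proof: reduce $\hat\LRT$ to the boundary process via Lemma~\ref{lm1-t1}, invoke the shadowing symmetry (Proposition~\ref{prop: eq-quant}) to replace $z^*$ by an arbitrary $z_0\in\cC_0$, apply the Chernozhukov--Chetverikov--Kato Gaussian and multiplier-bootstrap approximations on the polynomial-size index set $B_{z_0}$, and absorb the plug-in errors from $(\hat p,\hat q)$ through the covariance-comparison and anti-concentration lemmas. Your power argument --- $\hat\LRT = O_P(\rho_n)$ when $z^*\in\cC_1$ by \eqref{eq: sup-c0c1}, while $q_\alpha\to-\infty$ because $g(\hat p,\hat q)\hat\mu_0\asymp -d(\cC_0,\cC_1)\rho_n\asymp -d(\cC_0,\cC_1)I(p,q)$ dominates the $O(\sqrt{d(\cC_0,\cC_1)\rho_n\log n})$ fluctuation under the $\Omega(n^\varepsilon)$ condition --- is exactly the paper's computation.
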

	\begin{remark}
We defer the proof of theorem to Appendix~\ref{sec:lrt-proof}. The scaling condition $d(\cC_0,\cC_1)I(p,q)= \Omega(n^{\varepsilon})$ in the theorem demonstrates the {\it combinatorial-probabilistic} trade-off in the community property test. In order to differentiate two community properties $\cC_0$ versus $\cC_1$, we know that both the combinatorial distance between $\cC_0$ and $\cC_1$ and the distance  between two assignment probabilities  $p$ and $q$ should be large enough. Theorem \ref{t1-mtd} implies that the combinatorial distance can be measured by $d(\cC_0, \cC_1)$ and the probabilistic distance can be  measured by the Renyi divergence $I(p,q)$. Our test is powerful if the product of two distance increases faster than $n^{\varepsilon}$ for some arbitrarily small constant $\varepsilon>0$.
	\end{remark}

We now apply Theorem \ref{t1-mtd} to Examples \ref{ex:same} and \ref{ex:group} by checking Assumption~\ref{asmp: sct}. Due to the discussion in Section \ref{sec:case}, for Example \ref{ex:same}, $B_{z_0}$ is composed of all the assignments in $\mathcal{C}_1$ that can be obtained by swapping one node in $[m]$ with another node from a different cluster (see Figure \ref{fig:swap}(a) for illustration). Therefore, $|B_{z_0}| = m (n/K)(K-1) =O(mn)$. For Example~\ref{ex:group}, without loss of generality, we assume $m' \le m$. Therefore, $B_{z_0}$ is composed of all the assignments which can be obtained from reassigning the label of nodes  nodes $m+1,\ldots, m+m'$ in $z_0$ (see Figure \ref{fig:swap}(b) for illustration). Therefore, $|B_{z_0}| = (K-1) {n/K \choose m\wedge m'} = O(K (n/K)^{m\wedge m'})$. The $ {n/K \choose m\wedge m'}$ term is for choosing the set $\cS_{m'}'$ from a different community. Combining the discussion on $d(\cC_0, \cC_1)$ in Section \ref{sec:case}, we summarize the results in the following proposition.

\begin{proposition}\label{prop:sct}
For Examples \ref{ex:same}, we have $|B_{z_0}| = O(mn)$ and thus it satisfies Assumption \ref{asmp: sct}. We also have $d(\cC_0, \cC_1) = 2(n/K-1) = O(n/K)$. For Examples \ref{ex:group}, we have $|B_{z_0}| = O(K (n/K)^{m \wedge m'})$ which  satisfies Assumption \ref{asmp: sct}. We also have $d(\cC_0, \cC_1) = 2(m \wedge m')(n/K - m \wedge m')$.
\end{proposition}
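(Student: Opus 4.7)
The plan is to verify each of the two bullets by direct combinatorial bookkeeping, building on the explicit description of $B_{z_0}$ already given in Section~\ref{sec:case}. For both examples the argument splits cleanly into (i) enumerating the assignments that lie in $B_{z_0}$ to bound $|B_{z_0}|$, and (ii) counting the misaligned edges $\cE_1(z_0,z_1)$ and $\cE_2(z_0,z_1)$ for a representative $z_1 \in B_{z_0}$ to read off $d(\cC_0,\cC_1)$. A useful preliminary observation is that whenever $z_0,z_1 \in \cK^n$, the total number of within-community pairs is the same under both assignments (namely $K\binom{n/K}{2}$), so $n_1(z_0,z_1) = n_2(z_0,z_1)$ and $d(z_0,z_1)$ collapses to this common value. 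This spares us from tracking $\cE_1$ and $\cE_2$ separately.

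For Example~\ref{ex:same}, I would fix any $z_0 \in \cC_0$ with $z_0(1) = \cdots = z_0(m) = a$. Section~\ref{sec:case} identifies $B_{z_0}$ with the set of assignments obtained by swapping the label of some node $s \in [m]$ with the label of some node $s'$ from a different community; counting $m$ choices for $s$ and $(K-1)(n/K)$ choices for $s'$ gives $|B_{z_0}| \le m(K-1)(n/K) = O(mn)$ since $K$ is bounded, which verifies Assumption~\ref{asmp: sct} with $c_0 = 2$. For the distance, I would fix such a swap and enumerate the pairs in $\cE_1(z_0,z_1)$: they consist of edges joining $s$ to the other $n/K-1$ nodes originally in community $a$, plus edges joining $s'$ to the other $n/K-1$ nodes originally in its own community, giving $n_1(z_0,z_1) = 2(n/K-1)$, hence $d(z_0,z_1) = 2(n/K-1)$.

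For Example~\ref{ex:group} (assuming WLOG $m' \le m$), I would follow the same template with the block-swap construction of Section~\ref{sec:case}: exchange the labels of $\cS_{m'} = \{m+1,\ldots,m+m'\}$ with those of a size-$m'$ subset $\cS'_{m'}$ from another community. The cardinality count gives $|B_{z_0}| = (K-1)\binom{n/K}{m'} = O\bigl(K(n/K)^{m\wedge m'}\bigr)$, which is polynomial in $n$ and again satisfies Assumption~\ref{asmp: sct}. For the distance I would partition $\cE_1(z_0,z_1)$ into three contributions from cross-block pairs: $\cS_{m'}$ paired with $[m]$ contributes $m'm$ edges, $\cS_{m'}$ paired with the remaining $n/K - m - m'$ nodes of community $a$ contributes $m'(n/K - m - m')$ edges, and $\cS'_{m'}$ paired with the remaining $n/K - m'$ nodes of its original community contributes $m'(n/K-m')$ edges; the pairs internal to $\cS_{m'}$, to $\cS'_{m'}$, and across $\cS_{m'} \times \cS'_{m'}$ all contribute zero. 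The totals simplify to $n_1(z_0,z_1) = 2m'(n/K-m') = 2(m\wedge m')(n/K - m\wedge m')$.

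The main obstacle in both cases is the minimality statement, i.e., showing that the swap constructions above actually attain the infimum defining $d(\cC_0,\cC_1)$ rather than merely upper-bounding it. For this I would argue that under the balance constraint of $\cK^n$, any assignment $z_1$ that differs (up to relabeling) from $z_0$ on $k$ nodes must move at least $\lceil k/2 \rceil$ nodes out of their original blocks in pairs, and each such displaced node contributes at least $n/K-1$ edges to $\cE_1$ with its former blockmates; then I would check that leaving $\cC_0$ forces $k = 2$ in Example~\ref{ex:same} and $k = 2m'$ in Example~\ref{ex:group} (since $\cS_{m'}$ must exit community $a$ as a block in the latter's alternative $\cC_1$ defined by \eqref{eq:c0ex2}), matching the displayed distances.
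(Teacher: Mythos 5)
Your proposal is correct and follows essentially the same route as the paper: the paper's own justification (Section~\ref{sec:case} together with the paragraph preceding the proposition) is exactly this enumeration of single-node swaps ($|B_{z_0}| = m(n/K)(K-1)$) and block swaps ($|B_{z_0}| = (K-1)\binom{n/K}{m\wedge m'}$) plus the same edge counts $2(n/K-1)$ and $2(m\wedge m')(n/K - m\wedge m')$, with minimality argued only informally. One small caution on your minimality sketch: the claim that \emph{each} displaced node contributes at least $n/K-1$ pairs to $\cE_1$ fails when displaced nodes leave a block together (in the Example~\ref{ex:group} block swap each node of $\cS_{m'}$ contributes only $n/K - m'$ such pairs, and your bound would then exceed the achieved value $2m'(n/K-m')$), so the correct per-node count is $n/K$ minus the number of co-displaced blockmates with the same destination; with that correction your argument closes, and it is in any case no less rigorous than what the paper provides.
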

Plugging these results to the general Theorem \ref{t1-mtd}, we have the following two corollaries.

	\begin{corollary}[Examples \ref{ex:same} and \ref{ex:group}]\label{case1-mtd-col}
		Suppose $1/ \rho_n =o(n^{1-c_2})$ for some constant $c_2>0$. For $\cC_0$ and $\cC_1$ in \eqref{eq:c0ex1} for any $m \le n/K$ or \eqref{eq:c0ex2} for $m \wedge m' = O(1)$, our test for the hypothesis ${\rm H}_0: z^* \in \cC_0$ versus ${\rm H}_1: z^* \in \cC_1$ is honest, i.e.,
		$$\lim_{n \rightarrow \infty} \sup_{z^* \in \cC_0} \mathbb{P}(\text{reject } {\rm H}_0) = \alpha.
		$$
		Moreoever, if $I(p,q)n/K= \Omega(n^{\varepsilon})$ for some small positive constant $\varepsilon$, we have
		$$\lim_{n \rightarrow \infty} \sup_{z^* \in \cC_1} \mathbb{P}(\text{reject } {\rm H}_0) = 1.$$
	\end{corollary}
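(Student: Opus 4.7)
The plan is to derive Corollary \ref{case1-mtd-col} as a direct specialization of Theorem \ref{t1-mtd}: I would verify, for each of the two examples, the three structural hypotheses of the theorem — namely Assumption \ref{asmp:symtest}, the scattering Assumption \ref{asmp: sct}, and the growth bound $d(\cC_0, \cC_1) = o(n^{c_1})$ for some $c_1 < 2$ — and then transport the probabilistic conclusions. The sparsity condition $1/\rho_n = o(n^{1-c_2})$ is imposed in the corollary itself and transfers verbatim.

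\textbf{Step 1: Symmetry.} Assumption \ref{asmp:symtest} has already been verified in Section \ref{hypo test pre}, where representative node sets $\cN = [m]$ (for Example \ref{ex:same}) and $\cN = [m+m']$ (for Example \ref{ex:group}) with a canonical $\tilde z$ were exhibited. Nothing new needs to be proved here.

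\textbf{Step 2: Scattering and combinatorial growth.} I would read the relevant quantities directly from Proposition \ref{prop:sct}. For Example \ref{ex:same}, $|B_{z_0}| = O(mn)$, and under the hypothesis $m \le n/K$ this is $O(n^2)$; combined with $d(\cC_0, \cC_1) = 2(n/K - 1) = O(n)$, both Assumption \ref{asmp: sct} (with $c_0 = 2$) and the growth bound (with any $c_1 \in (1,2)$) are satisfied. For Example \ref{ex:group}, the hypothesis $m \wedge m' = O(1)$ yields $|B_{z_0}| = O(K (n/K)^{m \wedge m'}) = O(n^{m\wedge m'})$, which is polynomial in $n$, and $d(\cC_0, \cC_1) = 2(m \wedge m')(n/K - m \wedge m') = O(n)$; again both conditions hold.

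\textbf{Step 3: Honesty and the trade-off for power.} The honesty claim is then obtained by invoking the honesty conclusion of Theorem \ref{t1-mtd} directly. For the power claim, I would observe that $d(\cC_0, \cC_1) \asymp n/K$ in both examples — this is immediate for Example \ref{ex:same}, and for Example \ref{ex:group} it uses that $m \wedge m'$ is a positive constant independent of $n$. Consequently, the hypothesis $I(p,q)\, n/K = \Omega(n^{\varepsilon})$ stated in the corollary is equivalent, up to constants, to the combinatorial-probabilistic trade-off $d(\cC_0, \cC_1)\, I(p,q) = \Omega(n^{\varepsilon})$ required by the power clause of Theorem \ref{t1-mtd}. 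Invoking that clause then gives the asymptotic power statement.

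This is a bookkeeping corollary, so there is no genuinely new technical obstacle; the only care required is to confirm that the crude upper bounds on $|B_{z_0}|$ from Proposition \ref{prop:sct} remain polynomial in $n$ under the stated regimes ($m \le n/K$ in Example \ref{ex:same} and $m \wedge m' = O(1)$ in Example \ref{ex:group}), and that $d(\cC_0, \cC_1)$ scales linearly in $n$ in both cases, so that the general trade-off scaling of Theorem \ref{t1-mtd} collapses to the simpler form $I(p,q)\, n/K = \Omega(n^{\varepsilon})$.
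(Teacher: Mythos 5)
Your proposal is correct and follows exactly the route the paper takes: verify Assumption \ref{asmp:symtest} from Section \ref{hypo test pre}, read off $|B_{z_0}|$ and $d(\cC_0,\cC_1)$ from Proposition \ref{prop:sct} to check Assumption \ref{asmp: sct} and the growth bound $d(\cC_0,\cC_1)=o(n^{c_1})$, note that $d(\cC_0,\cC_1)\asymp n/K$ in both regimes so the trade-off condition collapses to $I(p,q)\,n/K=\Omega(n^{\varepsilon})$, and invoke Theorem \ref{t1-mtd}. No gaps.
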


	\section{Information-Theoretic Lower Bound}\label{lwrbd}

We have shown that our shadowing bootstrap method is honest and powerful when the product of the combinatorial distance and probabilistic distance satisfies $d(\cC_0, \cC_1) I(p,q) = \Omega(n^{\epsilon})$ for some small $\epsilon>0$. In this section, we will discuss the information-theoretic lower bound of community property test. We will give the lower bound of the minimax risk of all possible test $\psi$ for $\mathrm{H}_0: z \in \cC_0 \text{~v.s.~} \mathrm{H}_1: z \in \cC_1$, defined as
\[
   r(\cC_0, \cC_1) = \inf_{\psi} \Big\{\sup_{z\in \cC_0} \PP_z(\psi = 1) + \sup_{z\in \cC_1} \PP_z(\psi = 0)\Big\}.
\]
We will show that the combinatorial-probabilistic trade-off phenomenon appears in the lower bound as well, thus it essentially characterizes the hardness of the community property test.

\subsection{Packing Number of Communities}

In order to establish the lower bound, we first  introduce the concept of packing number of community class $\cC_1$. Similar to the minimax theory of the hypothesis testing for continuous parameters or the graph properties \citep{MR1742500,MR3909951}, we find that the packing number is also essential in the lower bound of community properties test. 

A key element in the definition of the packing number is the metric assigned to the community class $\cC_1$. 
Recall the community property distance $d(\cC_0,\cC_1)$ in Definition \ref{def:clusterdist}. It counts the misaligned edges $\cE_1(z_0,z_1)$ and $\cE_2(z_0,z_1)$ in Definition \ref{def:e12} for all $z_0 \in \cC_0$ and $z_1 \in \cC_1$. Our first insight is that the more misaligned edges there are, the easier it is to differentiate $\cC_1$ from $\cC_0$. This motivates us to consider the misaligned edge set $\cE_{1,2}(z_0,z_1)=\cE_1(z_0,z_1) \cup \cE_2(z_0,z_1)$ and use its cardinality as a ``metric" in the following definition of packing number. Our second insight is that how hard it is to differentiate $\cC_1$ from $\cC_0$ does not depends on the complexity of the entire set $\cC_1$ but the boundary set $B_{z_0}$ in Definition \ref{def:boundary}. Our shadowing bootstrap statistic in \eqref{eq:Wn} implies that $B_{z_0}$ is representative to $\cC_1$. Therefore, we give the following definition of packing number of  $B_{z_0}$ to characterize the hardness of test.



\begin{definition}[$\varepsilon$-packing of $B_{z_0}$]\label{def:pack} For any $z_0 \in \cC_0$, we say $ \{z_1,z_2,...,z_N\} \subseteq B_{z_0}$ is an $\varepsilon$-packing of $B_{z_0}$, if for any $z_j \neq z_k$ we have $|\cE_{1,2}(z_0,z_j) \cap \cE_{1,2}(z_0,z_k)| \leq \varepsilon$. The $\varepsilon$-packing number of $B_{z_0}$, denoted as $N(B_{z_0}, \varepsilon)$, is the maximum cardinality of any $\varepsilon$-packing of $B_{z_0}$.
\end{definition}

\begin{figure}[h]
	\centering
	\includegraphics[width=0.4\textwidth]{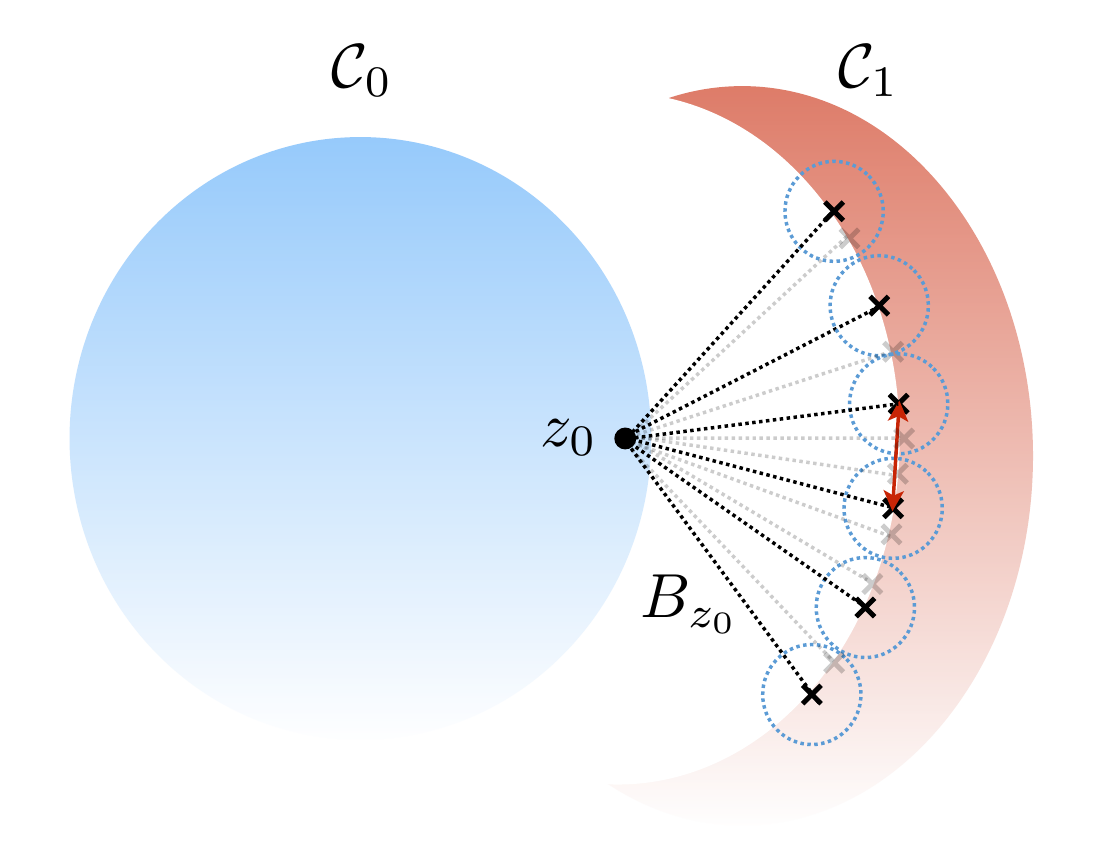}
	\caption{The $\varepsilon$-packing set of $B_{z_0}$. }\label{fig:lb-packing}
\end{figure}

We illustrate the packing set of $B_{z_0}$ in Figure \ref{fig:lb-packing}. 
By Definition \ref{def:boundary}, $B_{z_0}$ collects the alignments in $\cC_1$ which are closest to $z_0 \in \cC_0$. Therefore, these alignments are the hardest cases to test. The following theorem shows the lower bound of the community property test can be characterized by the packing number of these hardest cases.

\begin{theorem}\label{lb-main}
 	Suppose  $\cC_0, \cC_1 \subseteq \cK^n$, $1/ \rho_n =o(n^{1-c_2})$ for some constant $c_2>0$ and $p\le 1-\delta$ for some constant $\delta>0$. If there exists a $z_0 \in \cC_0$ such that $\log N\big (B_{z_0},\sqrt{d(z_0,\cC_1)} \big ) = O(\log n)$ and 
 	\begin{equation}\label{eq:cp-low}
 	    \limsup_{n \rightarrow \infty} \frac{d(z_0,\mathcal{C}_1)I(p,q)}{\log N\big (B_{z_0},\sqrt{d(z_0,\cC_1)} \big )} < 1,
 	\end{equation}
	then $\liminf\limits_{n \rightarrow \infty} r(\cC_0, \cC_1) \geq 1/2$.
\end{theorem}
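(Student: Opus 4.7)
The plan is Le Cam's method with a packing-based mixture prior on the alternative, combined with an SBM-specific edge-wise second-moment computation. First, fix the $z_0 \in \cC_0$ guaranteed by the hypothesis and let $\{z_1,\ldots,z_N\} \subseteq B_{z_0}$ be a maximum $\sqrt{d(z_0,\cC_1)}$-packing, with $N = N(B_{z_0},\sqrt{d(z_0,\cC_1)})$. Restricting the supremum over $\cC_0$ to the single point $z_0$ and lower-bounding the supremum over $\cC_1$ by the uniform average over $\{z_j\}$ gives via Neyman--Pearson
\[
r(\cC_0,\cC_1) \;\ge\; 1 - \|\bar P - P_{z_0}\|_{\mathrm{TV}}, \qquad \bar P := \tfrac{1}{N}\sum_{j=1}^{N} P_{z_j}.
\]
Combined with the Cauchy--Schwarz bound $\|\bar P - P_{z_0}\|_{\mathrm{TV}} \le \tfrac{1}{2}\sqrt{\chi^2(\bar P\|P_{z_0})}$, the conclusion $\liminf r \ge 1/2$ follows once $\chi^2(\bar P\|P_{z_0}) = o(1)$.

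Writing $L_j := dP_{z_j}/dP_{z_0}$, I would expand $1 + \chi^2 = N^{-2}\sum_{j,k}\E_{P_{z_0}}[L_j L_k]$ and exploit the SBM's edge-independence to factorize each cross-moment. A case analysis---using that $\cE_1(z_0,z_j) \cap \cE_2(z_0,z_k) = \emptyset$ because $z_0 \in \cC_0$ fixes the within/between status of every edge under $P_{z_0}$---shows that only edges in $\cE_{1,2}(z_0,z_j) \cap \cE_{1,2}(z_0,z_k)$ contribute non-trivially, each contributing a factor $1+\chi^2_e$ with edgewise $\chi^2_e \in \{(p-q)^2/(p(1-p)),(p-q)^2/(q(1-q))\}$. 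Under the sparse parameterization $p = \rho_n\lambda_1, q = \rho_n\lambda_2$ with $p \le 1-\delta$, a direct Taylor expansion yields $\log(1+\chi^2_e) \le C\cdot I(p,q)$ for a constant $C = C(\delta,\lambda_1,\lambda_2)$.

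Applying the packing bound $|\cE_{1,2}(z_0,z_j)\cap\cE_{1,2}(z_0,z_k)| \le \sqrt{d(z_0,\cC_1)}$ for $j\ne k$ and the trivial bound $|\cE_{1,2}(z_0,z_j)|\le 2 d(z_0,\cC_1)$ for the diagonal produces
\[
1+\chi^2(\bar P\|P_{z_0}) \;\le\; \frac{1}{N}\exp\!\bigl(2Cd(z_0,\cC_1) I(p,q)\bigr) + \exp\!\bigl(C\sqrt{d(z_0,\cC_1)}\,I(p,q)\bigr).
\]
Under the theorem's hypothesis $\limsup d(z_0,\cC_1)I(p,q)/\log N < 1$ (read with enough slack to absorb the constant $2C$ in the diagonal exponent) the first term equals $N^{2Cd(z_0,\cC_1)I(p,q)/\log N - 1} \to 0$; meanwhile the sparsity $\rho_n^{-1} = o(n^{1-c_2})$ combined with $\log N = O(\log n)$ forces $I(p,q) = O(\rho_n)$ together with $dI = O(\log n)$, hence $\sqrt{d}\,I = O(\sqrt{I \log n}) \to 0$ so the off-diagonal term tends to $1$. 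Therefore $\chi^2(\bar P\|P_{z_0}) \to 0$ and the lower bound follows.

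The main obstacle is the edge-wise bound $\log(1+\chi^2_e) \le C\,I(p,q)$: since $\chi^2_e / I(p,q) \asymp (\sqrt{\lambda_1}+\sqrt{\lambda_2})^2/\min(\lambda_1,\lambda_2)$ in the sparse regime (the Renyi-$2$ divergence is always at least the Renyi-$1/2$ divergence, so $C \ge 1$), one must interpret the ``$<1$'' in the theorem's trade-off condition as providing sufficient margin to absorb the constant $2C$ in the diagonal exponent. A secondary technical check is the claim $\sqrt{d(z_0,\cC_1)}\,I(p,q) \to 0$, which does not follow from $dI = O(\log n)$ alone but requires the supplementary fact $I(p,q) = O(\rho_n) = o(1/\log n)$ in the regime of interest; verifying this uses the sparsity assumption combined with the balanced scaling $p = \rho_n \lambda_1, q = \rho_n \lambda_2$.
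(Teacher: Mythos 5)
Your overall architecture (restrict $\cC_0$ to the fixed $z_0$, replace $\cC_1$ by a $\sqrt{d(z_0,\cC_1)}$-packing of $B_{z_0}$, and show the packed alternatives are nearly orthogonal) matches the paper's, but your second step — bounding $\|\bar P - P_{z_0}\|_{\mathrm{TV}}$ by $\chi^2(\bar P\|P_{z_0})$ — cannot prove the theorem as stated, and you have correctly identified why. The diagonal term of the second-moment computation produces the exponent $\sum_{e}\log(1+\chi^2_e)\asymp d(z_0,\cC_1)\cdot\chi^2_e$, where $\chi^2_e$ is a Renyi-$2$-type quantity satisfying $\chi^2_e/I(p,q)\to(\sqrt{\lambda_1}+\sqrt{\lambda_2})^2/(\lambda_1\wedge\lambda_2)\ge 4$ in the regime $p=\rho_n\lambda_1$, $q=\rho_n\lambda_2$. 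So your method establishes the conclusion only under $\limsup d(z_0,\cC_1)I(p,q)/\log N < 1/(2C)$ with $2C\ge 8$, not under the stated condition with constant $1$. This constant is not cosmetic: Theorem \ref{lb-main} is advertised as the \emph{sharp} version of Theorem \ref{lb-sec}, and Corollary \ref{col1-ex-rec} uses the constant $1$ to recover the exact-recovery threshold $nI(p,q)/(K\log n)<1$, which matches the known phase transition. ``Reading the $<1$ with enough slack'' amounts to proving a different, weaker theorem (essentially a variant of Theorem \ref{lb-sec}).

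The paper avoids this loss by not using a mixture/$\chi^2$ argument at all on the diagonal. It lower-bounds the risk of the Bayes test by $\PP_{z_0}\bigl(\sup_{z\in\tilde\cP}(\sum_{u\le d}X^u_z-\sum_{u\le d}Y^u_z)>0\bigr)$ with $X^u_z\sim\mathrm{Ber}(q)$, $Y^u_z\sim\mathrm{Ber}(p)$, invokes the sharp large-deviation lower bound $\PP(\sum X^u_z-\sum Y^u_z>0)\ge\exp(-(1+\eta)\,d\,I(p,q))$ (Lemma~5.2 of \cite{zhang2016minimax}, whose exponent is exactly the Chernoff/Renyi-$\tfrac12$ quantity $I(p,q)$, not the $\chi^2$ exponent), and then uses the Gaussian comparison lemmas of \cite{chernozhukov2013gaussian} to replace the weakly dependent packed variables (off-diagonal covariance $O(1/\sqrt{d})$, which is where the $\sqrt{d(z_0,\cC_1)}$-packing enters) by independent copies, so that $N$ events each of probability $\ge 1/N$ yield a success probability $\ge 1/2$. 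If you want to keep a genuinely different route, you would need to replace Cauchy--Schwarz/$\chi^2$ by a truncated or Hellinger-based second-moment argument that recovers the Renyi-$\tfrac12$ exponent on the diagonal; as written, the approach fails at the claimed constant. A secondary, fixable slip: your justification that $\exp(C\sqrt{d}\,I)\to1$ via ``$I=O(\rho_n)=o(1/\log n)$'' is not valid, since $1/\rho_n=o(n^{1-c_2})$ is a \emph{lower} bound on $\rho_n$; the correct route is $d(z_0,\cC_1)=\Omega(n/K)$ for $\cC_0,\cC_1\subseteq\cK^n$, whence the hypothesis $dI=O(\log n)$ forces $I=O(\log n/n)$ and $\sqrt{d}\,I=O(\sqrt{(\log n)\cdot I})\to0$.
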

\begin{remark}
We defer the proof of the theorem to Appendix \ref{proof: lb main sec}. The combinatorial-probabilistic trade-off in the lower bound is characterized by \eqref{eq:cp-low}. We cannot differentiate $\cC_1$ from $\cC_0$ if 
\[
{d(z_0,\mathcal{C}_1)I(p,q)} < \log N\big (B_{z_0},\sqrt{d(z_0,\cC_1)}\big), 
\]
for sufficiently large $n$. The packing entropy $\log N (B_{z_0},\sqrt{d(z_0,\cC_1)})$ is the lower bound of the signal strength. Our lower bound shows that the packing entropy of community class plays a similar role as the packing entropy in parametric hypothesis test \citep{MR1742500} and in graph property test \citep{MR3909951}. We derive the rate of packing entropy for Examples \ref{ex:same} and \ref{ex:group} in Proposition \ref{prop: lb-case study-pac num}. In general, the packing entropy is $O(\log n)$. Comparing to the upper bound $d(\cC_0,\cC_1)I(p,q)= \Omega(n^{\varepsilon})$ for some arbitrarily small constant $\varepsilon>0$ in Theorem \ref{t1-mtd}, there is a gap to $O(\log n)$ in the lower bound. We conjecture that this gap exists as both our upper and lower bounds are for general community property test. We  will find a finer analysis in future research. 
\end{remark}

The following theorem gives an alternative lower bound result relaxing the scaling conditions in Theorem \ref{lb-main}.

\begin{theorem}\label{lb-sec} Suppose  $\cC_0, \cC_1 \subseteq \cK^n$, $0<q<p \leq 1-\delta$ for some constant $\delta>0$ and $\lim_{n \rightarrow \infty}d(\cC_0,\cC_1) p  = \infty$. If one of the following conditions:
\begin{enumerate}
    \item  $d(\cC_0,\mathcal{C}_1)I(p,q) \le c$ 	for some sufficiently small constant $c$; 
    \item $\lim_{n \rightarrow \infty}d(\cC_0,\mathcal{C}_1)I(p,q) = \infty$,  but there exists a $z_0 \in \cC_0$ such that 
    \[
    \limsup_{n \rightarrow \infty} \frac{d(z_0,\mathcal{C}_1)I(p,q)}{\log N(B_{z_0},0)}< 1,
    \]
\end{enumerate}
 is satisfied, then $\liminf\limits_{n \rightarrow \infty} r(\cC_0, \cC_1) \geq 1/2$.
\end{theorem}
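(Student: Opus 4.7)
The plan is to split along the two conditions, using Le Cam's two-point bound for the first and an Ingster-style mixture argument on a $0$-packing for the second. For condition 1, I would pick $z_0 \in \cC_0$ and $z_1 \in \cC_1$ attaining $d(z_0,z_1) = d(\cC_0,\cC_1)$. The SBM laws $\PP_{z_0}$ and $\PP_{z_1}$ are product Bernoulli measures that differ only on the $n_1+n_2 \le 2\,d(\cC_0,\cC_1)$ edges of $\cE_{1,2}(z_0,z_1)$, so the Bhattacharyya coefficient factorises as $\rho(\PP_{z_0},\PP_{z_1}) = \exp(-\tfrac{1}{2}(n_1+n_2)I(p,q))$. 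Using $\mathrm{TV} \le H$ and $H^2 = 2(1-\rho) \le (n_1+n_2)\,I(p,q) \le 2\,d(\cC_0,\cC_1)\,I(p,q) \le 2c$, we obtain $r(\cC_0,\cC_1) \ge 1 - \sqrt{2c}$, which strictly exceeds $1/2$ once $c$ is sufficiently small.

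For condition 2, I would use the promised $z_0$ together with a maximal $0$-packing $\{z_1,\ldots,z_N\} \subseteq B_{z_0}$, where $N = N(B_{z_0},0)$, and form the mixture alternative $\bar{\PP}_1 = N^{-1}\sum_j \PP_{z_j}$. Because the misalignment sets $\cE_{1,2}(z_0,z_j)$ are pairwise disjoint by Definition \ref{def:pack} and Bernoulli edges are independent under $\PP_{z_0}$, the likelihood ratios $L_j = d\PP_{z_j}/d\PP_{z_0}$ depend on disjoint blocks of independent edges, so $\E_{\PP_{z_0}}[L_j L_k] = \E[L_j]\,\E[L_k] = 1$ for $j\neq k$, and the second-moment expansion collapses to
\[
\chi^2(\bar{\PP}_1 \,\Vert\, \PP_{z_0}) \;=\; \frac{1}{N^2}\sum_{j=1}^{N} \chi^2(\PP_{z_j} \,\Vert\, \PP_{z_0}).
\]
If each per-alternative chi-square satisfies $\chi^2(\PP_{z_j}\Vert\PP_{z_0}) + 1 \le \exp(C\,d(z_0,\cC_1)\,I(p,q))$ for a constant $C$ depending on $\delta$, then the hypothesis $\limsup d(z_0,\cC_1)\,I(p,q)/\log N < 1$ forces a geometric decay $\chi^2(\bar{\PP}_1\,\Vert\,\PP_{z_0}) \le N^{-\eta}$ for some fixed $\eta>0$. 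Combined with $\log N \to \infty$ (forced by $d(\cC_0,\cC_1)\,I(p,q) \to \infty$ together with the bounded ratio) and $\mathrm{TV} \le \tfrac{1}{2}\sqrt{\chi^2}$, this gives $\mathrm{TV}(\PP_{z_0},\bar{\PP}_1) \to 0$ and therefore $r(\cC_0,\cC_1) \ge 1 - \mathrm{TV} \to 1 \ge 1/2$.

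The main obstacle is precisely the per-alternative chi-square bound in condition 2. The Renyi-$2$ divergence of a single edge in $\cE_2(z_0,z_j)$ equals $\log(p^2/q + (1-p)^2/(1-q))$, which is \emph{not} uniformly dominated by a constant multiple of the per-edge Renyi-$1/2$ divergence $I(p,q)$ when $q$ is very small, so one cannot simply expand $\chi^2 + 1 = \prod_{\cE_1}(1+A)\prod_{\cE_2}(1+B)$ and invoke $1+x \le e^x$. I would resolve this by truncating the likelihood ratio: split $L_j = L_j\mathbbm{1}_{\{L_j \le M\}} + L_j\mathbbm{1}_{\{L_j > M\}}$ at a threshold $M$ and run the independence/variance argument only on the truncated piece, whose second moment can be bounded by $\exp(C\,d(z_0,\cC_1)\,I(p,q))$ via a Chernoff-type argument built from the Bhattacharyya coefficient (using $p\le 1-\delta$ to control $D_2(q\Vert p)$ and $d(\cC_0,\cC_1)\,p \to \infty$ to calibrate $M$), while the tail contribution $\PP_{z_0}(L_j>M)$ to $\mathrm{TV}$ is handled separately by a change-of-measure bound. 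This parallels the truncation mechanism used in the proof of Theorem \ref{lb-main}, but the key simplification here is that the $0$-packing annihilates the off-diagonal Bhattacharyya cross-term in the second-moment expansion, which is exactly why the weaker scaling $d(\cC_0,\cC_1)\,p \to \infty$ suffices in place of the polynomial sparsity $1/\rho_n = o(n^{1-c_2})$ of Theorem \ref{lb-main}.
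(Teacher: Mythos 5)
Your treatment of condition~1 is correct and is a genuinely different route from the paper: you bound the two-point risk by $1-\mathrm{TV}\ge 1-\sqrt{2(1-\rho)}$ with the Bhattacharyya coefficient $\rho=e^{-(n_1+n_2)I(p,q)/2}$, whereas the paper runs the Bayes-optimal test through a Lyapunov CLT and an explicit Gaussian tail, using $I(p,q)\gtrsim (p-q)^2/(p(1-p)+q(1-q))$. Your version is cleaner and does not even need $d(\cC_0,\cC_1)p\to\infty$ for this part.

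Condition~2 is where there is a genuine gap, and it is exactly the one you flag but do not resolve. The $\chi^2$-mixture method ties you to the R\'enyi-$2$ divergence on the diagonal, while the threshold in the theorem is stated in terms of $I(p,q)=D_{1/2}$; even in the most favorable (Gaussian-like) regime $D_2\approx 4D_{1/2}$, so a bound $\chi^2(\PP_{z_j}\Vert\PP_{z_0})+1\le e^{C\,d I}$ with $C>1$ only yields decay of $N^{-1}e^{CdI}$ under $\limsup dI/\log N<1/C$, not $<1$. Your proposed repair by truncating $L_j$ at $M\approx e^{(1+\epsilon)dI}$ does not go through in the regime the theorem permits ($q\ll p$, both small): there the alternative's log-likelihood ratio concentrates around $d\cdot\mathrm{KL}(\mathrm{Ber}(p)\Vert\mathrm{Ber}(q))\asymp d\,p\log(p/q)$, which is much larger than $d\,I(p,q)\asymp d\,p$, so $\PP_{z_j}(L_j>M)\to 1$ and the truncation discards essentially all of the alternative's mass; the "tail contribution to TV" is then not small but dominant. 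The paper sidesteps moments of likelihood ratios entirely: it lower-bounds the risk by the type-I error of the likelihood-ratio test, $\PP_{z_0}\bigl(\max_{z\in\cP(0)}S_z>0\bigr)$, where $S_z$ is the signed edge-sum over $\cE_{1,2}(z_0,z)$; the $0$-packing makes the $S_z$ independent (the same structural fact you exploit), and the sharp one-sided large-deviation bound $\PP_{z_0}(S_z>0)\ge e^{-(1+\eta)dI(p,q)}$ from Lemma~5.2 of \cite{zhang2016minimax} — this is precisely where $d(\cC_0,\cC_1)p\to\infty$ is used — gives $\PP(\max_z S_z\le 0)\le\bigl(2^{-1/N}\bigr)^{N}=1/2$ once $(1+\eta)dI\le\log N$. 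To complete your argument you would need either that large-deviation lower bound (at which point the mixture/$\chi^2$ machinery is unnecessary) or a substantially more delicate conditioning argument than the likelihood-ratio truncation you sketch.
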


We defer the proof of the theorem to Appendix \ref{proof: lb main sec}. Notice that the scaling condition on $d(z_0, \cC_1)p$  is different from the one on $d(z_0,\cC_1)I(p,q)$ in the lower bound. $I(p,q)$ measures the difference between $p$ and $q$, whereas the condition $d(z_0,\cC_1)p = \Omega(\log n)$ is to guarantee that the edge connection probability cannot be too small.  Theorems~\ref{lb-main} and \ref{lb-sec} both show the lower bound with the combinatorial-probabilistic trade-off. Theorem~\ref{lb-main} has a sharper lower bound on $d(z_0,\mathcal{C}_1)I(p,q)$ under a stronger scaling condition. In comparison, Theorem~\ref{lb-sec} has a less sharp lower bound with weaker scaling conditions. 
When $d(\cC_0,\mathcal{C}_1)I(p,q)$ is bounded, we cannot differentiate two hypotheses. When  $d(\cC_0,\cC_1)$ goes to infinity, Theorem~\ref{lb-sec} condition (2) shows the lower bound $d(\cC_0,\mathcal{C}_1)I(p,q) < \log N(B_{z_0},0)$. If we have stronger scaling conditions in Theorem~\ref{lb-main}, we get a sharper lower bound $d(\cC_0,\mathcal{C}_1)I(p,q) < \log N(B_{z_0},\sqrt{d(z_0,\cC_1)})$.

\subsection{Case Study of Lower Bound}
In this section we apply the general theorems for the lower bound to Examples~\ref{ex:same} and \ref{ex:group}. By \eqref{eq:cp-low} in Theorem~\ref{lb-main}, a key quantity for the lower bound is the packing number $N(B_{z_0}, \sqrt{d(z_0, \cC_1)})$. The following proposition gives concrete results for the two examples.

\begin{proposition}\label{prop: lb-case study-pac num}
The packing number for Example~\ref{ex:same} is $N(B_{z_0}, \sqrt{d(z_0, \cC_1)}) = m$, and the packing number for Example~\ref{ex:group} is $N(B_{z_0}, \sqrt{d(z_0, \cC_1)}) = N(B_{z_0},0) = 1$.
\end{proposition}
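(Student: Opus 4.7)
The plan is to compute each packing number by explicitly characterizing the misaligned edge set $\cE_{1,2}(z_0, z)$ for every $z \in B_{z_0}$, using the descriptions of $B_{z_0}$ established in Section~\ref{sec:case}, and then comparing pairwise intersection sizes to the threshold $\sqrt{d(z_0,\cC_1)}$. In both examples the argument reduces to combinatorial bookkeeping on symmetric differences of community assignments, but yields sharply different packing numbers because of the different boundary structure.

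For Example~\ref{ex:same}, I would fix $z_0 \in \cC_0$ with $z_0(1) = \cdots = z_0(m) = 1$, so that each $z \in B_{z_0}$ corresponds to swapping the labels of some primary node $s \in [m]$ with some secondary node $s'$ in a community $c \neq 1$. A direct inspection of when an edge $(i,j)$ changes its ``same community'' status shows that $\cE_{1,2}(z_0, z)$ consists of the $4(n/K - 1)$ edges with exactly one endpoint in $\{s, s'\}$ and the other endpoint in community $1$ or community $c$ of $z_0$. For the lower bound $N(B_{z_0}, \sqrt{d(z_0, \cC_1)}) \ge m$, I would exhibit a packing of $m$ swaps $(s_j, s_j')$ with pairwise distinct $s_j \in [m]$ and pairwise distinct $s_j'$: the only candidate edges in $\cE_{1,2}(z_0, z_j) \cap \cE_{1,2}(z_0, z_k)$ are the four cross-edges $(s_j, s_k), (s_j, s_k'), (s_j', s_k), (s_j', s_k')$, and a short case analysis on whether $s_j', s_k'$ share a community shows that at most all four survive, which is below $\sqrt{2(n/K-1)}$ for $n$ sufficiently large. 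For the matching upper bound $N(B_{z_0}, \sqrt{d(z_0, \cC_1)}) \le m$, I would observe that if two elements of a packing share the same primary $s$, then their intersection already contains all $n/K - 1$ edges from $s$ to the remaining nodes of community $1$, which exceeds $\sqrt{2(n/K - 1)}$; hence each $s \in [m]$ contributes at most one packing element, forcing the bound $m$.

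For Example~\ref{ex:group}, the crucial observation is that every $z \in B_{z_0}$ moves the fixed block $S_{m'}$ into some community $b \neq 1$, while $S_m$ together with every node of community $1$ of $z_0$ not selected into the swap partner $S_{m'}'$ remains in community $1$. Consequently, every edge from a node in $S_{m'}$ to a node of community $1$ outside $S_{m'}$ always lies in $\cE_1(z_0, z)$, yielding a common core of $m'(n/K - m')$ edges contained in $\cE_{1,2}(z_0, z)$ for every $z \in B_{z_0}$. Therefore any two distinct $z_j, z_k \in B_{z_0}$ satisfy $|\cE_{1,2}(z_0, z_j) \cap \cE_{1,2}(z_0, z_k)| \ge m'(n/K - m') > \sqrt{2m'(n/K - m')} = \sqrt{d(z_0, \cC_1)}$, so no $2$-element packing exists and $N(B_{z_0}, \sqrt{d(z_0, \cC_1)}) = 1$; since the same argument works with threshold $0$, we also get $N(B_{z_0}, 0) = 1$. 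The main technical step in the whole proof is the case analysis for Example~\ref{ex:same}, which requires enumerating the configurations of the secondary nodes $(s_j', s_k')$ (same community versus different) and verifying that only the four listed cross-edges can survive in the intersection; once this is settled, the proposition follows directly from the values of $d(z_0, \cC_1)$ in Section~\ref{sec:case}.
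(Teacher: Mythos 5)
Your proposal is correct and follows essentially the same route as the paper's proof: for Example~\ref{ex:same} you bound the packing number above by noting that two packing elements sharing the same swapped node $s\in[m]$ already share the $n/K-1$ edges from $s$ to the rest of its community (far exceeding $\sqrt{d(z_0,\cC_1)}$), and below by exhibiting $m$ swaps with disjoint node pairs whose misaligned-edge sets intersect in at most four cross-edges; for Example~\ref{ex:group} you identify the common core of $m'(n/K-m')$ misaligned edges shared by all boundary assignments, which is exactly the paper's argument. The only difference is that you spell out the structure of $\cE_{1,2}(z_0,z)$ more explicitly, which the paper leaves implicit.
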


The proof of the proposition is deferred to Appendix \ref{sec: proof even pack num}.  Table~\ref{tab:rate-even} summarizes important quantities for Examples~\ref{ex:same} and \ref{ex:group}.
\begin{table}
    \centering
	\begin{tabular}{c|c|c|c} 
		\hline
		\hline
		  & $d(\cC_0,\cC_1)$ &  $|B_{z_0}|$ & $N(B_{z_0},\sqrt{d(z_0,\cC_1)})$ \\ [0.5ex] 
		\hline
	Example	\ref{ex:same} & $2({n}/{K}-1)$ & $O(mn)$ & $m$\\ 
		\hline
	Example	\ref{ex:group} & $2m\wedge m'({n}/{K}-m \wedge m')$ & $O (K(n/K)^{m \wedge m'})$ & 1\\ [1ex]
		\hline
		\hline
	\end{tabular}
	\caption{Important values for even cases of Example~\ref{ex:same} and Example~\ref{ex:group} }\label{tab:rate-even}
\end{table}

Recall that $\lambda_1 = p/\rho_n$ and $\lambda_2 = q/\rho_n$. We present the lower bound of two examples below.
Applying  Theorem~\ref{lb-main} and Proposition~\ref{prop: lb-case study-pac num}, we have the following lower bound of same community test in Example \ref{ex:same}.
	\begin{corollary}\label{col-c1}
	For $\cC_0$ and $\cC_1$ defined in \eqref{eq:c0ex1}, if $1/ \rho_n =o(n^{1-c_2})$ for some constant $c_2>0$, $p<1-\delta$ for some constant $\delta>0$ and 
	\[
	\limsup\limits_{n \rightarrow \infty} 2n I(p,q)/(K \log m) <1,
	\]
	we have $\liminf\limits_{n \rightarrow \infty} r(\cC_0, \cC_1) \geq 1/2$.
	\end{corollary}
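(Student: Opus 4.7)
The plan is to reduce Corollary \ref{col-c1} to a direct application of Theorem \ref{lb-main} by verifying each of its hypotheses for the specific pair $(\cC_0,\cC_1)$ of Example \ref{ex:same}. The scaling conditions $\cC_0,\cC_1\subseteq\cK^n$, $1/\rho_n = o(n^{1-c_2})$, and $p\le 1-\delta$ are already imposed, so only two structural quantities need to be tracked: the combinatorial distance $d(z_0,\cC_1)$ and the packing number $N(B_{z_0},\sqrt{d(z_0,\cC_1)})$.

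First I would fix any $z_0\in\cC_0$ and import the values computed earlier in the paper: Proposition~\ref{prop:sct} gives $d(\cC_0,\cC_1) = 2(n/K-1)$, and since $\cC_1 = \cK^n\setminus\cC_0$ this equals $d(z_0,\cC_1)$ for every $z_0\in\cC_0$. Next, Proposition~\ref{prop: lb-case study-pac num} provides $N(B_{z_0},\sqrt{d(z_0,\cC_1)}) = m$. Then $\log N(B_{z_0},\sqrt{d(z_0,\cC_1)}) = \log m = O(\log n)$ because $m\le n/K\le n$, which verifies the size hypothesis on the packing entropy required by Theorem~\ref{lb-main}.

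It remains to verify the combinatorial-probabilistic inequality \eqref{eq:cp-low}. Substituting the two displayed quantities yields
\$
\frac{d(z_0,\cC_1)\,I(p,q)}{\log N(B_{z_0},\sqrt{d(z_0,\cC_1)})} \;=\; \frac{2(n/K-1)\,I(p,q)}{\log m} \;\le\; \frac{2n\,I(p,q)}{K\log m},
\$
where the inequality holds by $n/K - 1 < n/K$ and $I(p,q)\ge 0$. The hypothesis $\limsup_{n\to\infty} 2nI(p,q)/(K\log m)<1$ then implies that the limsup of the left-hand side is strictly less than $1$, which is exactly \eqref{eq:cp-low}. Applying Theorem~\ref{lb-main} concludes $\liminf_{n\to\infty} r(\cC_0,\cC_1)\ge 1/2$.

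The argument is almost purely bookkeeping; the only mild subtlety is justifying that the $z_0$-dependent packing number from Proposition~\ref{prop: lb-case study-pac num} behaves uniformly so that the existence of a single qualifying $z_0$ suffices, and that $m\le n$ ensures $\log m = O(\log n)$ even in the regime where $m$ grows with $n$. Both of these are immediate under the symmetry assumption on $\cC_0$ (Definition~\ref{def:cluster class}), so no substantive obstacle arises and no new estimate must be developed beyond quoting Propositions~\ref{prop:sct} and \ref{prop: lb-case study-pac num}.
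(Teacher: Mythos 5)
Your proposal is correct and follows exactly the route the paper takes: the corollary is stated as a direct consequence of Theorem~\ref{lb-main} combined with $d(z_0,\cC_1)=2(n/K-1)$ from Proposition~\ref{prop:sct} and $N(B_{z_0},\sqrt{d(z_0,\cC_1)})=m$ from Proposition~\ref{prop: lb-case study-pac num}, with the bound $2(n/K-1)I(p,q)/\log m \le 2nI(p,q)/(K\log m)$ delivering \eqref{eq:cp-low}. The bookkeeping is the same as the paper's, and your check that $\log m = O(\log n)$ covers the only remaining hypothesis.
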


Applying  Theorem~\ref{lb-sec} and Proposition \ref{prop: lb-case study-pac num}, we have the following lower bound of same community test for groups in Example \ref{ex:group}.
	
	\begin{corollary}\label{col-c2}
		For $\cC_0$ and $\cC_1$ defined in \eqref{eq:c0ex2}, if $n p \rightarrow \infty$, $0 < q<p \le 1-\delta$ for some constant $\delta>0$ and 
		\[
		\limsup\limits_{n \rightarrow \infty} nI(p,q)<c,
		\]
		for some sufficiently small constant $c>0$, we have $\liminf\limits_{n \rightarrow \infty} r(\cC_0, \cC_1) \geq 1/2$.
	\end{corollary}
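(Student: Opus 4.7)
The plan is to derive Corollary \ref{col-c2} as a direct consequence of Theorem \ref{lb-sec} under its first condition, using the explicit value of $d(\cC_0,\cC_1)$ computed in Proposition \ref{prop:sct} for Example \ref{ex:group}. So the bulk of the work is really just bookkeeping to match the assumptions of Corollary \ref{col-c2} to the hypotheses of Theorem \ref{lb-sec}.

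First, I would recall from Proposition \ref{prop:sct} that for the group same community test defined in \eqref{eq:c0ex2}, the combinatorial distance satisfies
\[
d(\cC_0,\cC_1) \;=\; 2(m\wedge m')\bigl(n/K - m\wedge m'\bigr) \;=\; \Theta(n),
\]
since $K$ is bounded and the setting of Corollary \ref{col-c2} implicitly treats $m\wedge m'$ as bounded (otherwise the regime $nI(p,q)<c$ trivializes). In particular, there is a constant $C>0$ with $d(\cC_0,\cC_1)\le Cn$, so the assumption $\limsup_{n\to\infty} nI(p,q)<c$ immediately yields $\limsup_{n\to\infty} d(\cC_0,\cC_1) I(p,q) \le Cc$, which can be made as small as desired by shrinking $c$. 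This verifies condition (1) of Theorem \ref{lb-sec}.

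Next, I would check the remaining hypotheses of Theorem \ref{lb-sec}. The containment $\cC_0,\cC_1\subseteq \cK^n$ is built into the definition in \eqref{eq:c0ex2}, and the bound $0<q<p\le 1-\delta$ is assumed. The condition $\lim_{n\to\infty} d(\cC_0,\cC_1) p = \infty$ follows because $d(\cC_0,\cC_1)\asymp n$ and $np\to\infty$ by hypothesis; multiplying gives $d(\cC_0,\cC_1)p\gtrsim np\to\infty$. At this point all the preconditions of Theorem \ref{lb-sec} are in place.

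Finally, invoking Theorem \ref{lb-sec} (condition 1) yields $\liminf_{n\to\infty} r(\cC_0,\cC_1)\ge 1/2$, as claimed. No step here is really the ``hard part,'' since Theorem \ref{lb-sec} does all the heavy lifting; the only thing to be careful about is choosing $c$ in the statement of the corollary small enough so that $Cc$ falls under the ``sufficiently small constant'' threshold required by Theorem \ref{lb-sec}, and confirming that the regime $m\wedge m'=O(1)$ is the intended interpretation (otherwise one would fall back on condition (2) of Theorem \ref{lb-sec} using $N(B_{z_0},0)=1$ from Proposition \ref{prop: lb-case study-pac num}, which already forces the lower bound to be meaningful only when $d(\cC_0,\cC_1)I(p,q)$ is bounded anyway).
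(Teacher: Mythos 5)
Your proposal is correct and follows the same route as the paper: since Proposition \ref{prop: lb-case study-pac num} gives $N(B_{z_0},0)=1$ for Example \ref{ex:group}, only condition (1) of Theorem \ref{lb-sec} is usable, and the corollary is obtained exactly by the bookkeeping you describe ($d(\cC_0,\cC_1)\lesssim n$ so $nI(p,q)<c$ implies $d(\cC_0,\cC_1)I(p,q)$ is sufficiently small, while $np\to\infty$ gives $d(\cC_0,\cC_1)p\to\infty$). Your observation that $m\wedge m'=O(1)$ is implicitly needed for $d(\cC_0,\cC_1)=\Theta(n)$ is a fair reading of the paper's intended regime and does not change the argument.
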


\section{General Framework for  Uneven Community Sizes}\label{sec: general framework}
In this section, we generalize our theory to the community property tests when  the community sizes in $\cC_0$ and $\cC_1$ are not necessarily  even, e.g., Example \ref{ex:num}. 
For any $z \in \cC_0 \cup \cC_1$, denote the community size $n_k(z) = |\{z(i)=k~|~ i \in [n]\}|$ for $k \in [K]$.
Let 
\begin{equation}\label{eq:ck}
      c_K = \max_{z\in \cC_0 \cup \cC_1}\max_{1 \le k \le K}|n_k(z) - n/K|.
\end{equation}
When the community sizes are even, we have $c_K = 0$. In this section, we consider the cases when $c_K$ could be larger than zero.
We will show that
the shadowing bootstrap method in Section \ref{mtd} can be applied to test the uneven community property as well. The information-theoretic lower bound is also similar to the one in Section \ref{lwrbd}. 

\subsection{General Symmetric Community Properties}

For the uneven community class, we still need some symmetry property for the assignments in $\cC_0$ and $\cC_1$. 
When community sizes are even, Definition \ref{def:cluster class} depicts the symmetry via the representative node set $\cN$ and the representative assignment $\tilde z$. However, for many community properties of interest, e.g., the community size test in Example \ref{ex:num}, we cannot find such $\cN$ and $\tilde z$. In Example \ref{ex:num}, we are interested in testing the community size and thus there is no representative nodes.
See Figure \ref{fig:exp} for  illustration.

Therefore, we define the following generalized symmetric community property pair.
\begin{definition}[Generalized symmetric community property pair] \label{def: sym} We say two disjoint community properties $\cC_0$ and $\cC_1$ is a {\it generalized symmetric property pair} if for any $z, z' \in \cC_0$, there exist permuations $\sigma \in S_K$ and $\tau \in S_n$ such that 
\begin{enumerate}
    \item $\tau \circ \sigma (z) := (\sigma(z(\tau(1))), \ldots, \sigma(z(\tau(n)))) = z'$ and
    \item $\cC_1$ is also closed under such transform $\tau\circ\sigma$, i.e., for any $z'' \in \cC_1$, $\tau\circ\sigma (z'') \in \cC_1 $.
\end{enumerate}
\end{definition}

Definition \ref{def: sym} generalizes the concept of symmetric community property in  Definition \ref{def:cluster class} via introducing the permutation transform. We can check that Examples \ref{ex:same} and \ref{ex:group} are still  symmetric by Definition \ref{def: sym}. See Figure \ref{fig:exp}(a) for an example of choosing $\sigma$ and $\tau$. On the other hand, the community sizes properties
\begin{equation}\label{eq:c0ex3}
    \cC_0 = \{z \in [K]^n: \text{all community sizes } = n/K\} \text{ and } \cC_1 = \cC_0^c,
\end{equation}
are also symmetric by Definition \ref{def: sym} but not Definition \ref{def:cluster class}. See Figure \ref{fig:exp}(b) for illustration.
 In fact, the following proposition shows that Definition \ref{def:cluster class} is a special case of Definition \ref{def: sym}. 

\begin{proposition}\label{prop:sym}
If $\cC_0, \cC_1 \subseteq \cK^n$ satisfy Assumption \ref{asmp:symtest}, then $\cC_0$ and $\cC_1$ is a generalized symmetric property pair. Moreover, the property pairs in \eqref{eq:c0ex1}, \eqref{eq:c0ex2} and \eqref{eq:c0ex3} are generalized symmetric property pairs.
\end{proposition}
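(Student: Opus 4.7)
The plan is to establish the proposition in two stages: first, show that Assumption \ref{asmp:symtest} implies the generalized symmetry of Definition \ref{def: sym}; second, verify the claim for the three concrete property pairs \eqref{eq:c0ex1}, \eqref{eq:c0ex2}, and \eqref{eq:c0ex3}. The overall strategy is purely constructive—given any $z, z' \in \cC_0$, I would exhibit an explicit pair $(\sigma,\tau) \in S_K \times S_n$ witnessing $\tau\circ\sigma(z) = z'$, and then verify the closure property of $\cC_1$ by keeping $\tau$ supported on $\cN^c$.

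For the first part, I would start by extracting the label permutation $\sigma$. Assumption \ref{asmp:symtest} gives $\sigma_1, \sigma_2 \in S_K$ with $\sigma_1(z_\cN) = \tilde z_\cN$ and $\sigma_2(z'_\cN) = \tilde z_\cN$, so $\sigma := \sigma_2^{-1}\circ \sigma_1$ satisfies $\sigma(z_\cN) = z'_\cN$. Next I would construct $\tau \in S_n$ by taking $\tau$ to be the identity on $\cN$, and on $\cN^c$ defining it community by community: for each label $k$, choose any bijection from $\{i \in \cN^c : z'(i) = k\}$ to $\{j \in \cN^c : \sigma(z(j)) = k\}$. Such a bijection exists because $\sigma(z),z' \in \cK^n$ both have every community of size $n/K$, and $\sigma(z)$ and $z'$ already agree on $\cN$, forcing the remaining counts on $\cN^c$ to match. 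By construction $\sigma(z(\tau(i))) = z'(i)$ for all $i$, i.e., $\tau\circ\sigma(z) = z'$. For the closure requirement, I would observe that since $\tau$ fixes $\cN$ pointwise, any $z'' \in \cC_1$ satisfies $(\tau\circ\sigma(z''))_\cN = \sigma(z''_\cN) \simeq z''_\cN$; Definition \ref{def:cluster class}'s closure of $\cC_1$ under $S_K$-permutations on $\cN$ then forces $\tau\circ\sigma(z'') \in \cC_1$.

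For the second part, the pairs \eqref{eq:c0ex1} and \eqref{eq:c0ex2} have already been checked to satisfy Assumption \ref{asmp:symtest} in the text following Assumption \ref{asmp:symtest}, so the first part applies directly. For \eqref{eq:c0ex3}, which does not fit Definition \ref{def:cluster class}, I would argue directly: take $\sigma$ to be the identity in $S_K$, and let $\tau$ be any bijection that sends the community-$k$ nodes of $z'$ onto the community-$k$ nodes of $z$ for every $k$, which exists because both $z,z' \in \cC_0 = \cK^n$ have every community of size $n/K$. Then $\tau\circ\sigma(z) = z'$ holds by construction, and $\cC_1 = \cC_0^c$ is closed under $\tau\circ\sigma$ because label- and index-permutations preserve the multiset of community sizes, so any violation of the ``all sizes equal to $n/K$'' condition is preserved. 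The main obstacle is essentially a careful bookkeeping issue—ensuring that $\tau$ can be chosen to fix $\cN$ pointwise while still matching sizes on $\cN^c$ (so that the closure argument for $\cC_1$ via Definition \ref{def:cluster class} goes through); this reduces to the uniform-size property of $\cK^n$ and requires no nontrivial estimate.
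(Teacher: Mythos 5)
Your proposal is correct and follows essentially the same route as the paper's proof: construct $\sigma$ from the equivalence of $z_\cN$ and $z'_\cN$ to $\tilde z_\cN$, extend to a node permutation $\tau$ that fixes $\cN$ pointwise and matches the remaining (even) community counts on $\cN^c$, and deduce closure of $\cC_1$ from its invariance under label permutations on $\cN$; the treatment of \eqref{eq:c0ex3} via the identity label permutation and size-preservation of $\tau$ is also identical. Your version is slightly more explicit (the factorization $\sigma=\sigma_2^{-1}\circ\sigma_1$ and the community-by-community bijection), but there is no substantive difference.
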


We defer the proof of the proposition to Appendix~\ref{sec: proof of prop 6.1}. In Figure \ref{fig:exp}, we show how to choose concrete permutation transforms $\sigma$ and $\tau$ for Examples~\ref{ex:same} and~\ref{ex:num}.

	\begin{figure}[htbp]
	\centering
   \begin{tabular}{cc}
		\includegraphics[width=0.4\textwidth]{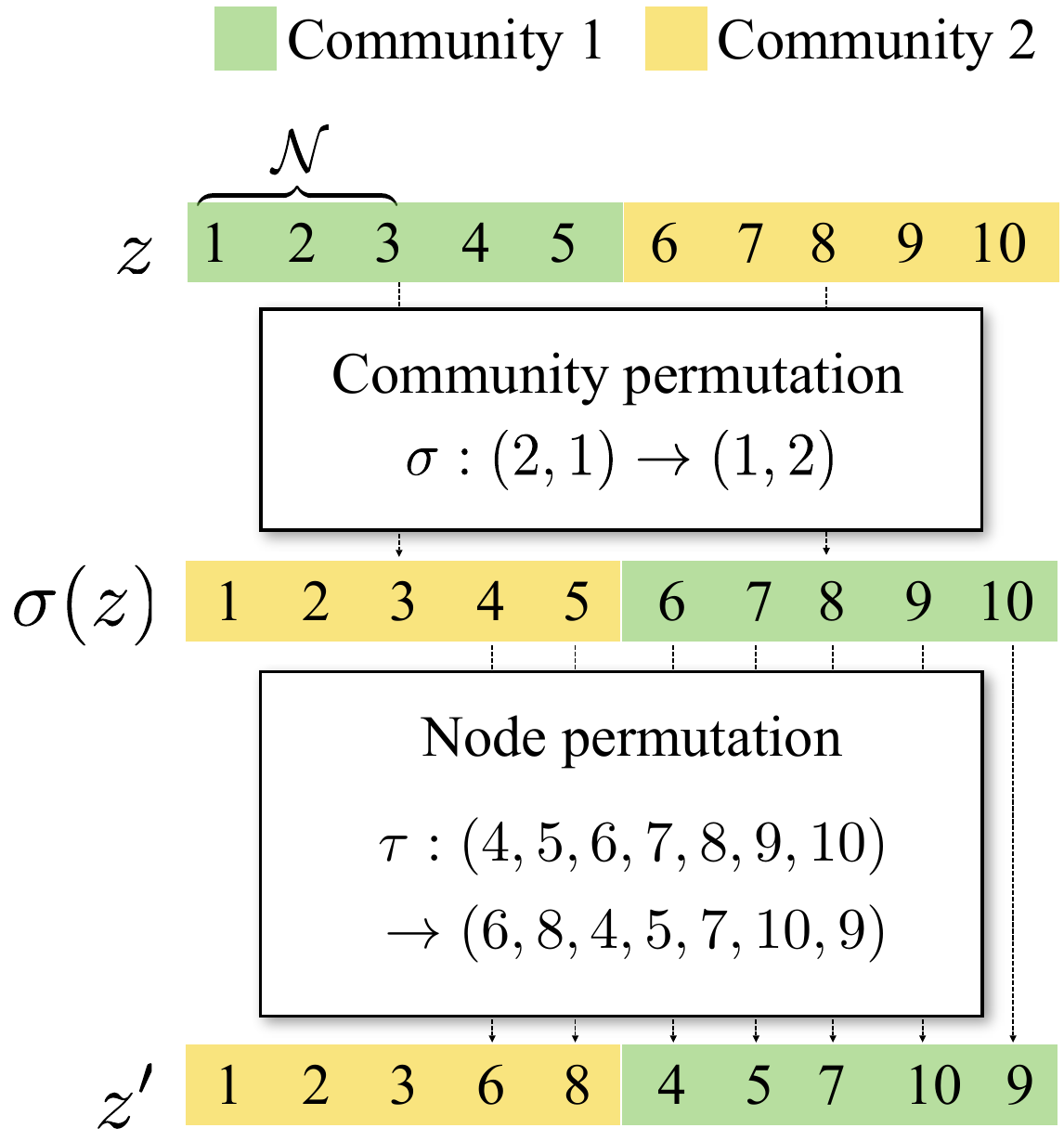}
		&\includegraphics[width=0.4\textwidth]{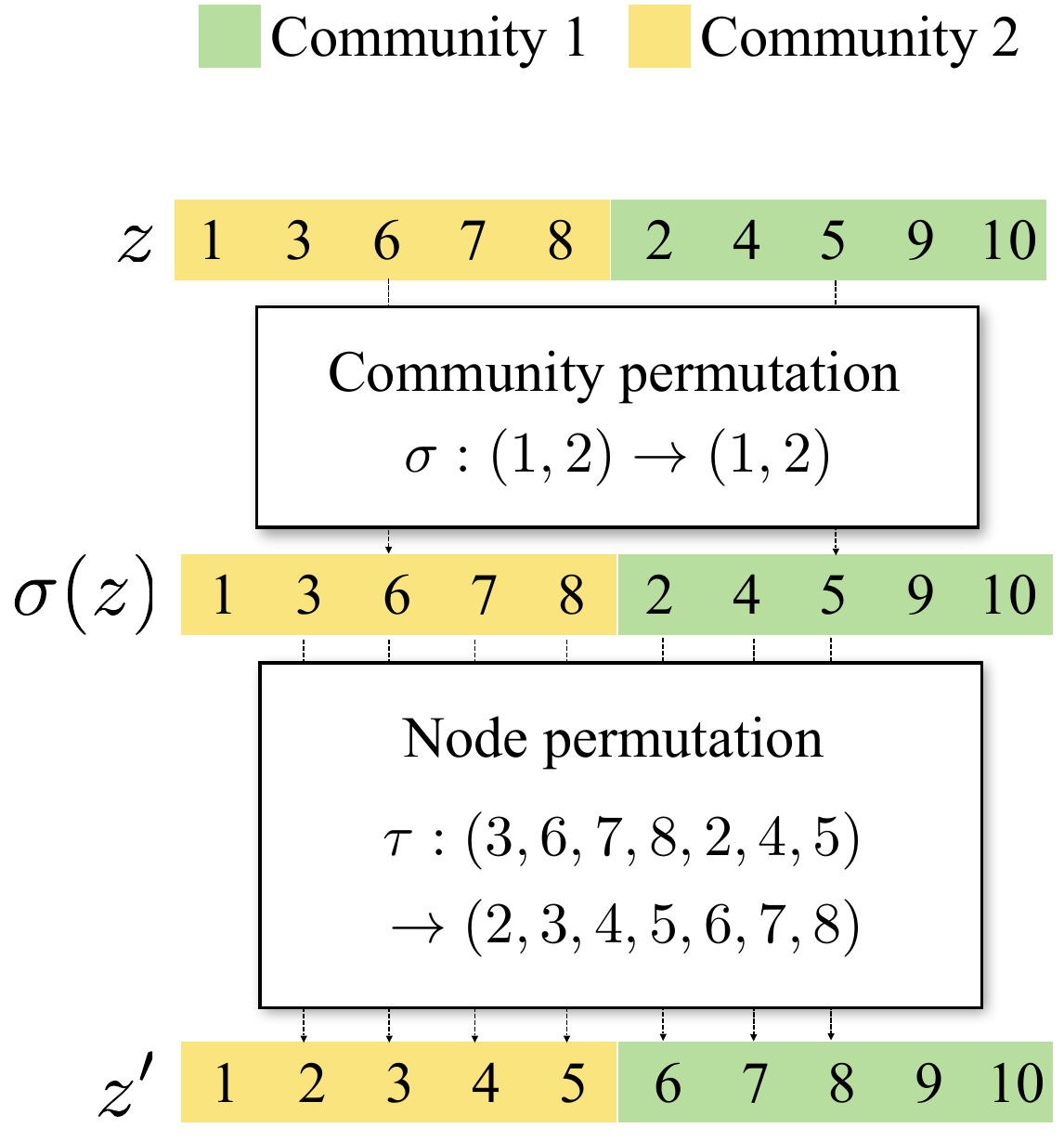}\\
				(a) Example \ref{ex:same} & (b) Example \ref{ex:num}	
	\end{tabular}
	\caption{Permutation of null assignments in Example~\ref{ex:same} and Example~\ref{ex:num}}\label{fig:exp} 
\end{figure}

\subsection{Shadowing Bootstrap for General Case}

We now generalize the testing method proposed in Section~\ref{mtd} to the uneven case. A key step is to generalize the boundary $B_{z_0}$ in Definition \ref{def:boundary}. Recall that for the even case, our insight is that the statistic $L$ in \eqref{eq:T} taking the supremum over $\cC_1$ is asymptotically equal to the $L_0$ in \eqref{eq:T0} taking the supremum over  $B_{z^*}$, which is much smaller than $\cC_1$.
Similar insight applies to the uneven case using  the following generalized definition of boundary.
\begin{definition}\label{def:boundary-general}
	For a given $z_0 \in \cC_0$, we define the boundary centered at $z_0$ with radius $r$ as
	$$B_{z_0} (r) = \{z \in \cC_1|d(z_0,z) \leq r \}.$$
\end{definition}

We illustrate the two types of boundary in Figure~\ref{fig:bz0_r1}. From  Figure~\ref{fig:bz0_r1}(a), we can see that $B_{z_0} = B_{z_0}(d(\cC_0,\cC_1))$. Therefore, Definition \ref{def:boundary-general} is a generalization of Definition \ref{def:boundary}. For the uneven case, $L_0$ is no longer asymptotically equal to $L$.  We need to enlarge $B_{z_0}$ to $B_{z_0} (r)$ for some $r > d(\cC_0,\cC_1)$ and modify the statistic $L_0$ in \eqref{eq:T0} by taking the supremum over $B_{z^*} (r)$.

\begin{figure}[htpb]
		\centering
		\begin{tabular}{cc}
			\includegraphics[height=0.3\textwidth]{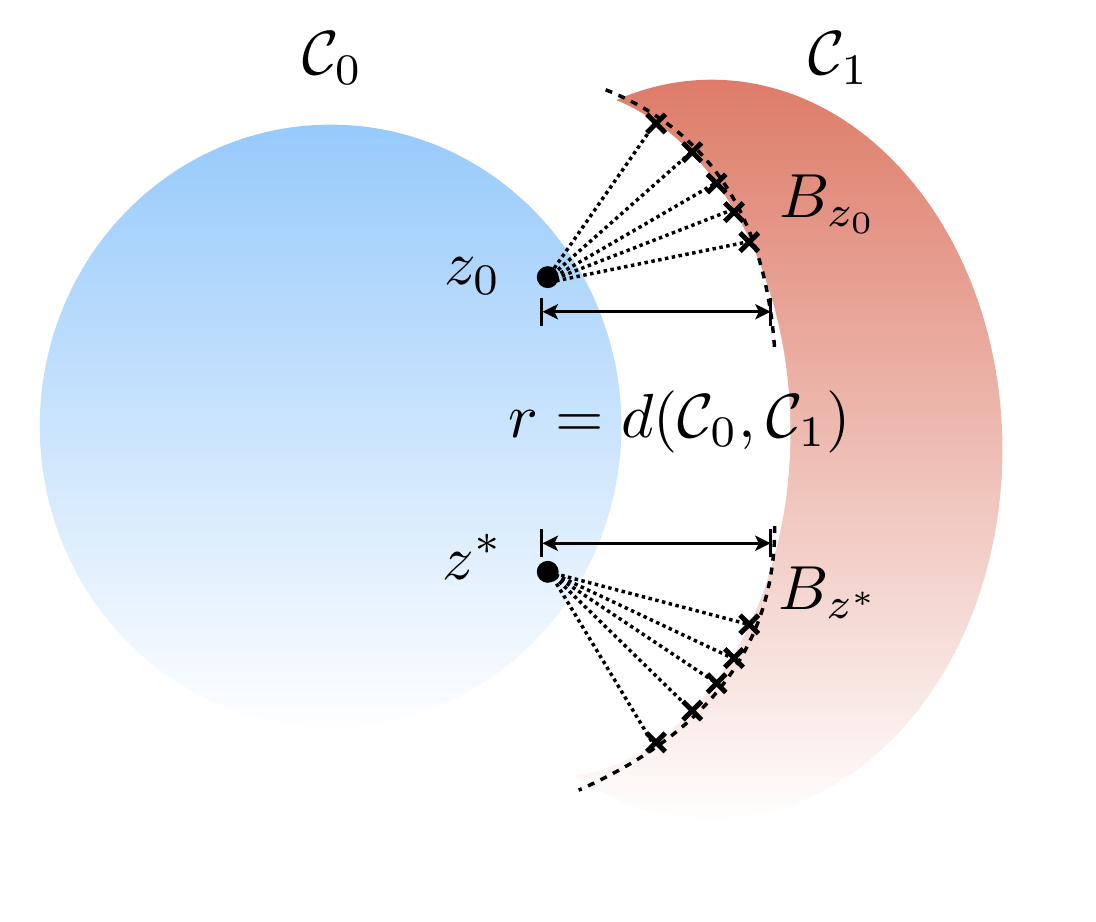}
			&\includegraphics[height=0.3\textwidth]{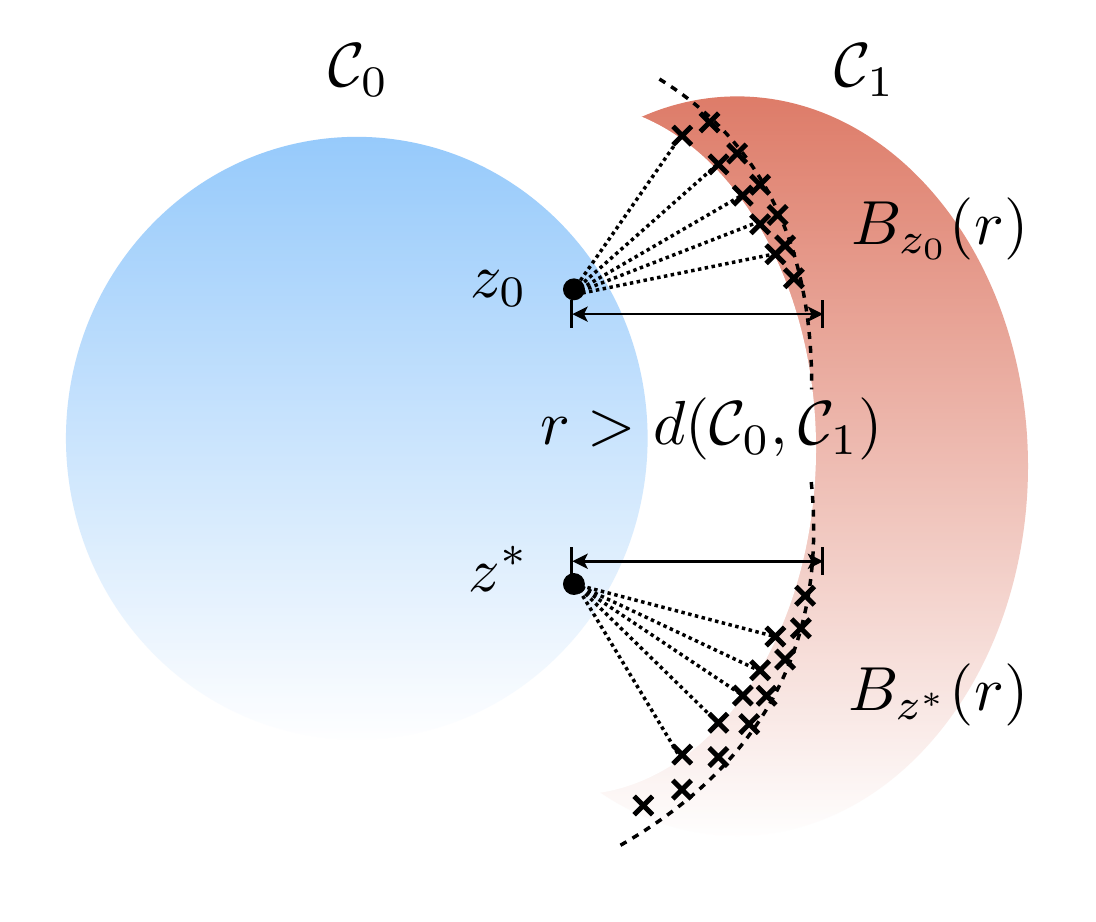}\\
			(a) Boundary $B_{z_0}$ in Definition \ref{def:boundary} & (b) Generalized boundary $B_{z_0}(r)$
		\end{tabular}
		\caption{The boundary $B_{z_0}$ defined previously for even cases is in essence a ball centered at $z_0$ with radius $r= d(\cC_0,\cC_1)$}\label{fig:bz0_r1}
	\end{figure}

In fact, we can still use the shadowing bootstrap method in Section \ref{mtd} to the uneven case. All procedures are exactly same as Section \ref{mtd} except that we only need to replace the bootstrap statistic $W_n$ in \eqref{eq:Wn} by
\begin{equation}\label{eq:Wn-g}
     W_n = \sup_{z \in B_{z_0}(r)}\sum_{1\le i<j \le n}\big( {\hat{\Ab}}_{ij} - \mathbb{E}_{\hat p, \hat q} ({\hat{\Ab}}_{ij})\big) \big( \mathbbm{1}[(i,j) \in \mathcal{E}_2(z_0,z)] -\mathbbm{1}[(i,j) \in \mathcal{E}_1(z_0,z)]\big)e_{ij},
\end{equation}
where $r$ is a tuning parameter to be specified in the following theorem.

	\begin{theorem}\label{t1-mtd-g}
Suppose $\cC_0$ and $\cC_1$ are generalized symmetric community property pair and $c_K = O(1)$. Suppose $d(\mathcal{C}_0, \mathcal{C}_1) = o(n^{c_1})$ for some constant $c_1 < 2$, and $1/ \rho_n =o(n^{1-c_2})$ for some constant $c_2>0$. We choose the radius $r$ in \eqref{eq:Wn-g} as  $r \geq r_K:= d(\cC_0,\cC_1)+{c_K^2{{p} K}}/(2({p}-{q}))$ and $r=d(\cC_0,\cC_1) + O(1)$. If  for any $z_0 \in \cC_0$, we have $|B_{z_0}(r)| = O(n^{c_0})$ for some positive constant $c_0$, then $$\lim_{n \rightarrow \infty}\sup_{z^* \in \cC_0} \mathbb{P}(\hat\LRT \geq q_{\alpha}) =\alpha \text{ and }
\lim_{n \rightarrow \infty} \sup_{z^* \in \cC_0} \mathbb{P}(\text{reject } {\rm H}_0) = \alpha.
$$
Moreover, if $d(\cC_0,\cC_1)I(p,q)= \Omega(n^{\varepsilon})$ for some arbitrarily small constant $\varepsilon>0$, we have
\[
\lim_{n \rightarrow \infty} \inf_{z^* \in \cC_1} \mathbb{P}(\text{reject } {\rm H}_0) = 1.
\]
	\end{theorem}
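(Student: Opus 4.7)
The plan is to extend the argument that proves Theorem \ref{t1-mtd} by carefully tracking how the enlarged boundary $B_{z_0}(r)$ of Definition \ref{def:boundary-general} replaces the exact boundary $B_{z_0}$, together with the weaker symmetry supplied by Definition \ref{def: sym}. The strategy divides into four pieces: (i) a sharpened LRT decomposition that localizes the likelihood maximizer inside $B_{z^*}(r)$; (ii) a generalized shadowing symmetry that makes the bootstrap covariance permutation-invariant across $z_0 \in \cC_0$; (iii) a high dimensional Gaussian multiplier bootstrap bound for the quantile of the localized statistic; and (iv) a power lower bound under the combinatorial-probabilistic trade-off.

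First I would mirror the decomposition \eqref{eq:lrt-lead} to write
\[
\LRT = \sup_{z \in B_{z^*}(r)} g(p,q)\Big(\sum_{(i,j)\in \cE_2(z^*,z)} \Ab_{ij} - \sum_{(i,j)\in \cE_1(z^*,z)} \Ab_{ij}\Big) + \op(1).
\]
The new work is showing that no $z \in \cC_1 \setminus B_{z^*}(r)$ can realize the supremum of the likelihood. The mean of the centered log-likelihood gap between any such $z$ and the closest point in $B_{z^*}(r_K)$ equals $g(p,q)(p-q)(d(z^*,z)-d(\cC_0,\cC_1))$ up to a correction of order $c_K^2 p K$ coming from the size mismatch; the radius threshold $r_K = d(\cC_0,\cC_1)+c_K^2 p K/(2(p-q))$ is chosen precisely so that this mean becomes a definite negative drift. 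A Bernstein bound applied edge-wise, followed by a union bound that exploits the polynomial cardinality assumption $|B_{z^*}(r)| = O(n^{c_0})$ together with the overall combinatorial bound on $\cC_1$, then rules out all $z \notin B_{z^*}(r)$ with high probability.

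Second I would generalize Lemma \ref{lm2-t1}. For any $z_0, z_0' \in \cC_0$ the generalized symmetric pair condition gives $(\sigma,\tau) \in S_K \times S_n$ with $\tau\circ\sigma(z_0)=z_0'$ and $\tau\circ\sigma(\cC_1)=\cC_1$. The map $z \mapsto \tau\circ\sigma(z)$ therefore restricts to a bijection $B_{z_0}(r) \to B_{z_0'}(r)$ that preserves the misaligned edge sets $\cE_1,\cE_2$ up to relabeling of edges by $\tau$. Because $\Cov(\Ab_{ij},\Ab_{kl})$ depends only on whether the endpoints lie in the same community under the true assignment, this relabeling yields equality of $\Cov(\bL_{z_0}(r))$ and $\Cov(\bL_{z_0'}(r))$ up to a permutation, which is the exact extension of Lemma \ref{lm2-t1} needed here. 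Then an argument identical to Proposition \ref{prop: eq-quant}, namely a high dimensional Gaussian comparison over processes with identical covariance structure, shows that the quantile of the shadowing statistic $L_0(z_0;r)$ agrees asymptotically with that of $L_0(z^*;r)$.

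Third I would invoke Proposition 2.1 of \cite{chernozhukov2013gaussian} to couple $L_0(z^*;r)$ to a Gaussian maximum and then to its multiplier bootstrap counterpart $W_n$ in \eqref{eq:Wn-g}. The scaling requirement $\log |B_{z_0}(r)|/n^{1/5} = o(1)$ is implied by $|B_{z_0}(r)| = O(n^{c_0})$; the envelope and variance bounds use $p = \rho_n \lambda_1$ with $1/\rho_n = o(n^{1-c_2})$. Combining this approximation with the shadowing step gives $\PP(\LRT \geq q_\alpha) \to \alpha$ uniformly over $z^* \in \cC_0$. For the power, under the alternative the mean of the leading term exceeds $g(p,q)(p-q)d(\cC_0,\cC_1) \asymp I(p,q)d(\cC_0,\cC_1)$ in magnitude, whereas the bootstrap quantile $q_\alpha$ is of order $g(\hat p,\hat q)\sqrt{\rho_n d(\cC_0,\cC_1)\log n}$ above its mean estimate $g(\hat p,\hat q)\hat\mu_0$; the hypothesis $d(\cC_0,\cC_1)I(p,q) = \Omega(n^\varepsilon)$ ensures the signal dominates and rejection occurs with probability tending to one.

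The main obstacle will be step one: carrying out the Bernstein plus union-bound argument to localize the maximizer inside $B_{z^*}(r)$ when the community sizes are uneven. The difficulty is that in the uneven case the mean of the log-likelihood ratio has a non-trivial contribution from the size deficit that is not present when $c_K=0$, so the radius must be inflated by exactly $c_K^2 pK/(2(p-q))$; getting the constants right and showing that enlarging $B_{z^*}$ to $B_{z^*}(r)$ with $r = d(\cC_0,\cC_1) + O(1)$ still keeps $|B_{z^*}(r)|$ polynomial in $n$ is where the delicate combinatorial-probabilistic balance lives.
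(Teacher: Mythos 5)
Your proposal follows the same architecture as the paper's proof: localize the likelihood maximizer to the ball $B_{z^*}(r)$, establish a generalized shadowing symmetry so that the covariance of the boundary process is permutation-invariant across $z_0\in\cC_0$ (the paper's Lemma \ref{lm2-t1-g}), apply the Gaussian comparison and multiplier bootstrap results of \cite{chernozhukov2013gaussian}, and obtain power by comparing the signal $d(\cC_0,\cC_1)I(p,q)$ against the quantile's fluctuation. The one place you take a genuinely different route is the localization step: you propose an edge-wise Bernstein bound plus a union bound over $\cC_1\setminus B_{z^*}(r)$, whereas the paper (Lemma \ref{lm1-t1}) adapts the profile-likelihood consistency machinery of \cite{wang2017likelihood}, working with the count statistics $\bO_{a,b}(z)$, the confusion matrix, and a Lipschitz population objective $F$, killing far-away assignments via the set $J_{\delta_n}$ (using $K^n e^{-\mu_n\delta_n/2}\to 0$) and handling near-maximizers with a local perturbation argument around $z_\perp$. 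Your route can work under the paper's parameterization ($\lambda_1,\lambda_2$ constant, so $n\rho_n\gg\log n$), but be aware that the union bound must be taken over equivalence classes modulo label permutations in $S_K$ — raw assignments with many misassigned nodes can be edge-wise close to the boundary after relabeling — which is exactly the subtlety the paper's $\bar z=\min_{\sigma(z)}|\sigma(z)-z_\perp|$ device addresses. Two further components of the paper's proof that your plan glosses over: (i) the bootstrap statistic $W_n$ is built from a freshly simulated $\hat\Ab\sim\cM(n,K,\hat p,\hat q,z_0)$, so one must bound the covariance discrepancy $\Delta_0=O_P(1/\sqrt{n^2\rho_n})$ between the observed-data process and the bootstrap process and invoke the Gaussian comparison lemma; and (ii) the consistency rate $|\hat\lambda_i-\lambda_i|=O(1/\sqrt{n^2\rho_n})$ is needed to control the errors $|\hat\sigma_0-\sigma_0|$ and $|\hat\mu_0-\mu_0|$ entering $q_\alpha$. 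Finally, note that the paper's power argument runs slightly differently from yours: under $z^*\in\cC_1$ the statistic itself is $O_P(\rho_n)$ while the estimated quantile $q_\alpha\to-\infty$ (since $\hat\mu_0=d(\cC_0,\cC_1)(\hat q-\hat p)$ is large and negative); the quantitative comparison is the same, but the signal shows up in the threshold rather than in the statistic.
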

We defer the proof of theorem to Appendix \ref{sec:lrt-proof}. 
The scaling assumptions in Theorem~\ref{t1-mtd-g} are similar to Theorem~\ref{t1-mtd}. The condition $|B_{z_0}(r)| = O(n^{c_0})$ for some $c_0>0$ is similar to Assumption \ref{asmp: sct}. We need $c_K$ in \eqref{eq:ck} to be bounded to prevent a specific community from being too large.  By the theorem, we need to choose $r \ge r_K:= d(\cC_0,\cC_1)+{c_K^2{{p} K}}/(2({p}-{q}))$, while $p, q, c_K$ are unknown. In practice, we suggest to choose the radius as $r = d(\cC_0,\cC_1)+C{\hat{p} K}/{(\hat{p}-\hat{q})}$ for some sufficiently large $C$. In fact, for many concrete examples, even though $r_K$ is unknown, we can directly construct $B_{z_0}(r_K)$. The following proposition shows how to construct $B_{z_0}(r_K)$  for Examples~\ref{ex:same}-\ref{ex:num}.  Moreover, it shows the conditions on $d(\cC_0,\cC_1)$ and $|B_{z_0}(r_K)|$ in Theorem \ref{t1-mtd-g} are true for all these examples.

\begin{proposition}\label{prop:bz-cases}
For any $z_0 \in \cC_0$, $B_{z_0}(r_K)$ can be constructed as follows.
\begin{enumerate}
    \item Example~\ref{ex:same}: $B_{z_0}(r_K)$ is composed of all the assignments obtained from  reassigning one node of any $z_0 \in \cC_0$ in $[m]$ to a different community. See Figure \ref{fig:exp uneven move}(a) for an illustration.  Moreover, we have $d(\cC_0,\cC_1) = n/K$ and $|B_{z_0}(r_K)| = m(K-1)$. 
    \item Example~\ref{ex:group}: Suppose $m \wedge m' \le c_K$, $B_{z_0}(r_K)$ is composed of all the assignments obtained from reassigning nodes $m+1,\ldots, m+m'$ in any $z_0 \in \cC_0$ collectively to a different community. Moreover, we have $d(\cC_0,\cC_1) = n(m \wedge m')/K$ and $|B_{z_0}(r_K)| = K-1$. Suppose $m \wedge m' > c_K$, $B_{z_0}(r_K)$ is composed of all the assignments obtained from exchanging label of nodes $m+1,\ldots, m+m'$ collectively with another $m'$ nodes from a different community for any $z_0 \in \cC_0$. See Figure \ref{fig:exp uneven move}(b) for an illustration. Moreover, we have $d(\cC_0,\cC_1) = 2m \wedge m'(n/K-m \wedge m')$ and $|B_{z_0}(r_K)| = O(K(n/K)^{m \wedge m'})$.
    \item Example~\ref{ex:num}: For an arbitrary $z_0 \in \cC_0$, $B_{z_0}(r_K)$ can be constructed by reassigning any node of $z_0$ to a different community. See Figure \ref{fig:exp uneven move}(c) for an illustration. Moreover, we have $d(\cC_0,\cC_1) = n/K$ and  $|B_{z_0}(r_K)| = n(K-1)$.
\end{enumerate}
\end{proposition}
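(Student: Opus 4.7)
The plan is to treat each of the three examples separately. In each case I would (i) identify the cheapest modification that takes a generic $z_0 \in \cC_0$ into $\cC_1$, (ii) compute $n_1(z_0,z_1)$ and $n_2(z_0,z_1)$ for such a modification to read off $d(\cC_0,\cC_1)$, and then (iii) verify that the slack $r_K - d(\cC_0,\cC_1) = c_K^2 p K/(2(p-q))$ is too small to admit any richer assignment. Under the standing reparameterization $p = \rho_n \lambda_1$, $q = \rho_n \lambda_2$ with $\lambda_1 > \lambda_2$ positive constants, one has $p/(p-q) = O(1)$, so when $c_K = O(1)$ the slack is only $O(K) = o(n/K)$, whereas each additional relabeled node contributes an extra $\Theta(n/K)$ misaligned edges by direct edge counting. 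This gap is the workhorse of the proof.

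For Example \ref{ex:same}, the cheapest modification is to relabel a single $s \in [m]$ from $k_1 = z_0(s)$ to some $k_2 \neq k_1$, giving $n_1 = n_{k_1}(z_0) - 1$ and $n_2 = n_{k_2}(z_0)$. Optimizing over the choice of $z_0 \in \cC_0$, which is possible because the sizes are allowed to drift by $c_K \geq 1$, one obtains $d(\cC_0,\cC_1) = n/K$, and the count $|B_{z_0}(r_K)| = m(K-1)$ follows from $m$ choices of $s$ times $K-1$ destinations. For Example \ref{ex:num}, $\cC_0 = \cK^n$ forces every $z_0$ to have exact sizes $n/K$; relabeling any single $s \in [n]$ to any $k_2 \neq z_0(s)$ lies in $\cC_1$ and produces $n_1 = n/K - 1$, $n_2 = n/K$, hence $d = n/K$ and $|B_{z_0}(r_K)| = n(K-1)$. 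For Example \ref{ex:group} I would split on whether $m \wedge m' \leq c_K$. In the small regime the block $S_{m'}$ (WLOG $m' \leq m$) can be moved collectively into any other community without violating the $c_K$ tolerance, giving $n_2 = m' \cdot n_{k_2}(z_0) = m' n/K$ optimally, so $d(\cC_0,\cC_1) = m' n / K$ and $|B_{z_0}(r_K)| = K - 1$. In the large regime, size feasibility forces a collective swap between $S_{m'}$ and an $m'$-subset of another community, reproducing the even-case computation from Section \ref{sec:case}: $d = 2 m'(n/K - m')$ and $|B_{z_0}(r_K)| = (K-1)\binom{n/K + O(c_K)}{m'} = O(K(n/K)^{m'})$.

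The main obstacle is to show that no assignment obtained by a \emph{richer} operation, meaning additional relabeled nodes, nodes moved back and forth, or mixtures of moves and swaps, falls inside $B_{z_0}(r_K)$. My plan is to encode each $z_1 \in \cC_1$ by the multiset of node-level disagreements with $z_0$ and perform a worst-case bookkeeping: each additional node moved contributes $\Theta(n/K)$ new edges to at least one of $n_1, n_2$, whereas cancellations (e.g.\ one node moved into $k_2$ and another out of $k_2$) can offset at most $O(c_K^2)$ edges in each term. This is precisely why the paper chooses $r_K = d(\cC_0, \cC_1) + c_K^2 p K/(2(p-q))$: the $c_K^2$ term absorbs the cancellations while the multiplicative factor $pK/(p-q)$ calibrates to the bootstrap geometry underlying Theorem \ref{t1-mtd-g}, so the slack is exactly large enough to retain all distance-minimizing assignments yet too small to admit any that move an additional node.
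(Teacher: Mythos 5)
Your proposal is correct and follows essentially the same route as the paper: construct the minimal modification (single-node move, collective block move, or block swap depending on the example and on whether $m\wedge m'\le c_K$), compute $n_1\vee n_2$ by direct edge counting to get $d(\cC_0,\cC_1)$, and exclude richer assignments by noting that each additional relabeled node adds $\Theta(n/K)$ misaligned edges while the slack $r_K-d(\cC_0,\cC_1)=c_K^2pK/(2(p-q))=O(1)$. The paper's own proof is in fact terser on the exclusion step ("any other operation would increase $d(z_0,z_1)$ by at least $n/K-2$, which exceeds the constant level"), so your bookkeeping paragraph only makes explicit what the paper leaves implicit.
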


We defer the proof to Appendix~\ref{sec: proof of bz-g}. The construction of $B_{z_0}(r_K)$ is visualized in Figure \ref{fig:exp uneven move}. We also summarize the results in Table \ref{tab:rate-general}. 

 	\begin{figure}[htbp]
	\centering
   \begin{tabular}{ccc}
		\includegraphics[height=0.2\textwidth]{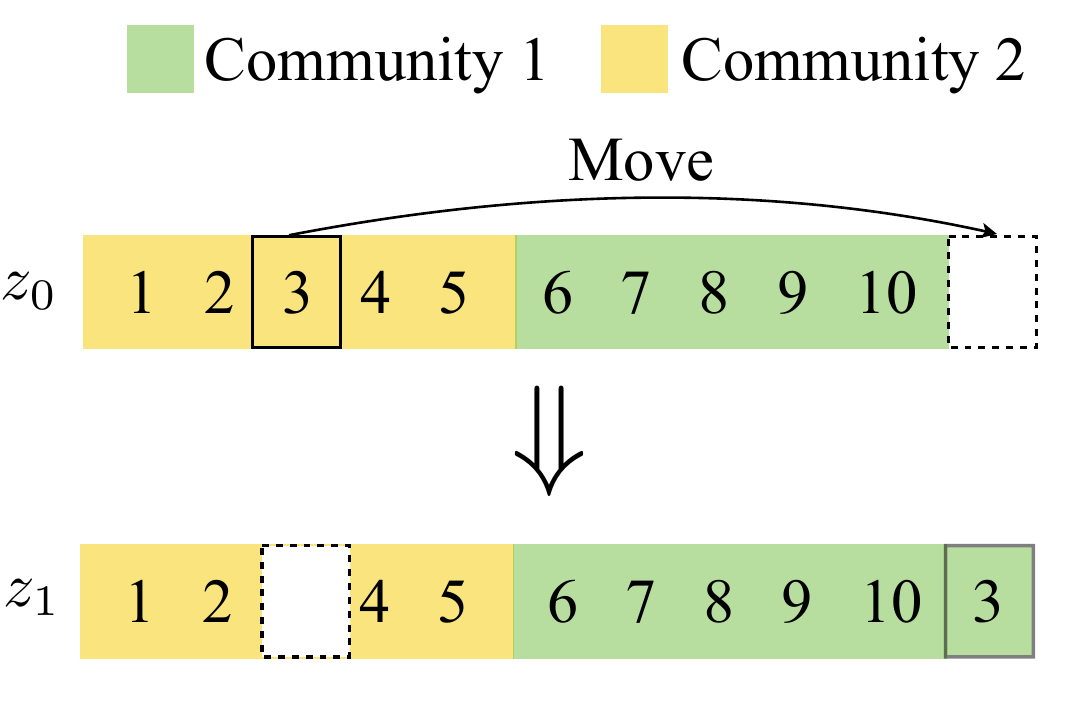}
		&\includegraphics[height=0.2\textwidth]{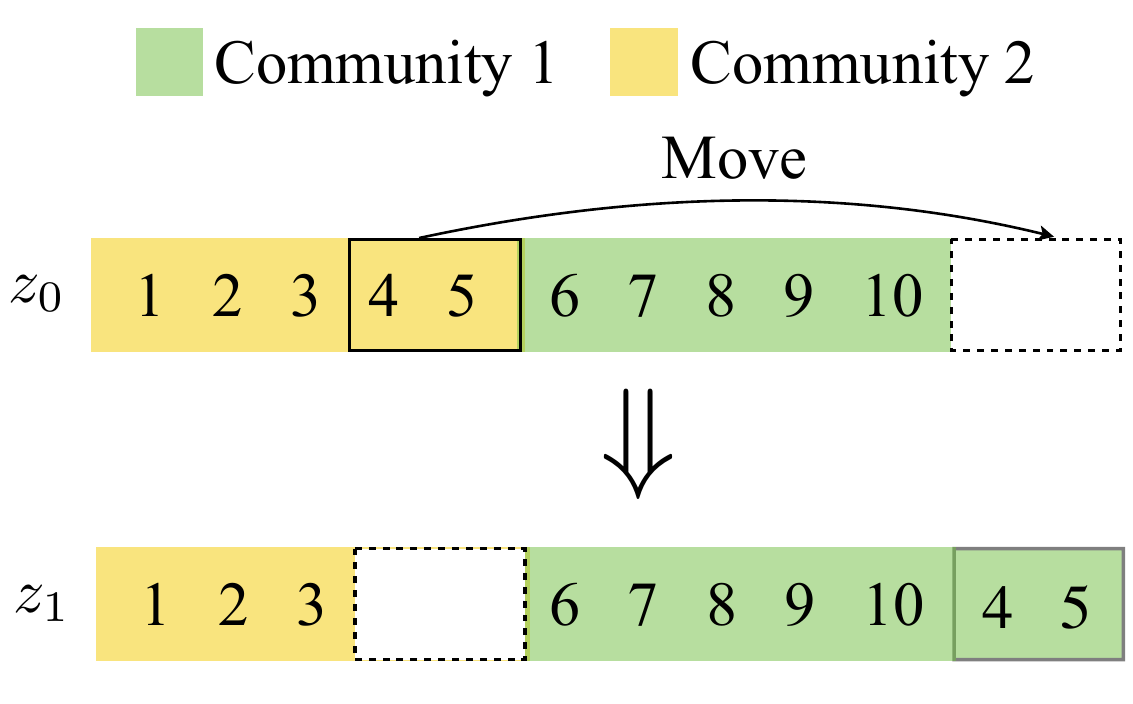}
		&\includegraphics[height=0.2\textwidth]{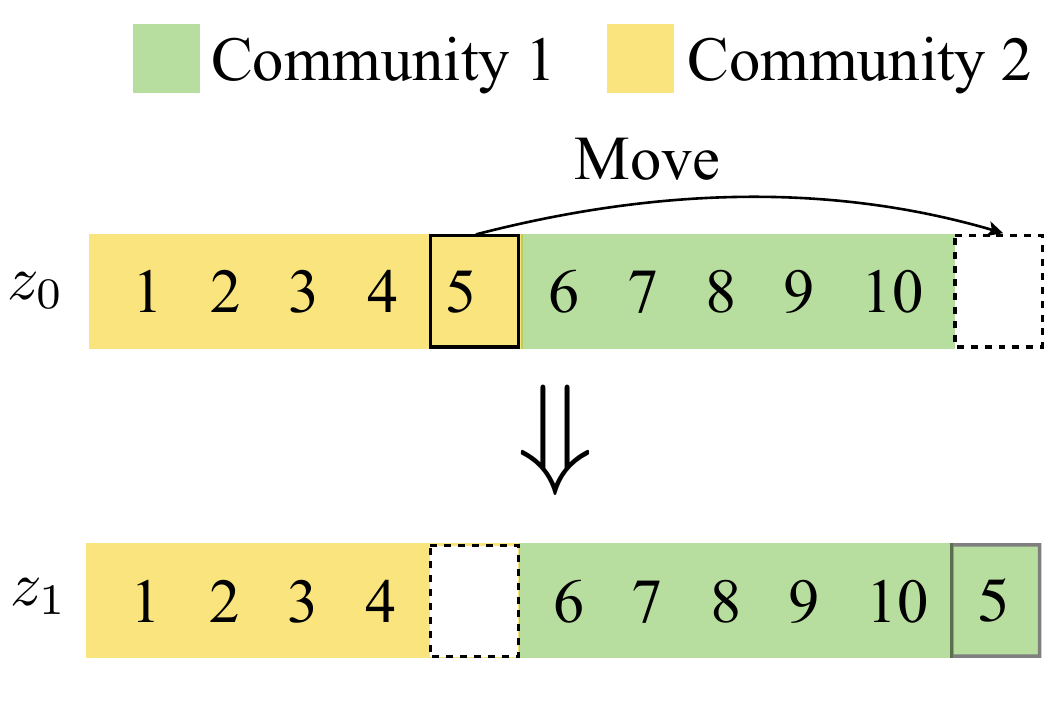}\\
				(a) Example \ref{ex:same} & (b) Example \ref{ex:group} & (c) Example \ref{ex:num}	
	\end{tabular}
	\caption{Construction of $B_{z_0}$ in Proposition \ref{prop:bz-cases}: (a) $\cC_0$ is that nodes $\{1,2,3 \}$ belong to the same community; (b) $\cC_0$ is that the nodes set $\{1,2,3\}$ and $\{4,5\}$ belong to the same community; (c) $\cC_0$ is that community 1 and community 2 have equal size of 5. }\label{fig:exp uneven move}
\end{figure}
 
 We therefore have the following corollary of Theorem \ref{t1-mtd-g}.
 	\begin{corollary}[Examples \ref{ex:same} -\ref{ex:num}]\label{case1-mtd-col-g}
		Suppose $1/ \rho_n =o(n^{1-c_2})$ for some constant $c_2>0$ and $c_K = O(1)$. We assume that $m\wedge m' =O(1)$ in Example \ref{ex:group}. For Examples \ref{ex:same} -\ref{ex:num}, with $B_{z_0}(r_K)$ constructed in Proposition~\ref{prop:bz-cases} our test for the hypothesis ${\rm H}_0: z^* \in \cC_0$ versus ${\rm H}_1: z^* \in \cC_1$ is honest, i.e.,
		$$\lim_{n \rightarrow \infty} \sup_{z^* \in \cC_0} \mathbb{P}(\text{reject } {\rm H}_0) = \alpha.
		$$
		Moreoever, if $I(p,q)n/K= \Omega(n^{\varepsilon})$ for some small positive constant $\varepsilon$, we have
		$$\lim_{n \rightarrow \infty} \sup_{z^* \in \cC_1} \mathbb{P}(\text{reject } {\rm H}_0) = 1.$$
	\end{corollary}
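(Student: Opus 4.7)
The plan is to apply Theorem \ref{t1-mtd-g} directly to each of Examples \ref{ex:same}--\ref{ex:num}, verifying its hypotheses one example at a time using Propositions \ref{prop:sym} and \ref{prop:bz-cases}. Two of the hypotheses of Theorem \ref{t1-mtd-g} are handed to us by assumption in the corollary itself, namely $c_K = O(1)$ and $1/\rho_n = o(n^{1-c_2})$. The remaining ingredients to be checked are: (i) that $(\cC_0,\cC_1)$ is a generalized symmetric community property pair, (ii) the scaling $d(\cC_0,\cC_1) = o(n^{c_1})$ for some constant $c_1 < 2$, and (iii) the polynomial cardinality bound $|B_{z_0}(r_K)| = O(n^{c_0})$ for some $c_0 > 0$.

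For (i), Proposition \ref{prop:sym} already asserts that each of the pairs \eqref{eq:c0ex1}, \eqref{eq:c0ex2}, \eqref{eq:c0ex3} corresponding to Examples \ref{ex:same}, \ref{ex:group}, \ref{ex:num} is a generalized symmetric property pair, so (i) is immediate. For (ii) and (iii), I would simply read off the explicit values furnished by Proposition \ref{prop:bz-cases}. In Example \ref{ex:same}, $d(\cC_0,\cC_1) = n/K$ and $|B_{z_0}(r_K)| = m(K-1) = O(n)$; in Example \ref{ex:num}, $d(\cC_0,\cC_1) = n/K$ and $|B_{z_0}(r_K)| = n(K-1) = O(n)$, using that $K$ is bounded; and in Example \ref{ex:group}, under the restriction $m \wedge m' = O(1)$, the relevant branch of Proposition \ref{prop:bz-cases} gives $d(\cC_0,\cC_1) = 2(m \wedge m')(n/K - m \wedge m') = O(n)$ and $|B_{z_0}(r_K)| = O(K (n/K)^{m \wedge m'}) = O(n^{c_0})$ with $c_0 = m \wedge m'$, which is bounded. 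In every case $d(\cC_0,\cC_1) \le C n$, so choosing any $c_1 \in (1,2)$ validates (ii), and $|B_{z_0}(r_K)|$ is polynomial in $n$, validating (iii).

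With all hypotheses of Theorem \ref{t1-mtd-g} confirmed, the honest conclusion $\lim_n \sup_{z^* \in \cC_0} \PP(\mathrm{reject}\ \mathrm{H}_0) = \alpha$ transfers verbatim. For the power statement, I would combine $d(\cC_0,\cC_1) \asymp n/K$ in each example with the corollary's assumption $I(p,q)\, n/K = \Omega(n^{\varepsilon})$ to obtain $d(\cC_0,\cC_1)\, I(p,q) = \Omega(n^{\varepsilon})$, which is precisely the hypothesis under which Theorem \ref{t1-mtd-g} yields $\lim_n \inf_{z^* \in \cC_1} \PP(\mathrm{reject}\ \mathrm{H}_0) = 1$. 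The only place that requires any genuine care is the $\asymp$ step in Example \ref{ex:group}: one needs $n/K - m \wedge m' \asymp n/K$, which follows exactly from the restriction $m \wedge m' = O(1)$ imposed in the corollary. Beyond this small check, the entire corollary is a bookkeeping exercise built on top of Theorem \ref{t1-mtd-g} and Propositions \ref{prop:sym} and \ref{prop:bz-cases}, so I do not anticipate any serious obstacle.
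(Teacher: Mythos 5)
Your proposal is correct and is essentially the paper's own derivation: the paper offers no separate proof of this corollary, treating it exactly as you do — as a direct application of Theorem \ref{t1-mtd-g} once Proposition \ref{prop:sym} supplies generalized symmetry and Proposition \ref{prop:bz-cases} supplies $d(\cC_0,\cC_1)\asymp n/K$ and the polynomial bound on $|B_{z_0}(r_K)|$. The only (harmless) imprecision is that in Example \ref{ex:group} the applicable branch of Proposition \ref{prop:bz-cases} is determined by whether $m\wedge m'\le c_K$ rather than by $m\wedge m'=O(1)$ alone, but both branches satisfy the required conditions, so the argument goes through either way.
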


\subsection{General Lower Bound}
We can also generalize the information-theoretic lower bound in Theorem \ref{lb-main} to the uneven case. Similar to the even case, we need to define packing number of $B_{z_0}(r)$, which follows the  same definition of $N\big (B_{z_0}, \varepsilon\big)$ in Definition \ref{def:pack}. We then have the lower bound of the general case as follows.

\begin{theorem}\label{lb-main-g}
 	Suppose $1/ \rho_n =o(n^{1-c_2})$ for some constant $c_2>0$, $p \le 1-\delta$ for some constant $\delta>0$ and $c_K=O(1)$. If there exists a $z_0 \in \cC_0$ and some $r=d(z_0,\cC_1) + O(1)$ such that  $\log N\big (B_{z_0}(r),\sqrt{d(z_0,\cC_1)} \big ) = O(\log n)$, and 
 	\begin{equation}\label{eq:cp-low-g}
 	    \limsup_{n \rightarrow \infty} \frac{d(z_0,\mathcal{C}_1)I(p,q)}{\log N\big (B_{z_0}(r),\sqrt{d(z_0,\cC_1)} \big )} < 1,
 	\end{equation}
	then $\liminf\limits_{n \rightarrow \infty} r(\cC_0, \cC_1) \geq 1/2$.
\end{theorem}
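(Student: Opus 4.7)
The plan is to extend the Le Cam packing argument used in the even-case proof of Theorem \ref{lb-main} to the enlarged boundary $B_{z_0}(r)$, invoking Definition \ref{def: sym} and the assumption $c_K=O(1)$ to control the departure from the uniform community-size setting. Fix the $z_0\in\cC_0$ guaranteed by the hypothesis, pick a maximal $\sqrt{d(z_0,\cC_1)}$-packing $\{z_1,\ldots,z_N\}\subseteq B_{z_0}(r)$ with $N=N(B_{z_0}(r),\sqrt{d(z_0,\cC_1)})$, and put the uniform prior $\bar\PP_1=N^{-1}\sum_{k=1}^N \PP_{z_k}$ on $\cC_1$. Since $\sup_{z\in\cC_0}\PP_z(\psi=1)\ge \PP_{z_0}(\psi=1)$ and $\sup_{z\in\cC_1}\PP_z(\psi=0)\ge \bar\PP_1(\psi=0)$, Le Cam's two-point inequality gives
\[
r(\cC_0,\cC_1)\;\ge\;1-\mathrm{TV}(\PP_{z_0},\bar\PP_1)\;\ge\;1-\tfrac{1}{2}\sqrt{\chi^2(\bar\PP_1\,\Vert\,\PP_{z_0})},
\]
so the task reduces to showing $\chi^2(\bar\PP_1\,\Vert\,\PP_{z_0})\to 0$ under condition \eqref{eq:cp-low-g}.

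Writing $L_k=d\PP_{z_k}/d\PP_{z_0}$, I would expand $\chi^2+1=N^{-2}\sum_{k,l}\EE_{z_0}[L_k L_l]$, and observe that edge independence makes each $\EE_{z_0}[L_k L_l]$ factor over edges. Only edges in $\cE_{1,2}(z_0,z_k)\cap\cE_{1,2}(z_0,z_l)$ contribute nontrivially, because on all other edges at least one of $L_k,L_l$ equals $1$ identically while the other has $\PP_{z_0}$-mean one. Moreover, an intersected edge cannot lie in $\cE_1$ for one index and $\cE_2$ for the other (this would force $z_0$ to both agree and disagree at its endpoints), so $z_k$ and $z_l$ assign the same within/between status on the intersection and each intersected edge contributes a Bernoulli factor that the standard $\mathrm{Ber}(p)$-vs-$\mathrm{Ber}(q)$ identity bounds by $\exp(I(p,q)(1+o(1)))$ in the regime $1/\rho_n=o(n^{1-c_2})$ and $p\le 1-\delta$. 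This yields
\[
\EE_{z_0}[L_k L_l]\;\le\;\exp\!\bigl(|\cE_{1,2}(z_0,z_k)\cap\cE_{1,2}(z_0,z_l)|\cdot I(p,q)(1+o(1))\bigr).
\]

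Splitting the double sum into diagonal and off-diagonal parts, the intersection has size at most $2r=2d(z_0,\cC_1)+O(1)$ on the diagonal (using $r=d(z_0,\cC_1)+O(1)$) and at most $\sqrt{d(z_0,\cC_1)}$ off-diagonal by the packing property, so
\[
\chi^2+1\;\le\;\frac{1}{N}\,e^{2d(z_0,\cC_1)I(p,q)(1+o(1))}\;+\;e^{\sqrt{d(z_0,\cC_1)}\,I(p,q)(1+o(1))}.
\]
Condition \eqref{eq:cp-low-g} gives $d(z_0,\cC_1)I(p,q)\le(1-\eta)\log N$ for some $\eta>0$, so the diagonal term is at most $N^{1-2\eta+o(1)}=o(1)$; the hypothesis $\log N=O(\log n)$ forces $d(z_0,\cC_1)I(p,q)=O(\log n)$, and hence the off-diagonal exponent is $o(1)$ once $d(z_0,\cC_1)\to\infty$ (the trivial bounded case is handled directly by the two-point Le Cam inequality with $N=1$). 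Combining gives $\chi^2\to 0$ and hence $\liminf_n r(\cC_0,\cC_1)\ge 1/2$. The main technical obstacle I anticipate is tightening the per-edge Bernoulli identity to produce the exact $I(p,q)$ exponent rather than the cruder chi-square exponent $(p-q)^2/(p(1-p))$, uniformly over the packing and across both sparse and dense $\rho_n$ regimes; the $p\le 1-\delta$ hypothesis and the generalized symmetry from Definition \ref{def: sym} (which guarantees the edge-parity argument behaves identically at every $z_k$ despite the uneven community sizes, via the $c_K=O(1)$ control on deviations of $n_k(z)$ from $n/K$) are what make this uniformity available.
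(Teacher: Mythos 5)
Your proposal replaces the paper's argument with a mixture/second-moment (chi-square) computation, and this route cannot reach the stated threshold: it has a genuine, quantitative gap in the diagonal term.

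Concretely, write $\chi^2+1=N^{-2}\sum_{k,l}\EE_{z_0}[L_kL_l]$. The diagonal contributes $N^{-1}\EE_{z_0}[L_k^2]$, and even granting your (optimistic) per-edge bound, you get $N^{-1}e^{2d(z_0,\cC_1)I(p,q)(1+o(1))}$. Under \eqref{eq:cp-low-g} you only know $d(z_0,\cC_1)I(p,q)\le(1-\eta)\log N$ for some possibly tiny $\eta>0$, so this term is $N^{1-2\eta+o(1)}$, which \emph{diverges} whenever $\eta<1/2$; your claim that it is $o(1)$ is an arithmetic error. The second-moment method would only work under the stronger condition $\limsup\, d\,I/\log N<1/2$, losing a factor of $2$ against the theorem. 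Worse, the per-edge identity itself is not $e^{I(p,q)}$: the exact factor for an $\cE_1$-edge is $1+(p-q)^2/(p(1-p))$, i.e.\ the order-$2$ R\'enyi (chi-square) divergence, which in the regime $q\to p$ is about $4I(p,q)$ per edge; summing over the $n_1+n_2\approx 2d$ intersected edges gives roughly $e^{8dI}$ on the diagonal. So the loss is not the ``technical obstacle'' you flag but a structural feature of the unconditioned second-moment method: $\EE[L^2]$ is governed by the wrong divergence, and no amount of tightening the Bernoulli identity recovers the R\'enyi-$1/2$ exponent $I(p,q)$ with constant $1$. (Your off-diagonal treatment also needs $\sqrt{d}\,I\to 0$, not merely $d\to\infty$, though that is minor by comparison.)

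The paper's proof avoids the mixture entirely. It lower-bounds the risk by the error of the Bayes test of $z_0$ against the packing, reduces each one-vs-one error to $\PP(\sum_u X^u_z-\sum_u Y^u_z>0)$ with $X^u\sim\mathrm{Ber}(q)$, $Y^u\sim\mathrm{Ber}(p)$, and invokes the sharp large-deviation lower bound $\exp(-(1+\eta)d(z_0,\cC_1)I(p,q))$ from Lemma~5.2 of \cite{zhang2016minimax} — which is exactly tied to $I(p,q)$ with constant $1$. The weak dependence among packing elements (pairwise edge-set overlaps at most $\sqrt{d(z_0,\cC_1)}$) is handled by Gaussian approximation and the Gaussian comparison lemma of \cite{chernozhukov2013gaussian}, reducing to independent copies for which the product bound $1-\bigl((1/2)^{1/N}\bigr)^{N}=1/2$ closes the argument. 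If you want to salvage your route you would need a truncated/conditional second-moment argument; as written, the proof does not establish the theorem.
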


\begin{remark}
 If we choose $r = d(z_0,\cC_1)$, as $B_{z_0} = B_{z_0}(d(z_0,\cC_1))$, \eqref{eq:cp-low-g} reduces to \eqref{eq:cp-low}. The relaxed assumption on $r = d(z_0,\cC_1) + O(1)$ can give us a better lower bound. For example, in the proof of Corollary \ref{col1-ex-rec},  we show a sharper minimax rate by using some $r > d(z_0,\cC_1)$. 
\end{remark}

We can also generalize Theorem \ref{lb-sec} to the following theorem.
\begin{theorem}\label{lb-sec-g} Suppose $0<q<p \leq 1-\delta$ for some constant $\delta>0$ and $\lim_{n \rightarrow \infty}d(\cC_0,\cC_1) p  = \infty$. If one of the following conditions:
\begin{enumerate}
    \item  $d(\cC_0,\mathcal{C}_1)I(p,q) \le c$ 	for some sufficiently small constant $c$; 
    \item $\lim_{n \rightarrow \infty}d(\cC_0,\mathcal{C}_1)I(p,q) = \infty$,  but there exists a $z_0 \in \cC_0$ and some $r=d(z_0,\cC_1) + O(1)$ such that $
    \limsup_{n \rightarrow \infty} {d(z_0,\mathcal{C}_1)I(p,q)}/{\log N(B_{z_0}(r),0)}< 1,
    $
\end{enumerate}
 is satisfied, then $\liminf\limits_{n \rightarrow \infty} r(\cC_0, \cC_1) \geq 1/2$.
\end{theorem}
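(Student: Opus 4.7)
The plan is to adapt the proof of Theorem~\ref{lb-sec} to the uneven setting, replacing the role of $B_{z_0}$ by the enlarged boundary $B_{z_0}(r)$ of Definition~\ref{def:boundary-general}. In both cases, I lower bound the minimax risk via Le Cam's framework as $r(\cC_0,\cC_1) \ge 1 - \|\PP_{z_0} - Q\|_{\mathrm{TV}}$, where $Q$ is either a point mass on a single alternative (case~1) or a uniform mixture over a packing set (case~2); it then suffices to show $Q$ is asymptotically indistinguishable from $\PP_{z_0}$.

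For condition~(1), I would apply the two-point method. Pick $z_0 \in \cC_0$ and $z_1 \in \cC_1$ achieving $d(z_0,z_1) = d(\cC_0,\cC_1)$, so the two SBMs differ only on edges in $\cE_{1,2}(z_0,z_1)$. The KL divergence decomposes edge-wise, and since $p \le 1-\delta$, each misaligned edge contributes at most $O(I(p,q))$. Thus $\mathrm{KL}(\PP_{z_0}\,\|\,\PP_{z_1}) \lesssim d(\cC_0,\cC_1) I(p,q) \le Cc$, and Pinsker's inequality forces the total variation to be bounded away from $1/2$ once $c$ is sufficiently small.

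For condition~(2), I would apply the mixture method on a maximal $0$-packing $\{z_1,\ldots,z_N\} \subseteq B_{z_0}(r)$ with $N = N(B_{z_0}(r),0)$. By definition of a $0$-packing, the edge sets $\cE_{1,2}(z_0,z_j)$ are pairwise disjoint, so for any $j \ne k$ the cross-likelihood ratios $\frac{d\PP_{z_j}}{d\PP_{z_0}}$ and $\frac{d\PP_{z_k}}{d\PP_{z_0}}$ depend on disjoint (hence independent) Bernoulli edges under $\PP_{z_0}$, and their expected product equals $1$. Forming $\bar\PP = \frac{1}{N}\sum_j \PP_{z_j}$ and using $\|\bar\PP - \PP_{z_0}\|_{\mathrm{TV}}^2 \le \tfrac{1}{4}\chi^2(\bar\PP\,\|\,\PP_{z_0})$, the chi-squared collapses to
\[
\chi^2(\bar\PP\,\|\,\PP_{z_0}) = \frac{1}{N^2}\sum_j \EE_{z_0}\Bigl[\Bigl(\tfrac{d\PP_{z_j}}{d\PP_{z_0}}\Bigr)^2\Bigr] - \frac{1}{N}.
\]
Each diagonal second moment factors edge-wise and is bounded by $\exp(C d(z_0,z_j) I(p,q)) \le \exp(C r I(p,q))$; since $r = d(z_0,\cC_1)+O(1)$, the strict inequality $d(z_0,\cC_1)I(p,q) < \log N(B_{z_0}(r),0)$ yields $\chi^2 = o(1)$, so $\bar\PP$ is indistinguishable from $\PP_{z_0}$ and the risk bound follows.

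The main obstacle is controlling the per-edge chi-squared contribution. The Rényi divergence $I(p,q)$ is the natural gauge in KL-based bounds, but the chi-squared between $\mathrm{Ber}(p)$ and $\mathrm{Ber}(q)$ can be strictly larger than $I(p,q)$ when $q$ is very small. This is precisely where the scaling $d(\cC_0,\cC_1)p \to \infty$ together with $p \le 1-\delta$ enters: it ensures that, after aggregating over the $d(z_0,z_j)$ misaligned edges, $\log \EE_{z_0}\bigl[(\tfrac{d\PP_{z_j}}{d\PP_{z_0}})^2\bigr] = O(d(z_0,z_j)I(p,q))$, matching the Rényi rate. A secondary technicality is verifying that the slack $r - d(z_0,\cC_1) = O(1)$ inflates $\log N(B_{z_0}(r),0)$ by at most an additive constant, so the strict inequality in the hypothesis is preserved.
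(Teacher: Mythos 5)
There is a genuine gap, and it is exactly the obstacle you flag at the end: your divergence bookkeeping is in the wrong Rényi order, and the scaling assumptions do not repair it. The quantity $I(p,q)=-2\log(\sqrt{pq}+\sqrt{(1-p)(1-q)})$ is the order-$1/2$ Rényi divergence between $\mathrm{Ber}(p)$ and $\mathrm{Ber}(q)$, whereas your condition-(1) bound uses KL (order $1$) and your condition-(2) bound uses the $\chi^2$ second moment (order $2$). Rényi divergences are increasing in the order, and the gap is a genuine constant factor even in the benign regime $p\asymp q$: e.g.\ for $p=2\rho$, $q=\rho$, $\rho\to0$, one has $I(p,q)\approx(\sqrt2-1)^2\rho\approx0.17\rho$ while $n_1\log(1+\chi^2(q\|p))+n_2\log(1+\chi^2(p\|q))\approx1.5\,d\rho$. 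So your $\chi^2$ computation gives $\chi^2(\bar\PP\|\PP_{z_0})=o(1)$ only when $C\,d(z_0,\cC_1)I(p,q)<\log N$ for some $C>1$, which does not prove the theorem's sharp threshold with constant $1$. The assumptions $d(\cC_0,\cC_1)p\to\infty$ and $p\le1-\delta$ say nothing about $q/p$ and cannot force $\log\EE_{z_0}[(d\PP_{z_j}/d\PP_{z_0})^2]=O(d\,I(p,q))$; indeed for $p$ constant and $q\to0$ the per-edge term $p^2/q$ blows up while $I(p,q)$ stays bounded. The same issue afflicts condition (1) as written, since $\mathrm{KL}(\mathrm{Ber}(p)\|\mathrm{Ber}(q))=p\log(p/q)+\cdots$ is not $O(I(p,q))$; that part is salvageable by replacing KL$+$Pinsker with the tensorizing Hellinger affinity, which gives $1-\|\PP_{z_0}-\PP_{z_1}\|_{\mathrm{TV}}\ge1-\sqrt{1-e^{-(n_1+n_2)I(p,q)}}$ and hence the claim for small $c$.

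The paper's route for condition (2) is structurally different and is what makes the constant $1$ attainable: rather than bounding a divergence of the mixture, it identifies the Bayes-optimal test as a likelihood-ratio test, augments the smaller of $\cE_1,\cE_2$ so the comparison becomes $\sup_{z\in\cP(0)}(\sum_u X^u_z-\sum_u Y^u_z)>0$ with independent blocks across the $0$-packing (disjoint edge sets), and then invokes a \emph{reverse} Chernoff bound (Lemma~5.2 of \cite{zhang2016minimax}) giving $\PP(\sum X^u_z\ge\sum Y^u_z)\ge\exp(-(1+\eta)d(z_0,\cC_1)I(p,q))$ --- the large-deviation exponent here is exactly $I(p,q)$. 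Combined with $\log N$ independent trials each succeeding with probability at least $1/N$, the failure probability is at most $(1-1/N)^{N}$-type, yielding risk $\ge1/2$. To close your argument you would need to replace the $\chi^2$ second-moment step with such a lower bound on the lower tail of the log-likelihood ratio (or a second-moment computation tilted to the order-$1/2$ exponent); as written, the proposal proves a strictly weaker statement.
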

We defer the proof of the above two theorems to Appendix \ref{proof: lb main sec}.

To apply the general lower bound theorem to Examples \ref{ex:same}-\ref{ex:num}, we need the following proposition on the packing number.

\begin{proposition}\label{prop: pk-num-g}
We have the packing number $N(B_{z_0}(r_K), \sqrt{d(z_0,\cC_1)})$ for three examples as follows:
\begin{itemize}
    \item Example~\ref{ex:same}: $N(B_{z_0}(r_K), \sqrt{d(z_0,\cC_1)}) = m$;
    \item Example~\ref{ex:group}: $N(B_{z_0}(r_K), \sqrt{d(z_0,\cC_1)}) = 1$;
    \item Example \ref{ex:num}: $N(B_{z_0}(r_K), \sqrt{d(z_0,\cC_1)}) = n$. 
\end{itemize}
\end{proposition}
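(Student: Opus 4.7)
The plan is to treat the three examples uniformly using the explicit construction of $B_{z_0}(r_K)$ in Proposition \ref{prop:bz-cases}. For each example I will (i) parameterize a generic element $z_j\in B_{z_0}(r_K)$ by the ``move'' that sends $z_0$ to $z_j$, (ii) write $\cE_{1,2}(z_0,z_j)$ explicitly in terms of that move, (iii) compute the intersection $|\cE_{1,2}(z_0,z_j)\cap\cE_{1,2}(z_0,z_k)|$ for two distinct moves, and (iv) compare this intersection with $\sqrt{d(z_0,\cC_1)}$ as prescribed by Definition \ref{def:pack}. The packing number is then the largest ``independent set'' of pairwise-compatible moves.

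For Example \ref{ex:same}, each $z_j\in B_{z_0}(r_K)$ is characterized by a pair $(i_j,c_j)$ with $i_j\in[m]$ and $c_j\neq z_0(i_j)$, and $\cE_{1,2}(z_0,z_j)$ consists only of edges incident to $i_j$. I will show that if $i_j\neq i_k$ then $\cE_{1,2}(z_0,z_j)\cap\cE_{1,2}(z_0,z_k)\subseteq\{(i_j,i_k)\}$, so the intersection has size at most $1\le \sqrt{d(z_0,\cC_1)}=\sqrt{n/K}$, while if $i_j=i_k$ and $c_j\neq c_k$ the intersection contains every edge from $i_j$ to its original community, giving at least $n/K-1$ shared edges, which exceeds $\sqrt{n/K}$ for all relevant $n/K$. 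Therefore a packing may include at most one reassignment per node in $[m]$, and choosing one reassignment per node is a valid $\sqrt{d(z_0,\cC_1)}$-packing of size exactly $m$. An identical argument (parameterizing $z_j$ by $(i_j,c_j)\in[n]\times([K]\setminus\{z_0(i_j)\})$) handles Example \ref{ex:num} and delivers packing number $n$, since $z_0\in\cC_0$ has all community sizes equal to $n/K$.

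For Example \ref{ex:group}, the situation is different because every element of $B_{z_0}(r_K)$ moves the \emph{same} block $\{m+1,\ldots,m+m'\}$. I will split into the two sub-cases of Proposition \ref{prop:bz-cases}. When $m\wedge m'\le c_K$, the block is relabeled to some $c_j\neq z_0(m+1)$ and every $z_j$ shares the common $\cE_1$-part consisting of the edges between $\{m+1,\ldots,m+m'\}$ and $C_0\setminus\{m+1,\ldots,m+m'\}$, where $C_0$ is the original community; this has size $m'(n_{C_0}-m')\asymp m'n/K$, far exceeding $\sqrt{d(z_0,\cC_1)}=\sqrt{m'n/K}$. When $m\wedge m'>c_K$, each $z_j$ additionally picks a swap set $S_j\subset C_j$ of size $m'$, but the same edges between $\{m+1,\ldots,m+m'\}$ and $C_0\setminus\{m+1,\ldots,m+m'\}$ are still in $\cE_1(z_0,z_j)$ for every move (since $S_j\cap C_0=\emptyset$), yielding the same lower bound on the intersection. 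In both sub-cases no two distinct moves can coexist in a $\sqrt{d(z_0,\cC_1)}$-packing, so the packing number equals $1$.

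The only mildly delicate step is the case analysis in Example \ref{ex:group} when $m\wedge m'>c_K$: one must verify that the edges between $\{m+1,\ldots,m+m'\}$ and $C_0\setminus\{m+1,\ldots,m+m'\}$ genuinely belong to $\cE_1(z_0,z_j)$ for \emph{every} choice of swap set $S_j$, which requires noting that $S_j\subset C_j\neq C_0$ and hence disjoint from $C_0$. Once this observation is in place, the rest of the computation is a direct edge count using $n_{C_0},n_{C_j}=n/K+O(c_K)=n/K+O(1)$, and the inequality $m'(n/K-m')\gg\sqrt{m'n/K}$ for large $n$ closes the argument. No probabilistic tools are needed; the proof is a combinatorial accounting of misaligned edges.
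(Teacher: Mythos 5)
Your proposal is correct and follows essentially the same strategy as the paper: characterize each element of $B_{z_0}(r_K)$ by its defining ``move'' (using the construction in Proposition \ref{prop:bz-cases}), show that two moves touching the same node or block share $\Omega(n/K)$ misaligned edges (ruling them out of a $\sqrt{d(z_0,\cC_1)}$-packing), and exhibit an explicit packing attaining the bound. The paper's proof is terser --- it refers back to the even-size arguments for Examples \ref{ex:same} and \ref{ex:group} and only details Example \ref{ex:num} --- but the underlying edge-counting argument is the same.
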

We defer the proof to Appendix \ref{sec: proof pack num-g}. The results is also summarized in Table \ref{tab:rate-general}.

\begin{table}
    \centering
	\begin{tabular}{ c | c | c | c } 
		\hline
		\hline
		  & $d(z_0,\cC_1)$ &  $|B_{z_0}(r_K)|$ & $N(B_{z_0}(r_K),\sqrt{d(z_0,\cC_1)})$ \\ [0.5ex] 
		\hline
	Example	\ref{ex:same}  & ${n}/{K}$ & $m(K-1)$ & $m$\\ 
		\hline
	\makecell{Example	\ref{ex:group}  \\ $m\wedge m' \le c_K$}&  ${n (m \wedge m')}/{K}$ & $K-1$ & 1 \\ 
			\hline
	\makecell{Example	\ref{ex:group} \\ $m\wedge m' > c_K$}&  $2m \wedge m' (n/K - m \wedge m')$ & $O(K(n/K)^{m \wedge m'})$ & 1 \\ 
		\hline
	Example	\ref{ex:num}  & ${n}/{K}$ & $n(K-1)$ & $n$\\ [1ex]
		\hline
		\hline
	\end{tabular}
	\caption{Important values for general cases of Examples~\ref{ex:same}-\ref{ex:num}.}\label{tab:rate-general}
\end{table}

Since $r_K = d(\cC_0,\cC_1) + c_K^2 pK/(2(p-q))$, where $c_K^2 pK/(2(p-q))=O(1)$ and $d(\cC_0,\cC_1) = d(z_0,\cC_1)$ by the symmetry of $\cC_0,\cC_1$, we have that $r_K = d(z_0,\cC_1) +O(1)$. Applying Theorem~\ref{lb-main-g} and Proposition~\ref{prop: pk-num-g}, we have the following lower bound of same community test in Example \ref{ex:same}.
	\begin{corollary}\label{col-c1-g}
	For $\cC_0$ and $\cC_1$ defined in Example \ref{ex:same}, if $1/ \rho_n =o(n^{1-c_2})$ for some constant $c_2>0$, $p<1-\delta$ for some constant $\delta>0$, $c_K=O(1)$ and 
	\[
	\limsup\limits_{n \rightarrow \infty} n I(p,q)/(K \log m) <1,
	\]
	we have $\liminf\limits_{n \rightarrow \infty} r(\cC_0, \cC_1) \geq 1/2$.
	\end{corollary}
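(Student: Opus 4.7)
The proof is a direct application of Theorem~\ref{lb-main-g} combined with the quantities already computed for Example~\ref{ex:same} in Propositions~\ref{prop:bz-cases} and~\ref{prop: pk-num-g}. The plan is to (i) read off the combinatorial distance and the packing number of the boundary at radius $r_K$, (ii) verify the two structural hypotheses of Theorem~\ref{lb-main-g} under the scaling assumptions of the corollary, and (iii) reduce the trade-off condition \eqref{eq:cp-low-g} to the scalar inequality stated in the corollary.

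First I would extract from Proposition~\ref{prop:bz-cases} applied to Example~\ref{ex:same} the equality $d(z_0,\mathcal{C}_1) = n/K$ for any $z_0 \in \mathcal{C}_0$, and from Proposition~\ref{prop: pk-num-g} the packing number $N\!\bigl(B_{z_0}(r_K),\sqrt{d(z_0,\mathcal{C}_1)}\bigr) = m$. These two quantities are precisely the ingredients that appear in \eqref{eq:cp-low-g}. Note that by the symmetry of $\mathcal{C}_0$ under Definition~\ref{def: sym}, both quantities are independent of the choice of $z_0 \in \mathcal{C}_0$, so it suffices to exhibit any such $z_0$.

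Next I would check the two preconditions of Theorem~\ref{lb-main-g}. The bound $\log N\!\bigl(B_{z_0}(r_K),\sqrt{d(z_0,\mathcal{C}_1)}\bigr) = \log m = O(\log n)$ is immediate from $m \le n$. The requirement $r = d(z_0,\mathcal{C}_1) + O(1)$ is met by taking $r = r_K$: since $r_K = d(\mathcal{C}_0,\mathcal{C}_1) + c_K^2 pK/\bigl(2(p-q)\bigr)$ with $K$ bounded, $c_K = O(1)$, and the parameter regime here forcing $c_K^2 pK/(p-q) = O(1)$ (as remarked just before Theorem~\ref{lb-sec-g}), we obtain $r_K = d(z_0,\mathcal{C}_1) + O(1)$ via the symmetric identity $d(\mathcal{C}_0,\mathcal{C}_1) = d(z_0,\mathcal{C}_1)$.

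With the two structural hypotheses in place, I would then substitute into \eqref{eq:cp-low-g} to obtain
\[
\frac{d(z_0,\mathcal{C}_1)\, I(p,q)}{\log N\bigl(B_{z_0}(r_K),\sqrt{d(z_0,\mathcal{C}_1)}\bigr)}
 \;=\; \frac{(n/K)\, I(p,q)}{\log m}
 \;=\; \frac{n\, I(p,q)}{K\log m},
\]
whose $\limsup$ is strictly less than $1$ by the hypothesis of the corollary. Theorem~\ref{lb-main-g} then delivers $\liminf_{n\to\infty} r(\mathcal{C}_0,\mathcal{C}_1) \ge 1/2$, completing the argument. The only delicate point I anticipate is the verification that the correction term $c_K^2 pK/(2(p-q))$ in $r_K$ is genuinely $O(1)$ in the stated regime ($p \le 1-\delta$, $1/\rho_n = o(n^{1-c_2})$, $c_K = O(1)$); everything else is a bookkeeping substitution into a theorem that has already been proved.
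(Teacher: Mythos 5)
Your proposal is correct and follows exactly the route the paper takes: the corollary is obtained by plugging $d(z_0,\mathcal{C}_1)=n/K$ (Proposition \ref{prop:bz-cases}) and $N\bigl(B_{z_0}(r_K),\sqrt{d(z_0,\mathcal{C}_1)}\bigr)=m$ (Proposition \ref{prop: pk-num-g}) into Theorem \ref{lb-main-g}, with the observation that $r_K=d(z_0,\mathcal{C}_1)+O(1)$ made in the text immediately preceding the corollary. Your extra care about the $c_K^2 pK/(2(p-q))=O(1)$ term is consistent with the paper's standing reparametrization $p=\rho_n\lambda_1$, $q=\rho_n\lambda_2$ with $\lambda_1,\lambda_2$ constants.
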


Applying  Theorem~\ref{lb-sec-g} and Proposition \ref{prop: pk-num-g}, we have the following lower bound of same community test for groups in Example \ref{ex:group}.
	
	\begin{corollary}\label{col-c2-g}
		For $\cC_0$ and $\cC_1$ defined in \eqref{eq:c0ex2}, if $n p \rightarrow \infty$, $0<q<p<1-\delta$ for some $\delta > 0$ and 
		\[
		\limsup\limits_{n \rightarrow \infty} nI(p,q)<c,
		\]
		for some sufficiently small constant $c>0$, we have $\liminf\limits_{n \rightarrow \infty} r(\cC_0, \cC_1) \geq 1/2$.
	\end{corollary}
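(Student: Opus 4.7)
The plan is to derive Corollary~\ref{col-c2-g} as a direct consequence of Theorem~\ref{lb-sec-g} using its first condition (the regime where $d(\cC_0,\cC_1)I(p,q)$ stays bounded). The strategy is to show that for Example~\ref{ex:group}, the combinatorial distance $d(\cC_0,\cC_1)$ is of order $n$, so that the hypothesis $\limsup_{n} nI(p,q) < c$ can be transferred directly into the small-product condition in Theorem~\ref{lb-sec-g}(1).

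First, I would invoke Proposition~\ref{prop:bz-cases} (item 2) to evaluate $d(\cC_0,\cC_1)$ in Example~\ref{ex:group}. In either sub-case of that proposition, namely $m\wedge m' \le c_K$ giving $d(\cC_0,\cC_1) = n(m\wedge m')/K$, or $m\wedge m' > c_K$ giving $d(\cC_0,\cC_1) = 2(m\wedge m')(n/K - m\wedge m')$, the combinatorial distance is of order $n$ since $K$ is bounded and $m,m'$ are fixed constants independent of $n$. In particular, there exists a constant $C_0 > 0$ with $d(\cC_0,\cC_1) \le C_0 n$.

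Next I would verify the standing hypotheses of Theorem~\ref{lb-sec-g}. The assumption $0 < q < p \le 1-\delta$ is given. For the divergence condition $d(\cC_0,\cC_1) p \to \infty$, the lower bound $d(\cC_0,\cC_1) \gtrsim n$ combined with the assumption $np \to \infty$ implies $d(\cC_0,\cC_1)p \gtrsim np \to \infty$.

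Then I would verify condition (1) of Theorem~\ref{lb-sec-g}. Using the bound $d(\cC_0,\cC_1) \le C_0 n$, we obtain
\[
d(\cC_0,\cC_1) I(p,q) \le C_0 \cdot n I(p,q).
\]
By the hypothesis $\limsup_{n \to \infty} nI(p,q) < c$, the right-hand side is bounded by $C_0 c$ asymptotically. By choosing the threshold constant $c$ in the corollary's hypothesis to be $c = c'/C_0$, where $c'$ is the sufficiently small constant called for in Theorem~\ref{lb-sec-g}(1), the product $d(\cC_0,\cC_1) I(p,q)$ is itself bounded by $c'$ for large $n$. Theorem~\ref{lb-sec-g} then yields $\liminf_{n\to\infty} r(\cC_0,\cC_1) \ge 1/2$, as claimed. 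The proof is essentially a bookkeeping exercise; the only mild subtlety is propagating the hidden $C_0$ factor when matching the "sufficiently small constant" between the corollary's hypothesis and Theorem~\ref{lb-sec-g}(1), which is why the corollary phrases its threshold as merely "sufficiently small" rather than specifying a numerical value.
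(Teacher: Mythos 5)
Your proof is correct and is essentially the paper's (implicit) argument: the paper derives this corollary by noting $d(\cC_0,\cC_1)\asymp n$ for Example~\ref{ex:group} and invoking condition (1) of Theorem~\ref{lb-sec-g}, exactly as you do, including the absorption of the hidden constant into the ``sufficiently small'' threshold. The only caveat is that your step $d(\cC_0,\cC_1)\le C_0 n$ requires $m\wedge m'=O(1)$, which the corollary does not state explicitly but which the paper assumes elsewhere for this example (e.g.\ Corollary~\ref{case1-mtd-col-g}), so making that assumption explicit is appropriate rather than a gap.
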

For Example~\ref{ex:num}, applying Theorem~\ref{lb-main-g} and Proposition~\ref{prop: pk-num-g},  we have the following result.

	\begin{corollary}\label{col-c3-g}
		For $\cC_0$ and $\cC_1$ defined in \eqref{eq:c0ex3}, if $1/ \rho_n =o(n^{1-c_2})$ for some constant $c_2>0$, $p<1-\delta$ for some constant $\delta>0$ and 
	\[
	\limsup\limits_{n \rightarrow \infty} n I(p,q)/(K \log n) <1,
	\]
	we have $\liminf\limits_{n \rightarrow \infty} r(\cC_0, \cC_1) \geq 1/2$.
	\end{corollary}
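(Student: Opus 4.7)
The plan is to derive the corollary by invoking Theorem~\ref{lb-main-g} on a carefully chosen sub-alternative. The full alternative $\cC_1 = \cC_0^c$ in \eqref{eq:c0ex3} contains assignments with arbitrarily unbalanced community sizes, so the quantity $c_K$ from \eqref{eq:ck} is not $O(1)$ in general and the theorem cannot be applied to $(\cC_0,\cC_1)$ directly. Since the minimax risk is monotone in the alternative, $r(\cC_0,\cC_1) \ge r(\cC_0,\cC_1')$ for any $\cC_1' \subseteq \cC_1$, so it is enough to exhibit a single restricted alternative $\cC_1'$ on which the theorem's conclusion holds.

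Fix any balanced $z_0 \in \cC_0$ and take $\cC_1' := B_{z_0}(r_K)$ as described in Proposition~\ref{prop:bz-cases}(3) --- the collection of assignments obtained from $z_0$ by moving a single node to a different community. Each such assignment has community sizes deviating from $n/K$ by at most $1$, so the restricted pair $(\cC_0,\cC_1')$ satisfies $c_K = O(1)$, and $r_K = d(z_0,\cC_1') + O(1)$. Proposition~\ref{prop:bz-cases} gives $d(z_0,\cC_1') = n/K$, and Proposition~\ref{prop: pk-num-g} gives $N\bigl(B_{z_0}(r_K),\sqrt{d(z_0,\cC_1')}\bigr) = n$, hence $\log N = \log n = O(\log n)$, matching the packing-entropy scaling hypothesis of Theorem~\ref{lb-main-g}. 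These combinatorial quantities are intrinsic to $B_{z_0}(r_K)$ and do not depend on whether one works inside the restricted class $\cC_1'$ or the original $\cC_1$, which is why the reduction is harmless.

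With these inputs in hand, the combinatorial-probabilistic trade-off condition \eqref{eq:cp-low-g} reads
\$
\limsup_{n \to \infty} \frac{d(z_0,\cC_1')\, I(p,q)}{\log N\bigl(B_{z_0}(r_K),\sqrt{d(z_0,\cC_1')}\bigr)} \;=\; \limsup_{n \to \infty} \frac{(n/K)\, I(p,q)}{\log n} \;<\; 1,
\$
which is precisely the hypothesis $\limsup n I(p,q)/(K \log n) < 1$ assumed in the corollary. Combined with the scaling assumptions $1/\rho_n = o(n^{1-c_2})$ and $p \le 1-\delta$, Theorem~\ref{lb-main-g} then yields $\liminf r(\cC_0,\cC_1') \ge 1/2$, and monotonicity of the risk delivers $\liminf r(\cC_0,\cC_1) \ge 1/2$. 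The main subtlety --- and essentially the only non-mechanical step --- is the reduction to $\cC_1'$: one must confirm that Proposition~\ref{prop:bz-cases}'s construction of $B_{z_0}(r_K)$ lies entirely inside $\cC_1'$ by definition, so restricting the alternative preserves the relevant geometry; this is immediate from the construction but deserves an explicit sentence in the write-up.
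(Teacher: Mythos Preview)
Your proposal is correct and follows the same route as the paper, which simply states that the corollary follows from Theorem~\ref{lb-main-g} together with Propositions~\ref{prop:bz-cases} and~\ref{prop: pk-num-g}. Your restriction to the sub-alternative $\cC_1'$ via monotonicity of the minimax risk is a careful extra step the paper omits: the hypothesis $c_K = O(1)$ of Theorem~\ref{lb-main-g} is not literally satisfied for $\cC_1 = \cC_0^c$, but the lower-bound proof only touches $z_0$ and elements of $B_{z_0}(r)$, so the paper treats the issue as moot, whereas your reduction makes the application rigorous without appealing to the theorem's proof.
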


Our lower bound result in Theorem \ref{lb-main-g} can also  provide a sharp threshold for exact recovery.
\begin{corollary}\label{col1-ex-rec}
	For the homogeneous SBM $\cM(n, K, p,q,z^*)$ with signal strength $\rho_n=n^{-c}$ for some $c \in (0,1)$ and $p<1-\delta$ for some $\delta>0$, when $\limsup\limits_{n \rightarrow \infty} nI(p,q)/(K \log n) < 1$, there exists $c_0 >0$ such that  $\inf_{\hat{z}}\sup_{z^*} \mathbb{P}(\hat{z} \neq z^*) \geq c_0$.
\end{corollary}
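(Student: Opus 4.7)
The plan is to reduce the exact recovery question to a composite hypothesis test that is covered by Theorem \ref{lb-main-g}. Fix any balanced reference assignment $z_0 \in \cK^n$ and set
\[
\cC_0 = \{z_0\}, \qquad \cC_1 = \{z : z \text{ differs from } z_0 \text{ by exactly one single-node reassignment to a different community}\}.
\]
Every $z \in \cC_1$ has community sizes $n/K-1,\, n/K+1$, and $n/K$ elsewhere, so $c_K = 1 = O(1)$. With $\rho_n = n^{-c}$ and $c \in (0,1)$, picking $c_2 = (1-c)/2$ gives $1/\rho_n = n^c = o(n^{1-c_2})$, and $p \le 1-\delta$ is inherited directly from the corollary, so the structural hypotheses of Theorem \ref{lb-main-g} are in force.

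Next I verify the quantities entering \eqref{eq:cp-low-g}. Moving a node $i$ from community $k$ to community $k'$ produces $n_1(z_0,z_1) = n/K-1$ newly-misaligned intra-community edges and $n_2(z_0,z_1) = n/K$ newly-aligned inter-community edges, hence $d(z_0,\cC_1) = n/K$. Choose $r = n/K = d(z_0,\cC_1) + O(1)$; any multi-node perturbation incurs distance at least $2n/K - O(1)$, so $B_{z_0}(r) = \cC_1$ with $|B_{z_0}(r)| = n(K-1)$. For the $\sqrt{n/K}$-packing number, two reassignments involving distinct nodes $i_1 \neq i_2$ share at most the single edge $(i_1,i_2)$ in $\cE_{1,2}(z_0,\cdot)$, whereas two reassignments of the same node both include all $n/K-1$ edges from that node to its original community; since $n/K-1 > \sqrt{n/K}$ for large $n$, a valid packing keeps at most one reassignment per node, giving $N(B_{z_0}(r),\sqrt{n/K}) \geq n$. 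Because $K$ is bounded, $\log N = \Theta(\log n)$, so the ratio in \eqref{eq:cp-low-g} becomes $(n/K)I(p,q)/\log n$, whose $\limsup$ is strictly less than $1$ by the corollary's hypothesis. Theorem \ref{lb-main-g} then yields $\liminf_{n \to \infty} r(\cC_0,\cC_1) \geq 1/2$.

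Finally, I convert this testing lower bound into the claimed recovery lower bound. For any estimator $\hat z$, define the plug-in test $\psi(\Ab) = \mathbbm{1}\{\hat z(\Ab) \neq z_0\}$. Since every $z \in \cC_1$ satisfies $z \neq z_0$, we have $\mathbb{P}_z(\hat z = z_0) \le \mathbb{P}_z(\hat z \neq z)$, hence
\[
\sup_{z \in \cC_0} \mathbb{P}_z(\psi = 1) + \sup_{z \in \cC_1} \mathbb{P}_z(\psi = 0) \;\le\; 2\sup_{z^* \in \cC_0 \cup \cC_1} \mathbb{P}_{z^*}(\hat z \neq z^*).
\]
If the right-hand side could be driven below $1/2$ as $n \to \infty$, this would contradict $r(\cC_0,\cC_1) \geq 1/2$. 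Consequently $\inf_{\hat z}\sup_{z^*}\mathbb{P}_{z^*}(\hat z \neq z^*) \geq 1/4 - o(1)$, which produces the constant $c_0 > 0$ demanded by the corollary.

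The main obstacle is the packing-number bookkeeping: I must confirm both that distinct single-node reassignments have essentially disjoint symmetric-difference edge sets at the resolution $\sqrt{n/K}$, and that no multi-node perturbation slips into $B_{z_0}(r)$ for the chosen radius. Once these combinatorial points are verified, Theorem \ref{lb-main-g} supplies the minimax testing bound and the reduction from minimax recovery to minimax testing is standard.
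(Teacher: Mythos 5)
Your proof is correct and follows essentially the same route as the paper's: reduce exact recovery to a property test in which $\cC_1$ consists of the single-node perturbations of a (near-)balanced $z_0$, compute $d(z_0,\cC_1)=n/K$ and the packing number $N\big(B_{z_0}(r),\sqrt{d(z_0,\cC_1)}\big)=n$, and invoke Theorem \ref{lb-main-g} together with the plug-in test reduction. The only real difference is that the paper takes $\cC_0$ to be the full label-permutation orbit $\{\sigma(z_0):\sigma\in S_K\}$ rather than the singleton $\{z_0\}$, which yields the lower bound for the more meaningful notion of recovery up to label permutation; your singleton version suffices for the corollary as literally stated.
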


We defer the proof of corollary to Appendix \ref{sec: proof ex rec}. An optimal upper bound method can be provided by the MLE, which is proved in part~(1) of the proof for Theorem~3.2 in \cite{zhang2016minimax}. Thus we can see that our lower bound is sharp and gives the threshold of exact recovery. Currently the existing threshold focuses on the regime of $\rho_n = \log n/n$ and $\lambda_1, \lambda_2$ are constants that do not depend on $n$. In our method, we consider a different regime with $\rho_n=n^{-c}$ for some $c \in (0,1)$, and $\lambda_1 - \lambda_2$ is allowed to be $o(1)$.

\section{Numerical Results on Synthetic Data}\label{sec: num}
We conduct the shadowing bootstrap  on Examples~\ref{ex:same} and~\ref{ex:num}.
We test both hypotheses at the significance level $\alpha  = 0.05$. We consider the number of nodes $n = 200, 600, 1000$ and the number of clusters $K = 2$. The connection probabilities are set to be $p=(1+\Delta)\rho_n$ and $q=(1-\Delta) \rho_n$, where $\rho_n = (n/K)^{-0.3}$ and $\Delta$ is the parameter that controls the difference between $p$ and $q$. We choose $\Delta$ varying from $0$ to $0.8$. The maximum likelihood estimator is calculated via using singular value decomposition estimator as an initialization to boost the computation. For Example~\ref{ex:same}, we set $m = \lceil (n/K)^{\delta}/2 \rceil$ for $\delta = 0.3, 0.5, 0.7$ to explore the influence of $m$ on type-I and II errors.


 \begin{figure}[htp]
	\centering
	\begin{tabular}{cc}
	 (a) Different $n$ & (b) Different $m$\\
	\includegraphics[width=0.5\textwidth]{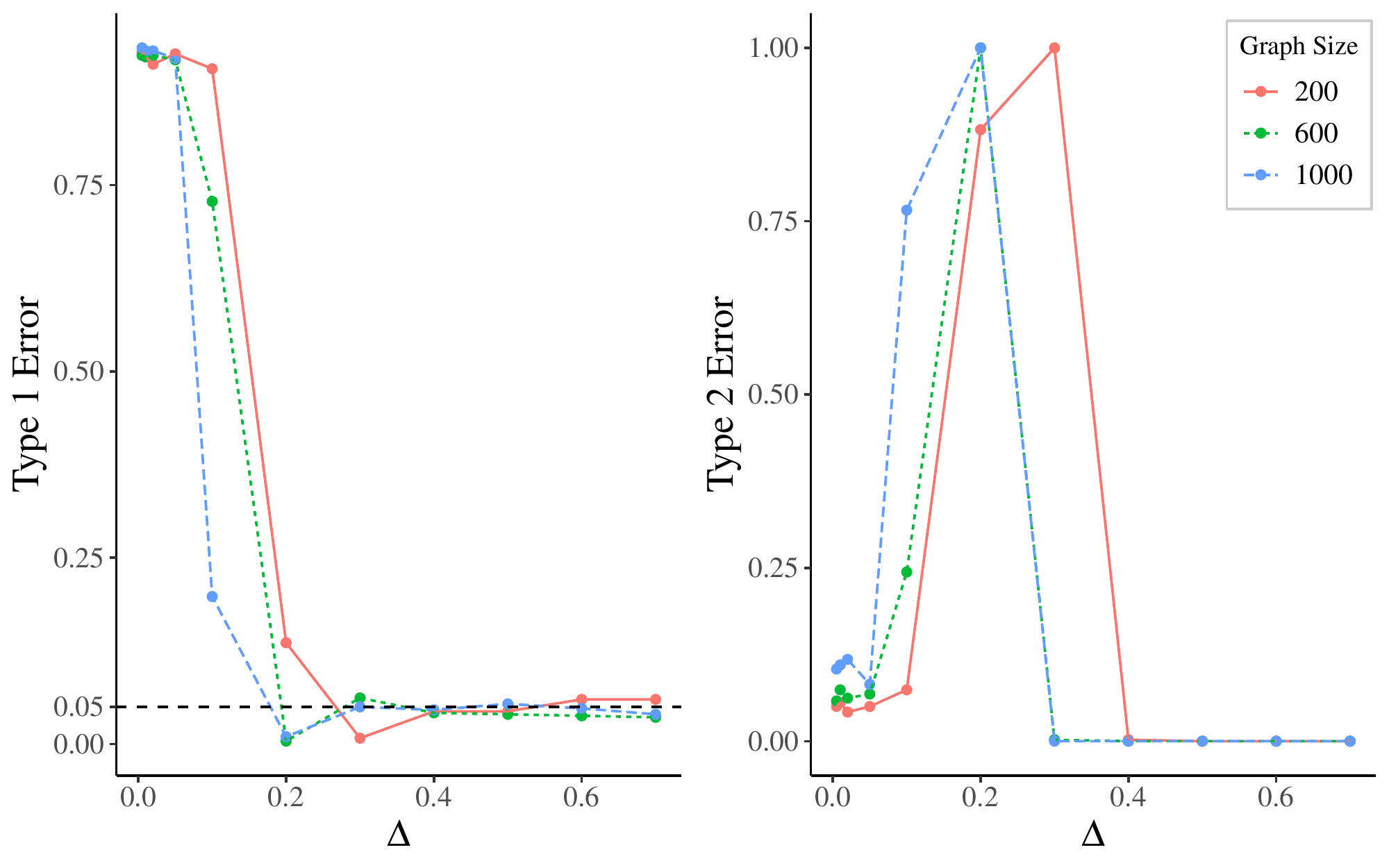}	&\includegraphics[width=0.5\textwidth]{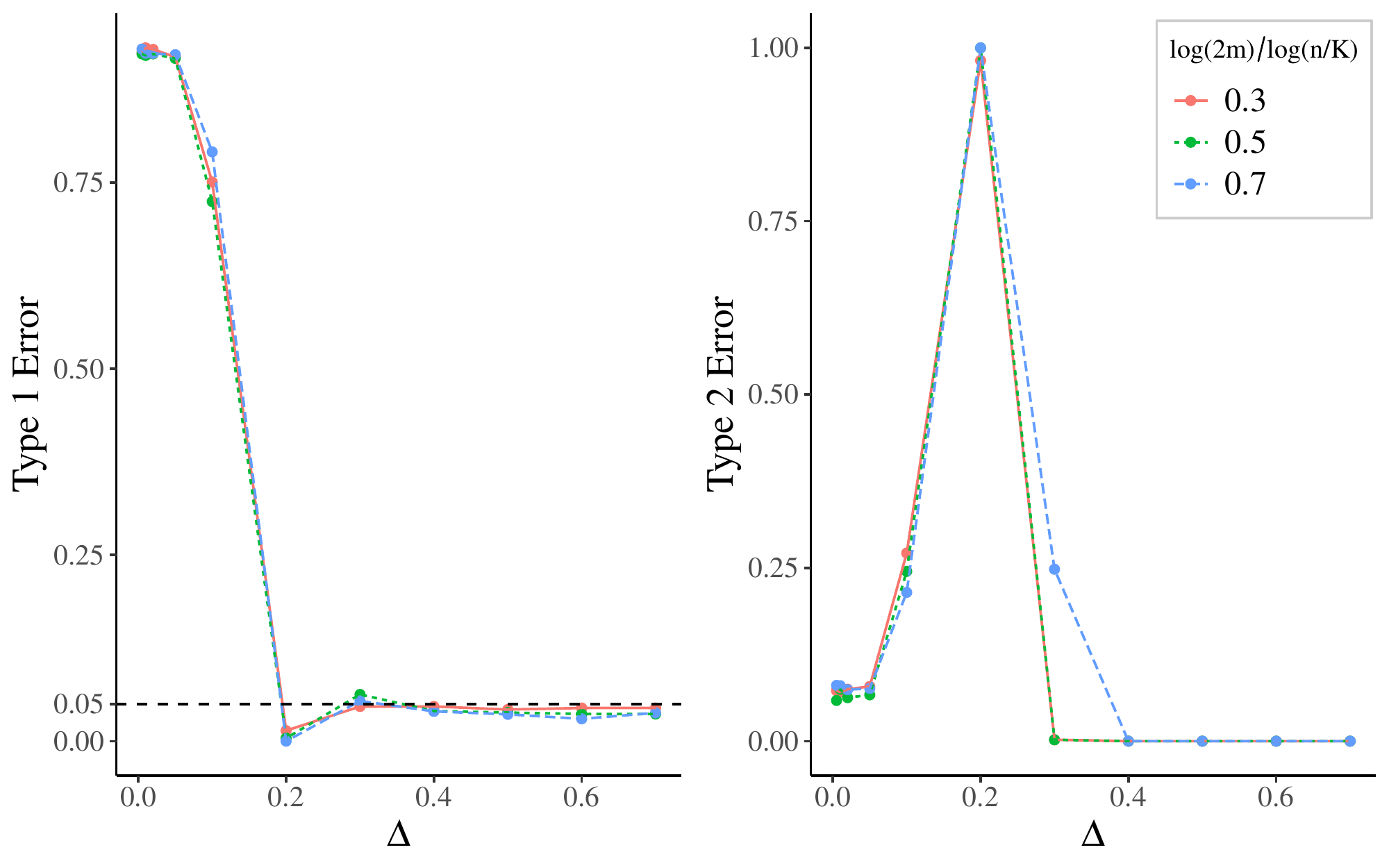}
	\end{tabular}
	\caption{Type-I and type-II errors for Example~\ref{ex:same} as $\Delta$ varies in $[0,0.8]$. Panel (a) shows the results for different graph sizes $n =$ 200, 600, 1000 with $m$ set as $\lceil (n/K)^{0.5}/2 \rceil$. Panel (b) shows the results $m = \lceil (n/K)^{\delta}/2 \rceil$ with $\delta =$ 0.3, 0.5, 0.7 and the   graph size $n=$ 600.}\label{fig:re-case1}
\end{figure}

 For both Examples \ref{ex:same} and \ref{ex:num}, the histograms of the shadowing bootstrap statistic in \eqref{eq:Wn} by choosing different  shadowing assignment $z_0 \in \cC_0$ are illustrated in  Figure~\ref{fig:comp-case1}. We can see that  the quantiles are almost the same for different $z_0$'s. This validates the rationale of our shadowing bootstrap method: we can estimate the quantile of $L_0$ in \eqref{eq:T0} by replacing the unknown truth $z^*$ with some shadowing assignment $z_0 \in \cC_0$. In Figure  \ref{fig:re-case1}, we show how the type-I and type-II errors vary with the signal strength $\Delta$, the graph size $n$ and the number of tested nodes $m$ for Example~\ref{ex:same}. Both the type-I and type-II errors are estimated via 500 repetitions . As $\Delta$ increases, the type-I error converges to the significance level $0.05$, which  shows that our method is honest. Type-II error is small when $\Delta$ is around zero as the test will always reject the null when the signal strength is too small, while it increases drastically as type-I error drops to 0.  When $\Delta$ is large enough, the type-II error converges to 0, showing that our test is powerful. 
 In Figure~\ref{fig:re-case1}(b), we can see that the type-I and type-II errors  for different $m$'s converge similarly  as $\Delta$ increases. This is consistent with Corollary \ref{case1-mtd-col} as the scaling condition is irrelevant to $m$. Simulation results for Example~\ref{ex:num} are shown in Figure~\ref{fig:re-case3}. The type 1 and type 2 error rates vary similarly as in Example~\ref{ex:same}.

\begin{figure}[tb]
	\centering
	\includegraphics[width=0.5\textwidth]{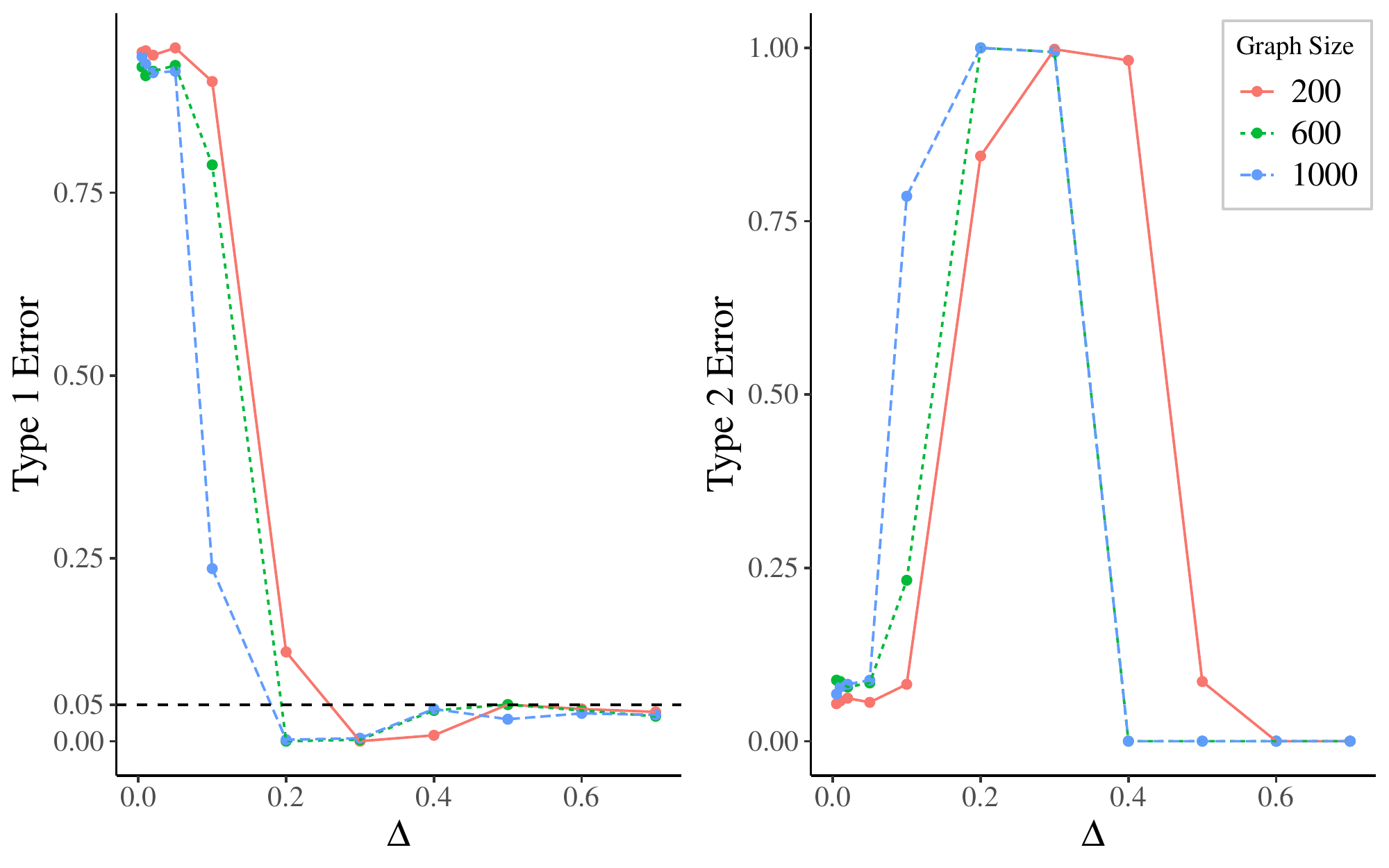}
	\caption{Type-I and type-II errors for Example~\ref{ex:num} as $\Delta$ varies in $[0,0.8]$ and the graph size $n = 200, 600, 1000$.}\label{fig:re-case3}
\end{figure}

 \begin{figure}[htbp]
	\centering
	\begin{tabular}{cccc}
	&$n=200$&$n=600$&$n=1000$\\
\rotatebox{90}{~~~~~~~~~~~~Example \ref{ex:same}}&		\includegraphics[width=0.3\textwidth]{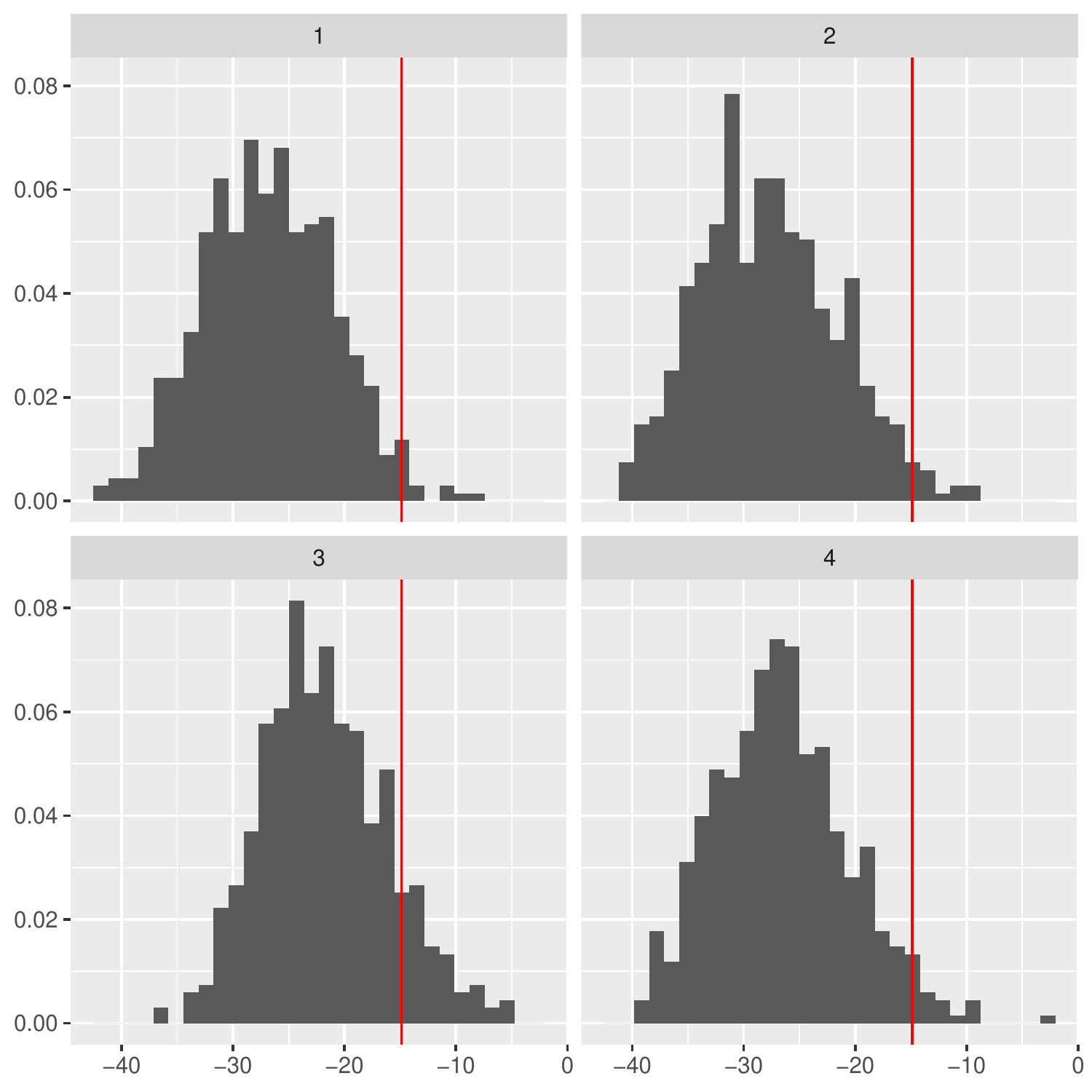}
		&\includegraphics[width=0.3\textwidth]{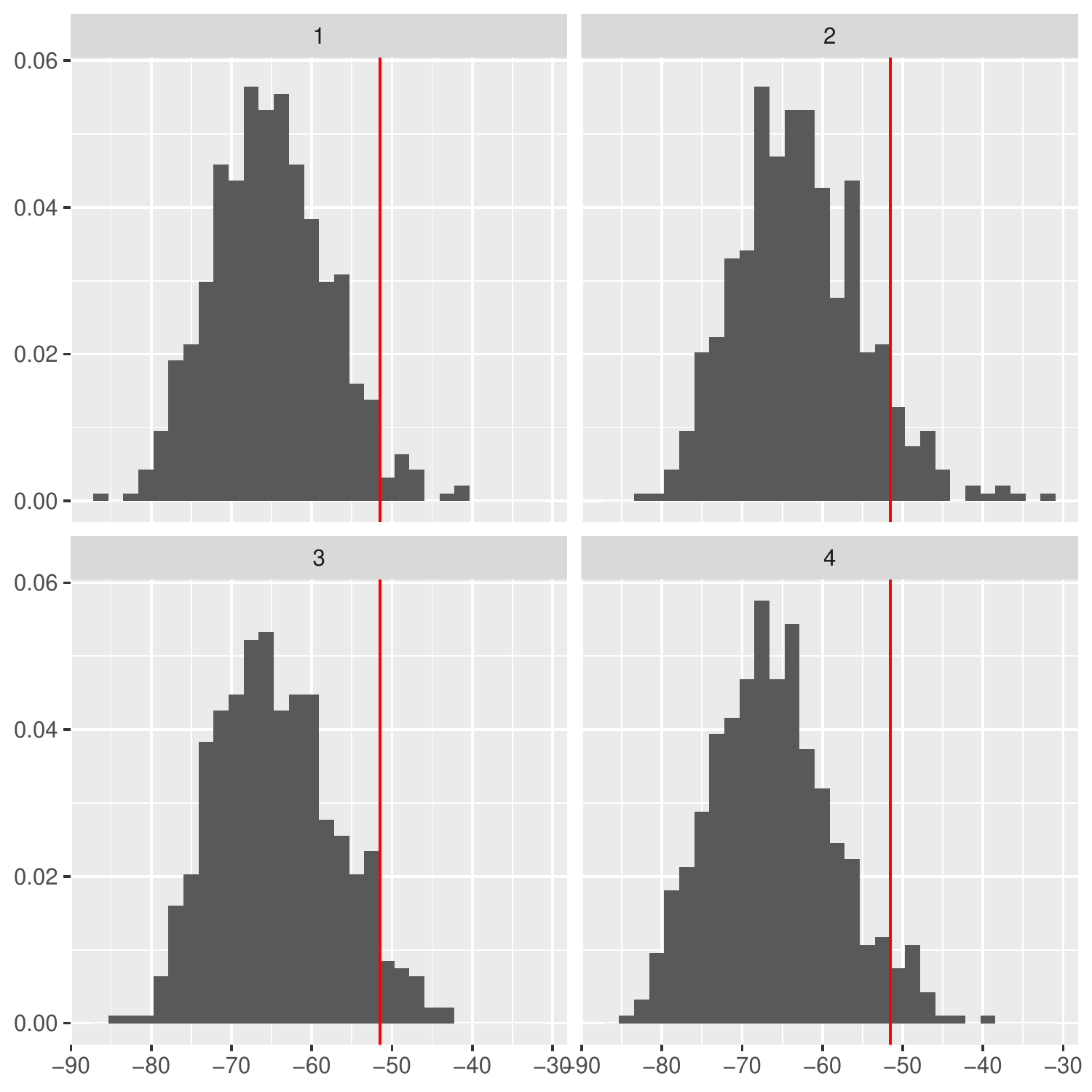}
		&\includegraphics[width=0.3\textwidth]{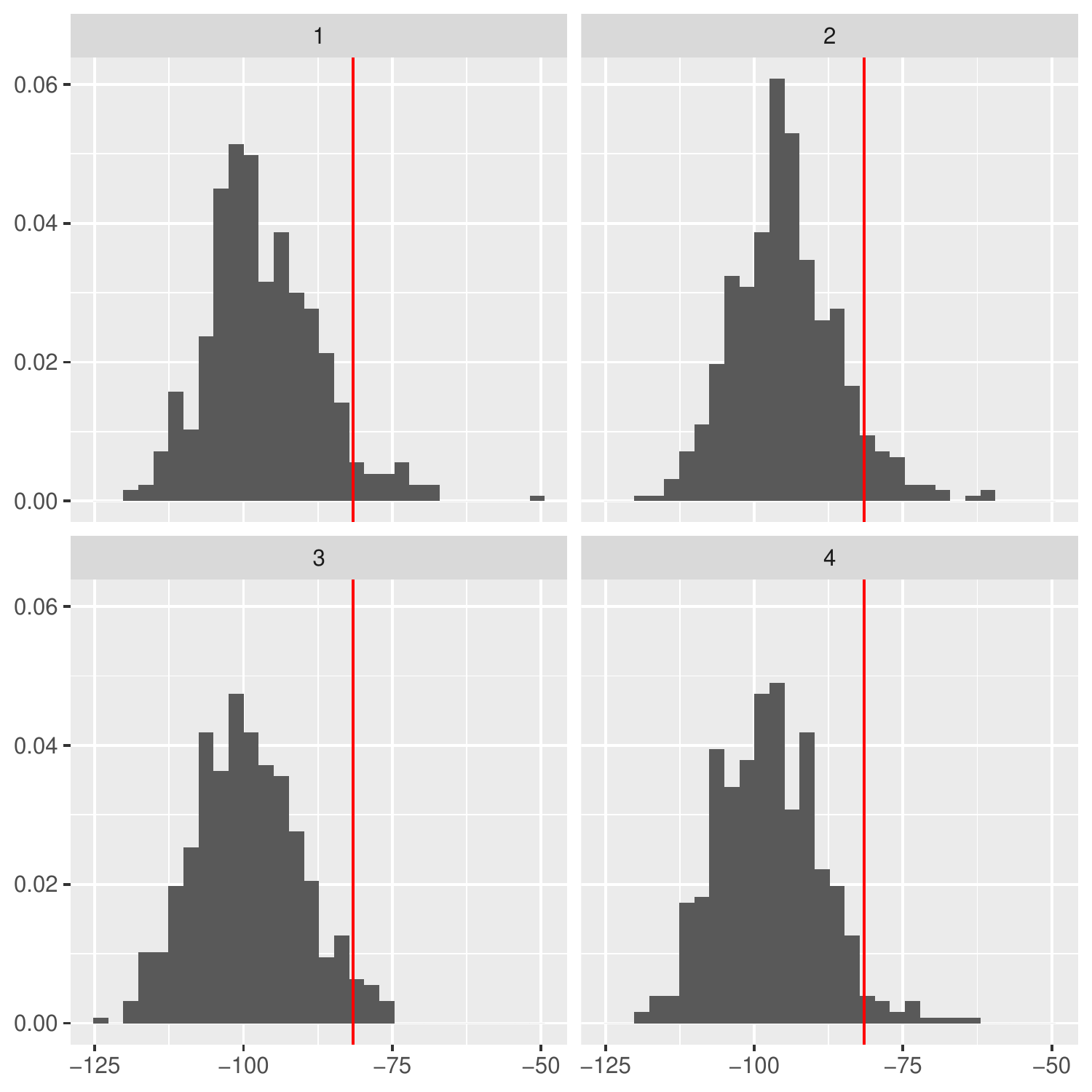}\\
\rotatebox{90}{~~~~~~~~~~~~Example \ref{ex:num}}&		\includegraphics[width=0.3\textwidth]{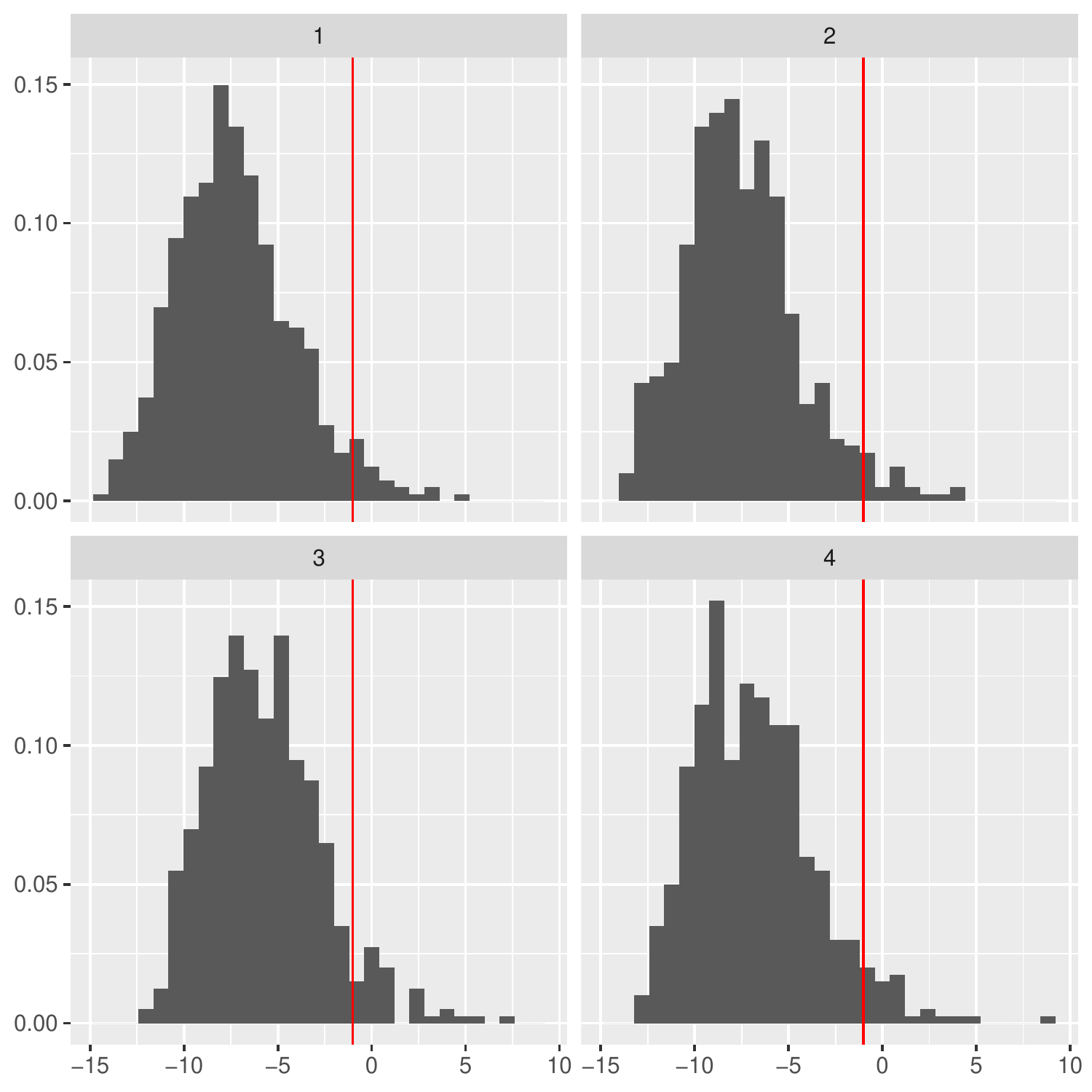}
		&\includegraphics[width=0.3\textwidth]{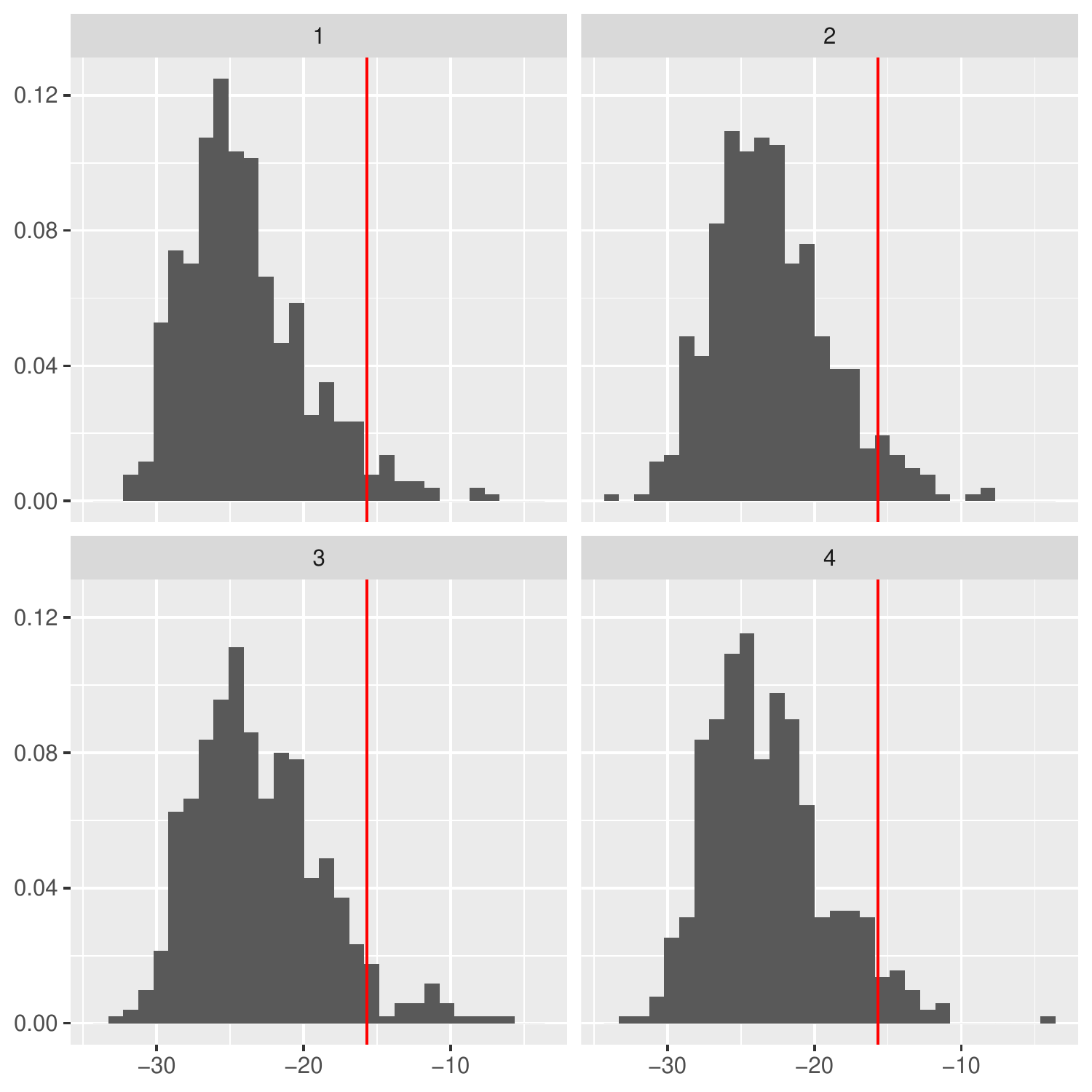}
		&\includegraphics[width=0.3\textwidth]{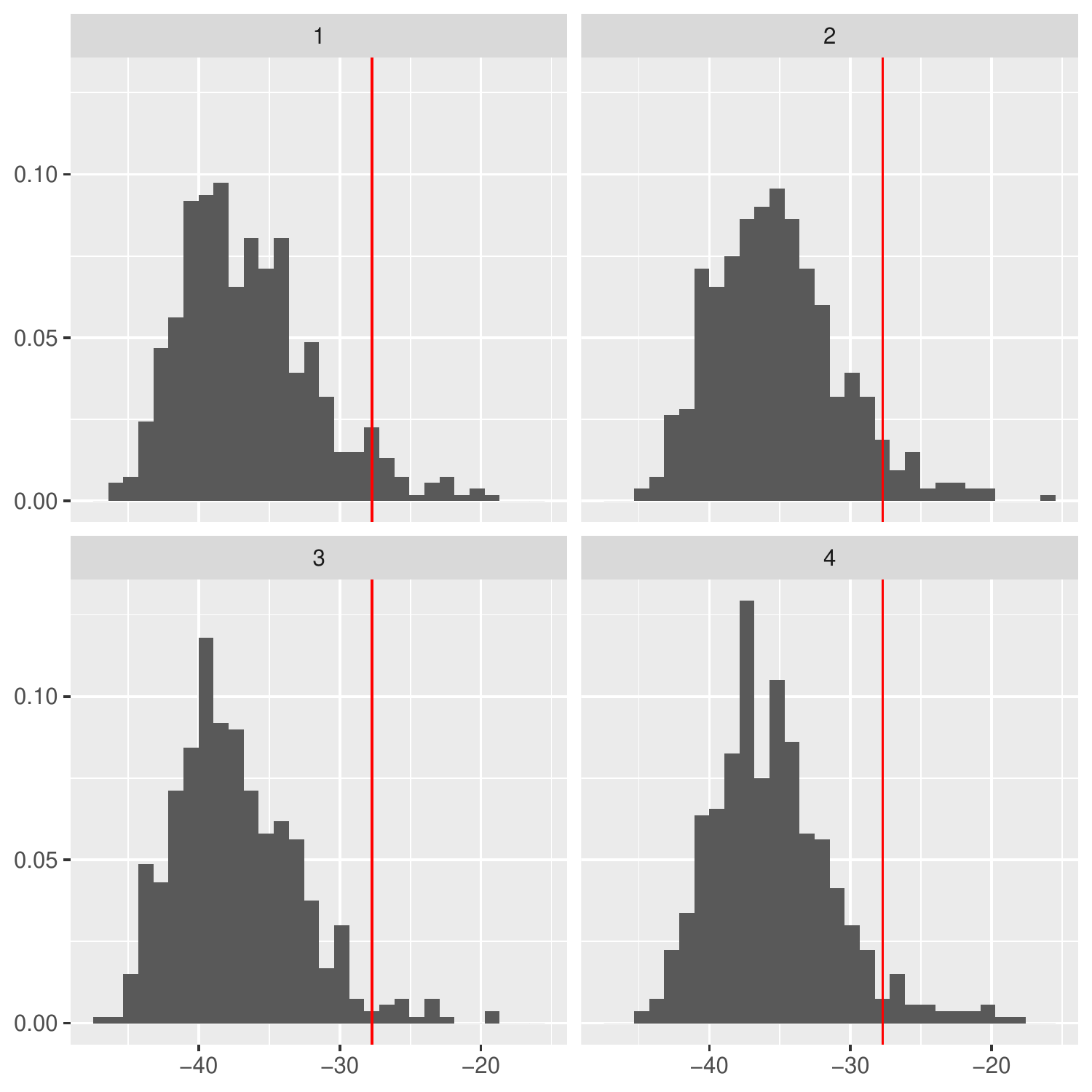}\\
	\end{tabular}
	\caption{The histograms of the transformed shadowing bootstrap statistic $g(\hat{p},\hat{q})W_n + g(\hat{p},\hat{q})\hat{\mu}_0$ for Examples~\ref{ex:same} and \ref{ex:num} with 500 bootstrap samples. We vary $n = 200, 600, 1000$ and for each $n$ we choose four different $z_0 \in \cC_0$. For Example \ref{ex:same}, we choose $\Delta=0.5$ and $m=\lceil (n/K)^{0.5}/2 \rceil$. For Example \ref{ex:num}, we choose $\Delta = 0.5$. Each panel shows the distribution of the statistic $W_n$ under different true $z_0$. According to the histogram, the distribution remains consistent for different true assignments in $\cC_0$.}\label{fig:comp-case1}
\end{figure}

We also compare our method with SIMPLE proposed in \cite{fan2019simple}. SIMPLE was designed to conduct a two sample test on whether two nodes belong to the same community. This is the case of Example \ref{ex:same}  in our paper with $m=2$. We also compare SIMPLE with our method  under Example \ref{ex:same} with $m >2$ by combining SIMPLE with Bonferroni correction.
We consider $n = 600$, $K = 2$ and  $m = 2, 3, 28$, where $m=28$ is chosen by setting $\delta=0.7$ in the formula $\lceil (n/K)^{\delta}/2 \rceil$. The probabilities $p$ and $q$ are chosen in the same way as the previous examples. The results of the comparison are shown in Table~\ref{table: comp with simple when m=2}. We applied the Bonferroni Correction to the SIMPLE method when doing the multiple comparison. From the results, our method outperforms SIMPLE in both the single-pair testing and the multiplicity testing. For the multiplicity testing results, Bonferroni Correction would result in a more conservative type-I error, yet the reported size from SIMPLE is still larger than the desired size of 0.05, indicating a lack of accuracy of the method under our setting of parameters. In comparison, the performance of our method is good and stable under all settings.
\begin{table}[htbp]
    \centering
    \begin{tabular}{c|c|ccccccc}
    \hline
     \hline
    $m=2$ & $\Delta$ & 0.1 & 0.2 & 0.3 & 0.4 & 0.5 & 0.6 & 0.7\\
    \hline
    \multirow{2}{*}{Our Method}& Size &0.770 & 0.008 & 0.056 & 0.040 & 0.060 & 0.048 & 0.056 \\
    & Power & 0.768 & 0.018 & 0.998 & 1.000 & 1.000 & 1.000 & 1.000\\
\hline
\multirow{2}{*}{SIMPLE}& Size &0.270 & 0.116 & 0.102 & 0.086 & 0.082 & 0.076 & 0.064 \\
    & Power & 0.312 & 0.756 & 0.990 & 1.000 & 1.000 & 1.000 & 1.000 \\
\hline
  \hline
    $m=3$ & $\Delta$ & 0.1 & 0.2 & 0.3 & 0.4 & 0.5 & 0.6 & 0.7\\
    \hline
    \multirow{2}{*}{Our Method}& Size &0.752 &0.014&0.048 &0.048 &0.044 &0.046 &0.046 \\
    & Power & 0.728 &0.018&0.998& 1.000& 1.000& 1.000& 1.000\\
\hline
\multirow{2}{*}{SIMPLE}& Size &0.344&0.118&0.070&0.074&0.070&0.084&0.070 \\
    & Power & 0.426&0.816&0.992 & 1.000 & 1.000 & 1.000 & 1.000 \\
\hline\hline
$m=28$ & $\Delta$ & 0.1 & 0.2 & 0.3 & 0.4 & 0.5 & 0.6 & 0.7\\
    \hline 
    \multirow{2}{*}{Our Method}& Size &0.792 &0.000&0.054&0.040&0.036&0.030&0.038 \\
    & Power &0.784 &0.000&0.752& 1.000& 1.000& 1.000& 1.000\\
\hline
\multirow{2}{*}{SIMPLE}& Size &0.938&0.232&0.126&0.088&0.072&0.06&0.044 \\
    & Power & 0.950&0.998& 1.000& 1.000& 1.000& 1.000& 1.000 \\
\hline
 \hline
    \end{tabular}
    \caption{Comparison of our method with SIMPLE for Example \ref{ex:same} with $m=2, 3, 28$. }
    \label{table: comp with simple when m=2}
\end{table}

\section{Real Application to Protein-Protein Network}
We apply our method to study the Protein-Protein Interaction (PPI) networks. We aim to test whether different protein functional groups belong to the same community in PPI \citep{tabouy2019variational}. The protein network is extracted from the STRING platform\footnote{\url{https://string-db.org}.} \citep{szklarczyk2015string}. For proteins $i$ and $j$, STRING dataset assigns an interaction score $v_{ij} \in [0,1]$ between the two proteins. We construct the adjacency matrix $\Ab$ of PPI as follows:
$$\Ab_{ij}=\left\{\begin{array}{ll}1 & \text { if } v_{i j}>0.95, \\  0 & \text { if } v_{i j} \leq 0.95. \end{array}\right.$$
We select the proteins from the two major orthologous groups: prokaryotic clusters (COGs) and eukaryotic clusters (KOGs) \citep{koonin2004comprehensive,galperin2019microbial} and remove those proteins  in both COGs and KOGs. We select 222 proteins in total. We further divide COGs and KOGs into the following 5 subgroups according to the Clusters of Orthologous Groups of proteins  database\footnote{\url{http://www.ncbi.nlm.nih.gov/COG/}}:
\begin{itemize}
    \item $G_1$: KOG-Information storage and processing;
    \item $G_2$: KOG-Cellular processes and signaling;
    \item $G_3$: COG-Cellular processes and signaling;
    \item $G_4$: COG-Information storage and processing;
    \item $G_5$: COG-Metabolism.
\end{itemize}

\begin{figure}[htb]
	\centering
	\includegraphics[width=0.4\textwidth]{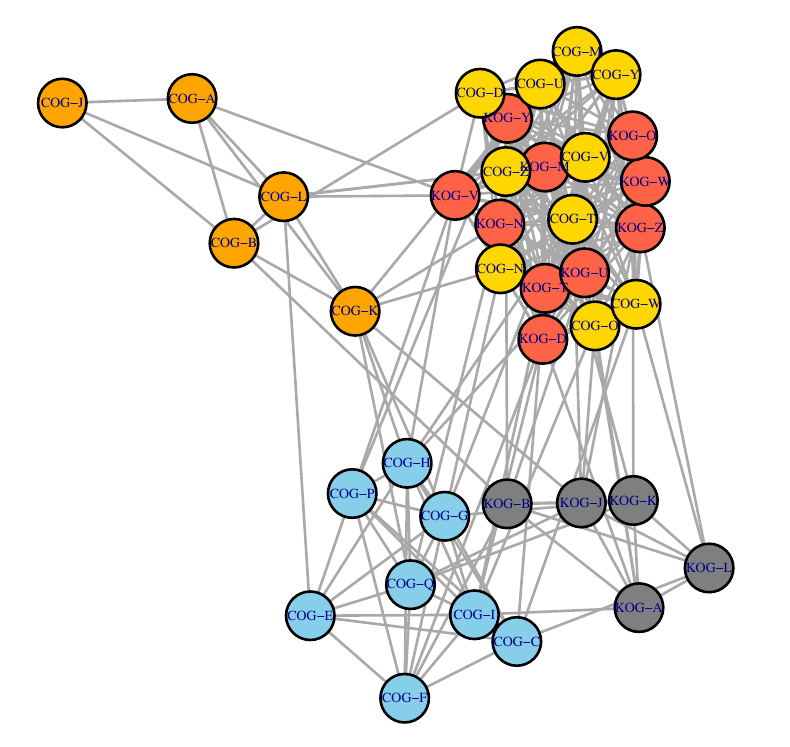}
	\caption{The functional network of protein interaction network. Each node represent a functional subgroup in KOGs or COGs (e.g. ``A"-RNA processing and modification\protect\footnotemark). The colors denote different functional categories. Gray nodes belongs to  $G_1$: KOG-Information storage and processing. Red nodes belong to $G_2$: KOG-Cellular processes and signaling. Yellow nodes belong to $G_3$: COG-Cellular processes and signaling. Orange nodes belong to $G_4$: COG-Information storage and processing. Blue nodes belong to $G_5$: COG-Metabolism.}\label{fig:ppi}.
\end{figure}
\footnotetext{\url{https://ftp.ncbi.nih.gov/pub/COG/COG2014/static/lists/homeCOGs.html}}

The protein-protein interacton network is visualized in Figure \ref{fig:ppi}. We aim to test whether any two subgroups above belong to the same community in PPI. We can formulate the hypthoses as follows:
\begin{align*}
 &\mathrm{H}_{0,ij}: \text{Subgroup $G_i$ and $G_j$ belong to the same community},\\
 &\mathrm{H}_{1,ij}: \text{Subgroup $G_i$ belong to community $a$, but $G_j$  belong to community $b \neq a$}.
\end{align*}
for $1 \le i \neq j \le 5$. These hypotheses are the same community test for groups in Example \ref{ex:group}.
We apply the shadowing bootstrap method to test $\mathrm{H}_{0,ij}$ for all $1 \le i \neq j \le 5$. We summarize the $p$-values of these hypothesis tests  in Table~\ref{table: subgr5}. 
\begin{table}[H]
    \centering
    \begin{tabular}{c|ccccc}
    \hline
    \hline
    Subgroups & $G_1$ &$G_2$& $G_3$&$G_4$&$G_5$\\
    \hline
   $G_1$ & - & $<10^{-6}$ & $<10^{-6}$ & 0.027806 & 0.013166 \\
  $G_2$& $<10^{-6}$ & - & 0.165860 & 0.000032 & 0.000178 \\
   $G_3$& $<10^{-6}$ & 0.165860 & - & 0.003800 & 0.029974 \\
  $G_4$ & 0.027806 & 0.000032 & 0.003800 & - & 0.023648 \\
  $G_5$& 0.013166 & 0.000178 & 0.029974 & 0.023648 & - \\
    \hline
    \hline
    \end{tabular}
    \caption{P-values of the test $\mathrm{H}_{0,ij}$ on whether two subgroups $G_i$ and $G_j$ are in the same community for $1 \le i \neq j \le 5$.}
    \label{table: subgr5}
\end{table}
According to the results, the p-value for $G_2$ and $G_3$ does not reach the significant level and we fail to reject the null hypothesis that they are in the same community. This is consistent with the fact that  both $G_2$ and $G_3$ belong to the cellular processing and signaling functional category. On the other hand, both $G_1$ and $G_4$ belong to the information storage and processing functional category, but the $p$-value between them is significant. This is consistent with the KOGs and COGs are heterogeneous in cellular processing \citep{koonin2004comprehensive}. We also notice that $p$-values between $G_1$ and $G_2$, $G_1$ and $G_3$, $G_2$ and $G_4$ are highly significant, as the information storage and processing and the cellular processes and signaling are different functional processes \citep{rehman2017three,brun2003functional,pal2005inference}.

{\setstretch{1.0}
	\setlength{\bibsep}{0.85pt}
	{\small
		\bibliographystyle{ims}
		\bibliography{Reference.bib}

\begin{thebibliography}{44}
\expandafter\ifx\csname natexlab\endcsname\relax\def\natexlab#1{#1}\fi
\expandafter\ifx\csname url\endcsname\relax
  \def\url#1{\texttt{#1}}\fi
\expandafter\ifx\csname urlprefix\endcsname\relax\def\urlprefix{URL }\fi

\bibitem[{Abbe et~al.(2016)Abbe, Bandeira and Hall}]{abbe2015exact}
\textsc{Abbe, E.}, \textsc{Bandeira, A.~S.} and \textsc{Hall, G.} (2016).
\newblock Exact recovery in the stochastic block model.
\newblock \textit{IEEE Trans. Inform. Theory} \textbf{62} 471--487.

\bibitem[{Abbe et~al.(2020)Abbe, Fan, Wang and Zhong}]{abbe2017entrywise}
\textsc{Abbe, E.}, \textsc{Fan, J.}, \textsc{Wang, K.} and \textsc{Zhong, Y.}
  (2020).
\newblock Entrywise eigenvector analysis of random matrices with low expected
  rank.
\newblock \textit{Ann. Statist.} \textbf{48} 1452--1474.

\bibitem[{Abbe and Sandon(2015)}]{abbe2015community}
\textsc{Abbe, E.} and \textsc{Sandon, C.} (2015).
\newblock Community detection in general stochastic block models: fundamental
  limits and efficient algorithms for recovery.
\newblock In \textit{2015 {IEEE} 56th {A}nnual {S}ymposium on {F}oundations of
  {C}omputer {S}cience---{FOCS} 2015}. IEEE Computer Soc., Los Alamitos, CA,
  670--688.

\bibitem[{Agarwal et~al.(2017)Agarwal, Bandeira, Koiliaris and
  Kolla}]{agarwal2017multisection}
\textsc{Agarwal, N.}, \textsc{Bandeira, A.~S.}, \textsc{Koiliaris, K.} and
  \textsc{Kolla, A.} (2017).
\newblock Multisection in the stochastic block model using semidefinite
  programming.
\newblock In \textit{Compressed sensing and its applications}. Appl. Numer.
  Harmon. Anal., Birkh\"{a}user/Springer, Cham, 125--162.

\bibitem[{Airoldi et~al.(2013)Airoldi, Costa and Chan}]{airoldi2013stochastic}
\textsc{Airoldi, E.~M.}, \textsc{Costa, T.~B.} and \textsc{Chan, S.~H.} (2013).
\newblock Stochastic blockmodel approximation of a graphon: Theory and
  consistent estimation.
\newblock In \textit{Advances in Neural Information Processing Systems 26}
  (C.~J.~C. Burges, L.~Bottou, M.~Welling, Z.~Ghahramani and K.~Q. Weinberger,
  eds.). Curran Associates, Inc., 692--700.

\bibitem[{Albert et~al.(1999)Albert, Jeong and
  Barab{\'a}si}]{albert1999diameter}
\textsc{Albert, R.}, \textsc{Jeong, H.} and \textsc{Barab{\'a}si, A.-L.}
  (1999).
\newblock Diameter of the world-wide web.
\newblock \textit{Nature} \textbf{401} 130--131.

\bibitem[{Bandeira(2018)}]{bandeira2018random}
\textsc{Bandeira, A.~S.} (2018).
\newblock Random {L}aplacian matrices and convex relaxations.
\newblock \textit{Found. Comput. Math.} \textbf{18} 345--379.

\bibitem[{Barab{\'a}si and Oltvai(2004)}]{barabasi2004network}
\textsc{Barab{\'a}si, A.-L.} and \textsc{Oltvai, Z.~N.} (2004).
\newblock Network biology: understanding the cell's functional organization.
\newblock \textit{Nature Reviews Genetics} \textbf{5} 101--113.

\bibitem[{Bickel et~al.(2013)Bickel, Choi, Chang and
  Zhang}]{bickel2013asymptotic}
\textsc{Bickel, P.}, \textsc{Choi, D.}, \textsc{Chang, X.} and \textsc{Zhang,
  H.} (2013).
\newblock Asymptotic normality of maximum likelihood and its variational
  approximation for stochastic blockmodels.
\newblock \textit{Ann. Statist.} \textbf{41} 1922--1943.

\bibitem[{Bickel and Sarkar(2016)}]{bickel2015hypothesis}
\textsc{Bickel, P.~J.} and \textsc{Sarkar, P.} (2016).
\newblock Hypothesis testing for automated community detection in networks.
\newblock \textit{J. R. Stat. Soc. Ser. B. Stat. Methodol.} \textbf{78}
  253--273.

\bibitem[{Brun et~al.(2003)Brun, Chevenet, Martin, Wojcik, Gu{\'e}noche and
  Jacq}]{brun2003functional}
\textsc{Brun, C.}, \textsc{Chevenet, F.}, \textsc{Martin, D.}, \textsc{Wojcik,
  J.}, \textsc{Gu{\'e}noche, A.} and \textsc{Jacq, B.} (2003).
\newblock Functional classification of proteins for the prediction of cellular
  function from a protein-protein interaction network.
\newblock \textit{Genome Biology} \textbf{5} R6.

\bibitem[{Bubeck et~al.(2016)Bubeck, Ding, Eldan and
  R\'{a}cz}]{bubeck2016testing}
\textsc{Bubeck, S.}, \textsc{Ding, J.}, \textsc{Eldan, R.} and
  \textsc{R\'{a}cz, M.~Z.} (2016).
\newblock Testing for high-dimensional geometry in random graphs.
\newblock \textit{Random Structures Algorithms} \textbf{49} 503--532.

\bibitem[{Chernozhukov et~al.(2013)Chernozhukov, Chetverikov and
  Kato}]{chernozhukov2013gaussian}
\textsc{Chernozhukov, V.}, \textsc{Chetverikov, D.} and \textsc{Kato, K.}
  (2013).
\newblock Gaussian approximations and multiplier bootstrap for maxima of sums
  of high-dimensional random vectors.
\newblock \textit{Ann. Statist.} \textbf{41} 2786--2819.

\bibitem[{Choi et~al.(2012)Choi, Wolfe and Airoldi}]{choi2012stochastic}
\textsc{Choi, D.~S.}, \textsc{Wolfe, P.~J.} and \textsc{Airoldi, E.~M.} (2012).
\newblock Stochastic blockmodels with a growing number of classes.
\newblock \textit{Biometrika} \textbf{99} 273--284.

\bibitem[{Fan et~al.(2019)Fan, Fan, Han and Lv}]{fan2019simple}
\textsc{Fan, J.}, \textsc{Fan, Y.}, \textsc{Han, X.} and \textsc{Lv, J.}
  (2019).
\newblock Simple: Statistical inference on membership profiles in large
  networks.

\bibitem[{Faskowitz et~al.(2018)Faskowitz, Yan, Zuo and
  Sporns}]{joshuabrainconn}
\textsc{Faskowitz, J.}, \textsc{Yan, X.}, \textsc{Zuo, X.-N.} and
  \textsc{Sporns, O.} (2018).
\newblock Weighted stochastic block models of the human connectome across the
  life span.
\newblock \textit{Scientific Reports} \textbf{8} 12997.

\bibitem[{Galperin et~al.(2017)Galperin, Kristensen, Makarova, Wolf and
  Koonin}]{galperin2019microbial}
\textsc{Galperin, M.~Y.}, \textsc{Kristensen, D.~M.}, \textsc{Makarova, K.~S.},
  \textsc{Wolf, Y.~I.} and \textsc{Koonin, E.~V.} (2017).
\newblock {Microbial genome analysis: the COG approach}.
\newblock \textit{Briefings in Bioinformatics} \textbf{20} 1063--1070.

\bibitem[{Gao and Lafferty(2017)}]{gao2017testing}
\textsc{Gao, C.} and \textsc{Lafferty, J.} (2017).
\newblock Testing network structure using relations between small subgraph
  probabilities.

\bibitem[{Gao et~al.(2017)Gao, Ma, Zhang and Zhou}]{gao2017achieving}
\textsc{Gao, C.}, \textsc{Ma, Z.}, \textsc{Zhang, A.~Y.} and \textsc{Zhou,
  H.~H.} (2017).
\newblock Achieving optimal misclassification proportion in stochastic block
  models.
\newblock \textit{J. Mach. Learn. Res.} \textbf{18} Paper No. 60, 45.

\bibitem[{Gao et~al.(2018)Gao, Ma, Zhang and Zhou}]{gao2018community}
\textsc{Gao, C.}, \textsc{Ma, Z.}, \textsc{Zhang, A.~Y.} and \textsc{Zhou,
  H.~H.} (2018).
\newblock Community detection in degree-corrected block models.
\newblock \textit{Ann. Statist.} \textbf{46} 2153--2185.

\bibitem[{Ghoshdastidar et~al.(2017)Ghoshdastidar, Gutzeit, Carpentier and von
  Luxburg}]{ghoshdastidar2017twouse}
\textsc{Ghoshdastidar, D.}, \textsc{Gutzeit, M.}, \textsc{Carpentier, A.} and
  \textsc{von Luxburg, U.} (2017).
\newblock Two-sample tests for large random graphs using network statistics.

\bibitem[{Ghoshdastidar et~al.(2020)Ghoshdastidar, Gutzeit, Carpentier and von
  Luxburg}]{ghoshdastidar2017two}
\textsc{Ghoshdastidar, D.}, \textsc{Gutzeit, M.}, \textsc{Carpentier, A.} and
  \textsc{von Luxburg, U.} (2020).
\newblock Two-sample hypothesis testing for inhomogeneous random graphs.
\newblock \textit{Ann. Statist.} \textbf{48} 2208--2229.

\bibitem[{Hajek et~al.(2016)Hajek, Wu and Xu}]{hajek2016achieving}
\textsc{Hajek, B.}, \textsc{Wu, Y.} and \textsc{Xu, J.} (2016).
\newblock Achieving exact cluster recovery threshold via semidefinite
  programming: extensions.
\newblock \textit{IEEE Trans. Inform. Theory} \textbf{62} 5918--5937.

\bibitem[{Holland et~al.(1983)Holland, Laskey and
  Leinhardt}]{holland1983stochastic}
\textsc{Holland, P.~W.}, \textsc{Laskey, K.~B.} and \textsc{Leinhardt, S.}
  (1983).
\newblock Stochastic blockmodels: first steps.
\newblock \textit{Social Networks} \textbf{5} 109--137.

\bibitem[{Karwa et~al.(2016)Karwa, Pati, Petrović, Solus, Alexeev, Raič,
  Wilburne, Williams and Yan}]{karwa2016exact}
\textsc{Karwa, V.}, \textsc{Pati, D.}, \textsc{Petrović, S.}, \textsc{Solus,
  L.}, \textsc{Alexeev, N.}, \textsc{Raič, M.}, \textsc{Wilburne, D.},
  \textsc{Williams, R.} and \textsc{Yan, B.} (2016).
\newblock Exact tests for stochastic block models.

\bibitem[{Koonin et~al.(2004)Koonin, Fedorova, Jackson, Jacobs, Krylov,
  Makarova, Mazumder, Mekhedov, Nikolskaya, Rao, Rogozin, Smirnov, Sorokin,
  Sverdlov, Vasudevan, Wolf, Yin and Natale}]{koonin2004comprehensive}
\textsc{Koonin, E.~V.}, \textsc{Fedorova, N.~D.}, \textsc{Jackson, J.~D.},
  \textsc{Jacobs, A.~R.}, \textsc{Krylov, D.~M.}, \textsc{Makarova, K.~S.},
  \textsc{Mazumder, R.}, \textsc{Mekhedov, S.~L.}, \textsc{Nikolskaya, A.~N.},
  \textsc{Rao, B.~S.}, \textsc{Rogozin, I.~B.}, \textsc{Smirnov, S.},
  \textsc{Sorokin, A.~V.}, \textsc{Sverdlov, A.~V.}, \textsc{Vasudevan, S.},
  \textsc{Wolf, Y.~I.}, \textsc{Yin, J.~J.} and \textsc{Natale, D.~A.} (2004).
\newblock A comprehensive evolutionary classification of proteins encoded in
  complete eukaryotic genomes.
\newblock \textit{Genome Biology} \textbf{5} R7.

\bibitem[{Lei(2016)}]{lei2016goodness}
\textsc{Lei, J.} (2016).
\newblock A goodness-of-fit test for stochastic block models.
\newblock \textit{Ann. Statist.} \textbf{44} 401--424.

\bibitem[{Massouli\'{e}(2014)}]{massoulie2014community}
\textsc{Massouli\'{e}, L.} (2014).
\newblock Community detection thresholds and the weak {R}amanujan property.
\newblock In \textit{S{TOC}'14---{P}roceedings of the 2014 {ACM} {S}ymposium on
  {T}heory of {C}omputing}. ACM, New York.

\bibitem[{Mossel et~al.(2012)Mossel, Neeman and Sly}]{mossel2012stochastic}
\textsc{Mossel, E.}, \textsc{Neeman, J.} and \textsc{Sly, A.} (2012).
\newblock Stochastic block models and reconstruction.

\bibitem[{Mossel et~al.(2018)Mossel, Neeman and Sly}]{mossel2018proof}
\textsc{Mossel, E.}, \textsc{Neeman, J.} and \textsc{Sly, A.} (2018).
\newblock A proof of the block model threshold conjecture.
\newblock \textit{Combinatorica} \textbf{38} 665--708.

\bibitem[{Newman(2003)}]{newman2003structure}
\textsc{Newman, M. E.~J.} (2003).
\newblock The structure and function of complex networks.
\newblock \textit{SIAM Rev.} \textbf{45} 167--256.

\bibitem[{Neykov et~al.(2019)Neykov, Lu and Liu}]{MR3909951}
\textsc{Neykov, M.}, \textsc{Lu, J.} and \textsc{Liu, H.} (2019).
\newblock Combinatorial inference for graphical models.
\newblock \textit{Ann. Statist.} \textbf{47} 795--827.

\bibitem[{Pal and Eisenberg(2005)}]{pal2005inference}
\textsc{Pal, D.} and \textsc{Eisenberg, D.} (2005).
\newblock Inference of protein function from protein structure.
\newblock \textit{Structure} \textbf{13} 121--130.

\bibitem[{Shumovskaia and Panov(2018)}]{shumovskaia2018towards}
\textsc{Shumovskaia, V.} and \textsc{Panov, M.} (2018).
\newblock Towards hypothesis testing for random graphs with community
  structure.
\newblock In \textit{ITIS Interdisciplinary School-Conference 2018}.

\bibitem[{Szklarczyk et~al.(2015)Szklarczyk, Franceschini, Wyder, Forslund,
  Heller, Huerta-Cepas, Simonovic, Roth, Santos, Tsafou, Kuhn, Bork, Jensen and
  von Mering}]{szklarczyk2015string}
\textsc{Szklarczyk, D.}, \textsc{Franceschini, A.}, \textsc{Wyder, S.},
  \textsc{Forslund, K.}, \textsc{Heller, D.}, \textsc{Huerta-Cepas, J.},
  \textsc{Simonovic, M.}, \textsc{Roth, A.}, \textsc{Santos, A.},
  \textsc{Tsafou, K.~P.}, \textsc{Kuhn, M.}, \textsc{Bork, P.}, \textsc{Jensen,
  L.~J.} and \textsc{von Mering, C.} (2015).
\newblock String v10: protein-protein interaction networks, integrated over the
  tree of life.
\newblock \textit{Nucleic acids research} \textbf{43} D447--D452.

\bibitem[{Tabouy et~al.(2020)Tabouy, Barbillon and
  Chiquet}]{tabouy2019variational}
\textsc{Tabouy, T.}, \textsc{Barbillon, P.} and \textsc{Chiquet, J.} (2020).
\newblock Variational {I}nference for {S}tochastic {B}lock {M}odels {F}rom
  {S}ampled {D}ata.
\newblock \textit{J. Amer. Statist. Assoc.} \textbf{115} 455--466.

\bibitem[{Tang et~al.(2017{\natexlab{a}})Tang, Athreya, Sussman, Lyzinski, Park
  and Priebe}]{tang2017semiparametric}
\textsc{Tang, M.}, \textsc{Athreya, A.}, \textsc{Sussman, D.~L.},
  \textsc{Lyzinski, V.}, \textsc{Park, Y.} and \textsc{Priebe, C.~E.}
  (2017{\natexlab{a}}).
\newblock A semiparametric two-sample hypothesis testing problem for random
  graphs.
\newblock \textit{J. Comput. Graph. Statist.} \textbf{26} 344--354.

\bibitem[{Tang et~al.(2017{\natexlab{b}})Tang, Athreya, Sussman, Lyzinski and
  Priebe}]{tang2017nonparametric}
\textsc{Tang, M.}, \textsc{Athreya, A.}, \textsc{Sussman, D.~L.},
  \textsc{Lyzinski, V.} and \textsc{Priebe, C.~E.} (2017{\natexlab{b}}).
\newblock A nonparametric two-sample hypothesis testing problem for random
  graphs.
\newblock \textit{Bernoulli} \textbf{23} 1599--1630.

\bibitem[{Ur~Rehman et~al.(2017)Ur~Rehman, Azam, Yao and
  Benso}]{rehman2017three}
\textsc{Ur~Rehman, H.}, \textsc{Azam, N.}, \textsc{Yao, J.} and \textsc{Benso,
  A.} (2017).
\newblock A three-way approach for protein function classification.
\newblock \textit{PloS one} \textbf{12} e0171702--e0171702.

\bibitem[{Wang and Bickel(2017)}]{wang2017likelihood}
\textsc{Wang, Y. X.~R.} and \textsc{Bickel, P.~J.} (2017).
\newblock Likelihood-based model selection for stochastic block models.
\newblock \textit{Ann. Statist.} \textbf{45} 500--528.

\bibitem[{Wasserman and Faust(1994)}]{wasserman1994social}
\textsc{Wasserman, S.} and \textsc{Faust, K.} (1994).
\newblock \textit{Social network analysis: Methods and applications}, vol.~8.
\newblock Cambridge university press.

\bibitem[{Yang and Barron(1999)}]{MR1742500}
\textsc{Yang, Y.} and \textsc{Barron, A.} (1999).
\newblock Information-theoretic determination of minimax rates of convergence.
\newblock \textit{Ann. Statist.} \textbf{27} 1564--1599.

\bibitem[{Yun and Proutiere(2014)}]{yun2014accurate}
\textsc{Yun, S.-Y.} and \textsc{Proutiere, A.} (2014).
\newblock Accurate community detection in the stochastic block model via
  spectral algorithms.

\bibitem[{Zhang and Zhou(2016)}]{zhang2016minimax}
\textsc{Zhang, A.~Y.} and \textsc{Zhou, H.~H.} (2016).
\newblock Minimax rates of community detection in stochastic block models.
\newblock \textit{Ann. Statist.} \textbf{44} 2252--2280.

\end{thebibliography}
	}
}

\newpage

\setcounter{page}{1}

\begin{center}
\textit{\large Supplementary material to}
\end{center}
\begin{center}
\title{\Large Combinatorial-Probabilistic Trade-Off:  CommunityProperties Test in the Stochastic Block Models}
\vskip10pt
\author{}
\end{center}

\setcounter{section}{0}
\renewcommand{\thesection}{\Alph{section}}

\maketitle

  This document contains the supplementary material to the paper
  ``Combinatorial-Probabilistic Trade-Off:  Community Properties Test in the Stochastic Block Models".  In Appendix \ref{sec: proof com prop}, we provide proofs of propositions related to community properties. Appendix \ref{sec: proof inf} proves that our testing method is honest and powerful.  In Appendix \ref{sec: proof lwrbd}, we prove the theorems related to lower bound and apply the general lower bound theorem to derive a sharp threshold for exact recovery. In Appendix
  \ref{sec: proof of tec lem}, we provide proofs for the technical lemmas that were used in proving the validity and power of the upper bound test.

\section{Proofs of Community Properties}\label{sec: proof com prop}
In this section we mainly focus on the proofs concerning community properties, including the generalization of symmetric community property pairs from even to uneven cluster sizes, the size of the ball $B_{z_0}(r_K)$ in three examples, and the packing number of the ball in each case. 
\subsection{Proof of Proposition~\ref{prop: lb-case study-pac num}}\label{sec: proof even pack num}
 \textbf{Example~\ref{ex:same}: } In this case, for a given $z_0 \in \cC_0$, we have derived the form of $B_{z_0}$. For any $z_i, z_j \in \cP\big(B_{z_0}, \sqrt{d(z_0,\cC_1)}\big)$, we know from Section~\ref{mtd} that they are transformed from $z_0$ by swapping one of the first $m$ nodes with another node from a different cluster. The node among the first $m$ to be swapped $s \in [m]$ cannot be the same for the two assignments, otherwise $|\mathcal{E}_{1,2}(z_0,z_i) \cap \mathcal{E}_{1,2}(z_0,z_j)| \geq |\mathcal{E}_{1}(z_0,z_i) \cap \mathcal{E}_{1}(z_0,z_j)| = n/K-1 \gg \sqrt{d(z_0,\cC_1)}$. Thus each $z \in \cP\big(B_{z_0}, \sqrt{d(z_0,\cC_1)}\big)$ corresponds to a different swapped node among the first $m$ nodes, and we have $N\big(B_{z_0}, \sqrt{d(z_0,\cC_1)}\big) \leq m$. On the other hand, for the given assignment $z_0$, we can construct the following set $\{z_k\}_{k=1}^m$: we take a set of nodes $\mathcal{S} = \{s_1,s_2,...,s_m\}$ from a cluster different from the cluster to which the first $m$ nodes of $z_0$ belong. Then for each $k$, we swap the cluster assignment of node $k$ with node $s_k$, $k=1,...,m$, and obtain the corresponding alternative assignment $z_k$. Then for any two alternative assignments $z_i$ and $z_j$ obtained this way, we have $|\mathcal{E}_{1,2}(z_0,z_i) \cap \mathcal{E}_{1,2}(z_0,z_j)| \leq 4$. Thus $N\big(B_{z_0}, \sqrt{d(z_0,\cC_1)}\big) = m$. 
 \\
\noindent \textbf{Example~\ref{ex:group}: } For a given $z_0 \in \cC_0$ and the corresponding boundary $B_{z_0}$, it can be perceived that $N(B_{z_0},\sqrt{d(z_0,\cC_1)}) = N(B_{z_0},0)=1$, because any $z \in B_{z_0}$ involves swapping the set $\mathcal{S}_2$ so that $\forall z_i, z_j \in B_{z_0}, |\mathcal{E}_{1,2}(z_0,z_i) \cap \mathcal{E}_{1,2}(z_0,z_j)| \geq m \wedge m'(n/K-m \wedge m')$. 

\subsection{Proof of Proposition \ref{prop:sym}}\label{sec: proof of prop 6.1}
To prove that Definition \ref{def:cluster class} is a special case of Definition \ref{def: sym} when the community size is even, it suffices for us to construct a concrete community label permutation $\sigma$ and  node label permutation $\tau$ satisfying Definition \ref{def: sym} based on $\cN$ and $\tilde{z}$. Here we use Figure~\ref{fig:exp} to illustrate the construction. Given any $z, z' \in \cC_0$, we first construct $\sigma$. Since $z_{\cN} \simeq z'_{\cN} \simeq \tilde{z}_{\cN}$, by Definition \ref{def:cluster class}, there must exist a $\sigma \in S_K$ mapping $z$ to $z'$ on the support $\cN$, i.e., $\sigma (z_{\cN}) = z'_{\cN}$.  For example, in Figure~\ref{fig:exp}, we construct a $\sigma$ swapping communities 1 and 2. After matching the community labels, we now construct $\tau$ in order to transform $\sigma(z)$ to $z'$. Since the community size is even and $\sigma (z_{\cN}) = z'_{\cN}$, $\sigma(z)$ and $z'$ have equal cluster sizes on the support of $\cN^c$. Therefore, there exists $\tau \in S_n$ such that $\tau(\sigma(z)_{\cN^c}) = z'_{\cN^c}$ and $\tau(\sigma(z)_{\cN}) = \sigma(z)_{\cN}=z'_{\cN}$. We can see the example of $\tau$ in Figure \ref{fig:exp}. Using $\sigma$ and $\tau$ constructed above, we can check that $\tau \circ \sigma(z) = z'$. We now check the last condition in Definition \ref{def: sym}. For any $z'' \in \cC_1$, since $\tau$ is invariant on $\cN$, we have $\tau \circ \sigma \left( z''_{\cN} \right) = \sigma \left( z''_{\cN} \right) \simeq z''_{\cN}$. By Definition \ref{def:cluster class}, the alternative community  $\cC_1$ is closed under permutation on the support of $\cN$, we have $\tau \circ \sigma (z'') \in \cC_1$. Therefore, we check that Definition \ref{def:cluster class} is a special case of Definition \ref{def: sym}.

Since the property pairs in \eqref{eq:c0ex1} and \eqref{eq:c0ex2} are symmetric property pairs, they are also generalized symmetric property pairs following the preceding arguments. As for the property pair in \eqref{eq:c0ex3}, we can see from Figure~\ref{fig:exp}(b) that for any two assignments $z, z' \in \cC_0$, since they have equal community sizes, we can take $\sigma$ to be the identity map and there exists $\tau \in S_n$ such that $\tau(z)=z'$. Then for any $z'' \in \cC_1$, since $\tau$ does not change the community sizes, we know that $\tau(z'')$ still have uneven community sizes and $\tau(z'') \in \cC_1$. Therefore, by Definition \ref{def: sym}, the property pair in \eqref{eq:c0ex3} is a generalized symmetric property pair. 
\subsection{Proof of Proposition~\ref{prop:bz-cases}}\label{sec: proof of bz-g}

\textbf{Example \ref{ex:same}: }To construct $B_{z_0}(r_K)$, we need to find all the assignments in $\cC_1$ whose distance from $z_0$ is no larger than $d(\cC_0, \cC_1)$ by an extra constant term. To construct assignments in $\cC_1$ closest to $z_0$, we would pick one node in $[m]$ and reassign it to a different community (see Figure~\ref{fig:exp uneven move} (a)). Assignments constructed in such ways will satisfy $d(z_0,z_1) = d(\cC_0, \cC_1) = n/K $. If we make community changes to any other nodes on the basis of such construction, then $d(z_0,z_1)$ would increase by at least $n/K - 2$, which exceeds the constant level. Thus $B_{z_0}(r_K)$ consists of all assignments constructed by moving one node of $z_0$ in $[m]$ to a different cluster. Since we can pick $m$ nodes in total and reassign them to $K-1$ different clusters, $|B_{z_0}(r_K)| = (K-1)m = O(m)$.
\\
\noindent \textbf{Example \ref{ex:group}: }For an arbitrary $z_0 \in \cC_0$, without loss of generality, we assume that $m'\le m$. Then when $m' \le c_K$, to construct assignments in $\cC_1$ that are closest to $z_0$, we need to reassign nodes $m+1, \ldots, m+m'$ collectively to a different community (see Figure~\ref{fig:exp uneven move} (b). Such constructed assignments have distance $d(z_0,z_1) = d(\cC_0, \cC_1) = m' n/K$. Similar to the previous example, any community changes to other nodes on the basis of such construction would result in increase of $d(z_0, z_1)$ by at least $n/K - m' -1$. Therefore, $B_{z_0}(r_K)$ consists of those assignments in $\cC_1$ constructed by reassigning nodes $m+1, \ldots, m+m'$. Since there are $K-1$ other clusters to reassign in total, we have $|B_{z_0}(r_K)| = K-1 = O(1)$. On the other hand, when $m' > c_K$, then we cannot reassign nodes $m+1,\ldots,m+m'$ collectively without exchanging with other nodes, otherwise the community size bound will be violated. Then $d(\cC_0,\cC_1)$ and $B_{z_0}(r_K)$ is exactly the same as the even case and the claim follows.  
 \\
\noindent \textbf{Example \ref{ex:num}: }As for the ball $B_{z_0}(r_K)$ for an arbitrary $z_0 \in \cC_0$, to transform $z_0$ into an assignment $z_1 \in \mathcal{C}_1$, the simplest way is to reassign an arbitrary node to a different community, and $d(z_0,z_1) = d(\mathcal{C}_0,\mathcal{C}_1) = n/K \asymp n$. Further community changes will result in increasing in $d(z_0,z_1)$ that exceeds the constant level. Since we can obtain such $z_1$ by reassigning any one of the $n$ nodes into the other $K-1$ clusters, we have $|B_{z_0}({r}_K)| = n(K-1) = O(n)$.

\subsection{Proof of Proposition \ref{prop: pk-num-g}}\label{sec: proof pack num-g}
The arguments for Example~\ref{ex:same} and Example~\ref{ex:group}
are almost the same as in the even cases and are hence omitted.

\textbf{Example~\ref{ex:num}: } For a given $z_0 \in \cC_0$, from previous discussion we can see that the ball $B_{z_0} (r)$ with $r=d(z_0,\cC_1) + O(1)$ is composed of all the assignments that differ from $z_0$ by one mis-aligned node. For any $z_i, z_j \in \cP \big ( B_{z_0}(r),\sqrt{d(z_0,\cC_1)} \big)$, the misaligned node $s$ cannot be the same, otherwise $|\mathcal{E}_{1,2}(z_0,z_i) \cap \mathcal{E}_{1,2}(z_0,z_j)| \geq n/K \gg \sqrt{d(z_0,\cC_1)}  $. Thus we have $ N\big ( B_{z_0}(r),\sqrt{d(z_0,\cC_1)} \big) \leq n$. Also since the set $\{z_k\}_{k=1}^n$ where each $z_k$ is obtained by reassigning the node $k$ into another cluster obviously satisfies the condition that $|\mathcal{E}_1(z_0,z_i) \cap \mathcal{E}_1(z_0,z_j)| + |\mathcal{E}_2(z_0,z_i) \cap \mathcal{E}_2(z_0,z_j)| \leq 1$, we have that $N\big ( B_{z_0}(r),\sqrt{d(z_0,\cC_1)} \big)=n$.

\section{Proof of Inference Results}\label{sec: proof inf}

In this section, we provide the proofs of the theorems on inference results. We will first prove Proposition \ref{prop: eq-quant} which implies that the quantile of the maximal leading term $L_0$ can be estimated without knowing the true assignment, then we prove the main Theorem~\ref{t1-mtd} using Proposition~\ref{prop: eq-quant} along with other lemmas. The proof of the technical lemmas will be deferred to Section~\ref{sec: proof of tec lem}.

In the following part of our paper, we use $c, C, c_1, c_2, C_1, C_2, \ldots$ to represent generic constants and their values may vary in different places.

\subsection{Proof of Proposition \ref{prop: eq-quant}}\label{sec: proof of invary quant on z0}
To prove Proposition~\ref{prop: eq-quant}, we need the following generalized version of Lemma~\ref{lm2-t1} stated previously 
\begin{lemma}[Shadowing symmetry lemma]\label{lm2-t1-g}
	  For a given $z \in \mathcal{C}_0$ and a given radius $r>0$, we list the assignments in the ball $B_{z}(r)$ as $z_1, z_2, \ldots, z_{|B_{z}(r)|}$. Define a $|B_{z}(r)|$-dimensional vector $\bL_z$ as
	 \[
(\bL_{z})_k = g(p,q)\bigg( \sum_{(i,j) \in \mathcal{E}_2(z,z_k)} \Ab_{ij}-\sum_{(i,j) \in \mathcal{E}_1(z,z_k)} \Ab_{ij} \bigg), \text{ for } k=1,2,\ldots, |B_{z}(r)|.
	 \]
	 	Suppose $\cC_0$ and $ \cC_1$ satisfy definition \ref{def: sym}, then for any $z_0, z_0' \in \cC_0$, we have $|B_{z_0}(r)| = |B_{z_0'}(r)|$ and  $\Cov(\bL_{z_0})$ equals to $\Cov(\bL_{z_0'})$ up to permutation, i.e., there existing a permutation $\uptau \in S_{|B_{z_0}(r)|}$ such that $\Cov(\bL_{z_0})_{kl} = \Cov(\bL_{z_0'})_{\uptau(k)\uptau(l)}$ for all $k,l = 1, \ldots, |B_{z_0}(r)|$.
	 \end{lemma}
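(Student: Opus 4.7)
The plan is to lift the symmetry in Definition~\ref{def: sym} to an explicit bijection between the two boundary balls, and then check that this bijection is distribution-preserving for the random vectors $\bL_{z_0}$ and $\bL_{z_0'}$. The main ingredient is a careful accounting of how the composition $\tau\circ\sigma$ acts on edge sets and on SBM edge probabilities.

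Given $z_0, z_0' \in \cC_0$, Definition~\ref{def: sym} supplies permutations $\sigma \in S_K$ and $\tau \in S_n$ with $\tau\circ\sigma(z_0) = z_0'$ and $\cC_1$ closed under $\tau\circ\sigma$. First, I will check that the map $z \mapsto \tau\circ\sigma(z)$ is a bijection from $B_{z_0}(r)$ onto $B_{z_0'}(r)$. The key identity, obtained by unpacking $(\tau\circ\sigma(z))(i) = \sigma(z(\tau(i)))$ and using that $\sigma$ is a bijection on $[K]$, is that $z_0'(i) = z_0'(j)$ iff $z_0(\tau(i)) = z_0(\tau(j))$, and the analogous equivalence holds with $z$ in place of $z_0$. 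This yields $(i,j) \in \mathcal{E}_h(z_0', \tau\circ\sigma(z))$ iff $(\tau(i),\tau(j)) \in \mathcal{E}_h(z_0, z)$ for $h=1,2$, and hence $d(z_0', \tau\circ\sigma(z)) = d(z_0, z)$. Combined with the closure of $\cC_1$ under $\tau\circ\sigma$, this gives the desired bijection, so $|B_{z_0}(r)| = |B_{z_0'}(r)|$, and we can define $\uptau \in S_{|B_{z_0}(r)|}$ by the rule $\tau\circ\sigma(z_k) = z_{\uptau(k)}'$.

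For the covariance identity, I view each $\bL_{z}$ as being computed from an adjacency matrix drawn from $\cM(n,K,p,q,z)$, matching the shadowing setting of Section~\ref{mtd}. Under $\cM(n,K,p,q,z_0')$, the edge $\Ab_{ij}$ is Bernoulli with parameter $p$ or $q$ according to whether $z_0'(i) = z_0'(j)$; by the identity above this is the same as whether $z_0(\tau(i)) = z_0(\tau(j))$. Since $\tau$ is a bijection on $[n]$ and SBM edges are independent, the joint law of $(\Ab_{ij})_{i<j}$ under the $z_0'$-model coincides with the joint law of $(\Ab_{\tau(i)\tau(j)})_{i<j}$ under the $z_0$-model. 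Combining this with the edge-set correspondence from the previous paragraph, the vector $((\bL_{z_0'})_{\uptau(k)})_k$ and the vector $((\bL_{z_0})_k)_k$ have the same joint distribution, which immediately gives $\Cov(\bL_{z_0'})_{\uptau(k)\uptau(l)} = \Cov(\bL_{z_0})_{kl}$ for all $k,l$.

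The main obstacle will be the careful index bookkeeping: verifying that the joint relabeling $\tau\circ\sigma$ acts compatibly on the unordered pairs appearing in $\mathcal{E}_1$ and $\mathcal{E}_2$, and that the SBM probability structure is invariant under this relabeling. Once this is untangled, the symmetry encoded in Definition~\ref{def: sym} propagates directly to covariance invariance of $\bL$, and the remaining steps are routine. An additional minor check is that the enumeration $z_1,\dots,z_{|B_{z_0}(r)|}$ is arbitrary, so the resulting $\uptau$ depends on the chosen enumerations but the conclusion (equality up to permutation) does not.
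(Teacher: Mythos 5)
Your proposal is correct and follows essentially the same route as the paper: both rest on the observation that $\tau\circ\sigma$ from Definition~\ref{def: sym} preserves the misaligned edge sets (via $(i,j)\in\mathcal{E}_h(z_0',\tau\circ\sigma(z))\iff(\tau(i),\tau(j))\in\mathcal{E}_h(z_0,z)$), hence induces a bijection $B_{z_0}(r)\to B_{z_0'}(r)$ defining $\uptau$. The only cosmetic difference is at the end, where the paper writes out $\Cov(\bL_{z_0})_{kl}$ explicitly in terms of $|\mathcal{E}_1\cap\mathcal{E}_1|$ and $|\mathcal{E}_2\cap\mathcal{E}_2|$ and checks these cardinalities are invariant, whereas you deduce the (slightly stronger) equality in joint law of the two vectors; both are valid.
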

We defer the proof of Lemma \ref{lm2-t1-g} to Section \ref{sec: proof lm2-t1-g}. Now we are ready to prove Proposition~\ref{prop: eq-quant}. In fact, the boundary in the definition of $L_0$ can be generalized to the ball $B_{z}(r)$ with $r \geq r_K:= d(\cC_0,\cC_1)+{c_K^2{{p} K}}/(2({p}-{q}))$ and $r=d(\cC_0,\cC_1) + O(1)$.
For the true assignment $z^* \in \cC_0$, we have that 
\begin{align*}                         
	L_0&=\sup_{z_k\in B_{z^*}(r)} \bigg \{g({p},{q})\big  (\sum_{(i,j)\in \mathcal{E}_2(z^*,z_k)} \Ab_{ij} - \sum_{(i,j)\in \mathcal{E}_1(z^*,z_k)} \Ab_{ij}  \big )  \bigg \}\\
	&=g({p},{q})\sup_{z_k \in B_{z^*}(r)} \bigg \{\sum_{i<j} \Big  \{ \big (\Ab_{ij}-\mathbb{E}(\Ab_{ij}) \big )\big ( \mathbbm{1}[(i,j) \in \mathcal{E}_2(z^*,z_k)]-\mathbbm{1}[(i,j) \in \mathcal{E}_1(z^*,z_k)]\big ) \Big \} \bigg \}\\&\quad +g({p},{q})\mu_0 +\delta_n\\
	&=g({p},{q})\sigma_0\sup_{k \in [|B_{z^*}(r)|]} \Big \{\frac{1}{\sigma_0}  \sum_{i<j} (\boldsymbol{X}_{ij})_k \Big \}+g({p},{q})\mu_0+\delta_n.
	\end{align*}
	where the vector $\boldsymbol{X}_{ij} \in \mathbb{R}^{|B_{z^*}(r)|}$ and $(\boldsymbol{X}_{ij})_k = \big (\Ab_{ij}-\mathbb{E}(\Ab_{ij}) \big )\big ( \mathbbm{1}[(i,j) \in \mathcal{E}_2(z^*,z_k)]-\mathbbm{1}[(i,j) \in \mathcal{E}_1(z^*,z_k)]\big ) $, $\delta_n = O(\rho_n)$, and $\sigma_0 = \sqrt{d(\mathcal{C}_0,\mathcal{C}_1) \big( p(1-p)+ q(1-q)\big)}, \mu_0=d(\mathcal{C}_0,\mathcal{C}_1)(q-p)$. We can see that for different $(i,j)$, the vector $\boldsymbol{X}_{ij}$ are independent of each other. For a fixed $k \in [|B_{z^*}(r)|]$, when $(i,j) \notin \mathcal{E}_{1,2}(z^*,z_k)$, $(\boldsymbol{X}_{ij})_k = 0$. When $(i,j) \in \mathcal{E}_{1,2}(z^*,z_k) $, under the regime $1/\rho_n = o(n^{1-c_2})$ for some positive $c_2$, there exists $B_n = 1/\sqrt{\rho_n} = o(n^{(1-c_2)/2})$ such that $\big |(\boldsymbol{X}_{ij})_k/\sqrt{\rho_n} \big | \leq B_n$ and $B_n^2 (\log 2d(C_0,\mathcal{C}_1) |B_{z_0}(r)|)^7/n \leq n^{-c_2/2}$, where $d(\mathcal{C}_0,\mathcal{C}_1) = o(n^2)$. Therefore, following a very similar proof as Theorem 2.2 and Corollary 2.1 in \cite{chernozhukov2013gaussian}, we have $$g({p},{q})\sup_{k \in [|B_{z^*}(r)|]} \Big \{{\sum_{i<j} (\boldsymbol{X}_{ij})_k}/{\sigma_0}  \Big \} \overset{d}{\rightarrow} \sup_{k \in [|B_{z^*}(r)|]} \tilde{Z}_k,$$ where $\tilde{Z} \sim N(0, \mathbf{\Sigma}_{z^*}/\sigma_0^2)$, and $\Sigma_{z^*} = \Cov(\bL_{z^*})$. Therefore, we have that 
	\begin{align*}
	    &\sup_{t \in \mathbb{R}} \left|\mathbb{P}(L_0 \leq t) - \mathbb{P}(\sigma_0  \sup_{k \in [|B_{z^*}(r)|]} \tilde{Z}_k +g(p,q)\mu_0 \leq t)\right|\\
	    &\le \sup_{t \in \mathbb{R}} \left|\mathbb{P}(L_0 \leq t) - \mathbb{P}(\sigma_0  \sup_{k \in [|B_{z^*}(r)|]} \tilde{Z}_k +g(p,q)\mu_0 +\delta_n \leq t)\right|\\&\quad +\sup_{t \in \mathbb{R}} \left|\mathbb{P}(\sigma_0  \sup_{k \in [|B_{z^*}(r)|]} \tilde{Z}_k +g(p,q)\mu_0 +\delta_n \leq t) - \mathbb{P}(\sigma_0  \sup_{k \in [|B_{z^*}(r)|]} \tilde{Z}_k +g(p,q)\mu_0 \leq t)\right|\\
	    &\le o(1)+\sup_{t \in \mathbb{R}} \left|\mathbb{P}\Big(\big| \sup_{k \in [|B_{z^*}(r)|]} \tilde{Z}_k -\left( t-g(p,q)\mu_0\right)/\sigma_0 \big| \le \delta_n/\sigma_0 \Big) \right|.
	\end{align*}
	We know that $\min_{k \in [|B_{z^*}(r)|]} \operatorname{Var}(\tilde{Z}_k) = \Omega(g(p,q)^2)=\Omega(1)$, $\log |B_{z^*}(r)| = O(\log n)$ and $\delta_n/\sigma_0 = O(n^{-1/2})$. Then by Lemma 2.1 in \cite{chernozhukov2013gaussian}, we have 
	\begin{align*}&\sup_{t \in \mathbb{R}} \left|\mathbb{P}\Big(\big| \sup_{k \in [|B_{z^*}(r)|]} \tilde{Z}_k -\left( t-g(p,q)\mu_0\right)/\sigma_0 \big| \le \delta_n/\sigma_0 \Big) \right| \\&\lesssim \frac{\delta_n}{\sigma_0}\left \{ \sqrt{2\log |B_{z^*}(r)| } +\sqrt{\min_{k \in [|B_{z^*}(r)|]} \operatorname{Var}(\tilde{Z}_k) \sigma_0 / \delta_n} \right \} \le n^{-1/4}.\end{align*}
	And thus we have
	$$ \sup_{t \in \mathbb{R}} \left|\mathbb{P}(L_0 \leq t) - \mathbb{P}(\sigma_0  \sup_{k \in [|B_{z^*}(r)|]} \tilde{Z}_k +g(p,q)\mu_0 \leq t)\right| = o(1).$$
	Following the same procedure with $z^*$ replaced by $z_0$, we also have 
	$$ \sup_{t \in \mathbb{R}} \left|\mathbb{P}(L_0(z_0) \leq t) - \mathbb{P}\Big(\sigma_0  \sup_{k \in [|B_{z_0}(r)|]} ( \tilde{Z}')_k +g(p,q)\mu_0 \leq t\Big)\right| = o(1),$$
	where $\tilde{Z}' \sim N(0, \mathbf{\mathbf{\Sigma}}_{z_0}/\sigma_0^2)$, and $\mathbf{\Sigma}_{z_0} = \Cov(\bL_{z_0})$.
	By Lemma~\ref{lm2-t1-g} we know that $\mathbf{\Sigma}_{z^*}$ and $\mathbf{\Sigma}_{z_0}$ are equal up to permutation. Therefore, the claim follows. We may also notice that the validity of the proof does not depend on the values of $p,q$ as long as the regime is $1/\rho_n = o(n^{1-c_2})$ for some constant $c_2>0$, and thus the statement is also true for $\hat{L}_0:=\sup_{z_k\in B_{z^*}(r)} \bigg \{g(\hat{p},\hat{q})\big  (\sum_{(i,j)\in \mathcal{E}_2(z^*,z_k)} \Ab_{ij} - \sum_{(i,j)\in \mathcal{E}_1(z^*,z_k)} \Ab_{ij}  \big )  \bigg \}$ with plugged-in estimators $\hat{p},\hat{q}$.
\subsection{Proof of Theorem~\ref{t1-mtd}}\label{sec:lrt-proof}
In fact, Proposition~\ref{prop:sym} shows that the symmetric community property pairs defined in Section~\ref{sec: def and ntn} are general symmetric property pairs under the general framework, and Theorem~\ref{t1-mtd-g} is a generalization of Theorem~\ref{t1-mtd} under uneven cluster sizes. Thus we can just prove the more general Theorem~\ref{t1-mtd-g} and the proof will also apply to Theorem~\ref{t1-mtd}.

The proof of the main theorem requires the help of Proposition~\ref{prop: eq-quant} and the following lemma that shows why the maximizer in the alternative assignment space can be restricted to the ball centered at the true assignment $z_0 \in \cC_0$. 
\begin{lemma}\label{lm1-t1}
	We denote $z^*$ as the true assignment, and $B_{z^*}(r_K)$ is the ball centered at $z^*$ with radius $r_K = d(z^*,\cC_1) + \frac{p K}{2(p-q)}c_K^2$, $c_K = O(1)$. Under the same conditions of Theorem~\ref{t1-mtd-g}, when $z^* \in \cC_0$
	\begin{equation}\label{eq: sup-bz}
	    \sup_{z \in \cC_1} \log f(\Ab;z,\hat{p},\hat{q}) = \sup_{z \in B_{z^*}(r_K)} \log f(\Ab;z,\hat{p},\hat{q}) +O_{P}(\rho_n);
	\end{equation}
	 Moreover, for any true assignment $z^*$, we have
	\begin{equation}\label{eq: sup-c0c1}
	    \sup_{z \in \cC_0 \cup \cC_1} \log f(\Ab;z,\hat{p},\hat{q}) = \log f(\Ab;z^*,\hat{p},\hat{q}) +O_{P}(\rho_n).
	\end{equation}
\end{lemma}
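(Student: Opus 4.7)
The plan is to control the log-likelihood difference
$$\Delta(z) := \log f(\Ab;z,\hat p,\hat q) - \log f(\Ab;z^*,\hat p,\hat q)$$
uniformly over $z \in \mathcal{C}_0 \cup \mathcal{C}_1$. I would first expand $\Delta(z)$ by separating the product-Bernoulli factors according to which edges fall in the misaligned sets $\mathcal{E}_1(z^*,z), \mathcal{E}_2(z^*,z)$, obtaining
$$\Delta(z) = g(\hat p,\hat q)\Big(\sum_{(i,j)\in \mathcal{E}_2(z^*,z)}\Ab_{ij} - \sum_{(i,j)\in\mathcal{E}_1(z^*,z)}\Ab_{ij}\Big) + \log\frac{1-\hat p}{1-\hat q}\big(|\mathcal{E}_2(z^*,z)|-|\mathcal{E}_1(z^*,z)|\big).$$
Using standard concentration of $(\hat p,\hat q)$ around $(p,q)$, the conditional expectation under $\mathbb{P}_{z^*}$ satisfies $\mathbb{E}[\Delta(z)] = -|\mathcal{E}_1(z^*,z)|\,\mathrm{KL}(p\|q) - |\mathcal{E}_2(z^*,z)|\,\mathrm{KL}(q\|p) + O_{\mathbb{P}}(\rho_n)$, which is of order $-d(z^*,z)\,I(p,q)$ and furnishes a drift driving the likelihood down as $z$ moves away from $z^*$. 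A Bernstein bound on the bounded, variance-$O(\rho_n)$ summands then gives $|\Delta(z) - \mathbb{E}[\Delta(z)]| = O_{\mathbb{P}}(\sqrt{d(z^*,z)\,\rho_n\log n})$ at any fixed $z$.

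To upgrade this to a uniform statement I would peel $\mathcal{C}_0 \cup \mathcal{C}_1$ into dyadic shells $\{z : d(z^*,z) \in [2^k, 2^{k+1})\}$ and union-bound against the combinatorial count of each shell, which grows polynomially in $n$ by a Stirling-type estimate of the number of feasible node swaps in the regime of interest. The resulting logarithmic union-bound term is dominated by $d(z^*,z)\,I(p,q)/\rho_n$ under the signal-strength assumption, so with probability $1-o(1)$ the inequality
$$\Delta(z) \le -c_1\, d(z^*,z)\, I(p,q) + c_2\sqrt{d(z^*,z)\,\rho_n\log n} + O_{\mathbb{P}}(\rho_n)$$
holds simultaneously over every $z \in \mathcal{C}_0 \cup \mathcal{C}_1$.

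For claim \eqref{eq: sup-bz}, $B_{z^*}(r_K) \subseteq \mathcal{C}_1$ gives one direction. Pick $z_{\min}\in\mathcal{C}_1$ attaining $d(z^*,z_{\min}) = d(z^*,\mathcal{C}_1)$; then $z_{\min} \in B_{z^*}(r_K)$ and the sup inside the ball is at least $\log f(\Ab;z^*,\hat p,\hat q) + \mathbb{E}[\Delta(z_{\min})] - O_{\mathbb{P}}(\sqrt{d(z^*,\mathcal{C}_1)\,\rho_n\log n})$. Any $z$ outside the ball satisfies $d(z^*,z) - d(z^*,\mathcal{C}_1) > c_K^2 pK/(2(p-q))$, so its expected drop exceeds that of $z_{\min}$ by $\asymp c_K^2 K(p-q)$. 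This excess is exactly the slack needed to absorb (a) the deterministic imbalance term $\log\frac{1-\hat p}{1-\hat q}(|\mathcal{E}_2|-|\mathcal{E}_1|)$, which is $O(c_K(p-q))$ in the uneven case, and (b) the peeled concentration error. Calibrating $r_K$ so that both nuisance terms are dominated by the mean drift by a margin of order $\rho_n$ is the main obstacle; it requires careful bookkeeping of the $c_K$-dependent constants that appear in $\mathbb{E}[\Delta(z)]$ and in the imbalance correction, and it is the reason $r_K$ takes the specific form $d(z^*,\mathcal{C}_1) + c_K^2 pK/(2(p-q))$ rather than just $d(z^*,\mathcal{C}_1)$.

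For claim \eqref{eq: sup-c0c1}, the direction $\ge$ is immediate from $z^* \in \mathcal{C}_0 \cup \mathcal{C}_1$. The reverse follows from the uniform bound above: any maximizer $\hat z$ must satisfy $\Delta(\hat z) \ge 0$, so the uniform inequality forces $d(z^*,\hat z) = O_{\mathbb{P}}(1)$ under the scaling $1/\rho_n = o(n^{1-c_2})$, after which an $O(1)$-node reassignment shifts the log-likelihood only by $O_{\mathbb{P}}(\rho_n)$.
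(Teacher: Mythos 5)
Your overall strategy (a drift-plus-concentration bound on $\Delta(z)$, peeled over assignments grouped by their distance from $z^*$) is the same in spirit as the paper's, which routes the argument through the count-statistic/confusion-matrix machinery of Wang--Bickel: their set $J_{\delta_n}$ plays the role of your far shells, and their bound $\sum_{m=1}^n 2K^K n^m K^{m+2}\exp(-Cm\mu_n/n)$ is exactly a peeling over the number $m$ of mismatched nodes. But as written your proposal has concrete errors. First, the counting claim is wrong: the number of assignments at node-Hamming distance $m$ from $z^*$ is $\binom{n}{m}(K-1)^m = e^{\Theta(m\log n)}$, and the total is $K^n$, so the shells are \emph{not} polynomially many in $n$ uniformly. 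The union bound still closes, but only because the drift $-c\,d(z^*,z)I(p,q) \asymp -c\,m\,n\rho_n/K$ beats $m\log n$ under $1/\rho_n=o(n^{1-c_2})$; your justification as stated does not establish uniformity over all of $\cC_0\cup\cC_1$. Second, your last step for \eqref{eq: sup-c0c1} fails: balancing $c_1 d I(p,q)\le c_2\sqrt{d\rho_n\log n}$ with $I(p,q)\asymp\rho_n$ gives only $d(z^*,\hat z)\lesssim \log n/\rho_n$, which can be as large as $n^{1-c_2}\log n$, not $O_{\PP}(1)$. The missing ingredient is the quantization of the distance: any $z$ not label-equivalent to $z^*$ has $d(z^*,z)=\Omega(n/K)$ (the paper invokes Lemma~5.3 of \cite{zhang2016minimax} for exactly this), which combined with $d\lesssim\log n/\rho_n=o(n)$ forces $\hat z\simeq z^*$. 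Relatedly, ``an $O(1)$-node reassignment shifts the log-likelihood only by $O_{\PP}(\rho_n)$'' is false --- reassigning even one node flips $\Theta(n/K)$ edge probabilities and shifts the log-likelihood by $\Theta_{\PP}(n\rho_n)$.

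Third, and most importantly, the actual content of \eqref{eq: sup-bz} --- why the near-maximizers over $\cC_1$ land inside $B_{z^*}(r_K)$ with the specific radius $r_K=d(z^*,\cC_1)+c_K^2 pK/(2(p-q))$ --- is precisely what you defer as ``the main obstacle'' without resolving it. The paper's resolution is a short explicit computation: the population value of the log-likelihood at $z$ depends on $z$ only through $n_2(z^*,z)q-n_1(z^*,z)p=(n_2-n_1)q-n_1(p-q)$, and the imbalance satisfies $|n_1-n_2|=|\tilde n(z^*)-\tilde n(z)|\le Kc_K^2/2$ because every feasible assignment has within-cluster edge count within $Kc_K^2/2$ of $(n^2/K-n)/2$. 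Dividing the resulting slack $qKc_K^2/2$ (bounded by $pKc_K^2/2$) by the per-unit drift $p-q$ gives exactly the extra radius. Without this bookkeeping the lemma is not proved, so the proposal should be regarded as an outline with the decisive step missing.
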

With help of this lemma, instead of taking the supremum over the entire assignment space $\mathcal{C}_1$, we are able to restrict the maximizer to a much smaller set $B_{z_0}(r_K)$ so that the Central Limit Theorem can be applied. Recall that the boundary $B_{z^*}$ defined in Section~\ref{sec: LRT test intro} is in essence a ball with radius $d(z^*, \cC_1)$. We defer the proof of Lemma~\ref{lm1-t1} to Appendix \ref{sec: proof lm b2}.

Now we are ready to present the proof of Theorem~\ref{t1-mtd-g}:

	The proof is mainly composed of three parts. The first part is to briefly illustrate the derivation of $L_0$ as the leading term of the log-likelihood ratio, the second part is to control the error caused by plugging in the estimators of connection probabilities $\hat{p},\hat{q}$, and the third part is to illustrate the multiplier bootstrap as a valid approximation of the LRT quantile. 
	\subsubsection{Derivation of the leading term for LRT}
	For a given true assignment $z^* \in \cC_0$, by Lemma~\ref{lm1-t1} we have:
    \begin{align*}
	\hat{\LRT}&=\log \frac{\sup_{z \in \cC_1} f(\Ab;z,\hat{p},\hat{q})}{\sup_{z \in \cC_0 \cup \cC_1} f(\Ab;z,\hat{p}, \hat{q})}\\
	&=\sup_{z \in \cC_1} \log f(\Ab;z,\hat{p},\hat{q}) - \log f(\Ab;z^*,\hat{p},\hat{q}) + O_P(\rho_n)\\
	&=\sup_{z_k \in B_{z^*}(r)} \Big (\log f(\Ab;z_k,\hat{p},\hat{q}) - \log f(\Ab;z^*,\hat{p},\hat{q})\Big ) + O_P(\rho_n),
	\end{align*}
	where $r \geq r_K:= d(\cC_0,\cC_1)+{c_K^2{{p} K}}/(2({p}-{q}))$ and $r=d(\cC_0,\cC_1) + O(1)$. In practice, due to the consistency of $\hat{p},\hat{q}$, when we choose the radius $r=d(\cC_0,\cC_1)+C\hat{p}K/(\hat{p}-\hat{q})$ for some sufficiently large $C$, we can make sure that the conditions on the radius is satisfied with probability $1-o(1)$. Thus we can see that the $\hat{\LRT}$ is essentially the supremum of the log-likelihood difference between the true assignment $z^*$ and the alternative assignments in the ball $B_{z^*}(r)$. We further expand the log-likelihood terms and can write 
	\begin{align*}                         
	\hat{\LRT}&=\sup_{z_k \in B_{z^*}(r)} \bigg \{g(\hat{p},\hat{q})\big  (\!\!\!\!\sum_{(i,j)\in \mathcal{E}_2(z^*,z_k)} \!\!\!\!\Ab_{ij} -\!\!\!\! \sum_{(i,j)\in \mathcal{E}_1(z^*,z_k)}\!\!\!\! \Ab_{ij}  \big ) + \log \big ( \frac{1-\hat{q}}{1-\hat{p}} \big ) \Big( n_1(z^*,z_k) - n_2(z^*,z_k)\Big) \bigg \} \\
	&\quad +O_P(\rho_n)\\
	&=\hat{L}_0 +\delta_n.
	\end{align*}
	where $\delta_n = \sup_{z_k \in B_{z^*}(r)} \left\{ \log \big ( {(1-\hat{q})}/{(1-\hat{p})} \big ) \Big( n_1(z^*,z_k) - n_2(z^*,z_k)\Big) \right\}+O_P(\rho_n) = O_P(\rho_n)$, and $\hat{L}_0 = g(\hat{p},\hat{q}) \sup_{z_k \in B_{z^*}(r)} \big  (\sum_{(i,j)\in \mathcal{E}_2(z^*,z_k)} \Ab_{ij} -  \sum_{(i,j)\in \mathcal{E}_1(z^*,z_k)} \Ab_{ij}  \big ) $. From Proposition~\ref{prop: eq-quant} we have that $\lim_{n \rightarrow \infty} \sup_{t \in \mathbb{R}}|\mathbb{P}( \hat{L}_0 < t) - \mathbb{P}( \hat{L}_0(z_0)<t)|=0$ for any $z_0 \in \cC_0$. Therefore, it suffices for us to prove that $\mathbb{P}(\hat{\LRT} \geq q_{\alpha}) =\alpha +o(1)$ for one given true assignment $z_0 \in \cC_0$. Now we are ready to prove the validity of multiplier boostrap for estimating the quantile based on the leading term.
	\subsubsection{Bounding of error caused by plugging in $\hat{p},\hat{q}$}
	From previous section we know that
	$$\hat{L}_0(z_0) = g(\hat{p},\hat{q})\sigma_0\sup_{k \in [|B_{z^*}(r)|]} \Big \{\frac{1}{\sigma_0}  \sum_{i<j} (\boldsymbol{X}_{ij})_k \Big \}+g(\hat{p},\hat{q})\mu_0 +O_P(\rho_n),$$
	where $(\boldsymbol{{X}}_{ij})_k = \big ({\Ab}_{ij}-\mathbb{E}({\Ab}_{ij}) \big )\big ( \mathbbm{1}[(i,j) \in \mathcal{E}_2(z_0,z_k)]-\mathbbm{1}[(i,j) \in \mathcal{E}_1(z_0,z_k)]\big )$. For any $z_0 \in \cC_0$, we give the following notations:
	$$T_0=\!\!\!\sup_{k \in [|B_{z_0}(r)|]} \!\!\!\Big \{\frac{1}{\sigma_0} \sum_{i<j} (\boldsymbol{X}_{ij})_k \Big \},\quad  \Xi_0=\!\!\!\sup_{k \in [|B_{z_0}(r)|]} \!\!\!\Big \{ \frac{1}{\sigma_0} \sum_{i<j} \{\xi_{ij}\}_k \Big \}
	,\quad \Xi_0'=\!\!\!\sup_{k \in [|B_{z_0}(r)|]} \!\!\!\Big \{ \frac{1}{\hat{\sigma}_0} \sum_{i<j}\{\hat{\xi}_{ij}\}_k \Big \},$$
	and denote
	$$\tilde{W}_n=W_n/\hat{\sigma}_0=\sup_{k \in [|B_{z_0}(r)|]} \Big \{\frac{1}{\hat{\sigma}_0} \sum_{i<j} (\boldsymbol{\hat{X}}_{ij})_k e_{ij} \Big \},$$
	where $(\boldsymbol{\hat{X}}_{ij})_k = \big (\hat{\Ab}_{ij}-\mathbb{E}_{\hat{p},\hat{q}}(\hat{\Ab}_{ij}) \big )\big ( \mathbbm{1}[(i,j) \in \mathcal{E}_2(z_0,z_k)]-\mathbbm{1}[(i,j) \in \mathcal{E}_1(z_0,z_k)]\big ) $ and the ajacency matrix $\boldsymbol{\hat{A}}$ is generated by $\hat{p},\hat{q}$, and $\hat{\sigma}_0 = \sqrt{d(\mathcal{C}_0,\mathcal{C}_1)\big(\hat{p}(1-\hat{p})+\hat{q}(1-\hat{q}) \big )}$. $\xi_{ij} $ and $\hat{\xi}_{ij} $ are the independent mean zero Gaussian vectors with covariance matrix equal to that of $\boldsymbol{X}_{ij}$ and $\boldsymbol{\hat{X}}_{ij}$ respectively ($\{\xi_{ij}\}_k = 0$ if $(i,j) \notin \mathcal{E}_{1,2}(z_0,z_k)$, and the same for $\{\hat{\xi}_{ij} \}_k$ ). $\{e_{ij}\}_{i<j}$ are i.i.d standard Gaussians. By Corollary 2.1 in \cite{chernozhukov2013gaussian}, we have 
	\[
		\sup _{t \in \mathbb{R}}\left|\PP\left(T_0\leqslant t\right)-\PP\left(\Xi_0 \leqslant t\right)\right|
		= o(1);
	\]

	Also, by Lemma 3.2 and Corollary 3.1 of \cite{chernozhukov2013gaussian} we have 
	\[
		\sup _{t \in \mathbb{R}}\left|\mathbb{P}\left(\tilde{W}_n\leqslant t| \hat \Xb_{ij}\right)-\mathbb{P}\left(\Xi_0' \leqslant t\right)\right|
		= o_{P}(1).
	\]

	We let $\mathbf{\Sigma}^{\Xi_0}$ and $\mathbf{\Sigma}^{\Xi_0'}$ be the covariance matrix of the vectors $\Big \{\sum_{i<j}\{\xi_{ij}\}_k/ \sigma_0 \Big \}_k$ and $\Big \{ {\sum_{i<j} \{\hat{\xi}_{ij}\}_k}/\hat{\sigma}_0 \Big \}_k$ respectively. 
	Thus for $k,l \in [|B_{z_0}(r)|]$ we have:
	\begin{align*}
	\mathbf{\Sigma}^{\Xi_0}_{k,l}&=\frac{1}{\sigma_0^2}\Cov (\sum_{ij} \{\xi_{ij}\}_k,\sum_{ij}\{\xi_{ij}\}_l)\\
	&=\frac{1}{\sigma_0^2}\Cov (\sum_{i<j} (\boldsymbol{X}_{ij})_k ,\sum_{i<j} (\boldsymbol{X}_{ij})_l)\\
	&=\frac{|{\mathcal{E}}_2(z_0,z_k) \cap {\mathcal{E}}_2(z_0,z_l) |q(1-q) + |{\mathcal{E}}_1(z_0,z_k) \cap {\mathcal{E}}_1(z_0,z_l) |p(1-p)}{d(\mathcal{C}_0,\mathcal{C}_1)\big(p(1-p)+q(1-q) \big )}.
	\end{align*}
	Accordingly,
	\begin{align*}
	\mathbf{\Sigma}^{\Xi_0'}_{k,l}&=\frac{1}{{\hat{\sigma}_0}^2}\Cov (\sum_{i<j} (\boldsymbol{\hat{X}}_{ij})_k ,\sum_{i<j} (\boldsymbol{\hat{X}}_{ij})_l)\\
	&=\frac{|{\mathcal{E}}_2(z_0,z_k) \cap {\mathcal{E}}_2(z_0,z_l) |\hat{q}(1-\hat{q}) + |{\mathcal{E}}_1(z_0,z_k) \cap {\mathcal{E}}_1(z_0,z_l) |\hat{p}(1-\hat{p})}{d(\mathcal{C}_0,\mathcal{C}_1)\big(\hat{p}(1-\hat{p})+\hat{q}(1-\hat{q}) \big )}\\
	&=\frac{|{\mathcal{E}}_2(z_0,z_k) \cap {\mathcal{E}}_2(z_0,z_l) |q(1-q) + |{\mathcal{E}}_1(z_0,z_k) \cap {\mathcal{E}}_1(z_0,z_l) |p(1-p) +O_P(d(\cC_0,\cC_1)\sqrt{\rho_n}/n)}{d(\mathcal{C}_0,\mathcal{C}_1)\big(p(1-p)+q(1-q) \big ) + O_P(d(\cC_0,\cC_1)\sqrt{\rho_n}/n)}.
	\end{align*}
	Then we have 
	\begin{align*}
	\Delta_0&=\max_{k,l}|\mathbf{\Sigma}^{\Xi_0}_{k,l}-\mathbf{\Sigma}^{\Xi_0'}_{k,l}|
	\le \left|\frac{O_P(d(\cC_0,\cC_1)\sqrt{\rho_n}/n)}{{\hat{\sigma}_0}^2}\right|+\left|\mathbf{\Sigma}^{\Xi_0}_{k,l}\frac{O_P(d(\cC_0,\cC_1)\sqrt{\rho_n}/n)}{{\hat{\sigma}_0}^2}\right|
	=O_P(\frac{1}{\sqrt{n^2 \rho_n}}).
	\end{align*}
	Thus by Lemma 3.1 in \cite{chernozhukov2013gaussian}, there exists a constant $C$ such that 
	$$\sup _{t \in \mathbb{R}}\left|\mathbb{P}\left(\Xi_0\leqslant t\right)-\mathbb{P}\left(\Xi_0' \leqslant t\right)\right| \le C \Delta_{0}^{1 / 3}\left(1 \vee \log \left(|B_{z_0}(r)|/ \Delta_{0}\right)\right)^{2 / 3}=o_P({n^{-1/6-c_2/12}}).$$
	and thus 
	\[
		\sup _{t \in \mathbb{R}}\left|\mathbb{P}\left(\Xi_0\leqslant t\right)-\mathbb{P}\left(\Xi_0' \leqslant t\right)\right|
		= o_P(1),
	\]
	and in turn we have 
\[
		\sup _{t \in \mathbb{R}}\left|\mathbb{P}\left(T_0 \leqslant t\right)-\mathbb{P}\left(\tilde{W}_n\leqslant t| \hat \Xb_{ij}\right)\right|
		= o_{P}(1).
	\]	
\subsubsection{Validity of multiplier bootstrap in estimating LRT quantile}
	Now recall that $C_{\tilde{W}_n}(\alpha)$ is the $\alpha$ quantile of $\tilde{W}_n$ conditional on $\hat{\Xb}_{ij}$, and we would like to control the order of $C_{\tilde{W}_n}(\alpha)$ in order to bound the error in estimating the quantile of $\hat{\LRT}$. Give a constant $t>\sqrt{2c_0}$, we have
	\begin{align*}
	    \mathbb{P}(\tilde{W}_n & \geqslant t \sqrt{\log n}|\hat{\Xb}_{ij})  = \mathbb{P}(\Xi_0' \geqslant t \sqrt{\log n})+o_{P}(1)\\
	    & \le \sum_{k \in [|B_{z_0}(r)|]} \mathbb{P}\left(\Big \{ \frac{1}{\hat{\sigma}_0} \sum_{i<j}\{\hat{\xi}_{ij}\}_k \Big \} \geqslant t\sqrt{\log n}\right) +o_{P}(1)\\
	    & \lesssim |B_{z_0}(r)| e^{-\frac{t^2}{2} \log n} +o_{P}(1)
	    = O_{P}\left( n^{c_0 - t^2/2} \right)+o_{P}(1) = o_{P}(1).
	\end{align*}
	Thus we know that $C_{\tilde{W}_n}(\alpha) = O_{P}(\sqrt{\log n})$. 
	We know that $q_{\alpha} = g(\hat{p},\hat{q})\hat{\sigma}_0 C_{\tilde{W}_n}(\alpha) + g(\hat{p},\hat{q}) \hat{\mu}_0$, $\hat{\LRT} = \hat{L}_0 + \delta_n$ and also $\lim_{n \rightarrow \infty} \sup_{t \in \mathbb{R}}|\mathbb{P}( \hat{L}_0 < t) - \mathbb{P}( \hat{L}_0(z_0)<t)|=0$. Therefore,
	\begin{align*}
	\mathbb{P}(\hat{\LRT} \geq q_{\alpha}) & = \mathbb{P}(\hat{L}_0 + \delta_n \geq q_{\alpha}) = \mathbb{P}(\hat{L}_0(z_0) + \delta_n \geq q_{\alpha})+o(1)\\
	&=\mathbb{P}(g(\hat{p},\hat{q})\sigma_0 T_0+ g(\hat{p},\hat{q}) \mu_0 +\delta_n \geq g(\hat{p},\hat{q})\hat{\sigma}_0 C_{\tilde{W}_n}(\alpha) + g(\hat{p},\hat{q}) \hat{\mu}_0 )+o(1)\\
	&=\mathbb{P}\bigg (T_0 \geq \frac{\hat{\sigma}_0}{\sigma_0}C_{\tilde{W}_n}(\alpha) +\frac{ \hat{\mu}_0-  \mu_0}{\sigma_0} -\frac{\delta_n}{g(\hat{p},\hat{q})\sigma_0} \bigg)+o(1).
	\end{align*}
	We have that $|\hat{\sigma}_0-\sigma_0|=O_{P}(\sqrt{d(\cC_0,\cC_1)}/n)$ and $| \hat{\mu}_0-  \mu_0|=O_{P}(d(\cC_0,\cC_1)\sqrt{\rho_n}/n)$. Therefore,
	\begin{align*}
	\mathbb{P}(\hat{\LRT} \geq q_{\alpha})&=\mathbb{P}\bigg (T_0 \geq C_{\tilde{W}_n}(\alpha)+ \frac{C_1\sqrt{d(\cC_0,\cC_1)}}{\sigma_0n}C_{\tilde{W}_n}(\alpha)+\frac{C_2d(\cC_0,\cC_1)\sqrt{\rho_n}}{\sigma_0n}\! -\!\frac{\delta_n}{g(\hat{p},\hat{q})\sigma_0} \bigg)+o(1)\\
	&=\mathbb{P}\bigg (T_0 \geq C_{\tilde{W}_n}(\alpha)+ C_1 \sqrt{{\log n}}/({n^2\rho_n})+C_2{\sqrt{d(\cC_0,\cC_1)}}/{n} +C_3\sqrt{\frac{\rho_n}{d(\cC_0,\cC_1)}} \bigg)+o(1)\\
	&=\mathbb{P}\big (T_0 \geq C_{\tilde{W}_n}(\alpha)+ \Delta_n \big)+o(1),
	\end{align*} 
	where $\Delta_n = o_{P}(n^{-c})$ for some positive constant $c>0$. Now from previous results we have
	\begin{align*}
	   &| \mathbb{P}(\hat{\LRT} \geq q_{\alpha})-\alpha| \le |\mathbb{P}\big (T_0 \geq C_{\tilde{W}_n}(\alpha)+ \Delta_n \big) - \mathbb{P}\big (\tilde{W}_n \geq C_{\tilde{W}_n}(\alpha)+ \Delta_n \big)| \\&\quad + |\mathbb{P}\big (\tilde{W}_n \geq C_{\tilde{W}_n}(\alpha)+ \Delta_n \big) -\mathbb{P}\big (\tilde{W}_n \geq C_{\tilde{W}_n}(\alpha)\big)|+o(1)\\
	   & \leq \mathbb{P}(|\tilde{W}_n - C_{\tilde{W}_n}(\alpha)|\leq \Delta_n) +o_{P}(1).
	\end{align*}
	Now we study the distribution of $\tilde{W}_n$: if we denote $Y_k = \frac{1}{\hat{\sigma}_0} \sum_{i<j}(\hat{\boldsymbol{X}}_{ij})_k e_{ij}$, then $Y_k | \hat{\boldsymbol{X}} \sim N(0, \sigma^2_k)$, where $\sigma^2_k = \sum_{i<j} (\hat{\boldsymbol{X}}_{ij})_k^2 / \hat{\sigma}_0^2$, and $\sup_k|\mathbb{E}(\sigma_k^2)-1| \le |\hat{\sigma}_0^2/\hat{\sigma}_0^2 -1|+o_{P}(1) =o_{P}(1) $. Also, $|(\hat{\boldsymbol{X}}_{ij})_k^2 | < 1$. Under the event $\cA = \{ \hat{p}=o(1)\} \cap \{ \hat{q}= o(1) \}$ with $\PP(\cA) = 1-o(1)$, by Bernstein's inequality, we have 
	$$ \mathbb{P}_{\hat{\bX}}(|\sigma^2_k - \mathbb{E}_{\hat{\bX}}(\sigma_k^2)| > 1/2) \leq 2\exp \left(-\frac{\frac{1+1}{8}\hat{\sigma}_0^4}{(\frac{1}{6} +1) \hat{\sigma}_0^2}\right) = 2\exp \left(-\frac{3\hat{\sigma}_0^2}{14}\right),$$
	where $\PP_{\hat \bX}$ and $\EE_{\hat \bX}$ denotes probability and expectation with $\hat{p}$ and $\hat{q}$ fixed and consider only the randomness of $\hat \bX$. Also
	\begin{align*}
	    \mathbb{P}_{\hat{\bX}}(\min_k \sigma_k^2 < 1/2)& \leq \sum_k \mathbb{P}_{\hat{\bX}}(|\sigma^2_k - \mathbb{E}_{\hat{\bX}}(\sigma_k^2)| > 1/2)\\
	    & = 2|B_{z_0}(r)| \exp \left(-\frac{3\hat{\sigma}_0^2}{14}\right) = o_{P}(1),
	\end{align*}
	where the last $o_P(1)$ term is due to the fact that $\hat{\sigma}_0^2 = \Omega_P(n \rho_n) = \Omega_P(n^{c_2})$ and $|B_{z_0}(r)|=O(n^{c_0})$. Then by Lemma~2.1 in \cite{chernozhukov2013gaussian}, we have 
	\begin{align*}
	    \mathbb{P}(|\tilde{W}_n - C_{\tilde{W}_n}(\alpha)|\leq \Delta_n) & = \mathbb{P}(|\max_k Y_k - C_{\tilde{W}_n}(\alpha)|\leq \Delta_n) \leq \sup_{z \in \mathbb{R}}\mathbb{P}(|\max_k Y_k - z|\leq \Delta_n)\\
	    & = O_{P} \left(\Delta_n\left\{ \sqrt{2 \log |B_{z_0}(r)| } +\sqrt{\log (\min_k \sigma_k^2 /\Delta_n) } \right\}\right) =o_{P}(1),
	\end{align*}
	and thus $\lim_{n \rightarrow \infty}\sup_{z^* \in \cC_0} \mathbb{P}(\hat{\LRT} \ge q_{\alpha}) = \alpha$. 
	
	As for the Type I error, from the preceding proof we see that $\mathbb{P}(\LRT \geq q_{\alpha}) = \alpha +o_{P}(1)$, and the convergence of the $o_{P}(1)$ term is independent of $z^* \in \cC_0$ due to the symmetry of $\cC_0$. Therefore, we have
	\begin{align*}
	\sup_{z^* \in \cC_0} \mathbb{P}(\text{reject } H_0 ) &=\sup_{z^* \in \cC_0} \mathbb{P}(\LRT \geq q_{\alpha})=\alpha+o_P(1),
	\end{align*}
	and hence the claim follows. As for the Type II error, when the true assignment is $z^* \in \cC_1$, by  \eqref{eq: sup-c0c1} in Lemma~\ref{lm1-t1}, we have
	\begin{align*}
	\LRT &=\log \frac{\sup_{{z} \in \cC_1} f(\Ab;{z},\hat{p},\hat{q})}{\sup_{{z} \in \cC_0 \cup \cC_1} f(\Ab;{z},\hat{p}, \hat{q})}\\
	&= \log \frac{\sup_{{z} \in \cC_1} f(\Ab;{z},\hat{p},\hat{q})}{ f(\Ab;z^*,\hat{p},\hat{q})}+\log \frac{ f(\Ab;z^*,\hat{p},\hat{q})}{\sup_{{z} \in \cC_0 \cup \cC_1} f(\Ab;{z},\hat{p},\hat{q})}=O_{P}(\rho_n).
	\end{align*}
	And since $\hat{\sigma}_0 \asymp \sqrt{d(\cC_0,\cC_1)\hat{p}}, \hat{\mu}_0 \asymp -d(\cC_0,\cC_1)\hat{p} = -\Omega_P(n^{c_2})$ and $C_{\Tilde{W}_n}(\alpha) = O_P(\sqrt{\log n})$, we have $q_{\alpha}=g(\hat{p},\hat{q})\hat{\sigma}_0 C_{\Tilde{W}_n}(\alpha) + g(\hat{p},\hat{q})\hat{\mu}_0 \rightarrow -\infty$. Since the convergence is independent of $z^*$, we have for any true assignment $z_1 \in \cC_1$,
	$$\inf_{z^* \in \cC_1}\mathbb{P}(\text{reject } H_0) = 1-\sup_{z^* \in \cC_1}\mathbb{P}(\LRT\leq q_{\alpha}) = 1-o_{P}(1).$$

\section{Proof of Theorems for the Lower Bound}\label{sec: proof lwrbd}
In this section, we will prove the theorems for the lower bound. Similar as the upper bound, since Theorem~\ref{lb-main-g} and Theorem~\ref{lb-sec-g} are general versions of Theorem~\ref{lb-main} and Theorem~\ref{lb-sec}, we will only prove the general versions and the proof can be applied to Theorem~\ref{lb-main} and Theorem~\ref{lb-sec}, too. Also, the proof of Theorem~\ref{lb-main-g} is actually based on the proof of Theorem~\ref{lb-sec-g} under a stronger regime. Therefore, we will prove the two theorems together: we will first prove Theorem~\ref{lb-sec-g} under more general conditions, and then we will apply the proof of Theorem~\ref{lb-sec-g} to the proof of Theorem~\ref{lb-main-g} under stronger conditions.
\subsection{Proof of Theorem~\ref{lb-main-g} and Theorem~\ref{lb-sec-g}}\label{proof: lb main sec}
The proof proceeds in the following order: we first prove the results under the two conditions of Theorem \ref{lb-sec-g}, namesly the proof of Theorem \ref{lb-sec-g} (1) and the proof of Theorem \ref{lb-sec-g} (2), then we provide the proof of Theorem \ref{lb-main-g}. 
\subsubsection{Proof of Theorem \ref{lb-sec-g} (1)}

	As for the minimax rate, we have: 
	\begin{align*}
	r(\cC_0, \cC_1) & = \underset{\psi}{\min} \bigg \{ \underset{z \in \cC_0}{\sup} \mathbb{P}_{z}(\psi=1) + \underset{z \in \cC_1}{\sup} \mathbb{P}_{z}(\psi=0)  \bigg\} \\
	& \geq \underset{\psi}{\min} \bigg \{ \mathbb{P}_{z_0}(\psi=1) +  \mathbb{P}_{z_1}(\psi=0)  \bigg\}.
	\end{align*}
	where $z_0$ and $z_1$ are fixed cluster assignments in $\cC_0$ and $\cC_1$ respectively. For a given adjacency matrix $\Ab$, we know that $\psi$ is a function of $\Ab$, and the only information of $\Ab$ relevant to classification of the true assignment is $\{\Ab_{ij}, (i,j) \in \mathcal{E}_1(z_0,z_1) \bigcup \mathcal{E}_2(z_0,z_1)  \}$. Larger size of $\mathcal{E}_1(z_0,z_1)$ and $\mathcal{E}_2(z_0,z_1)$ will provide more information and lead to smaller type I and type II error. Thus, the worst case is when the size of $\mathcal{E}_1(z_0,z_1)$ and $\mathcal{E}_2(z_0,z_1)$ obtains the infimum, i.e., $d(z_0,z_1)=n_1(z_0, z_1) \vee n_2(z_0, z_1)=d(\mathcal{C}_0, \mathcal{C}_1)$. 
	\\
	To obtain $\inf_{\psi} \big \{ \sup_{z \in \cC_0}\mathbb{P}_{z}(\psi=1) +  \sup_{z \in \cC_1}\mathbb{P}_{z}(\psi=0)  \big\}$, the optimal method $\tilde{\psi}$ must be the mode of the posterior distribution. For the convenience of notations, we denote $L(z,\Ab)$ as $f(\Ab; z,p,q)$, and $n_i$ as $n_i(z_0,z_1)$, $i =1,2$ for short: 
	\begin{align*}
	L(z_0, \Ab) &\propto p^{\sum_{(i,j) \in \mathcal{E}_1(z_0,z_1)} \Ab_{ij}} (1-p)^{n_1 - \sum_{(i,j) \in \mathcal{E}_1(z_0,z_1)} \Ab_{ij}} q^{\sum_{(i,j) \in \mathcal{E}_2(z_0,z_1)} \Ab_{ij}} (1-q)^{n_2 - \sum_{(i,j) \in \mathcal{E}_2(z_0,z_1)} \Ab_{ij}}\\
	L(z_1, \Ab) &\propto p^{\sum_{(i,j) \in \mathcal{E}_2(z_0,z_1)} \Ab_{ij}} (1-p)^{n_2 - \sum_{(i,j) \in \mathcal{E}_2(z_0,z_1)} \Ab_{ij}} q^{\sum_{(i,j) \in \mathcal{E}_1(z_0,z_1)} \Ab_{ij}} (1-q)^{n_1 - \sum_{(i,j) \in \mathcal{E}_1(z_0,z_1)} \Ab_{ij}}
	\end{align*}
	and correspondingly,
	\begin{align*}
	\tilde{\psi}(\Ab)= \left \{ \begin{array}{cl} 0, & \text{if } L(z_0,\Ab) > L(z_1,\Ab);\\
	1, &  \text{if } L(z_0,\Ab) \leq L(z_1,\Ab). \end{array} \right.
	\end{align*}
	Then $\mathbb{P}_{z_0}(\tilde{\psi}=1) =   \mathbb{P}_{ z_0}(L(z_0,\Ab) \leq L(z_1,\Ab))$ and $\mathbb{P}_{ z_1}(\tilde{\psi}=0) =\mathbb{P}_{z_1}(L(z_0,\Ab) > L(z_1,\Ab))$. Without loss of generality, we assume that $n_1(z_0,z_1) \geq n_2(z_0,z_1)$. Then, if we expend the size of $\mathcal{E}_2(z_0,z_1)$ to be the same as $\mathcal{E}_1(z_0,z_1)$, adding i.i.d entries $\{\Ab_{ij}, (i,j) \in \mathcal{E}_2^L(z_0,z_1) \backslash \mathcal{E}_2(z_0,z_1)\}$ conforming to the same distribution as $\{\Ab_{ij}, (i,j) \in \mathcal{E}_2(z_0,z_1)\}$, more information will be provided and the error rate will decrease, where $\mathcal{E}_2^{L}(z_0,z_1)$ denotes the set expended on $\mathcal{E}_2(z_0,z_1)$, and we have: 
	\begin{align*}
	\tilde{L}(z_0, \Ab) &\propto p^{\sum_{(i,j) \in \mathcal{E}_1(z_0,z_1)} \Ab_{ij}} (1-p)^{n_1 - \sum_{(i,j) \in \mathcal{E}_1(z_0,z_1)} \Ab_{ij}} q^{\sum_{(i,j) \in \mathcal{E}_2^L(z_0,z_1)} \Ab_{ij}} (1-q)^{n_1 - \sum_{(i,j) \in \mathcal{E}_2^L(z_0,z_1)} \Ab_{ij}}\\
	\tilde{L}(z_1, \Ab) &\propto p^{\sum_{(i,j) \in \mathcal{E}_2^L(z_0,z_1)} \Ab_{ij}} (1-p)^{n_1 - \sum_{(i,j) \in \mathcal{E}_2^L(z_0,z_1)} \Ab_{ij}} q^{\sum_{(i,j) \in \mathcal{E}_1(z_0,z_1)} \Ab_{ij}} (1-q)^{n_1 - \sum_{(i,j) \in \mathcal{E}_1(z_0,z_1)} \Ab_{ij}}
	\end{align*}
	Thus we can obtain lower bound on the minimax rate:
	\begin{align*}
	r(\tilde{\psi})&=   \mathbb{P}_{ z_0}(L(z_0,\Ab) \leq L(z_1,\Ab)) +\mathbb{P}_{ z_1}(L(z_0,\Ab) > L(z_1,\Ab)) \\
	& \geq \mathbb{P}_{ z_0}(\tilde{L}(z_0,\Ab) \leq \tilde{L}(z_1,\Ab)) +\mathbb{P}_{ z_1}(\tilde{L}(z_0,\Ab) > \tilde{L}(z_1,\Ab))\\
	& = \mathbb{P}_{z_0} \bigg(\sum_{(i,j) \in  \mathcal{E}_1(z_0,z_1)} \Ab_{ij} \leq \sum_{(i,j) \in  \mathcal{E}_2^L(z_0,z_1)} \Ab_{ij} \bigg) + \mathbb{P}_{z_1} \bigg(\sum_{(i,j) \in  \mathcal{E}_1(z_0,z_1)} \Ab_{ij} > \sum_{(i,j) \in  \mathcal{E}_2^L(z_0,z_1)} \Ab_{ij} \bigg) \\
	& \geq 2\mathbb{P}\Big(\sum_{u=1}^{n_1} X^u \geq \sum_{u=1}^{n_1} Y^u\Big).
	\end{align*}
	where $\{X_u\} \overset{\text{i.i.d}}{\sim} \text{Ber}(q), \{Y^u\} \overset{\text{i.i.d}}{\sim} \text{Ber}(p)$, and $\{X^u\}$ are independent to $\{Y^u\}$.\\
	\\
	Now $n_1=d(\mathcal{C}_0,\mathcal{C}_1)$, and both $p$ and $q$ change with $n_1$. We have $\mathbb{E}(|X^u - Y^u- \mathbb{E}[X^u - Y^u]|^3) \asymp p(1-q) + q(1-p)$. Since $0<q< p <1-\delta$, we have $\delta p < p(1-q)+q(1-p) < 2 p$. Thus $\mathbb{E}(|X^u - Y^u- \mathbb{E}[X^u - Y^u]|^3) \asymp p$. Similarly $\text{Var}(X^u - Y^u) \asymp p$. Thus, $$\frac{\sum_{u=1}^{n_1} \mathbb{E}(|X^u - Y^u- \mathbb{E}[X^u - Y^u]|^3)}{ \text{Var}\left( \sum_{u=1}^{n_1} \left\{X^u - Y^u\right\} \right)^{3/2} } \asymp \frac{n_1[p(1-q) + q(1-p)]}{ {n_1}^{3/2} (q(1-q) + p(1-p))^{3/2}} \asymp \frac{1}{\sqrt{n_1p}} \rightarrow 0$$ Therefore, by the Lyapunov’s Central Limit Theorem and the independence of $\{ X^u\}$ and $\{ Y^u\}$, as $n \rightarrow \infty$, we have $\sum_{u=1}^{n_1} X^u - \sum_{u=1}^{n_1} Y^u \overset{d}{\rightarrow} N(n_1(q - p), n_1 q(1-q)+  n_1 p(1-p))$. Therefore, $$\mathbb{P}(\sum_{u=1}^{n_1} X^u \geq \sum_{u=1}^{n_1} Y^u)= \mathbb{P} (\sum_{u=1}^{n_1} X^u - \sum_{u=1}^{n_1} Y^u \geq 0)= 1- \Phi(\frac{\sqrt{n_1}(p-q)}{\sqrt{p(1-p)+q(1-q)}})+o(1).$$
	When $\limsup\limits_{n \rightarrow \infty}n_1I(p,q)=O(1)$, we can see that $p-q=o(1)$. We have 
	\begin{align*}
	I(p,q)&=-2\log \Big (1- \frac{(\sqrt{p}-\sqrt{q})^2+(\sqrt{1-p} -\sqrt{1-q})^2}{2}\Big)\\
	&=\left( \frac{(p-q)^2}{(\sqrt{p}+\sqrt{q})^2} + \frac{(p-q)^2}{(\sqrt{1-p}+\sqrt{1-q})^2} \right)(1+o(1))\\
	&\geq \frac{\delta}{2}\frac{(p-q)^2}{{p(1-p)+q(1-q)}}(1+o(1)).
	\end{align*}
	Thus, if $\limsup\limits_{n \rightarrow \infty}n_1I(p,q)\leq \delta \Phi^{-1} (3/4)^2/2$, namely, $\limsup\limits_{n \rightarrow \infty}\sqrt{n_1}(p-q)/\sqrt{p(1-p)+q(1-q)} \leq \limsup\limits_{n \rightarrow \infty} \sqrt{2n_1I(p,q)/\delta}\leq \Phi^{-1} (3/4)$, we have $$\mathbb{P}(\sum_{u=1}^{n_1} X^u \geq \sum_{u=1}^{n_1} Y^u) \geq  1- \Phi(\sqrt{n_1}(p-q)/\sqrt{p(1-p)+q(1-q)}) \geq 1/4.$$and $$r(\cC_0, \cC_1) \geq 2(1- \Phi(\sqrt{n_1}(p-q)/\sqrt{p(1-p)+q(1-q)}) ) \geq 1/2.$$
	\subsubsection{Proof of Theorem \ref{lb-sec-g} (2)}
	When $d(z_0, \cC_1) I (p,q) \rightarrow \infty$, if there exists a $z_0 \in \cC_0$ and some $r=d(z_0,\cC_1) +O(1)$ such that $\limsup_{n \rightarrow \infty} d(\cC_0,\cC_1) I(p,q) / \log N(B_{z_0}(r),0) < 1$, then we take a 0-packing $\cP(B_{z_0}(r),0)$ (denoted $\cP(0)$ for short) of the ball $B_{z_0}(r)$, and we have:
	\begin{align*}
	r(\cC_0, \cC_1) & = \underset{\psi}{\min} \bigg \{ \underset{z \in \cC_0}{\sup} \mathbb{P}_z(\psi=1) + \underset{z \in \cC_1}{\sup} \mathbb{P}_z(\psi=0)  \bigg\} 
	 \geq \underset{\psi}{\min} \bigg \{ \mathbb{P}_{z_0}(\psi=1) +  \sup_{z \in \cP(0)} \mathbb{P}_z(\psi=0)  \bigg\}\\
	&=\underset{\psi}{\min} \bigg \{ \sum_{{\rm A}}\Big (\mathbb{P}_{ z_0}(\psi=1|\Ab={\rm A}) \mathbb{P}_{z_0}(\Ab={\rm A}) +  \sup_{z \in \cP(0)} \mathbb{P}_{z}(\psi=0 |\Ab={\rm A}) \mathbb{P}_z (\Ab={\rm A}) \Big )  \bigg\}\\
	&= \underset{\psi}{\min} \bigg \{ \sum_{{\rm A}}\Big (\mathbbm{1}(\psi({\rm A})=1) \mathbb{P}_{z_0}(\Ab={\rm A}) +  \mathbbm{1}(\psi({\rm A})=0 )\sup_{z \in \cP(0)}  \mathbb{P}_z(\Ab={\rm A}) \Big )  \bigg\},
	\end{align*}
	where the sum over ${\rm A}$ is the summation over all possible realizations of the adjacency matrix $\Ab$. Thus the optimal method $\tilde{\psi}$ in this scenario should be:
	\begin{align*}
	\tilde{\psi}(\Ab)= \left \{ \begin{array}{cl} 0, & \text{if } L(z_0,\Ab={\rm A})  \ge  \sup_{z \in \cP(0)} L (z,\Ab={\rm A} ); \\
	1, &  \text{if }  L(z_0,\Ab={\rm A}) < \sup_{z \in \cP(0)} L(z,\Ab={\rm A}).  \end{array} \right.
	\end{align*}
	and we have $L(z_0,\Ab={A}) < \sup_{z \in \cP(0)} L(z,\Ab={A})$
	\begin{align*}
	r(\cC_0, \cC_1) & \geq \mathbb{P}_{z_0}(\tilde{\psi } =1) + \sup_{z \in \cP(0)} \mathbb{P}_z(\tilde{\psi} = 0)\\
	&=\mathbb{P}_{z_0}\Big (L(z_0,\Ab={\rm A})  < \sup_{z \in \cP(0)} L(z, \Ab={\rm A}) \Big ) \\
	&\qquad+ \sup_{z \in \cP(0)} \mathbb{P}_z \Big (L(z_0,\Ab={\rm A})  \geq  \sup_{z \in \cP(0)} L(z,\Ab={\rm A}) \Big )\\
	&=\mathbb{P}_{z_0}\Big ( \sup_{z \in \cP(0)} \log  L(z,\Ab={\rm A}) - \log L(z_0, \Ab={\rm A}) > 0\Big ) \\
	&\qquad+ \sup_{z \in \cP(0)} \mathbb{P}_z\Big (\sup_{z \in \cP(0)} \log  L(z,\Ab={\rm A}) - \log L(z_0,\Ab={\rm A}) \leq  0 \Big ).
	\end{align*}
	Similar with the case when $d(z_0,\cC_1)I(p,q) = O(1)$, we can expand each $\mathcal{E}_2(z_0,z)$ to $\mathcal{E}_2^L(z_0,z)$ (or $\mathcal{E}_1(z_0,z)$ to $\mathcal{E}_1^L(z_0,z)$, we use the former notation for convenience) so that $\mathcal{E}_1(z_0,z)$ and $\mathcal{E}_2(z_0,z)$ are of equal sizes, and then we have
	\begin{align*}
	r(\cC_0, \cC_1) & \geq \mathbb{P}_{z_0}\bigg( \sup_{z \in \cP(0)} \Big( \sum_{(i,j) \in  \mathcal{E}_2^L(z_0,z)} \Ab_{ij} - \sum_{(i,j) \in  \mathcal{E}_1(z_0,z)} \Ab_{ij} \Big) >0\bigg)\\
	&\quad +\sup_{z \in \cP(0)} \mathbb{P}_{z}\bigg (\sup_{z \in \cP(0)} \Big( \sum_{(i,j) \in  \mathcal{E}_2^L(z_0,z)} \Ab_{ij} - \sum_{(i,j) \in  \mathcal{E}_1(z_0,z)} \Ab_{ij} \Big) \leq 0 \bigg )\\
	&\geq \mathbb{P}_{z_0}\bigg( \sup_{z \in \cP(0)} \Big( \sum_{(i,j) \in  \mathcal{E}_2^L(z_0,z)} \Ab_{ij} - \sum_{(i,j) \in  \mathcal{E}_1(z_0,z)} \Ab_{ij} \Big) >0\bigg)\\
	&= \mathbb{P}_{z_0}\bigg( \sup_{z \in \cP(0)} \Big( \sum_{u=1}^{n_1(z_0,z)} X^u_{z} -\sum_{u=1}^{n_1(z_0,z)} Y^u_{z}\Big) >0\bigg),
	\end{align*}
	where $\{ X^u_{z}\} \overset{\text{i.i.d}}{\sim} \text{Ber}(q)$, $\{ Y^u_{z}\} \overset{\text{i.i.d}}{\sim} \text{Ber}(p)$, $\{ X^u_{z_i}\} \perp \{ X^u_{z_j}\}, i \neq j$, $\{ Y^u_{z_i}\} \perp \{ Y^u_{z_j}\}, i \neq j$ and $\{ X^u_{z_i}\} \perp \{ Y^u_{z_j}\}, \forall i, j$. By Lemma~5.2 in 
	\cite{zhang2016minimax}, we know that there exists $\eta \rightarrow 0$ such that
	$$\mathbb{P}_{z_0} \Big (\sum_{u=1}^{n_1(z_0,z)} X^u_{z} -\sum_{u=1}^{n_1(z_0,z)} Y^u_{z} >0 \Big)  \geq \exp \big(-(1+\eta) d(z_0,\mathcal{C}_1)I(p,q)\big). $$ 
	
	When $ \limsup_{n \rightarrow \infty} d(z_0,\mathcal{C}_1) I(p,q) /\log |\cP(0)| <1$, for sufficiently large $n$ we have $(1+\eta)d(z_0, \cC_1)I(p,q) \le \log |\cP(0)|$, and since $x>1-(1/2)^x$ for $x>0$, we have that for $n$ large enough
	\begin{align*}
	    &\mathbb{P}_{z_0} \Big (\sum_{u=1}^{n_1(z_0,z)} X^u_{z} -\sum_{u=1}^{n_1(z_0,z)} Y^u_{z} >0 \Big)  \geq \exp \big(-(1+\eta) d(z_0,\mathcal{C}_1)I(p,q)\big)\\
	    & \geq \exp \big(-\log |\cP(0)|\big) = 1/|\cP(0)|
	    \geq 1-(1/2)^{1/|\cP(0)|},
	\end{align*}
and thus 
	\begin{align*}r(\cC_0, \cC_1) &\geq \mathbb{P}_{z_0}\bigg( \sup_{z \in \cP(0)} \Big( \sum_{u=1}^{n_1(z_0,z)}\!\!\! X^u_{z} -\!\!\!\!\!\sum_{u=1}^{n_1(z_0,z)}\!\!\! Y^u_{z}\Big) >0\bigg)
	 = 1- \mathbb{P}_{z_0}\bigg( \sup_{z \in \cP(0)} \Big( \sum_{u=1}^{n_1(z_0,z)} \!\!\!X^u_{z} -\!\!\!\!\!\sum_{u=1}^{n_1(z_0,z)} \!\!\!Y^u_{z}\Big) \le 0\bigg)\\
	&=1-\Pi_{z \in \cP(0)}\mathbb{P}_{z_0}\bigg(  \sum_{u=1}^{n_1(z_0,z)} \!\!\!X^u_{z} -\!\!\!\!\!\sum_{u=1}^{n_1(z_0,z)} \!\!\!Y^u_{z} \le 0\bigg)
	\geq 1-\left\{(1/2)^{1/|\cP(0)|} \right\}^{|\cP(0)|}=1/2.\end{align*}
	The statement is true for any 0-packing of the ball $B_{z_0}(r)$, and thus the statement follows.
	\subsubsection{Proof of Theorem \ref{lb-main-g}}
	Under the regime $1/\rho_n=o(n^{1-c_2})$, we take one $\sqrt{d(z_0,\cC_1)}$-packing $\cP(B_{z_0}(r),\sqrt{d(z_0,\cC_1)})$ (denoted $\tilde{\cP}$ for short) of the ball $B_{z_0}(r)$, similar with the proof of Theorem~\ref{t1-mtd}, by Corollary~2.1 in \cite{chernozhukov2013gaussian}, we have:
	\begin{align*}
	r(\cC_0, \cC_1) &\geq \mathbb{P}_{z_0}\bigg( \sup_{z \in \tilde{\cP}} \Big( \sum_{u=1}^{n_1(z_0,z)} X^u_{z} -\sum_{u=1}^{n_1(z_0,z)} Y^u_{z}\Big) >0\bigg)\\
	&=\mathbb{P}_{z_0}\bigg( \sup_{z \in \tilde{\cP}} \Big( \sum_{u=1}^{d(z_0,\mathcal{C}_1)} X^u_{z} -\sum_{u=1}^{d(z_0,\mathcal{C}_1)} Y^u_{z}\Big) > \delta_n \bigg)\\
	&=\mathbb{P}_{z_0}\bigg( \sup_{z \in \tilde{\cP}} \Big( \sum_{u=1}^{d(z_0,\mathcal{C}_1)} X^u_{z} -\sum_{u=1}^{d(z_0,\mathcal{C}_1)} Y^u_{z} + d(z_0,\mathcal{C}_1)(p-q) \Big) >d(z_0,\mathcal{C}_1)(p-q) +\delta_n \bigg)\\
	&=\mathbb{P}_{z_0}\bigg( \sup_{z \in \tilde{\cP}} \xi_z>\frac{d(z_0,\mathcal{C}_1)(p-q) }{\sigma_d} +\delta_n / \sigma_d \bigg) + o(1).
	\end{align*}
	where $\delta_n = \sup_{z \in \tilde{\cP}} \Big( \sum_{u=1}^{d(z_0,\mathcal{C}_1)} X^u_{z} -\sum_{u=1}^{d(z_0,\mathcal{C}_1)} Y^u_{z}\Big) - \sup_{z \in \tilde{\cP}} \Big( \sum_{u=1}^{n_1(z_0,z)} X^u_{z} -\sum_{u=1}^{n_1(z_0,z)} Y^u_{z}\Big)  = O(1)$ and $\sigma_d=\sqrt{d(z_0,\mathcal{C}_1)\big( p(1-p)+q(1-q) \big)}$, and $\{\xi_z\}_{z \in \tilde{\cP}}$ are standard Gaussian variables with the same covariance matrix as $ \{(\sum_{u=1}^{d(z_0,\mathcal{C}_1)} X^u_{z} -\sum_{u=1}^{d(z_0,\mathcal{C}_1)} Y^u_{z} + d(z_0,\mathcal{C}_1)(p-q))/\sigma_d\}_{z \in \tilde{\cP}}$. 
	
	By Lemma~2.1 in \cite{chernozhukov2013gaussian}, combined with the fact that $d(z_0,\cC_1)=\Omega_P(n)$ and $1/\rho_n = o(n^{1-c_2})$, we have that 
	$$\bigg|\mathbb{P}_{z_0}\bigg( \sup_{z \in \tilde{\cP}} \xi_z>\frac{d(z_0,\mathcal{C}_1)(p-q) }{\sigma_d} +\delta_n / \sigma_d \bigg) - \mathbb{P}_{z_0}\bigg( \sup_{z \in \tilde{\cP}} \xi_z>\frac{d(z_0,\mathcal{C}_1)(p-q) }{\sigma_d} \bigg)\bigg| \lesssim \frac{\delta_n}{\sigma_d}\sqrt{\log n}=o(1). $$
	We let $\{\tilde{X}^u_z\}_{u,z}$ be i.i.d $ \text{Ber}(q)$ random variables and $\{\tilde{Y}^u_z\}_{u,z}$ be i.i.d $ \text{Ber}(p)$ random variables, and $\{\tilde{X}^u_z\}_{u,z}$ and $\{\tilde{Y}^u_z\}_{u,z}$ are independent of each other. Then for each $z \in \tilde{\cP}$, $\sum_{u=1}^{d(z_0,\mathcal{C}_1)} \tilde{X}^u_{z} -\sum_{u=1}^{d(z_0,\mathcal{C}_1)} \tilde{Y}^u_{z} + d(z_0,\mathcal{C}_1)(p-q)$ shares the same distribution with $\sum_{u=1}^{d(z_0,\mathcal{C}_1)} X^u_{z} -\sum_{u=1}^{d(z_0,\mathcal{C}_1)} Y^u_{z} + d(z_0,\mathcal{C}_1)(p-q)$. We let $\{\tilde{\xi}_z\}_{z \in \tilde{\cP}}$ be the corresponding Gaussian analog of $ \{(\sum_{u=1}^{d(z_0,\mathcal{C}_1)} \tilde{X}^u_{z} -\sum_{u=1}^{d(z_0,\mathcal{C}_1)} \tilde{Y}^u_{z} + d(z_0,\mathcal{C}_1)(p-q))/\sigma_d\}_{z \in \tilde{\cP}}$. Then we have:
	\begin{align*}
	|\Cov (\xi_{z_i},\xi_{z_j})-\Cov (\tilde{\xi}_{z_i},\tilde{\xi}_{z_j})|= \left \{ \begin{array}{cl} 0 & \text{if } i=j ,\\
	O(\frac{1}{\sqrt{d(z_0,\mathcal{C}_1)}})&  \text{if }  i \neq j. \end{array} \right.
	\end{align*}
	Thus by Lemma~3.1 in \cite{chernozhukov2013gaussian}, we have $\Delta_{0}= O\big (1/\sqrt{d(z_0,\mathcal{C}_1)} \big )$, and 
	$$\sup_{t \in \mathbb{R}} \Big| \mathbb{P}_{z_0}\bigg( \sup_{z \in \tilde{\cP}} \xi_z>t\bigg) - \mathbb{P}_{z_0}\bigg( \sup_{z \in \tilde{\cP}} \tilde{\xi}_z>t\bigg)\Big| \leq C\Delta_{0}^{1/3}(\log |\tilde{\cP}|/\Delta_{0})^{2/3} =o(1),$$
	and thus 
	\begin{align*}
	&\mathbb{P}_{z_0}\bigg( \sup_{z \in \tilde{\cP}} \Big( \sum_{u=1}^{d(z_0,\mathcal{C}_1)} X^u_{z} -\sum_{u=1}^{d(z_0,\mathcal{C}_1)} Y^u_{z} + d(z_0,\mathcal{C}_1)(p-q) \Big) >d(z_0,\mathcal{C}_1)(p-q) \bigg)\\
	&=\mathbb{P}_{z_0}\bigg( \sup_{z \in \tilde{\cP}} \Big( \sum_{u=1}^{d(z_0,\mathcal{C}_1)} \tilde{X}^u_{z} -\sum_{u=1}^{d(z_0,\mathcal{C}_1)} \tilde{Y}^u_{z} + d(z_0,\mathcal{C}_1)(p-q) \Big) >d(z_0,\mathcal{C}_1)(p-q) \bigg)+o(1).
	\end{align*}
	Then similar with previous proof, we have when $\limsup \lim_{n \rightarrow \infty} d(z_0,\mathcal{C}_1) I(p,q) /\log |\tilde{\cP}| <1$, 
	$$r(\cC_0, \cC_1) \geq \mathbb{P}_{z_0}\bigg( \sup_{z \in B(r_K)} \Big( \sum_{u=1}^{n_1(z_0,z)} \tilde{X}^u_{z} -\sum_{u=1}^{n_1(z_0,z)} \tilde{Y}^u_{z}\Big) >0\bigg) +o(1)\geq 1/2+o(1).$$
	Also the resullts hold for any $\sqrt{d(z_0,\cC_1)}$-packing of the ball $B_{z_0}(r)$. Therefore, we proved the claim.
\subsection{Proof of Corollary \ref{col1-ex-rec}}\label{sec: proof ex rec}
We prove by contradiction. When $\limsup\limits_{n \rightarrow \infty} nI(p,q)/(K \log n) < 1$, we take an assignment $z_0$ satisfying that $\max_{k}|n_k(z_0)-n/K|=c_K=O(1)$, and let $z^*$ denote the true assignment. Then we consider the null hypothesis $\text{H}_0: \exists \sigma \in S_K \quad \text{s.t.} \quad \sigma(z^*)=z_0$. Then we can see that for this hypothesis test, $\cC_0 = \{z: z=\sigma(z_0), \sigma \in S_K\}$, and $\cC_1 = [K]^n \backslash \cC_0$. If there exists an estimator $\hat{z}(\Ab)$ that recovers the communities with high probability, then we can propose the testing procedure as: $\psi^{\text{exact}}= 0$ if $\hat{z}(\Ab) \in \cC_0$ and 1 otherwise. Thus the minimax rate $r(\cC_0, \cC_1) \le \sup_{z \in \cC_0}\mathbb{P}_z(\psi^{\text{exact}} = 1) + \sup_{z \in \cC_1}\mathbb{P}_z(\psi^{\text{exact}} = 0)=o_{P}(1)$. Now we consider the ball $B_{z_0}(r)$ with $r=d(z_0,\cC_1) +2c_K$: to change an assignment $z_0 \in \cC_0$ into $z_1 \in \cC_1$, the simplest way is to change the cluster label of one node, and $d(z_0,\mathcal{C}_1) = \min_{k}n_k(z_0)$, and $|d(z_0,z_1) - d(z_0,\cC_1)| \le 2c_K$ for any $z_1$ constructed such way. For any $z_i, z_j \in \cP\big (B_{z_0}(r), \sqrt{d(z_0,\cC_1)}\big )$, the mis-clustered node should be different. Otherwise, $|\mathcal{E}_{1,2}(z_0,z_i) \cap \mathcal{E}_{1,2}(z_0,z_j)| \geq n/K+O(1) \gg\sqrt{d(z_0,\cC_1)}$. Therefore, $N\big (B_{z_0}(r), \sqrt{d(z_0,\cC_1)}\big )=n$. Thus by Theorem~\ref{lb-main-g}, $\liminf_{n \rightarrow \infty} r(\cC_0, \cC_1) \ge 1/2 $, which is in contradiction to the previous conclusion that $r(\cC_0.\cC_1) = o(1)$. Thus the claim follows.
\section{Proof of technical lemmas}\label{sec: proof of tec lem}
Now we will provide proofs for the technical lemmas used for the proof of Theorem~\ref{t1-mtd-g}. 
\subsection{Proof of Lemma~\ref{lm2-t1}}\label{sec: proof lm2-t1-g}

	It suffices for us to prove Lemma \ref{lm2-t1-g}, the more general version of Lemma \ref{lm2-t1}. Due to the structure of $\bL_z$, it suffices for us to prove that the edge-wise distance between assignments are permutation-invariant. 
	
	For any given $z_0 \in \cC_0$ and $z_1, z_1' \in \cC_1$, we have:
	\begin{align*}
	n_1(z_0,z_1)&=\sum_{i<j, i,j \in [n]} \mathbbm{1}\big (z_0(i)=z_0(j),z_1(i)\neq z_1(j) \big )\\
	&=\sum_{i<j, i,j \in [n]} \mathbbm{1}\Big (\sigma \big (z_0(i)\big)=\sigma \big(z_0(j) \big),\sigma\big(z_1 (i)\big)\neq \sigma\big(z_1(j)\big) \Big )\\
	&=\sum_{\uptau(i)<\uptau(j), i,j \in [n]} \mathbbm{1}\Big (\tau \circ \sigma(z_0)\big(\tau(i)\big)=\tau \circ \sigma(z_0)\big(\tau(j)\big),\tau \circ \sigma(z_1)\big(\tau(i)\big)\neq \tau \circ \sigma(z_1)\big(\tau(j)\big) \Big )\\
	&=	n_1(\tau \circ \sigma(z_0),\tau \circ \sigma(z_1)).
	\end{align*}
	Then very similarly we have $n_2(z_0,z_1)=n_2\big (\tau \circ \sigma(z_0),\tau \circ \sigma(z_1)\big )$ and thus $d(z_0,z_1) = n_1(z_0,z_1) \vee n_2(z_0,z_1)=n_1\big (\tau \circ \sigma(z_0),\tau \circ \sigma(z_1)\big ) \vee n_2\big (\tau \circ \sigma(z_0),\tau \circ \sigma(z_1)\big ) =d\big (\tau \circ \sigma(z_0),\tau \circ \sigma(z_1)\big )$. This suggests that the permutation $\tau \circ \sigma$ does not change the distance between assignments. Also,
	\begin{align*}
	&|\mathcal{E}_1(z_0,z_1)\cap \mathcal{E}_1(z_0,z_1')|=\sum_{i<j, i,j \in [n]} \mathbbm{1}\big (z_0(i)=z_0(j),z_1(i)\neq z_1(j),z_1'(i)\neq z_1'(j)  \big )\\
	&=\sum_{i<j, i,j \in [n]} \mathbbm{1}\big (\tau \circ \sigma(z_0)(i)=\tau \circ \sigma(z_0)(j),\tau \circ \sigma(z_1)(i)\neq \tau \circ \sigma(z_1')(j),\tau \circ \sigma(z_1')(i)\neq \tau \circ \sigma(z_1)(j) \big )\\
	&=\big|\mathcal{E}_1\big(\tau \circ \sigma(z_0),\tau \circ \sigma(z_1)\big)\cap \mathcal{E}_1\big(\tau \circ \sigma(z_0),\tau \circ \sigma(z_1')\big)\big|.
	\end{align*}
	And similarly, 
	$$|\mathcal{E}_2(z_0,z_1)\cap \mathcal{E}_2(z_0,z_1')| = |\mathcal{E}_2(\tau \circ \sigma(z_0),\tau \circ \sigma(z_1))\cap \mathcal{E}_2(\tau \circ \sigma(z_0),\tau \circ \sigma(z_1'))|.$$
	Thus the cardinality of the intersection of the sets $\cE_i, i=1,2$ is also invariant under the permutation $\tau \circ \sigma$. 
	
	Now for any $z_0, z_0' \in \cC_0$, if $\tau \circ \sigma(z_0) = z_0'$, and $d(z_0, z_1) = d(z_0,\cC_1)$, from previous results we have $d(z_0',\tau \circ \sigma(z_1)) =d(z_0,\mathcal{C}_1)$. If there exists an assignment $z_1' \in \cC_1$ such that $d(z_0',z_1')< d(z_0',\tau \circ \sigma(z_1))$, then $d(z_0,(\tau \circ \sigma)^{-1}(z_1'))=d(z_0',z_1')< d(z_0, \mathcal{C}_1)$ due to the fact that $\tau \circ \sigma$ is a one to one mapping. Since $z_0 = (\tau \circ \sigma)^{-1}(z_0')$, we know that $\cC_1$ is closed under $(\tau \circ \sigma)^{-1}$ and $(\tau \circ \sigma)^{-1}(z_1') \in \cC_1$. This is contradictory to the fact that $z_1 = \operatorname{argmin}_{z \in \cC_1} d(z_0,z)$. Therefore, $d(z_0', \mathcal{C}_1) = d(z_0',\tau \circ \sigma(z_1))=d(z_0,\mathcal{C}_1)$.
	
	Similarly, if $z_1 \in B_{z_0}(r)$, then $\tau \circ \sigma(z_1) \in B_{z_0'}(r)$. If $z_1' \in B_{z_0'}(r)$, then $(\tau \circ \sigma)^{-1}(z_1') \in B_{z_0}(r)$. Therefore, $\tau \circ \sigma$ is a one to one mapping from $B_{z_0}(r)$ to $B_{z_0'}(r)$, and  $|B_{z_0}(r)|=|B_{z_0'}(r)|$.
	
	Now for a given radius $r$, we find the permutation $\uptau \in S_{|B_{z_0'}(r)|}$ such that $\uptau(z_i)=\tau \circ \sigma(z_i) = z_i'$ for $z_i \in B_{z_0}(r)$ and $z_i' \in B_{z_0'}(r)$. 
	
	
	When the true assignment is $z_0$, the $(k,l)$-th entry of the covariance matrix for the vector $\bL_{z_0}$ can be expressed as
	\begin{align*}
	&\Cov(\bL_{z_0})_{kl}
	=g({p},{q})^2 \Big( |\mathcal{E}_2(z_0,z_k) \cap \mathcal{E}_2(z_0,z_l)|q(1-q) + |\mathcal{E}_1(z_0,z_k) \cap \mathcal{E}_1(z_0,z_l)|p(1-p) \Big)\\
    &\quad=g({p},{q})^2 \Big( |\mathcal{E}_2(\tau \circ \sigma(z_0),\tau \circ \sigma(z_k)) \cap \mathcal{E}_2(\tau \circ \sigma(z_0),\tau \circ \sigma(z_l))|q(1-q)\\
	&\qquad\qquad\qquad + |\mathcal{E}_1(\tau \circ \sigma(z_0),\tau \circ \sigma(z_k)) \cap \mathcal{E}_1(\tau \circ \sigma(z_0),\tau \circ \sigma(z_l))|p(1-p) \Big)\\
	&\quad=\Cov \Big (g({p},{q})\big  (\sum_{ \mathcal{E}_2 (\uptau(z_0'),\uptau(z_k'))} \Ab_{ij} - \sum_{ \mathcal{E}_1(\uptau(z_0'),\uptau(z_k'))} \Ab_{ij}  \big ),g({p},{q})\big  (\sum_{ \mathcal{E}_2(\uptau(z_0'),\uptau(z_l'))} \Ab_{ij} - \sum_{ \mathcal{E}_1(\uptau(z_0'),\uptau(z_l'))} \Ab_{ij}  \big )\Big )\\
	&\quad=\Cov(\bL_{z_0'})_{\uptau(k)\uptau(l)}.
	\end{align*}
	Hence we finish the proof.
\subsection{Proof of Lemma~\ref{lm1-t1}}\label{sec: proof lm b2}
	The proof mainly follows from Lemma 2.3 and Lemma 2.6 in \cite{wang2017likelihood} with modifications for the function $F(\cdot)$ and $G(\cdot)$. We provide the sketch of proof as following:\\
	We first define the count statistics as proposed in \cite{wang2017likelihood}:
	$$\Ab_{i, j} |\left(z(i)=a, z(j)=b\right) \sim \operatorname{Ber}\left(\boldsymbol{\rm H}_{a, b}\right), i\neq j, a,b \in [K].$$
	where $\boldsymbol{\rm H}_{a, b} = p =\lambda_1 \rho_n \text{ if } a=b,$ and $\boldsymbol{\rm H}_{a, b} = q =\lambda_2 \rho_n \text{ if } a\neq b$. $\boldsymbol{\rm H}=\rho_n \boldsymbol{\rm S}$.
	$$\boldsymbol{O}_{a, b}(z)=\sum_{i=1}^{n} \sum_{j \neq i} \mathbbm{1}\left(z(i)=a, z(j)=b\right) \Ab_{i j},$$
	$$L=\sum_{i=1}^n\sum_{j=i+1}^n \Ab_{ij}, \mu_n=n^2 \rho_n.$$
	For two assignments $z, z'$, The confusion matrix is:
	$$\boldsymbol{R}_{k, a}(z, z')=n^{-1} \sum_{i=1}^{n} \mathbbm{1}\left(z(i)=k, z'(i)=a\right).$$
	By definition, we have $|n_k(z)-n/K| \leq c_K, \forall z \in \cC_0 \cup \cC_1, \forall k=1,2,...,K$. We let $\tilde{n}(z)$ denote the number of within-cluster edges, and assume 
	$$n_k(z) = n/K + a_k, |a_k| \leq c_K, k=1,2,...,K,$$
	$$\sum_{k=1}^K a_k =0.$$
	Then 
	\begin{align*}
	\tilde{n}(z)&=\frac{\sum_{k=1}^K (n/K+a_k)^2 -n}{2}=\frac{n^2/K-n}{2}+\frac{\sum_{k=1}^K a_k^2 }{2}\\
	& \leq \frac{n^2/K-n}{2}+Kc_K^2/2.
	\end{align*}
	Therefore, $\forall z,z' \in \cC_1$ we have $\tilde{n}(z) + n_2(z,z') - n_1(z,z')=\tilde{n}(z')$, $|n_2(z,z') - n_1(z,z')|=|\tilde{n}(z)-\tilde{n}(z')| \leq Kc_K^2/2$. Thus, we denote $z^*$ as the true assignment, and $\forall z \in \cC_0 \cup \cC_1$ we have
	\begin{align*}
	\log f(\Ab;z,\hat{p},\hat{q}) &=  \frac{1}{2} \Big (\sum_{a,b=1}^K \boldsymbol{O}_{a,b}(z) \log \frac{\hat{\boldsymbol{\rm H}}_{a,b}}{1-\hat{\boldsymbol{\rm H}}_{a,b}} \Big ) +\tilde{n}(z) \log (1-\hat{p}) + \big (n(n-1)/2 -\tilde{n}(z) \big )\log (1-\hat{q})\\
	&=\frac{1}{2} \Big (\sum_{a,b=1}^K \boldsymbol{O}_{a,b}(z) \big (\log \hat{\boldsymbol{\rm S}}_{a,b} + \log \rho_n - \log (1-\hat{\boldsymbol{\rm H}}_{a,b}) \big )\Big ) + C(z^*) + O_P(\rho_n)\\
	&=\frac{\mu_n}{2} \Big (\sum_{a,b=1}^K \frac{\boldsymbol{O}_{a,b}(z)}{\mu_n} \{ \log \hat{\boldsymbol{\rm S}}_{a,b} + O_{P}( \rho_n) \} \Big ) + \log \rho_n L + C(z^*)+O_P(\rho_n).
	\end{align*}
	where $C(z^*) = \tilde{n}(z^*) \log (1-\hat{p}) + \big (n(n-1)/2 -\tilde{n}(z^*) \big )\log (1-\hat{q})$. 
	We let $F(\boldsymbol{O}(z)/\mu_n)=\sum_{a,b=1}^K \frac{\boldsymbol{O}_{a,b}(z)}{\mu_n} \log \frac{\hat{\boldsymbol{\rm S}}_{a,b}}{1-\hat{\boldsymbol{\rm H}}_{a,b}}$ and $F(\boldsymbol{R}\boldsymbol{\rm S} \boldsymbol{R}^{\top}(z))=\sum_{a,b=1}^K (\boldsymbol{R}\boldsymbol{\rm S} \boldsymbol{R}^{\top}(z))_{a,b} \log \frac{\hat{\boldsymbol{\rm S}}_{a,b}}{1-\hat{\boldsymbol{\rm H}}_{a,b}}$, where $\boldsymbol{R}(z)=\boldsymbol{R}(z,z^*)$ and $\boldsymbol{R}\boldsymbol{\rm S}\boldsymbol{R}^{\top}(z)=\boldsymbol{R}(z,z^*)\boldsymbol{\rm S}\boldsymbol{R}(z,z^*)^{\top}$. We denote $\tilde{\cC} \subseteq \cC_0 \cup \cC_1$ as some subset of assignments, and we let $\mathcal{V}_G$ denote the set of $z \in \tilde{\cC}$ that maximizes $F(\boldsymbol{R}\boldsymbol{\rm S} \boldsymbol{R}^{\top}(z))$.
	Obviously $F(\cdot)$ is Lipschitz, for $\epsilon_n \rightarrow 0$ slowly,
	\begin{align*} 
	&\left|F\left(\boldsymbol{O}(z) / \mu_n \right)-F(\boldsymbol{R}\boldsymbol{\rm S} \boldsymbol{R}^{\top}(z))\right| \\ \leq & C \cdot \sum_{k, l}\left|\boldsymbol{O}_{k, l}(z) / \mu_n-\left(\boldsymbol{R} \boldsymbol{\rm S} \boldsymbol{R}^{\top}(z)\right)_{k, l}\right| \\=& O_{P}\left(\epsilon_{n}\right). 
	\end{align*}
	We choose some positive $\delta_n \rightarrow 0$ slowly enough such that $\delta_n/\epsilon_n \rightarrow \infty$. We take any $Z' \in \mathcal{V}_G$, then we define 
	$$J_{\delta_{n}}=\left\{z \in[K]^{n} : F(\boldsymbol{R}\boldsymbol{\rm S} \boldsymbol{R}^{\top}(z))-F(\boldsymbol{R}\boldsymbol{\rm S} \boldsymbol{R}^{\top}(Z'))<-\delta_{n}\right\}.$$
	Then we have 
	\begin{align*}
	\sum_{z \in J_{\delta_n}} e^{\log f(\Ab; z,\hat{p},\hat{q})} & \leq f(\Ab;Z',\hat{p},\hat{q})K^n e^{ O_{P}(\mu_n \epsilon_n) - \mu_n \delta_n/2 +O_P(\rho_n)}\\
	&=f(\Ab;Z',\hat{p},\hat{q})o_{P}(1).
	\end{align*}
	For $z \in \tilde{\cC} \backslash \left \{ J_{\delta_n}\cup \mathcal{V}_G \right \}$, $|F(\boldsymbol{R}\boldsymbol{\rm S} \boldsymbol{R}^{\top}(z))-F(\boldsymbol{R}\boldsymbol{\rm S} \boldsymbol{R}^{\top}(Z'))| \rightarrow 0$ and $\|\boldsymbol{R}\boldsymbol{\rm S} \boldsymbol{R}^{\top}({z})-\boldsymbol{R}\boldsymbol{\rm S} \boldsymbol{R}^{\top}(Z')\|_{\infty} \rightarrow 0$. Treating $\boldsymbol{R}(z)$ as a vector, choosing $z_{\perp}$ be such that $\boldsymbol{R}\left(z_{\perp}\right) :=\min _{\boldsymbol{R}\left(z_{0}\right) : z_{0} \in \mathcal{V}_{G}}\left\|\boldsymbol{R}(z)-\boldsymbol{R}\left(z_{0}\right)\right\|_{2}$ for a given $z \in \tilde{\cC} \backslash \left \{ J_{\delta_n}\cup \mathcal{V}_G \right \}$. Due to the consistency of $\hat{p},\hat{q}$, the function $F(\cdot)$ is a linear function with constant coefficients. We know that with probability $1-o(1)$:
	$$\left.\frac{\partial F\left((1-\epsilon) \boldsymbol{R}\boldsymbol{\rm S} \boldsymbol{R}^{\top}\left(z_{\perp}\right)+\epsilon\boldsymbol{R}\boldsymbol{\rm S} \boldsymbol{R}^{\top}(z) \right)}{\partial \epsilon}\right|_{\epsilon=0^{+}}<0.$$
 Given a matrix $A$, we denote the matrix maximum norm $\|A\|_{\infty} = \max_{jk} |A_{jk}|$.	Letting $\bar{z}=\min_{\sigma(z)}|\sigma(z) - z_{\perp}|$, and $\boldsymbol{X}(z)= \boldsymbol{O}(z)/\mu_n - \boldsymbol{R}\boldsymbol{\rm S}\boldsymbol{R}^{\top}(z)$, we have 
	\begin{align*} 
	& \mathbb{P}\left(\max _{z \notin \mathcal{S}\left(z_{\perp}\right)}\left\|\boldsymbol{X}(\overline{z})-\boldsymbol{X}\left(z_{\perp}\right)\right\|_{\infty}>\epsilon\left|\overline{z}-z_{\perp}\right| / n\right) \\ \leq & \sum_{m=1}^{n} \mathbb{P}\left(\max _{z : z=\overline{z},\left|\overline{z}-z_{\perp}\right|=m}\left\|\boldsymbol{X}(z)-\boldsymbol{X}\left(z_{\perp}\right)\right\|_{\infty}>\epsilon \frac{m}{n}\right) \\ \leq & \sum_{m=1}^{n} 2K^K n^{m}K^{m+2} \exp \left(-C \frac{m \mu_{n}}{n}\right) \rightarrow 0. 
	\end{align*}
	where $\mathcal{S}(z)=\{\sigma(z) | \sigma \in S_K \}$. Since $\boldsymbol{R }\boldsymbol{\rm S} \boldsymbol{R}^{\top}(\overline{z})-\boldsymbol{R} \boldsymbol{\rm S }\boldsymbol{R}^{\top}\left(z_{\perp}\right) = \Omega(|\bar{z}-z_{\perp}|)$, we have that 
	$$\frac{\boldsymbol{O}(\overline{z})}{\mu_{n}}-\frac{\boldsymbol{O}\left(z_{\perp}\right)}{\mu_{n}} = (1+o_P(1))\left(\boldsymbol{R }\boldsymbol{\rm S} \boldsymbol{R}^{\top}(\overline{z})-\boldsymbol{R} \boldsymbol{\rm S }\boldsymbol{R}^{\top}\left(z_{\perp}\right)\right).$$
	And thus we probability $1-o(1)$ uniform on all $z$, we have
	$$F\left(\boldsymbol{O}(\overline{z})/\mu_n\right) < F\left(\boldsymbol{O}(z_{\perp})/\mu_n\right).$$
    In turn, we have

	$$\log f(\Ab; z, \hat{p},\hat{q}) \le \log f\left(\Ab; z_{\perp},\hat{p},\hat{q}\right) +O_P(\rho_n).$$ 
	Since from Lemma A.1 in \cite{wang2017likelihood} the high probability is uniform on all assignments, we have that, with probability $1-o(1)$, for any $z \in \tilde{\cC} \backslash \mathcal{V}_{G}$ we can find $z' \in \mathcal{V}_G$ such that $\log f(\Ab; z, p,q) = \log f(\Ab; z', p,q) + O_P(\rho_n)$,
	and therefore,
	$$\sup_{z \in \tilde{\cC}} \log f(\Ab;z,\hat{p},\hat{q}) = \sup_{z \in \cV_G} \log f(\Ab;z,\hat{p},\hat{q}) +O_P(\rho_n). $$
	
	Now we consider $F(\boldsymbol{R}\boldsymbol{\rm S} \boldsymbol{R}^{\top}(z))$:
	\begin{align*}
	F(\boldsymbol{R}\boldsymbol{\rm S} \boldsymbol{R}^{\top}(z))&=\mathbb{E}(F(\boldsymbol{O}(z)/\mu_n))\\
	&=\frac{1}{\mu_n}\Big (C_1(z^*)+\log \frac{\hat{\lambda}_1 (1-\hat{p})}{\hat{\lambda}_2 (1-\hat{q})} \big (n_2(z^*,z)q-n_1(z^*,z)p\big ) \Big)\\
	&=\frac{1}{\mu_n}\Big (C_1(z^*)+\log \frac{\hat{\lambda}_1 (1-\hat{p})}{\hat{\lambda}_2 (1-\hat{q})} \big (n_2(z^*,z)\vee n_1(z^*,z)(\lambda_2-\lambda_1)+c_0(z)\big )\rho_n \Big),
	\end{align*}
	where $\hat{\lambda}_1 = \hat{p}/\rho_n, \hat{\lambda}_2 = \hat{q}/\rho_n$ and $C_1(z^*)=\log \frac{\hat{\lambda}_1}{1-\hat{p}} \tilde{n}(z^*)p +\log \frac{\hat{\lambda}_2}{1-\hat{q}} (n(n-1)/2-\tilde{n}(z^*))q$, and $c_0(z) \leq \lambda_1Kc_K^2/2, \forall z \in \cC_0 \cup \cC_1$. 
	
	Thus when $z^* \in \cC_0$ and $\tilde{\cC} = \cC_1$, it can be easily perceived that $\mathcal{V}_G \subseteq B_{z^*}(r_K)$ with high probability, and hence
	$$\sup_{z \in \cC_1} \log f(\Ab;z,\hat{p},\hat{q}) = \sup_{z \in B_{z^*}(r_K)} \log f(\Ab;z,\hat{p},\hat{q}) +O_P(\rho_n).$$
	Moreover, when $z^* \in \tilde{\cC}$, $\cV_G \subseteq B_{z^*}(r^*)$ with high probability, where $r^* = \lambda_1Kc_K^2/\{2(\lambda_1 - \lambda_2)\} = O(1)$. By Lemma~5.3 in \cite{zhang2016minimax}, for any $z \in B_{z^*}(r^*)$, if $z \neq z^*$, then $d(z^*, z) = \Omega (n)$. Therefore, $B_{z^*}(r^*) = \{z^*\}$. In other words, we have
	\begin{equation}\label{eq: lb-lm1-g}
	    \sup_{z \in \tilde{\cC} } \log f(\Ab;z,\hat{p},\hat{q}) = \log  f(\Ab;z^*,\hat{p},\hat{q}) +O_P(\rho_n).
	\end{equation}
	More concretely, if we take $\tilde{\cC} = \cC_0 \cup \cC_1$, we have 
	$$ \sup_{z \in \cC_0 \cup \cC_1 }\log f(\Ab;z,\hat{p},\hat{q}) =  \log f(\Ab;z^*,\hat{p},\hat{q})+O_P(\rho_n),$$
	and if $z^* \in \cC_1$, we have 
	$$ \sup_{z \in \cC_1 } \log f(\Ab;z,\hat{p},\hat{q}) = \log  f(\Ab;z^*,\hat{p},\hat{q}) +O_P(\rho_n).$$

\subsection{Consistency of Probability Estimation}\label{sec: supp}
 Recall the estimators $\hat p$ and $\hat q$ are defined in \eqref{eq: p q hat}, and that $\hat \lambda_1 = \hat p/\rho_n$ and  $\hat \lambda_2 = \hat q/\rho_n$. The following lemma shows that $\hat \lambda_1$ and $\hat \lambda_2$ are consistent.
\begin{lemma}\label{lm: consist of prob est}
	Under the same condition of Theorem \ref{t1-mtd-g}, we have $$|\hat{\lambda_i} - \lambda_i| = O(1/\sqrt{n^2 \rho_n}), \quad i=1,2$$
\end{lemma}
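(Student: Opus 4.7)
The plan is to reduce the joint MLE of $(\hat p, \hat q)$ to a classical Bernoulli mean estimation problem by first showing that the profile MLE assignment
$
\hat z := \argmax_{z \in \cC_0 \cup \cC_1} \sup_{p,q} f(\Ab;z,p,q)
$
equals $z^*$ up to community relabeling with probability tending to one, and then applying a standard second-moment bound to the within- and between-community edge counts under $z^*$.

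For the first step, let $(\hat p(z), \hat q(z))$ denote the profile MLE of $(p,q)$ for a fixed $z$, i.e., the empirical within- and between-community edge densities with respect to $z$. Then by construction $(\hat p, \hat q) = (\hat p(\hat z), \hat q(\hat z))$. Rerunning the argument in the proof of Lemma~\ref{lm1-t1} with $(\hat p(z), \hat q(z))$ in place of the global $(\hat p, \hat q)$, and noting that its key Lipschitz and Hoeffding-type bounds on $F(\boldsymbol{O}(z)/\mu_n)$ depend on $(p,q)$ only through the scaled parameters $\lambda_1, \lambda_2$ being bounded and bounded apart, one deduces that $\hat z$ lies in $B_{z^*}(r^*)$ for some $r^* = O(1)$ with probability $1-o(1)$. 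By Lemma~5.3 of \cite{zhang2016minimax}, $B_{z^*}(r^*) = \{z^*\}$ modulo permutations of community labels, so $\hat z \simeq z^*$ with probability $1 - o(1)$.

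For the second step, since $\hat z \simeq z^*$ implies $\hat p = \hat p(z^*)$, we have
\[
\hat p = \frac{1}{N_1}\sum_{\substack{i<j \\ z^*(i) = z^*(j)}} \Ab_{ij}, \qquad N_1 = \sum_{k=1}^K \binom{n_k(z^*)}{2}.
\]
Since $|n_k(z^*) - n/K| \le c_K = O(1)$ and $K$ is bounded, $N_1 \asymp n^2$. The summands are independent Bernoulli$(p)$, so Chebyshev's inequality gives $|\hat p - p| = O_P(\sqrt{p(1-p)/N_1}) = O_P(\sqrt{\rho_n}/n)$, and dividing by $\rho_n$ yields $|\hat \lambda_1 - \lambda_1| = O_P(1/\sqrt{n^2\rho_n})$. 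The argument for $\hat \lambda_2$ is identical, using the $\binom{n}{2} - N_1 \asymp n^2$ between-community pairs.

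The main obstacle is the mild circularity in the first step: Lemma~\ref{lm1-t1} as stated plugs in the global $(\hat p, \hat q)$ whose consistency is what we are trying to prove. The resolution is to observe that the combinatorial core of Lemma~\ref{lm1-t1} is really about any pair of constants satisfying the mild regularity that $\hat\lambda_1, \hat\lambda_2$ stay bounded and bounded apart; this is automatic for the profile pair $(\hat p(z), \hat q(z))/\rho_n$ at every candidate $z$ close to $z^*$, because each such $\hat p(z)/\rho_n$ and $\hat q(z)/\rho_n$ is an average of $\Theta(n^2)$ Bernoullis concentrated near $\lambda_1$ or $\lambda_2$ with at most $O(1)$ inflation from misassigned edges. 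Thus the chain of implications closes without requiring the consistency bound in advance.
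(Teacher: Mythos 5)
Your overall strategy is sound and is genuinely different in presentation from the paper's proof: the paper simply cites Lemma~1 and Theorem~2 of \cite{bickel2013asymptotic} to get $|\hat\nu_i-\nu_i|=O(1/\sqrt{n^2\rho_n})$ on the logit scale and then does a two-line delta-method computation to transfer this to $\lambda_i$, whereas you re-derive the substance of that external result (exact recovery of the profile-MLE assignment, followed by Chebyshev on the empirical edge densities). Your second step is correct: once $\hat z\simeq z^*$ on an event of probability $1-o(1)$, $\hat p$ is an average of $N_1\asymp n^2$ i.i.d.\ $\mathrm{Ber}(p)$ variables, giving $|\hat p - p|=O_P(\sqrt{\rho_n}/n)$ and hence the claimed rate after dividing by $\rho_n$ (note the lemma's $O(\cdot)$ must be read as $O_P(\cdot)$ in any case, as in the paper's own proof).

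The gap is in your first step, specifically in how you close the circularity. You argue that the regularity needed to rerun Lemma~\ref{lm1-t1} --- $\hat\lambda_1,\hat\lambda_2$ bounded and bounded apart --- ``is automatic for the profile pair $(\hat p(z),\hat q(z))/\rho_n$ at every candidate $z$ close to $z^*$.'' But the entire burden of step 1 is to exclude the candidates $z$ \emph{far} from $z^*$, and for those the profile parameters collapse: when $z$ badly misclassifies, both $\hat p(z)$ and $\hat q(z)$ concentrate near a common mixture of $p$ and $q$, so $\hat p(z)-\hat q(z)$ is not bounded away from zero (on the $\rho_n$ scale) and the Lipschitz/sign arguments in the proof of Lemma~\ref{lm1-t1}, which compare $F(\boldsymbol{O}(z)/\mu_n)$ with a \emph{fixed} coefficient vector, do not transfer to the profile objective $\ell(z)=\sup_{p,q}\log f(\Ab;z,p,q)$, whose coefficients change with $z$. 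What is actually needed is the uniform convergence of the normalized profile likelihood to a population contrast $R\mapsto \max_{\boldsymbol{\rm S}'} \sum_{a,b}(R\boldsymbol{\rm S}R^{\top})_{ab}\log$-terms that is uniquely maximized (over confusion matrices $R$) at the identity --- precisely the content of Lemma~1 of \cite{bickel2013asymptotic} that the paper outsources. Without that uniform population-level argument (or an explicit union bound over the $K^n$ assignments showing $\ell(z)<\ell(z^*)$ whenever $z\not\simeq z^*$), your ``rerun Lemma~\ref{lm1-t1}'' step does not go through as stated.
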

\begin{proof}
From Lemma 1 and Theorem 2 in \cite{bickel2013asymptotic}, we know that $|\log \big(\hat{p}/(1-\hat{p})\big)-\log \big(p/(1-p)\big)|=O(1/\sqrt{n^2 \rho_n})$ and $|\log \big(\hat{q}/(1-\hat{q})\big)-\log \big(q/(1-q)\big)|=O(1/\sqrt{n^2 \rho_n})$.
	We let $\nu_1$ and $\nu_2$ denote the logit of $p$ and $q$. Then since $(\nu_1,\nu_2)$ is a one-to-one function of $(p,q)$, we know the relationship between $(\hat{\nu}_1,\hat{\nu}_2)$ and $(\hat{p},\hat{q})$ should be $\hat{\nu}_1 = \log \hat{p}/(1-\hat{p})$ and $\hat{\nu}_2 = \log \hat{q}/(1-\hat{q})$. Then we have
	\begin{align*}
		\hat{\nu}_i - \nu_i &= \log \frac{\hat{\lambda}_i \rho_n}{1- \hat{\lambda}_i \rho_n} - \log \frac{\lambda_i \rho_n}{1-\lambda_i \rho_n}\\
		 &=\log \hat{\lambda}_i \rho_n - \log \lambda_i \rho_n + \log (1-\lambda_i \rho_n) -  \log (1-\hat{\lambda}_i \rho_n) \\
		 &=\log \left(1+\frac{\hat{\lambda}_i -\lambda_i}{\lambda_i} \right) - \log \left( 1+ \frac{({\lambda}_i - \hat{\lambda}_i) \rho_n}{1-{\lambda}_i \rho_n} \right)\\
		 &=(1+o(1))\frac{\hat{\lambda}_i -\lambda_i}{\lambda_i} + (1+o(1))\frac{(\hat{\lambda}_i - \lambda_i) \rho_n}{1-{\lambda}_i \rho_n} \\
		 & \asymp \hat{\lambda}_i -\lambda_i
	\end{align*}
	and thus by previous results we have $$|\hat{\lambda_i} - \lambda_i| = O(1/\sqrt{n^2 \rho_n}), \quad i=1,2$$
\end{proof}

\end{document}